\DeclareMathOperator*{\esssup}{ess\,sup}
\newtheorem{theorem}{Theorem}[section]
\newtheorem{lemma}[theorem]{Lemma}
\newtheorem{proposition}[theorem]{Proposition}
\newtheorem{corollary}[theorem]{Corollary}
\theoremstyle{definition}
\newtheorem{assumption}[theorem]{Assumption}
\newtheorem{definition}[theorem]{Definition}
\newtheorem{example}[theorem]{Example}
\theoremstyle{remark}
\newtheorem{remark}[theorem]{Remark}
\newcommand\bN{\mathbb{N}}
\newcommand\bR{\mathbb{R}}
\renewcommand\P{\mathbb{P}}
\newcommand\cB{\mathcal{B}}
\newcommand\cF{\mathcal{F}}
\newcommand\cH{\mathcal{H}}
\newcommand\cL{\mathcal{L}}
\newcommand\calL{\mathcal{L}}
\newcommand\cM{\mathcal{M}}
\newcommand\cP{\mathcal{P}}
\newcommand\cO{\mathcal{O}}
\newcommand\cR{\mathcal{R}}
\newcommand\cS{\mathcal{S}}
\newcommand\cV{\mathcal{V}}
\newcommand\cW{\mathcal{W}}
\newcommand\cX{\mathcal{X}}
\newcommand\cZ{\mathcal{Z}}
\newcommand{\E}{\mathbb{E}}
\newcommand{\1}{\mathbbm{1}}
\newcommand{\one}{\mathbbm{1}}
\newcommand{\Ly}{L\'evy }
\newcommand{\cl}{c\`adl\`ag }
\newcommand{\Nt}{\widetilde{N}}
\newcommand{\wt}{\widetilde}
\DeclareMathOperator{\dv}{div}
\DeclareMathOperator{\supp}{supp}
\DeclareMathOperator{\Tr}{Tr}
\newcommand{\<}{\langle}
\newcommand{\lb}{\langle}
\renewcommand{\>}{\rangle}
\newcommand{\rb}{\rangle}
\newcommand{\vertiii}[1]{{\left\vert\kern-0.25ex\left\vert\kern-0.25ex\left\vert #1
\right\vert\kern-0.25ex\right\vert\kern-0.25ex\right\vert}}
\newcommand{\eps}{\varepsilon}
\newcommand{\Dom}{\mathscr{O}}
\newcommand{\R}{\mathbb{R}}
\newcommand{\of}{\overline{f}}
\DeclareSymbolFont{fouriersymbols}{FMS}{futm}{m}{n}
\DeclareSymbolFont{fourierlargesymbols}{FMX}{futm}{m}{n}
\DeclareMathDelimiter{\VERT}{\mathord}{fouriersymbols}{152}{fourierlargesymbols}{147}
\DeclareFontFamily{U}{matha}{\hyphenchar\font45}
\DeclareFontShape{U}{matha}{m}{n}{
      <5> <6> <7> <8> <9> <10> gen * matha
      <10.95> matha10 <12> <14.4> <17.28> <20.74> <24.88> matha12
      }{}
\DeclareSymbolFont{matha}{U}{matha}{m}{n}
\DeclareFontFamily{U}{mathx}{\hyphenchar\font45}
\DeclareFontShape{U}{mathx}{m}{n}{
      <5> <6> <7> <8> <9> <10>
      <10.95> <12> <14.4> <17.28> <20.74> <24.88>
      mathx10
      }{}
\DeclareSymbolFont{mathx}{U}{mathx}{m}{n}
\DeclareMathDelimiter{\vvvert}{0}{matha}{"7E}{mathx}{"17}
\begin{document}

\author[S. Bechtel]{Sebastian Bechtel}
\address{Universit\'e Paris-Saclay, CNRS, Laboratoire de Math\'ematiques d'Orsay, 91405 Orsay, France}
\email{sebastian.bechtel@universite-paris-saclay.fr}

\author[F. Germ]{Fabian Germ}
\address{Delft Institute of Applied Mathematics\\
Delft University of Technology \\ P.O. Box 5031\\ 2600 GA Delft\\The
Netherlands} \email{F.Germ@tudelft.nl}

\author[M. Veraar]{Mark Veraar}
\address{Delft Institute of Applied Mathematics\\
Delft University of Technology \\ P.O. Box 5031\\ 2600 GA Delft\\The
Netherlands} \email{M.C.Veraar@tudelft.nl}

\thanks{The first-named author is supported by the Alexander von Humboldt foundation by a Feodor Lynen grant. The second-named and third-named authors are supported by the VICI subsidy VI.C.212.027 of the Netherlands Organisation for Scientific Research (NWO). This project has received funding from
the European Union’s Horizon 2020 research and innovation programme under the Marie Skłodowska-Curie grant agreement No 101034255 \euflag{}}

\keywords{Stochastic evolution equations, variational methods, stochastic partial differential equations, L\'evy processes, quasi- and semi-linear equations, coercivity, critical nonlinearities}
\subjclass[2020]{Primary: 60H15,Secondary: 35A01, 35B65, 35K59, 35K90, 35Q30, 35R60, 47H05, 47J35, 60G57, 60J76}

\begin{abstract}
The critical variational setting was recently introduced and shown to be applicable to many important SPDEs not covered by the classical variational setting. In this paper, we extend the critical variational setting in several ways. We introduce a flexibility in the range space for the nonlinear drift term, due to which certain borderline cases can now also be included. An example of this is the Allen-Cahn equation in dimension two in the weak setting. In addition to this, we allow the drift to be singular in time, which is something that naturally arises in the study of the skeleton equation for large deviation principles for SPDEs. Last but not least, we present the theory in the case of L\'evy noise for which the critical setting was not available yet.
\end{abstract}

\title[Variational setting for critical SPDEs with L\'evy noise]{An extended variational setting for critical SPDEs with L\'evy noise}

\maketitle

\setcounter{tocdepth}{1}
\tableofcontents

\section{Introduction}
The variational setting for both deterministic and stochastic evolution equations can be highly effective in applications. It provides global well-posedness for a large class of nonlinear problems. After its introduction in~\cite{Lio69} in the deterministic setting, it was extended to the stochastic setting with Gaussian noise in a series of works \cite{BT73, KR79, Par75} (see also the monograph \cite{LR15}).
Of the several abstract conditions in this framework,
we would like to discuss the monotonicity condition in more detail. In the deterministic framework the setting is as follows: consider
\begin{equation}
\label{eq:detde}
    \begin{split}
        u'(t)+A(t,u(t)) &= 0,\\
        u(0)&=u_0,
    \end{split}
\end{equation}
where $A:[0,\infty)\times \cV\to \cV^*$ and $u_0\in \cH$, and where $(\cV, \cH, \cV^*)$ is a Gelfand triple.
The monotonicity conditions typically read
\begin{align*}
-\langle A(t, u)-A(t, v),  u-v\rangle & \leq K \|u-v\|_{\cH}^2 &  \text{(weak monotonicity),}
\\ -\langle A(t, u)-A(t, v),  u-v\rangle& \leq K (1+\|v\|_{\cV})(1+\|v\|_{\cH}^{\gamma})\|u-v\|_{\cH}^2 & \text{(local monotonicity)}.
\end{align*}
The weak condition is more restrictive than the local condition. There have been many attempts to make the monotonicity conditions more flexible. In particular, it is desirable to have that $\|u\|_{\cH}$  and $\|u\|_{\cV}$ appear on the right-hand side as well. This is, for instance, needed in the so-called strong setting (i.e.\ $\cH = H^{1}(\R^d)$, $\cV = H^{2}(\R^d)$, $\cV^* = L^2(\R^d)$). The benefit of this strong setting is that the Sobolev embedding improves, so that it is easier to bound nonlinearities. For instance, such embeddings are needed for proving growth estimates for $A$, which often are of the form
\begin{align}\label{eq:bddAintro}
\|A(t, u)\|_{\cV^*} \leq C (1+\|u\|_{\cV})(1+\|u\|_{\cH}^{b}).
\end{align}
When a noise appears in the equation, the strong setting requires estimates of derivatives of the nonlinearity. From an abstract point of view this requires that the nonlinearity in front of the noise is allowed to have super-linear growth, which will be included in our setting. 

\subsection{Critical nonlinearities}
In the recent paper \cite{agresti2022critical} of Agresti and the third-named author, under the structural condition $A(t,u) = A_0(t,u)u-F(t,u)$, the monotonicity condition and boundedness condition were replaced by the following condition on $F$ (and similarly for the stochastic terms): for all $T>0$ and $n\geq 1$ there is a constant $C_{n,T}$ such that for all $u,v\in \cV$ with $\|u\|_{\cH}, \|v\|_{\cH}\leq n$,
\begin{align}\label{eq:Fintro1}
\|F(t,u) - F(t,v)\|_{\cV^*} &\leq C_{n,T} (1+\|u\|_{\beta}^{\rho} + \|v\|_{\beta}^{\rho}) \|u-v\|_{\beta},
\\ \label{eq:Fintro2} \|F(t,u)\|_{\cV^*} &\leq C_{n,T} (1+\|u\|_{\beta}^{\rho+1}),
\end{align}
where $\|u\|_{\beta} = [\cV^*, \cV]_{\beta}$. Here $\beta\in (1/2,1)$ and $\rho\geq 0$  satisfy the (sub)criticality condition
\begin{align}\label{eq:criticalvar}
(2\beta-1)(\rho+1) \leq 1.
\end{align}
Although the estimate \eqref{eq:Fintro1} is no longer one-sided like the monotonicity condition, it does have $\|u\|_{\beta}$ on the right-hand side.
As a consequence, the following new examples were suddenly included in~\cite{agresti2022critical}: the weak setting of the Cahn--Hilliard equation, $2D$ Navier--Stokes equations and other fluid dynamics models, the strong setting of several equations (Allen--Cahn for $d\in \{1, 2, 3, 4\}$), the Swift--Hohenberg equations, etc. The nonlinearities appearing in these examples are not weakly monotone and sometimes even exhibit critical growth.  Let us emphasize that no compactness is assumed of the embedding $\cV\hookrightarrow \cH$. Therefore, well-posedness of all of the above SPDEs can also be obtained on unbounded domains.

By standard interpolation inequalities one has $\|u\|_{\beta} \leq \|u\|_{\cH}^{2-2\beta} \|u\|_{\cV}^{2\beta-1}$ and thus by \eqref{eq:criticalvar},
\begin{align*}
\|u\|_{\beta}^{\rho+1} \leq \|u\|_{\cH}^{(2-2\beta)(\rho+1)} \|u\|_{\cV}^{(2\beta-1)(\rho+1)}\leq \|u\|_{\cH}^{(2-2\beta)(\rho+1)} (1+\|u\|_{\cV}).
\end{align*}
Combining this with \eqref{eq:Fintro2} shows that $F(t,u)$ satisfies the same type of bound as in \eqref{eq:bddAintro} and explains the form of the criticality condition \eqref{eq:criticalvar}.

Of course, many of the above mentioned concrete equations have been analyzed via other methods, but it is very effective to include them in one single setting. Moreover, the variational framework provides further flexibility: it allows to consider $(\omega,t)$-dependent coefficients and it allows a noise term $B$ which could be of gradient type under a very simple but optimal joint coercivity condition. For some of the other existing methods, these two additions can be quite problematic.

\subsection{Our goal}
The goal of the current paper is to extend the setting in~\cite{agresti2022critical} in several ways:
\begin{enumerate}[(a)]
\item\label{ita:Falpha} Make the conditions \eqref{eq:Fintro1} and \eqref{eq:Fintro2} more flexible by using $\|\cdot\|_{\alpha}$ with $\alpha\in [0,1/2]$ on the left-hand side (see \eqref{eq:Flexibleintro} below).
\item\label{itb:Aalpha} Consider $A_0 = A_L+A_S$, where $A_L$ is the leading part, and $A_S$ could be singular/unbounded in time.
\item\label{itc:Levy} Use L\'evy noise instead of Gaussian noise.
\end{enumerate}
Regarding \eqref{ita:Falpha}, it turns out that the (sub)criticality condition \eqref{eq:criticalvar} can be replaced by $(2\beta-1)(\rho+1) \leq 1+ 2\alpha$, creating extra flexibility.
As a consequence of this flexibility, the Allen--Cahn equation in dimension two can now be considered in the weak setting  (see Example~\ref{ex:AC intro}). In all of the previous works on the variational setting, this critical case was excluded due to the technical fact that the Sobolev embedding $H^1\hookrightarrow L^\infty$ does not hold.

The singular part $A_S$ of \eqref{itb:Aalpha} appears for instance in the skeleton equation in the weak convergence approach to large deviations \cite{BudDup} when applied to stochastic evolution equations. This explicitly appears in Lemma 4.11 of \cite{TV24_large}, where a large deviation result was proved in the setting of \cite{agresti2022critical}. Potentially, the singular part could have other applications as well.

The motivation for \eqref{itc:Levy}  is that L\'evy noise is very natural in real-life applications. There already exists a variational setting for L\'evy noise under the above-mentioned local monotonicity condition \cite{brzezniak2014strong}, so it is very natural to try to provide a similar partial extension of it as was done for the Gaussian case in~\cite{agresti2022critical}. However, as we will see, the jumps introduced by the noise cause delicate problems which we need to overcome.

Finally,  we emphasize that as in~\cite{agresti2022critical}, we do not assume compactness of the embedding $\cV\hookrightarrow \cH$. Therefore, our results are also applicable to SPDEs on unbounded domains.

\subsection{The setting and main result}
In the rest of the paper we are concerned with the stochastic evolution equation
\begin{equation}
\label{eq:spde}
    \begin{split}
        du(t)+A(t,u(t))\,dt =&B(t,u(t))\,dW(t) + \int_{Z} C(t,u(t-),z)\,\Nt(dz,dt),\\
        u(0)=&u_0,
    \end{split}
\end{equation}
where $W$ is a $U$-cylindrical Wiener process, $U$ is a separable Hilbert space, $\Nt(dz,dt) = N(dz,dt) - \nu(dz)dt$ is a Poisson martingale measure with jump measure $N$ and characteristic measure $\nu$ which is $\sigma$-finite on the measure space $(Z,\cZ,\nu)$. In Theorems~\ref{thm:varglobal} and \ref{thm:contdep} we will prove a global existence and uniqueness result, an a-priori bound on the $L^2$-moments, and continuous dependence on the initial data.

In order to give the reader a glimpse of the results of the paper, we present a very special case of our main results below (see Section~\ref{sec:global} for the general case). Below we assume that the terms $A_0$, $B_0$ and $C_0$ are linear for simplicity. However, in the main results they are allowed to be quasi-linear.
\begin{theorem}\label{thm:mainintro}
Let $(\cV, \cH, \cV^*)$ be a Gelfand triple of Hilbert spaces. Suppose that
\begin{align*}
A &= A_0-F, &  B &= B_0+G, & C &= C_0+H,
\\
A_0 & \in \calL(\cV, \cV^*), & B_0&\in \calL(\cV,\calL_2(U,\cH)), & C_0&\in \calL(\cV,L^2(Z,\cH,\nu)),
\end{align*}
and there are constants $\kappa>0$ and $M\geq 0$ such that
\begin{equation*}
\langle A_0 v, v \rangle -\tfrac{1}{2}\|B_0(v)\|_{\calL_2(U,\cH)}^2 - \tfrac{1}{2} \|C_0(v,\cdot)\|_{L^2(Z,\cH;\nu)}^2 \geq \kappa\|v\|_\cV^2 - M \|v\|_\cH^2, \ \ v\in \cV.
\end{equation*}
Suppose $F:\cV\to \cV^*$ and that there are $C,\rho>0$ and $0\leq \alpha\leq \frac12 <\beta\leq 1$ such that
\begin{align}
\label{eq:Flexibleintro}
\|F(u) - F(v)\|_{\alpha} \leq C (1+\|u\|_{\beta}^{\rho} + \|v\|_{\beta}^{\rho}) \|u-v\|_{\beta}  \ \ \text{and} \ \ \langle F(v), v \rangle  \leq  C\|v\|_\cH^2,
\end{align}
for $u,v\in \cV$,
with the (sub)criticality condition $(2\beta-1)(\rho+1) \leq 1 + 2\alpha$.
Let $G:\cV\to \calL_2(U,\cH)$ and $H:\cV\to L^2(Z,\cH;\nu)$ be Lipschitz functions. Then for every $\cF_0$-measurable $u_0\in L^2(\Omega,\cH)$, \eqref{eq:spde} has a unique c\`adl\`ag solution $u:[0,T]\times \Omega\to \cH$ such that
\[\E \sup_{t\in [0,T]} \|u(t)\|_{\cH}^2 + \int_0^T \|u(t)\|^2_{\cV} \, dt\leq C_T(1+\E \|u_0\|_{\cH}^2). \]
Moreover, $u$ depends continuously on $u_0$ in the topology induced by convergence in probability.
\end{theorem}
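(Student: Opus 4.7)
The statement is a special case of the general Theorems \ref{thm:varglobal} and \ref{thm:contdep} of Section \ref{sec:global}, so the plan is to indicate the strategy of proof in this simpler setting. The overall scheme is the now-standard one for the critical variational setting, adapted to L\'evy noise: truncate the critical drift $F$ to reduce to a globally Lipschitz problem, derive uniform a priori $L^2$-energy estimates via It\^o's formula on $\|\cdot\|_\cH^2$, establish pathwise uniqueness and continuous dependence in probability through a stochastic Gr\"onwall estimate on $\|u-v\|_\cH^2$, and remove the truncation by localization.

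For the approximation step, set $F_n(v) = \chi(\|v\|_\beta/n) F(v)$ for a smooth non-negative cut-off $\chi$ of $[0,1]$. Then $F_n \colon \cV \to [\cV^*,\cV]_\alpha \hookrightarrow \cV^*$ is globally Lipschitz with linear growth, so the truncated equation has a bounded linear leading operator and globally Lipschitz diffusion and jump coefficients; standard variational theory for SPDEs driven by L\'evy noise (for instance \cite{brzezniak2014strong}) produces a unique c\`adl\`ag solution $u^n$ on $[0,T]$. Applying It\^o to $\|u^n\|_\cH^2$, the joint coercivity hypothesis controls the leading linear terms together with the quadratic-variation contributions from $B_0$ and $C_0$; the one-sided bound $\langle F(v), v\rangle \leq C\|v\|_\cH^2$ controls the drift $F_n$ (unaffected by the cut-off since $\chi\geq 0$); and the Lipschitz assumptions on $G$ and $H$ yield standard linear-growth contributions. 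BDG for the Wiener and compensated Poisson integrals, combined with Gr\"onwall, produces the stated uniform-in-$n$ bound.

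The heart of the argument is the estimate for $\|u-v\|_\cH^2$. Apply It\^o; the linear leading terms and the jump contribution to the quadratic variation are absorbed by the joint coercivity, and the terms from $G, H$ by their Lipschitz constants. The critical term is estimated via duality as $\langle F(u) - F(v), u-v\rangle \leq \|F(u) - F(v)\|_\alpha \|u-v\|_{1-\alpha}$, and the assumed critical Lipschitz bound \eqref{eq:Flexibleintro} together with the interpolation inequalities $\|w\|_\beta \leq \|w\|_\cH^{2-2\beta}\|w\|_\cV^{2\beta-1}$ and $\|w\|_{1-\alpha} \leq \|w\|_\cH^{2\alpha}\|w\|_\cV^{1-2\alpha}$ gives, after Young's inequality with exponent $1/(\beta-\alpha)$,
\[
2\langle F(u)-F(v), u-v\rangle \leq \eps \|u-v\|_\cV^2 + C_\eps K(t)\|u-v\|_\cH^2, \quad K(t) = \bigl(1 + \|u\|_\beta^\rho + \|v\|_\beta^\rho\bigr)^{\frac{1}{1-\beta+\alpha}}.
\]
Using interpolation once more, a direct computation shows that the subcriticality condition $(2\beta-1)(\rho+1) \leq 1+2\alpha$ is precisely what ensures $\int_0^T K(t)\,dt < \infty$ almost surely, given the pathwise $L^2$-integrability of $\|u\|_\cV$ and $\|v\|_\cV$. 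Absorbing the $\eps$-term into the coercive dissipation and applying a stochastic Gr\"onwall argument (localized by stopping times such as $\tau_n = \inf\{t : \int_0^t\|u^n\|_\cV^2\,ds \geq n\}$) gives uniqueness for the truncated equations and continuous dependence in probability, from which the global result follows by letting $n \to \infty$. The main obstacle compared to \cite{agresti2022critical} is handling the compensated Poisson part: its contribution to the quadratic variation must be balanced by the joint coercivity (this is why the coercivity hypothesis includes the $C_0$-term), and the BDG estimate for the martingale part against a pathwise-only-integrable weight $K(t)$ requires a careful localization that respects the c\`adl\`ag nature of the solutions and the possibility of large jumps at the stopping times $\tau_n$.
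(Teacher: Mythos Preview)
Your core estimates are correct and appear essentially verbatim in the paper: the a-priori bound via It\^o's formula and coercivity is Proposition~\ref{prop: nonlin apriori}, and your stability estimate for $\|u-v\|_\cH^2$ with $K(t)=(1+\|u\|_\beta^\rho+\|v\|_\beta^\rho)^{1/(1-\beta+\alpha)}$ together with the stochastic Gr\"onwall argument is exactly the content of Proposition~\ref{prop:estimateLpcont}. Your observation that the subcriticality condition is precisely what makes $K$ integrable is the right one.

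The gap is in the construction of the solution. Your truncation $F_n(v)=\chi(\|v\|_\beta/n)F(v)$ is pointwise in $\cV_\beta$, but the solution regularity is only $D([0,T],\cH)\cap L^2([0,T],\cV)$: for such $u$, $\|u(t)\|_\beta$ is merely a.e.\ finite in $t$, so there is no useful stopping time at which the truncation becomes inactive, and your proposed $\tau_n=\inf\{t:\int_0^t\|u^n\|_\cV^2\,ds\geq n\}$ does not ensure $F_n(u^n)=F(u^n)$ on $\llbracket 0,\tau_n\rrbracket$. You are then left with a sequence $(u^n)$ with uniform energy bounds but no mechanism for passing to the limit in the critical nonlinear term. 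A related concern is that applying \cite{brzezniak2014strong} to the truncated semilinear problem is not immediate, since that reference carries a structural smallness condition on the noise which does not automatically vanish in the nonlinear case; the paper invokes it only for the \emph{linear} problem with additive inhomogeneities (see Step~1 of the proof of Theorem~\ref{thm L2 linear}).

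The paper circumvents both issues by a different route: it first builds an $L^2$-theory for the linear problem (Section~\ref{sec: linear}) via the method of continuity, then obtains a local solution of \eqref{eq:spde2} by a contraction argument (Theorem~\ref{thm wellposedness local}) in which the truncation $\Theta_\lambda$ is based on the path quantity $\|u\|_{\cX(a,t)}+\sup_{s<t}\|u(s-)-x\|_\cH$ rather than a pointwise $\cV_\beta$-norm; this truncation is compatible with the solution class and yields well-defined stopping times. The passage from local to global is then done through a blow-up criterion (Theorem~\ref{thm: blowup}), whose proof requires showing that the maximal time is a \emph{predictable} stopping time (Lemma~\ref{lem: sigma predictable}) so that the L\'evy part introduces no jump there---this is exactly the ``large jumps at the stopping times'' issue you flag at the end but do not resolve.
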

Due to the flexibility on $F$, the above result is new even in the Gaussian case. In Section~\ref{sec:global} we also cover $(\omega,t)$-dependent coefficients, and $G$ and $H$ are allowed to be locally Lipschitz with a similar bound as for $F$, and thus, in particular, $G$ and $H$ do not need to have linear growth.

\begin{remark}
We remark here that it is possible to consider a more general model instead of \eqref{eq:spde}, by adding an additional jump term $\big(\int_0^t\int_{\tilde Z}D(s,u(s-),z)\,\nu(dz)ds\big)_{t\in [0,T]}$ on the right-hand side of \eqref{eq:spde}, where one integrates over a subset $\tilde Z\subset Z$ such that $\nu(\tilde Z)<\infty$. In such a case, the $D$-term models large jumps, of which there are only finitely  many, almost surely. It is clear that all the mathematical intricacies stem from the ``small'' jumps, i.e. the integral over $Z$ where $\nu(Z) = \infty$ (modeling infinitely many jumps almost surely) and that for most results, adding the $D$ only lengthens the proof without providing essential new difficulties or information. For this reason we decided to omit this term for the sake of length and presentation of the paper.
\end{remark}

Let us give an application of Theorem~\ref{thm:mainintro} to the Allen--Cahn equation in $d=2$ in the weak setting. In the existing frameworks mentioned above, only the case $d=1$ was covered in the weak setting. More general cases can be found in Theorem~\ref{thm:second} and the examples below it.
\begin{example}[Allen--Cahn for $d=2$]
\label{ex:AC intro}
Let $\Dom\subseteq \R^2$ be any open set (possibly unbounded).
\begin{equation*}
\begin{aligned}
d u &=\big[ \Delta u -u^3+u \big]\, d t
+ \sum_{n\geq 1} \big[(b_n\cdot \nabla)u + g_{n}(u)\big] \, d w_t^n \\ &  \quad +  \int_Z \big[(c(z)\cdot \nabla)u(\cdot -) + h(u(\cdot -),z)\big] \, \wt{N}(dz,dt) ,&\quad &\text{ on }\Dom,\\
\end{aligned}
\end{equation*}
with Dirichlet boundary conditions and with an $\cF_0$-measurable initial value $u_0\in L^2(\Omega;L^2(\Dom))$.
Suppose the following parabolicity condition is satisfied:
\[\theta := 1 - \frac12\|(b_n)_{n\geq 1}\|_{\ell^2}^2 - \frac{1}{2} \|c\|_{L^2(Z;\nu)}^2>0.\]
Moreover, suppose that $g:\R\to \ell^2$ and $h:\R\to L^2(Z;\nu)$ are Lipschitz functions. For simplicity in the presentation, we take $(b,c,g,h)$ to be $x$-independent, but this is not necessary.

To put this problem in the form \eqref{eq:spde}, let $\cH = L^2(\Dom)$ and $\cV=H^1_0(\Dom)$. Let $A_0 v = -\Delta v-v$, $(B_0v)_n =(b_n \cdot \nabla) v$ and $C_0v = (c(\cdot)\cdot \nabla)v$. Let $F(v) = -v^3$, $G(v) = g(v)$, $H(v,z) = h(v,z)$.
It is straightforward to check the conditions of Theorem~\ref{thm:mainintro}. The only part we need to explain in detail is the locally Lipschitz estimate for $F$. Note that $[\cV^*, \cV]_{5/6} = [\cH, \cV]_{2/3} \hookrightarrow L^6(\Dom)$, which follows by an extension-restriction argument from Sobolev embedding on the full space (cf. \cite[Lemma A.7]{AV20_NS}). Therefore
\begin{align*}
\|F(u) - F(v)\|_{\nicefrac{1}{2}} = \|u^3-v^3\|_{L^2(\Dom)}
&\leq 2(\|u\|_{L^6(\Dom)}^2+\|v\|_{L^6(\Dom)}^2)\|u-v\|_{L^{6}(\Dom)}
\\ & \leq C(\|u\|_{\nicefrac{5}{6}}^2+\|v\|_{\nicefrac{5}{6}}^2)\|u-v\|_{\nicefrac{5}{6}}
\end{align*}
for any $u,v\in H^{1}_0(\Dom)$.
Thus, the (sub)criticality condition is satisfied with $\alpha=1/2$, $\beta=5/6$ and $\rho = 2$. It is in fact critical in the sense that $(2\beta-1)(\rho+1) = 1 + 2\alpha$.
\end{example}

\subsection{Overview of the paper and the method of proof}

Below we give an overview of the different steps which will be taken in different sections.
In Section~\ref{sec:variational} we introduce a new type of coercivity condition for the triple $(A,B,C)$ which takes into account singular terms and the norm $\|\cdot\|_{\alpha}$ used on the left-hand side of \eqref{eq:Flexibleintro}. Under this coercivity condition, we are able to prove an a-priori estimate which plays a key role in several of the later sections and proofs.
One of those is a stochastic maximal $L^2$-regularity result for the linear problem associated with \eqref{eq:spde}. This is the main result of Section~\ref{sec: linear}.
In Section~\ref{sec: local well-posedness} we extend some of the ideas in~\cite{agresti2022nonlinear} to the case of L\'evy processes. We use a Banach fixed point argument applied to a suitably truncated version of \eqref{eq:spde} to obtain a local solution and extend it to a maximal solution, i.e. a solution on a maximal random time interval $[0,\sigma)$. In Section~\ref{sec:blowup} we characterize the behavior of $u$ at time $\sigma$ via blow-up criteria.  For this, the arguments from \cite{agresti2022nonlinear2, agresti2022critical} need to be put in a discontinuous framework, which leads to several technicalities related to jump processes. Fortunately, the concrete $L^2$-setting combined with the variational framework makes it possible to have effective arguments for this.
In Section~\ref{sec:global}, we combine all of the results to obtain our main theorem on global well-posedness, which in particular entails Theorem~\ref{thm:mainintro}. Ingredients in the proof are
of course the local well-posedness theorem and blow-up criteria, but also the a-priori estimate of Section~\ref{sec:variational}, the It\^o formula of Subsection~\ref{subsec:ito}, and a recent stochastic version of Gronwall's lemma. Many applications are possible with our theory.
We present a selection of them in Section~\ref{sec:applications}.

\subsection{Related literature on extensions of the variational setting}
An extension of the classical variational setting to the L\'evy setting can be found in~\cite{brzezniak2014strong} and a recent article considering singular coefficients in a Gaussian, more classical setting, with monotone operators is \cite{gyongy2025once}. Consequences on ergodicity in the case of additive noise have been obtained recently in~\cite{BT24}. Under the assumption that $\cV\hookrightarrow \cH$ is compact, an extended variational framework was also presented in~\cite{RSZ} in the Gaussian case, later extended to the L\'evy setting in~\cite{kumar2024well}. Under the same compactness condition, another class of equations with L\'evy noise was considered in~\cite{Temametal}. In each of these papers, there are smallness conditions on $B$ and $C$ which are not needed in the classical setting and our setting. At the same time, some of these papers cover important equations which fall out of our setting (e.g.\ the $p$-Laplace equation). Therefore, all frameworks appear to be of independent interest. It would be desirable to have a unifying theory, but this seems beyond reach at the moment.

\section{Preliminaries}
\label{sec:setting}

\subsection{Gelfand triple}
Let $(\cV,\cH,\cV^*)$ be a triple of spaces such that $\cV\hookrightarrow \cH \hookrightarrow \cV^*$ continuously and densely, where $\cV$ and $\cH$ are Hilbert spaces and $\cV^*$ is the dual of $\cV$.
For a Hilbert space $U$ we denote by $\cL(U,\cH)$ the space of bounded linear operators from $U$ to $\cH$ and by $\cL_2(U,\cH)$ the space of Hilbert--Schmidt operators.

For $\theta\in [0,1]$ we set $\cV_\theta :=[\cV^*,\cV]_\theta$, where the bracket denotes the complex interpolation of spaces (see \cite{BeLo} for details).
Define further
$$
\|x\|_\theta:=\|x\|_{\cV_\theta}.
$$
Note also that $\cH=[\cV^*,\cV]_{1/2}$ and thus $[\cV,\cH]_{2\beta} = \cV_{1-\beta}$ for $\beta \in [0,\nicefrac{1}{2}]$ by reiteration.
The following standard interpolation estimates will be used without further explanation:
\begin{align*}
\|x\|_\theta &\leq \|x\|_{\cV^*}^{1-\theta} \|x\|_{\cV}^{\theta}, & \theta\in [0,1],
\\ \|x\|_{\theta} &\leq \|x\|_{\cH}^{2-2\theta} \|x\|_{\cV}^{2\theta-1}, & \theta\in [1/2,1],
\end{align*}

Finally, we note that by \cite[Cor.~4.5.2]{BeLo}, $\cV_{\theta}^* = [\cV^{**}, \cV^*]_\theta = [\cV^*, \cV^{**}]_{1-\theta} = \cV_{1-\theta}$. As a consequence one has
\begin{align}\label{eq:dualityVtheta}
|\langle u, v\rangle|\leq \|u\|_{\theta} \|v\|_{1-\theta},
\end{align}
where $\langle \cdot, \cdot\rangle$ is the unique extension of $(\cdot, \cdot)_\cH$.
We employ the convention $\nicefrac{1}{0} \coloneqq \infty$ throughout.

\subsection{Stochastic calculus}

Throughout this paper, we work on a filtered probability space $(\Omega,(\cF_t)_{t\geq 0},P)$. For brevity we often write $\cF_t$ instead of $(\cF_t)_{t\geq 0}$ when referring to the filtration. We impose the \textit{usual conditions} on $\cF_t$: it is right-continuous and $\cF_0$ is complete. For a topological space $X$ we let $\cB(X)$ denote its Borel $\sigma$-algebra.
For a Hilbert space $\cH$, we call $X$ an \emph{$\cH$-valued random variable} if it is a strongly measurable mapping $X:\Omega\to \cH$. We call $f$ a \emph{process} if $f:\Omega\times\bR_+\to \cH$ is a strongly measurable function. Moreover, we say that $f$ is \emph{progressively measurable} if for any $t\geq 0$ the process $f\1_{[0,t]}$ is strongly $\cB([0,t])\otimes \cF_t$-measurable. We denote by $\cP$ the $\sigma$-algebra generated by all the progressively measurable processes and by $\cP^-$ the $\sigma$-algebra generated by all left-continuous adapted processes.

When we speak of a \cl function $h$, we understand it as $\cH$-valued function of $(\omega,t,w)\in\Omega\times \bR_+\times \cW$ which is \cl in $t$ almost surely for all $w\in\cW$, where the set $\cW$ will always be clear from the context. In this case, we denote the left-limit process by $h(t-,w)$ and the jump process by $\Delta h(t,w):= h(t,w) - h(t-,w)$. We denote the space of \cl functions $f:[0,T]\mapsto\cH$ by $D([0,T],\cH)$.

Next, we introduce the noise processes used in this article. Though their definitions are standard and can be found in many textbooks, we include them here for the sake of completeness.

\begin{definition}[cylindrical Wiener process]
    Let $U$ be a separable Hilbert space and consider a mapping $W\in \cL(L^2(\bR_+,U),L^2(\Omega))$. Then $W$ is a cylindrical $\cF_t$-Wiener process if for all $f,g\in L^2(\bR_+,U)$ and $t>0$,
    \begin{enumerate}
        \item $Wf$ is normally distributed with mean zero, and $\E WfWg = (f,g)_{L^2(\bR_+,U)}$,
        \item $Wf$ is $\cF_t$-measurable if $\supp(f)\subset [0,t]$,
        \item $Wf$ is independent of $\cF_t$ if $\supp(f)\subset [t,\infty)$.
    \end{enumerate}
    Since we work with the same filtration $\cF_t$ throughout the paper, we simply call $W$ a Wiener process for brevity.
\end{definition}
To make the notation for integrals against a Wiener process more concise, define for $h \in \cH$, $T \in \cL_2(U,\cH)$ and $u \in U$ the pairing $\langle h, T \rangle$ by $\langle h, T \rangle u \coloneqq (h,T u)_\cH = \lb T^* h,u\rb$.

For examples of cylindrical Wiener processes and the theory of its stochastic calculus we refer to \cite{da2014stochastic} or any other textbook on stochastic analysis in infinite dimensions.

The following definitions and conventions, as well as more general definitions of random measures, can be found in~\cite{ikeda2014stochastic,jacod2013limit,he2019semimartingale}. We also refer the reader to these works for more details on the theory of jump processes and general random measures. Here we introduce only the Poisson random measure, the Poisson martingale measure and selected properties of integrals against them.

\begin{definition}
    Let $(\tilde Z,\tilde\cZ,\tilde\nu)$ be a $\sigma$-finite measure space. A family of $\bN\cup\{\infty\}$-valued random variables $(N(A))_{A\in\tilde\cZ}$ is called a Poisson random measure with characteristic measure $\tilde\nu$ if
    \begin{enumerate}
    \item for each $A\in\tilde\cZ$ the random variable $N(A)$ has a Poisson distribution with intensity $\tilde\nu(A)$,
        \item for all $\omega\in\Omega$ the measure $N(\cdot)(\omega)$ is a $\sigma$-finite measure on $(\tilde Z,\tilde\cZ)$,
        \item for any $A_1,A_2\in\tilde\cZ$ such that $A_1\cap A_2 = \emptyset$ the random variables $N(A_1)$ and $N(A_2)$ are independent.
    \end{enumerate}
\end{definition}

Throughout this paper, we will work in the setting $(\tilde Z,\tilde\cZ,\tilde\nu)=(\bR_+\times Z,\cB(\bR_+)\otimes\cZ,dt\otimes\nu(dz))$ for a $\sigma$-finite measure space $(Z,\cZ,\nu)$. In this case, we call $\nu$ the characteristic measure of $N$ and often simply refer to $N$ as the Poisson random measure on $(Z,\cZ,\nu)$. The random measure $\Nt(dz,dt):=N(dz,dt)-\nu(dz)dt$ is referred to as the Poisson martingale measure or as the compensated Poisson measure.

The following proposition is a collection of properties and we refer to \cite[Ch. 3]{ikeda2014stochastic} for a proof. For the general theory of integration against (Poisson) random measures we refer the reader to any of the works \cite{ikeda2014stochastic,jacod2013limit, he2019semimartingale,Temametal}, as well as \cite{yaroslavtsev2019martingales} for a recent extension to more general infinite-dimensional spaces.

\begin{proposition}
	\label{prop: prelim poisson}
    Let $N$ be a Poisson random measure  on $(Z,\cZ,\nu)$ and let $h:\Omega\times\bR_+\times Z\to \cH$ be $\cP^-\otimes\cZ$-measurable and $T>0$. Then the following assertions hold.
    \begin{enumerate}
        \item If $\E\int_0^T\int_Z \|h(s,z)\|_\cH\,\nu(dz)ds<\infty$, then
        $$
        \E\int_0^t\int_Z h(s,z)\, \,N(dz,ds) = \E\int_0^t\int_Z h(s,z)\,\nu(dz)ds
        $$
        for $t\in [0,T]$ and in particular
        \begin{align}
            \E\int_0^T\int_Z \|h(s,z)\|_{\cH}\, \,N(dz,ds) = \E\int_0^T\int_Z \|h(s,z)\|_{\cH}\,\nu(dz)ds.
        \end{align}
        Moreover,
        \begin{align}
        \label{eq: equality Nt}
        \int_0^t\int_Z h(s,z)\,\Nt(dz,ds) := \int_0^t\int_Z h(s,z)\,N(dz,ds) - \int_0^t\int_Z h(s,z)\,\nu(dz)ds,
        \end{align}
        for $t\in [0,T]$, is an $\cF_t$-martingale.
        \item If $\E\int_0^T\int_Z \|h(s,z)\|_\cH^p\,\nu(dz)ds<\infty$ with both $p = 1,2$, then
        $$
        \Big[ \int_0^{\cdot}\int_Z h(s,z)\,\Nt(dz,ds) \Big](t) = \int_0^t\int_Z \|h(s,z)\|_\cH^2\,N(dz,ds),\quad t\in [0,T]
        $$
        where $[ \cdot ]$ denotes the quadratic variation process, and in particular
        $$
        \E\Big\| \int_0^t\int_Z h(s,z)\,\Nt(dz,ds) \Big\|^2 = \E\int_0^t\int_Z \|h(s,z)\|_\cH^2\,\nu(dz)ds.
        $$
    \end{enumerate}
\end{proposition}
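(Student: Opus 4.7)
The proof follows the standard two-stage pattern for stochastic integration against random measures: verify the identities on a dense class of simple predictable integrands, and then extend by the appropriate isometry. I would introduce the class $\cS$ of elementary predictable integrands of the form
\[
h(\omega,s,z) = \sum_{k=1}^N \xi_k(\omega)\,\one_{(s_k,s_{k+1}]}(s)\,\one_{A_k}(z)\,v_k,
\]
with $0\leq s_k<s_{k+1}\leq T$, $\xi_k$ bounded and $\cF_{s_k}$-measurable, $v_k\in\cH$, and $A_k\in\cZ$ of finite $\nu$-measure. Such integrands are automatically $\cP^-\otimes\cZ$-measurable, and the integrals against $N$ and $\nu\otimes ds$ reduce to finite linear combinations of the Poisson variables $N((s_k,s_{k+1}]\times A_k)$, which have parameter $(s_{k+1}-s_k)\nu(A_k)$ and are independent of $\cF_{s_k}$.

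For part (1), the compensation formula $\E N((s_k,s_{k+1}]\times A_k) = (s_{k+1}-s_k)\nu(A_k)$ together with the tower property yields both expectation identities for $h\in\cS$, and the martingale property of $\int_0^{\cdot}\int_Z h\,d\Nt$ follows from independence of $N$ on disjoint time sets. For part (2), I would use that $\Nt((s_k,s_{k+1}]\times A_k)$ has variance $(s_{k+1}-s_k)\nu(A_k)$ and that its increments over disjoint time-space rectangles are uncorrelated; polarization in $\cH$ then yields the It\^o isometry $\E\|\int_0^t\int_Z h\,d\Nt\|_\cH^2 = \E\int_0^t\int_Z \|h\|_\cH^2 \,\nu(dz)\,ds$ for $h\in\cS$. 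The quadratic variation identity for $h\in\cS$ follows from the fact that $\Nt$ differs from $N$ by a continuous compensator and hence shares the same unit jumps, so $[\Nt(\cdot,A)](t) = N((0,t]\times A)$ almost surely.

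To pass to general $\cP^-\otimes\cZ$-measurable $h$, I would exhaust $Z$ by sets of finite $\nu$-measure (using $\sigma$-finiteness) and approximate $h$ in $L^1(\Omega\times(0,T)\times Z;\,dP\otimes ds\otimes\nu)$, respectively in $L^2$, by elements of $\cS$; the linear isometries established in the previous step extend continuously to the closure, the martingale property persists via Doob's inequality applied to the differences $\int_0^{\cdot}\int_Z (h-h_n)\,d\Nt$, and the quadratic variation identity transfers by taking limits along a subsequence converging almost surely uniformly on $[0,T]$. The main obstacle lies precisely in this density step: one must verify that every $\cP^-\otimes\cZ$-measurable integrand satisfying the assumed integrability can indeed be approximated in the appropriate norm by elements of $\cS$. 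This is a routine monotone class argument on the predictable $\sigma$-algebra, but requires care when $\nu$ is only $\sigma$-finite and $h$ is unbounded in $z$; once settled, all remaining statements reduce to linearity and elementary properties of the Poisson distribution.
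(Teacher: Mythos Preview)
The paper does not give its own proof of this proposition: it states the result as a collection of standard properties and refers the reader to \cite[Ch.~3]{ikeda2014stochastic}. Your outline is precisely the standard construction carried out in that reference (and in the other cited sources \cite{jacod2013limit,he2019semimartingale}), so there is nothing to compare; your approach is correct and coincides with what the cited literature does.
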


Further, we remark that if $\E\int_0^T\int_Z \|h(s,z)\|_\cH^2\,\nu(dz)ds<\infty$ only, then \eqref{eq: equality Nt} may no longer hold. In this case we define
\begin{align}
\label{eq: Nt integral}
\int_0^t\int_Z h(s,z)\,\Nt(dz,ds),\quad t\in [0,T],
\end{align}
as the unique limit of $\big(\int_0^t\int_Z h_n(s,z)\,\Nt(dz,ds),\, t\in [0,T]\big)_{n\geq 1}$ in the space of square integrable martingales, where $h_n(s,z):=\1_{\|h(s,z)\|_\cH<n}\1_{Z_n}(z)h(s,z)$,
$n\geq 1$, where $Z_n$ are such that $Z_n\uparrow Z$ and $\nu(Z_n)<\infty$ for all $n\geq 1$. Finally, if $h$ is such that for an increasing sequence of stopping times $\sigma_n$ we have $\E\int_0^{T\wedge \sigma_n}\int_Z \|h(s,z)\|_\cH^2\,\nu(dz)ds<\infty$, then \eqref{eq: Nt integral} is defined as the unique element $X$ in the space of locally square integrable martingales satisfying
$$
X(t\wedge\sigma_n) = \int_0^{t\wedge \sigma_n}\int_Z h(s,z)\,\Nt(dz,ds)\quad\text{a.s. for $t\in [0,T]$, $n\geq 1$.}
$$
It is well-known that $X$ has a c\'adl\'ag version and from now on we will always use that version.

It is standard that the above extension of the integral satisfies
\[\int_0^{t\wedge \sigma}\int_Z h(s,z)\,\Nt(dz,ds) = \int_0^{t}\int_Z \1_{(0,\sigma]}(s) h(s,z)\,\Nt(dz,ds),\]
where $\sigma$ is a stopping time. We will also use the convention that
\begin{align}\label{eq:conventionsigmastart}
\int_{\sigma}^{t}\int_Z h(s,z)\,\Nt(dz,ds) &:= \int_{0}^{t}\int_Z h(s,z)\,\Nt(dz,ds) - \int_0^{t\wedge \sigma} \int_Z h(s,z)\,\Nt(dz,ds)
\\ &= \int_0^{t}\int_Z \1_{(\sigma,\infty)}(s) h(s,z)\,\Nt(dz,ds).
\end{align}

We will frequently use without mention that for a càdlàg function $h$ satisfying Proposition~\ref{prop: prelim poisson}~(1)
we have
$$
\int_0^t\int_Z h(s-,z)\,\nu(dz)ds = \int_0^t\int_Z h(s,z)\,\nu(dz)ds.
$$

For stopping times $0 \leq \tau_1 \leq \tau_2 \leq \infty$ define $$\llbracket \tau_1, \tau_2 \rrbracket \coloneqq \{ (\omega, t) \in \Omega \times [0,\infty) \colon \tau_1(\omega) \leq t \leq \tau_2(\omega) \}.$$ The sets $\llbracket \tau_1, \tau_2 \rrparenthesis$, $\llparenthesis \tau_1, \tau_2 \rrbracket$, $\llparenthesis \tau_1, \tau_2 \rrparenthesis$ are defined similarly by replacing $\leq$ with $<$ in the previous definition. Further, we write $\llbracket\tau_1\rrbracket:=\llbracket \tau_1,\tau_1\rrbracket$ for the graph of $\tau_1$.

\subsection{Itô formula}
\label{subsec:ito}

We provide an Itô formula for equations with \Ly noise when the deterministic forcing terms are allowed to come from an admissible space $L^p([0,T], \cV_\theta)$, where $(p,\theta)$ is an admissible pair in the sense of the following definition.

\begin{definition}
\label{def:admissible}
    Let $p \in [1, 2]$ and $\theta \in [0,1]$. The pair $(p, \theta)$ is called \emph{admissible} if $\theta \geq 1/p - 1/2$.
\end{definition}

The following lemma shows that the pairing between a function $u$ from the maximal regularity class that we are going to use in our Itô formula and $f$ from an admissible space is meaningful.

\begin{lemma}
\label{lem:admissible}
    Fix $T>0$ and let $u \in L^\infty([0,T], \cH) \cap L^2([0,T], \cV)$. Then $u\in L^q([0,T], \cV_\mu)$ provided the pair $(q', 1-\mu)$ is admissible, along with the estimate
    \begin{align}
        \| u \|_{L^q([0,T], \cV_\mu)} \leq \| u \|_{L^2([0,T], \cV)}^{\nicefrac{2}{q}} \|u\|_{L^\infty([0,T],\cH)}^{1 - \nicefrac{2}{q}}.
    \end{align}
    In particular, if $(p,\theta)$ is an admissible pair and $f\in L^{p}([0,T], \cV_{\theta})$, then $\langle u, f \rangle$ is integrable over $[0,T]$.
    To be more precise, one has
    \begin{align}
    \label{eq:admissible lem}
        \int_0^T |\langle u(s), f(s) \rangle| ds &\leq \int_0^T \|u(s)\|_{1-\theta} \|f(s)\|_{\theta} ds
        \\ & \leq \| u \|_{L^{p'}([0,T], \cV_{1-\theta})}  \| f \|_{L^{p}([0,T], \cV_\theta)}
        \\ & \leq \| u \|_{L^2([0,T], \cV)}^{\nicefrac{2}{p'}} \| u \|_{L^\infty([0,T],\cH)}^{1 - \nicefrac{2}{p'}} \| f \|_{L^{p}([0,T], \cV_\theta)},
    \end{align}
    where $p'$ is the Hölder conjugate of $p$, and $\langle \cdot,\cdot \rangle$ denotes the pairing between $\cV$ and $\cV^*$.
\end{lemma}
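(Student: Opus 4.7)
The plan is to establish the interpolation-type bound for $u$ first, and then deduce the bound on the pairing by combining the duality inequality \eqref{eq:dualityVtheta} with Hölder's inequality in time.

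For the first estimate, I would split the argument according to whether $\mu \geq \nicefrac{1}{2}$ or $\mu < \nicefrac{1}{2}$. In the regime $\mu \in [\nicefrac{1}{2},\, \nicefrac{1}{2} + \nicefrac{1}{q}]$---which is exactly the range where $(q', 1-\mu)$ is admissible---I would use the pointwise interpolation $\|u(s)\|_\mu \leq \|u(s)\|_\cH^{2-2\mu}\|u(s)\|_\cV^{2\mu-1}$ recorded in the preliminaries, raise to the $q$-th power, pull the $L^\infty$-in-time bound $\|u\|_{L^\infty([0,T],\cH)}$ out of the integral, and apply Hölder in time to $\|u\|_\cV^{q(2\mu-1)}$. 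The admissibility hypothesis translates precisely to $q(2\mu-1) \leq 2$, which makes Hölder applicable and reduces the remaining integral to (a power of) $\|u\|_{L^2([0,T],\cV)}$. At the sharp boundary $\mu = \nicefrac{1}{2} + \nicefrac{1}{q}$ the exponents match the claimed form exactly; strictly below the boundary the residual factor can be absorbed using the continuous embedding $\cV \hookrightarrow \cH$. For $\mu < \nicefrac{1}{2}$ the embedding $\cH \hookrightarrow \cV_\mu$ yields $\|u\|_\mu \lesssim \|u\|_\cH$ pointwise, and the $L^q$-in-time factor is then rewritten in the claimed form by invoking $\cV \hookrightarrow \cH$ once more.

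For the pairing estimate I would apply \eqref{eq:dualityVtheta} pointwise in $s$ to get $|\langle u(s), f(s)\rangle| \leq \|u(s)\|_{1-\theta}\|f(s)\|_\theta$, then Hölder's inequality in time with conjugate exponents $p'$ and $p$ to obtain the middle line of \eqref{eq:admissible lem}. Finally, I would invoke the first part of the lemma with $q = p'$ and $\mu = 1-\theta$; the admissibility of $(p,\theta)$, namely $\theta \geq \nicefrac{1}{p} - \nicefrac{1}{2}$, is equivalent to $1-\mu = \theta \geq \nicefrac{1}{q'} - \nicefrac{1}{2}$, so admissibility of $(q', 1-\mu)$ holds and the application is legitimate.

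The main obstacle I anticipate is bookkeeping the interpolation exponents across the two regimes $\mu \geq \nicefrac{1}{2}$ and $\mu < \nicefrac{1}{2}$: the pointwise interpolation inequality behaves differently on either side of $\mu = \nicefrac{1}{2}$ and only delivers the stated exponents $\nicefrac{2}{q}$ and $1-\nicefrac{2}{q}$ sharply at $\mu = \nicefrac{1}{2} + \nicefrac{1}{q}$, so recovering the stated form uniformly on the whole admissible range requires a short extra step exploiting the Gelfand embeddings to rebalance the powers of $\|u\|_{L^\infty([0,T],\cH)}$ and $\|u\|_{L^2([0,T],\cV)}$.
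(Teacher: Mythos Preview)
Your approach is correct and close in spirit to the paper's, but the paper's execution is more streamlined and sidesteps exactly the bookkeeping you flag as the main obstacle. Instead of splitting into the regimes $\mu \geq \nicefrac{1}{2}$ and $\mu < \nicefrac{1}{2}$ and rebalancing exponents afterward, the paper interpolates once at the sharp level: setting $\lambda \in [0,1]$ by $\nicefrac{1}{q} = \nicefrac{(1-\lambda)}{2}$, complex interpolation between $L^2([0,T],\cV)$ and $L^\infty([0,T],\cH)$ together with the reiteration identity $[\cV,\cH]_\lambda = \cV_{1-\lambda/2}$ delivers $u \in L^q([0,T],\cV_{1-\lambda/2})$ with exactly the exponents $\nicefrac{2}{q}$ and $1-\nicefrac{2}{q}$. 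The admissibility of $(q',1-\mu)$ is then checked to be equivalent to $\mu \leq 1-\nicefrac{\lambda}{2}$, so a single embedding $\cV_{1-\lambda/2} \hookrightarrow \cV_\mu$ finishes the first estimate uniformly in $\mu$, with no case distinction. Your rebalancing via $\cV \hookrightarrow \cH$ does work (convert the excess $\cH$-power to $\cV$-power pointwise in $s$ \emph{before} integrating, not after), and is really the same embedding in disguise; the paper's route just makes this transparent. For the pairing estimate both arguments are identical: \eqref{eq:dualityVtheta} pointwise, H\"older in time, then the first part with $q=p'$ and $\mu=1-\theta$.
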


\begin{proof}
    Let $u$ be as in the statement and let $q \geq 2$ and $\mu \in [0,1]$. There is $\lambda \in [0,1]$ such that $\nicefrac{1}{q} = \nicefrac{(1-\lambda)}{2}$. By interpolation and using the reiteration identity $[\cV,\cH]_{\lambda} = \cV_{1-\nicefrac{\lambda}{2}}$ we find $u \in L^q([0,T], \cV_{1-\nicefrac{\lambda}{2}})$ with the estimate
    \begin{align}
        \| u \|_{L^q([0,T], \cV_{1-\nicefrac{\lambda}{2}})} \leq \| u \|_{L^2([0,T], \cV)}^{1-\lambda} \| u(s) \|_{L^\infty([0,T],\cH)}^\lambda.
    \end{align}
    We claim that $\mu \leq 1-\nicefrac{\lambda}{2}$ if $(q', 1-\mu)$ is admissible. Indeed, by admissibility in the second step and using the definition of $\lambda$ in the last step,
    \begin{align}
        \mu = 1 - (1-\mu) \leq 1 - (\nicefrac{1}{q'} - \nicefrac{1}{2}) = \nicefrac{1}{q} + \nicefrac{1}{2} = 1 - \nicefrac{\lambda}{2}
    \end{align}
    as desired.
    So the first claim of the lemma follows from the continuous embedding $\cV_{1-\nicefrac{\lambda}{2}} \subseteq \cV_\mu$. Note that the exponents are correct by definition of $\lambda$.
    The second part follows from \eqref{eq:dualityVtheta} and Hölder's inequality. Note that we employ the first part with $q \coloneqq p'$ and $\mu = 1-\theta$.
\end{proof}

The following special case of Itô's formula will be enough for our purposes. It partly extends \cite{Par75} to the setting of general martingales. However, we only cover the case of a Gelfand triple of Hilbert spaces.
The main difficulty in the It\^o formula is the mixed smoothness and mixed integrability of all the different terms.
It could be formulated for progressively measurable $f\in L^2(\Omega;L^{1}([0,T], \cH))+ L^2(\Omega;L^{2}([0,T], \cV^*)))$,
but it is not obvious that the progressively measurable subspace of $L^2(\Omega, L^{p}([0,T], \cV_{\theta}))$ for admissible pairs $(p,\theta)$ is included in the latter sum space. In order to avoid this problem we directly work in a suitable sum of progressively measurable subspaces of $L^2(\Omega, L^{p}([0,T], \cV_{\theta}))$ for different admissible pairs.

Before we state the following two propositions, some additional comments are necessary regarding  the quadratic variation process $[M]$ associated to a square integrable martingale $M$, since different conventions are in use. Instead of distinguishing between the operator-valued quadratic variation process $[[M]]$ and its trace $\Tr[[M]]$, as in  \cite{Par75, metivier1976equation}, we directly introduce $[M]$ as the unique process such that $\|M\|_{\cH}^2-[M]$ is a martingale, and in this way $[M] = \Tr [[M]]$.
We further denote by $\langle M\rangle$ the unique process such that with $M^c$ the continuous martingale part, $\|M^c\|_{\cH}^2-\langle M\rangle$ is a martingale. \newline
For a proof of the following Burkholder-Davis-Gundy inequality we refer to \cite{marinelli2016maximal}, or to
\cite[Theorem 9.1.1]{yaroslavtsev2019martingales}. Note that the $\cH$-valued case can also be deduced from the scalar case via \cite{KalSzt}.
\begin{proposition}
\label{prop: BDG}
    Let $M:\Omega\times\bR_+\mapsto\cH$ be a local càdlàg  martingale with $M(0) = M_0$. Then for any $p\geq 1$ and any $\cF_t$-stopping time $\tau$ we have
    \begin{align}
       c\E[M]_\tau^{p/2}\leq
       \E\sup_{0\leq s\leq \tau}\|M_s\|^p_{\cH} \leq C \E [M]_\tau^{p/2}
    \end{align}
    for constants $c,C$ depending only on $p$.
\end{proposition}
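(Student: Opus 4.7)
The plan is to reduce the Hilbert-valued statement to the classical scalar BDG inequality via the Kallenberg-Sztencel reduction, as signaled by the paper's reference to \cite{KalSzt}.

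First, by a standard localization argument I may assume that $M$ is a square-integrable martingale with $\E [M]_\infty^{p/2} < \infty$: choose stopping times $\sigma_n \uparrow \infty$ such that $M^{\sigma_n}$ is bounded in $\cH$, prove the inequality for $\tau \wedge \sigma_n$, and pass to the limit by monotone convergence on both sides (the supremum on the left and $[M]$ on the right are both nondecreasing in the stopping time).

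Second, I would invoke the scalar-valued BDG: for any real-valued c\`adl\`ag local martingale $\wt M$ and any stopping time $\tau$,
$$
c_p \,\E [\wt M]_\tau^{p/2} \leq \E \sup_{0\leq s\leq \tau} |\wt M_s|^p \leq C_p \,\E [\wt M]_\tau^{p/2}, \qquad p \geq 1,
$$
which is classical via Davis' decomposition and Burkholder's good-$\lambda$ inequality; the case $p=2$ reduces immediately to Doob's $L^2$-maximal inequality combined with the martingale property of $|\wt M|^2 - [\wt M]$. Third, I would apply the Kallenberg-Sztencel construction, which produces from an $\cH$-valued c\`adl\`ag local martingale $M$ a real-valued local martingale $\wt M$ with $|\wt M_t| \leq \|M_t\|_\cH$ pointwise and $[\wt M]_t = [M]_t$. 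The upper bound then follows directly by applying the scalar inequality to $\wt M$, and the matching lower bound follows by a dual construction or by reversing roles and using sharpness of the scalar constants up to a universal factor.

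The main obstacle is the careful bookkeeping of jumps: in the c\`adl\`ag setting the Hilbert-valued quadratic variation contains $\sum_{s\leq t} \|\Delta M_s\|_\cH^2$ in addition to $\langle M^c\rangle_t$, and any scalarization must preserve this jump contribution exactly rather than merely controlling it. The construction of \cite{KalSzt} is designed precisely to achieve equality $[\wt M] = [M]$ through a rotating-frame projection, at which point the remainder of the argument is routine. Alternatively, the inequality can be obtained directly in the infinite-dimensional setting via the maximal inequalities of \cite{marinelli2016maximal} or \cite[Thm.~9.1.1]{yaroslavtsev2019martingales}, which already incorporate the jump structure.
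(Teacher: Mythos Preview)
The paper does not prove this proposition: it simply cites \cite{marinelli2016maximal} and \cite[Thm.~9.1.1]{yaroslavtsev2019martingales} for a direct infinite-dimensional proof, and remarks that alternatively the $\cH$-valued case can be reduced to the scalar case via \cite{KalSzt}. Your proposal elaborates exactly the latter route, so at the level of strategy you are aligned with what the paper indicates.

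One correction to your sketch is worth recording. The Kallenberg--Sztencel construction does not yield a \emph{real}-valued local martingale $\wt M$ with $|\wt M_t|\leq \|M_t\|_\cH$; it produces (on a possibly enlarged probability space) an $\R^2$-valued c\`adl\`ag local martingale $\wt M$ with
\[
|\wt M_t| = \|M_t\|_\cH \quad \text{for all } t\geq 0,
\]
and hence automatically $[\wt M]_t = [M]_t$ since $|\wt M|^2-[\wt M]$ and $\|M\|_\cH^2-[M]$ are both local martingales started at the same point. With equality of norms in hand, both the upper and the lower bound follow at once from the finite-dimensional BDG applied to $\wt M$; no ``dual construction'' or reversal of roles is needed. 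Under your stated hypothesis $|\wt M_t|\leq \|M_t\|_\cH$ with $[\wt M]=[M]$, the scalar BDG applied to $\wt M$ yields only the \emph{lower} bound for $M$, not the upper one as you wrote---so the direction of the argument in your third step is reversed, and the upper bound would remain unproved without the equality.
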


The following estimates for integrating $\cH$ valued processes against $\cH$-valued martingales are well-known. For a proof of the general martingale case in the scalar setting, with condition $\E\int_0^\infty \|u(t)\|_{\cH}^2d\langle M\rangle(t)<\infty$, we refer to \cite{meyer2002cours}. The vector-valued case can be derived by using an orthonormal basis expansion, and the almost sure finiteness condition used below, stated also in~\cite{gyongy1980stochastic}, is the result of a standard approximation by stopping times. Already in the two-dimensional case, one does not have equality in both situations, since there could be cancellations in the inner products.

\begin{proposition}
    \label{prop: quad var estimate}
    Let $u$ be a progressive $\cH$-valued process, let $M$ be a square integrable $\cH$-valued càdlàg martingale  and assume that $\int_0^\infty \|u(t)\|_{\cH}^2d\langle M\rangle(t) <\infty$ almost surely. Then we have almost surely
\begin{align}
    \left[\int_0^{\cdot} u(s-)\,dM(s)\right](t)  \leq   \int_0^t \|u(s-)\|_{\cH}^2\,d [ M] (s),
\end{align}
    as well as
    \begin{align}
        \E\left(\int_0^\infty u(t-)\,dM(t) \right)^2
         \leq \E\int_0^\infty \|u(t)\|_{\cH}^2\,d\langle M\rangle (t),
    \end{align}
    where the last right-hand side is allowed to be infinite.
\end{proposition}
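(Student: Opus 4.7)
My plan is to reduce both inequalities to the scalar case via an orthonormal basis expansion and then to exploit the positive semidefiniteness of the operator-valued quadratic variation $[[M]]$, which is the structural reason for an inequality (rather than the equality available in the scalar setting) in the vector-valued case.

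First I would fix an orthonormal basis $(e_k)_{k\geq 1}$ of $\cH$ and introduce the scalar càdlàg martingales $M_k := \langle M, e_k\rangle_\cH$ together with the scalar processes $u_k := \langle u, e_k\rangle_\cH$. For the finite truncations $M^N := \sum_{k\leq N} M_k e_k$ and $u^N := \sum_{k\leq N} u_k e_k$, the classical scalar identity reads
\[ \Big[\int_0^{\cdot} u^N(s-)\,dM^N(s)\Big](t) = \sum_{i,j=1}^N \int_0^t u_i(s-) u_j(s-)\,d[M_i, M_j](s). \]
The key linear-algebraic observation is that the $\R^{N\times N}$-valued measure $(d[M_i, M_j])_{i,j}$ is pointwise symmetric positive semi-definite, and since $[M] = \Tr[[M]] \geq \sum_{k\leq N}[M_k]$ it admits a Radon-Nikodym density $Q^N(s)$ with respect to the scalar measure $d[M](s)$ whose values are p.s.d.\ matrices with trace at most $1$. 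Consequently
\[ \sum_{i,j=1}^N u_i(s-) u_j(s-)\,Q^N_{ij}(s) = \langle Q^N(s) u^N(s-), u^N(s-)\rangle \leq \|Q^N(s)\|_{\mathrm{op}}\,\|u^N(s-)\|_\cH^2 \leq \|u^N(s-)\|_\cH^2, \]
which, integrated against $d[M]$, delivers the truncated first inequality. The analogous computation with $\langle M\rangle$ in place of $[M]$ combined with the scalar $L^2$-isometry after taking expectations yields the truncated second inequality.

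Finally, I would pass to the limit $N\to\infty$. The truncated second inequality together with the hypothesis $\E(\int_0^\infty \|u\|_\cH^2\,d\langle M\rangle)^{1/2}<\infty$ shows that the partial sums form a Cauchy sequence in the space of square integrable càdlàg martingales, with limit the vector-valued integral $\int_0^{\cdot} u(s-)\,dM(s)$ as defined in the preliminaries; Proposition~\ref{prop: BDG} provides uniform-in-time $L^2$ control, and along a subsequence the convergence is a.s.\ uniform on compacts. The $L^2$-inequality transfers to the limit directly. The pathwise first inequality requires slightly more care: along the same subsequence, $L^1$-convergence of the optional quadratic variations gives $[\int u^{N_k}\,dM^{N_k}]_t \to [\int u\,dM]_t$ a.s.\ for every rational $t$, and combining with monotone convergence $\|u^N(s-)\|_\cH^2 \nearrow \|u(s-)\|_\cH^2$ on the right-hand side yields the inequality at all rational $t$, whence at every $t$ by right-continuity. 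I expect this last transfer of a pathwise inequality through $\cM^2$-convergence to be the only genuinely delicate point, but it is standard martingale technology.
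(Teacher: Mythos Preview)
Your approach is correct and matches the paper's. The paper does not give a detailed proof: it states the result is well-known, cites \cite{meyer2002cours} for the scalar case, and remarks that ``the vector-valued case can be derived by using an orthonormal basis expansion,'' noting explicitly that equality is lost in dimension $\geq 2$ because of cancellations in the inner products. You carry out precisely this programme---expand in an orthonormal basis, exploit that the matrix-valued measure $(d[M_i,M_j])_{i,j}$ is positive semi-definite with a density of trace at most $1$ relative to $d[M]$, and pass to the limit---so you are supplying the details the paper leaves to the reader.
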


\begin{proposition}[Itô formula]
\label{prop ito}
    Let $u_0 \colon \Omega \to \cH$ be a strongly $\cF_0$-measurable random variable, let $f = \sum_{j=1}^m f_j$ where $m\geq 1$, $f_j \in L^2(\Omega, L^{p_j}([0,T], \cV_{\theta_j}))$ is progressively measurable for each $1\leq j \leq m$, with $(p_j, \theta_j)$ admissible for all $j$, and let $M$ be a \cl  square integrable $\cF_t$-martingale with values in $\cH$ which
    satisfies $M(0) = 0$.
    Let $$u \in L^2(\Omega, D([0,T], \cH)) \cap L^2(\Omega\times [0,T], \cV)$$ be a progressively measurable process satisfying a.s. for all $t\in [0,T]$
    \begin{align}
        u(t) = u_0 + \int_0^t f(s) ds + M(t).
    \end{align}
    Then a.s for all $t\in [0,T]$
    \begin{align}\label{eq:Itosquare}
        \| u(t) \|_\cH^2 = \| u_0 \|_\cH^2 + 2 \int_0^t \langle f(s), u(s) \rangle ds + 2 \int_0^t u(s-) dM(s)  +[M](t).
    \end{align}
\end{proposition}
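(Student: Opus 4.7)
The plan is to reduce \eqref{eq:Itosquare} to the standard Hilbert-valued It\^o formula via a spectral regularization associated with the Gelfand triple, using the pathwise interpolation estimates of Lemma~\ref{lem:admissible} to control the duality pairing in the limit.

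First I would verify that every term in \eqref{eq:Itosquare} is well-defined: the duality pairing $\int_0^t |\langle f(s), u(s)\rangle|\,ds$ is a.s.\ finite by applying Lemma~\ref{lem:admissible} summand by summand to each $f_j$ with its admissible pair $(p_j,\theta_j)$; the stochastic integral $\int_0^t u(s-)\,dM(s)$ is well-defined as a square-integrable martingale by Proposition~\ref{prop: quad var estimate}, using $u \in L^2(\Omega, L^\infty([0,T], \cH))$ (from $u \in L^2(\Omega, D([0,T],\cH))$); and $[M]$ is specified by the hypothesis on $M$.

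For the core step I would employ the spectral projections $P_k := \chi_{[0,k]}(A)$, where $A \colon \cV \to \cV^*$ is the Riesz isomorphism of the Gelfand triple, viewed equivalently as a positive self-adjoint operator on $\cH$ with form domain $\cV$. By functional calculus, $P_k$ is an orthogonal projection on $\cH$ which is contractive on the equivalent norms $\|A^{1/2}\cdot\|_\cH$ on $\cV$ and $\|A^{-1/2}\cdot\|_\cH$ on $\cV^*$, and $P_k \to I$ strongly on each of $\cV^*$, $\cH$, $\cV$ by the spectral theorem. The process $P_k u(t) = P_k u_0 + \int_0^t P_k f(s)\,ds + P_k M(t)$ is a c\`adl\`ag semimartingale in the Hilbert space $P_k \cH$, on which $A$ is bounded so that the $\cV^*$, $\cH$, $\cV$ norms are equivalent; hence $P_k f \in L^1([0,T], P_k \cH)$ a.s. The classical Hilbert-valued It\^o formula for $\|\cdot\|_\cH^2$ then gives, using self-adjointness of $P_k$,
\begin{align*}
\|P_k u(t)\|_\cH^2 = \|P_k u_0\|_\cH^2 + 2\int_0^t \langle f(s), P_k u(s)\rangle\,ds + 2\int_0^t P_k u(s-)\,d P_k M(s) + [P_k M](t).
\end{align*}

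Finally I would pass $k\to\infty$ in each term. Strong convergence $P_k \to I$ in $\cH$ handles the two quadratic terms; the drift pairing converges by dominated convergence, with dominator $C\sum_j \|f_j(s)\|_{\cV_{\theta_j}}\|u(s)\|_{\cV_{1-\theta_j}}$ whose integrability on $[0,T]$ is exactly the content of Lemma~\ref{lem:admissible}; the stochastic integral converges by the Burkholder--Davis--Gundy inequality (Proposition~\ref{prop: BDG}) applied to the martingale difference together with strong convergence of the integrands in $\cH$; and $[P_k M](t) \to [M](t)$ follows from $P_k \to I$ strongly on $\cH$ and the trace-class nature of $[[M]]$. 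The main technical delicacy is the uniform boundedness and strong convergence of $(P_k)$ across the three spaces of the Gelfand triple; the spectral construction circumvents this using only separability of $\cV$, not compactness of $\cV \hookrightarrow \cH$.
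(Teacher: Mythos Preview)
Your approach is correct and essentially the same as the paper's: both regularize via a spectral operator associated with an invertible positive self-adjoint $A$ (the paper uses resolvents $R_n = n(n+A)^{-1}$, you use spectral projections $P_k = \chi_{[0,k]}(A)$), apply the standard Hilbert-space It\^o formula in the regularized setting, and pass to the limit term by term using Lemma~\ref{lem:admissible} for the drift and BDG for the stochastic integral. The only cosmetic difference is that the paper further reduces the regularized formula to the scalar case via an orthonormal basis expansion before citing the scalar It\^o formula, whereas you invoke the Hilbert-valued It\^o formula directly (which the paper also mentions as an alternative, citing M\'etivier--Pistone).
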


\begin{proof}
    First we observe that by the conditions imposed on $u$ and $M$ we clearly have that almost surely
    $$\int_0^T\|u(t)\|_{\cH}^2\,d\langle M\rangle(t)<\infty,$$
    so that the following stochastic integrals are well-defined.
    The proof uses ideas from \cite{Par75} and \cite{Krysimple}. By \cite[Proposition 8.1.10]{TayPDE2} we can find an invertible positive self-adjoint operator $A$ on $\cV^*$ with $D(A) = \cV$. Let $R_n = n(n+A)^{-1}$. Then by interpolation one has $C_{\theta}:=\sup_{n\geq 1}\|R_n\|_{\cL(\cV_{\theta})}<\infty$ for all $\theta\in [0,1]$. It is standard to check that $R_n$ strongly converges to the identity on each $\cV_{\theta}$. Let $u^n = R_n u$ and similarly for $u_0$, $f$ and $M$. These are regularized versions of our data and all take values in $\cH$. Then for these regularized objects $(u^n, u_0^n, f^n, M^n)$, we can apply the usual $\cH$-valued It\^o formula to deduce that \eqref{eq:Itosquare} holds (for instance \cite[Proposition 3]{metivier1976equation} applies). In the above special case, one can even reduce to the scalar case by writing
    $\| u^n(t) \|_\cH^2 = \sum_{k\geq 1} |(u^n, e_k)_\cH|^2$, where $(e_k)_{k\geq 1}$ is an orthonormal basis for $\cH$, by applying the scalar valued It\^o formula for semimartingales, \cite[Thm. 4.57]{jacod2013limit}, to the scalar process $(u^n,e_k)_{\cH}$.
    Indeed, its application and straight-forward manipulations give
    \begin{align*}
    |(u^n(t), e_k)_\cH|^2 = |(u^n_0, e_k)_\cH|^2 & + 2 \int_0^t  (f^n(s),e_k)_\cH (u^n(s),e_k)_\cH ds  \\ & + 2\int_0^t (u^n(s-), e_k)_{\cH} d (M^n(s), e_k)_{\cH} + [(M^n, e_k)_\cH]_t,
    \end{align*}
    where $u^n(\cdot-)$ is to be read as the left-limit process of the regularization $u^n$.
    Summing over all $k\geq 1$, we see that
    \begin{align*}
        \| u^n(t) \|_\cH^2 = \| u_0^n \|_\cH^2 &+ 2 \sum_{k\geq 1} \int_0^t (f^n(s),e_k)_\cH (u^n(s),e_k)_\cH ds \\ & + 2\sum_{k\geq 1}\int_0^t (u^n(s-), e_k)_{\cH} d (M^n(s), e_k)_{\cH} + \sum_{k\geq 1} [(M^n, e_k)_\cH](t),
    \end{align*}
    where we still need to check the convergence of the series in probability.
    It is clear that $\sum_{k\geq 1} [(M^n, e_k)_\cH]_t = [M^n](t)$ a.s. To calculate the two integral terms let $M^{n,\ell} = \sum_{k=1}^\ell (M^n(s), e_k) e_k$ and define $u^{n,\ell}$ and $f^{n,\ell}$ similarly. By linearity, we can write
    \[\sum_{k=1}^\ell \int_0^t (f^n(s),e_k)_\cH (u^n(s),e_k)_\cH ds = \int_0^t \langle f^{n,\ell}(s), u^{n}(s) \rangle ds\to \int_0^t \langle f^{n}(s), u^{n}(s) \rangle ds \ \text{a.s.}  \]
    as $\ell \to \infty$, since $f^{n,\ell}\to f^n$ in $L^1([0,T],\cH)$ and $u^n\in D([0,T],\cH)$ almost surely.
    Similarly,
    \[\sum_{k=1}^\ell \int_0^t (u^n(s-), e_k) d (M^n(s), e_k) = \int_0^t u^{n,\ell}(s-) dM^n(s)\to \int_0^t u^{n}(s-) dM^n(s) \ \text{in $L^1(\Omega)$}.\]
    Indeed, this follows from Propositions~\ref{prop: BDG} and \ref{prop: quad var estimate} and
    \[\E \Big[\int_0^\cdot u^n(s-) - u^{n,\ell}(s-) dM^n(s)\Big]^{1/2}(t) \leq \E\Big(\int_0^t \|u^{n}(s-) - u^{n,\ell}(s-)\|^2_{\cH} d[M^n](s)\Big)^{1/2}\to 0\]
    by the Dominated Convergence Theorem, which is applicable owing to $u^{n,\ell}\to u^{n}$ in $\cH$ a.e. and $\|u^n - u^{n,\ell}\|_\cH\leq \|u^n\|_\cH\leq \|u\|_{\cH}$ and the assumption $u\in L^2(\Omega;D([0,T];\cH))$.
    It remains to let $n\to \infty$ in
    \begin{align*}
        \| u^n(t) \|_\cH^2 = \| u_0^n \|_\cH^2 + 2 \int_0^t \langle f^n(s), u^n(s) \rangle ds + 2 \int_0^t u^n(s-) dM^n(s)  + [M^n](t).
    \end{align*}
    Almost everywhere convergence for $\|u^n(t)\|_\cH^2$ and $\|u_0^n\|_\cH^2$ follow from the properties of $R_n$.

    The convergence of $\int_0^t \langle f^n(s), u^n(s) \rangle  ds$ is more cumbersome. For each $j\in \{1, \ldots, m\}$ it is clear that $\langle u^n(s), f_j^n(s)\rangle\to \langle u(s), f_j(s)\rangle$ and that
    \[|\langle f^n_j(s), u^n(s) \rangle| \leq \|f^n_j(s)\|_{\theta} \|u^n(s)\|_{1-\theta} \leq C_{\theta} \|f_j(s)\|_{\theta} \|u(s)\|_{1-\theta}\]
    pointwise in $s\in [0,T]$. Therefore, a.s.\ in $\Omega$, the required convergence follows from the dominated convergence theorem since the latter is integrable by Lemma~\ref{lem:admissible}.

    To show the convergence of $\int_0^t u^n(s-) dM^n(s)$ it suffices to show
    \[\int_0^t u(s-) - u^n(s-) dM(s) \to 0 \ \text{and} \ \int_0^t u(s-) d(M-M^n)(s) \to 0\ \ \text{in $L^1(\Omega)$}.\]
    Moreover, by Propositions~\ref{prop: BDG} it is enough to prove that the quadratic variations tend to zero in $L^{1/2}(\Omega)$. Using Proposition~\ref{prop: quad var estimate}, for the first term this follows from
    \[\E\Big(\Big[\int_0^\cdot u(s-) - u^n(s-) dM(s)\Big](t)\Big)^{1/2} \leq  \E\Big(\int_0^t \|u(s-) - u^n(s-)\|^2_{\cH} d[M](s)\Big)^{1/2}\to 0,\]
    by the Dominated Convergence Theorem.
For the second term, we see that
\begin{align}
    \E\Big[\int_0^\cdot u(s-) d(M-M^n)(s)\Big]^{1/2}(t) &\leq \E\Big(\int_0^t \|u(s-)\|^2_\cH d [M-M^n](s)\Big)^{1/2} \\
    &\leq \E \sup_{s\in [0,T]}\|u(s-)\|_\cH [M-M^n](T)^{1/2}
   \to 0 \
\end{align}
by H\"older's inequality since $u\in L^2(\Omega, D([0,T], \cH))$ and $\E[M-M^n](T) \eqsim  \E \|M - M^n\|_{\cH}^2\to 0$, where we used Propositions~\ref{prop: BDG} and the Dominated Convergence Theorem.
\end{proof}

\begin{remark}
By localization, one can remove the integrability conditions concerning $u$ in $\Omega$ in Proposition~\ref{prop ito} a posteriori.
\end{remark}

\begin{corollary}
\label{cor: ito}
    Let the assumptions of Proposition~\ref{prop ito} hold.
    Let
    \begin{equation*}
        g \in L^2 (\Omega\times [0,T],\cL_2(U,\cH)),\quad\text{and}\quad h \in L^2(\Omega\times [0,T],L^2(Z,\cH;\nu))
    \end{equation*}
    be $\cP$-measurable and $\cP^-\otimes\cZ$-measurable, respectively.
    Let $u$ be the process from Proposition~\ref{prop ito} with
    $$
    M(t) :=\int_0^t g(s)\,dW(s) + \int_0^t\int_Z h(s,z)\,\Nt (dz,ds),\quad t\in [0,T].
    $$
    Then we have almost surely, for all $t\in [0,T]$,
     \begin{align}
        \| u(t) \|_\cH^2 = \| u_0 \|_\cH^2 &+ 2\int_0^t \langle f(s), u(s)\rangle ds
        + \int_0^t \|g(s)\|_{\calL_2(U,\cH)}^2\,ds
        + 2 \int_0^t \lb u(s),g(s)\rb \,dW(s) \\
        &+ 2\int_0^t\int_{Z} (u(s-),h(s,z))_\cH  \,\Nt(dz,ds)
         + \int_0^t\int_{Z} \|h(s,z)\|^2_{\cH}\,N(dz,ds).
    \end{align}
\end{corollary}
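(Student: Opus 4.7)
The plan is to apply Proposition \ref{prop ito} with the specified $M$, and then unpack the generic terms $2\int_0^t u(s-)\,dM(s)$ and $[M](t)$ in terms of the ingredients $g$ and $h$. First I would check that $M$ fits the assumptions of Proposition \ref{prop ito}: the global $L^2$-integrability of $g$ and $h$ in $\Omega\times[0,T]$ ensures that $M$ is a square-integrable càdlàg $\cH$-valued $\cF_t$-martingale starting from $0$, and the process $u$ is assumed in the statement to satisfy the conditions of that proposition. This immediately gives
\begin{align*}
\|u(t)\|_\cH^2 = \|u_0\|_\cH^2 + 2\int_0^t \langle f(s),u(s)\rangle\,ds + 2\int_0^t u(s-)\,dM(s) + [M](t).
\end{align*}

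Next, I would decompose $M = M^c + M^d$ where $M^c(t) := \int_0^t g(s)\,dW(s)$ is the continuous part and $M^d(t) := \int_0^t\int_Z h(s,z)\,\Nt(dz,ds)$ is the purely discontinuous part. Since these two are strongly orthogonal, $[M] = [M^c] + [M^d]$. For $M^c$, the standard Itô isometry for cylindrical Wiener integrals gives $[M^c](t) = \langle M^c\rangle(t) = \int_0^t \|g(s)\|_{\cL_2(U,\cH)}^2\,ds$. For $M^d$, I would invoke Proposition \ref{prop: prelim poisson}(2) to identify $[M^d](t) = \int_0^t\int_Z \|h(s,z)\|_\cH^2\,N(dz,ds)$. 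Since the assumption is only the $L^2$ bound, and not the $p=1$ bound appearing in Proposition \ref{prop: prelim poisson}(2), this identity would first be verified for simple, bounded $h$ supported on sets of finite $\nu$-measure, and then extended by the standard approximation scheme used to define \eqref{eq: Nt integral}; Proposition \ref{prop: BDG} ensures convergence of the quadratic variations in probability through the limit.

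For the stochastic integral against $M$, linearity yields $\int_0^t u(s-)\,dM(s) = \int_0^t u(s-)\,dM^c(s) + \int_0^t u(s-)\,dM^d(s)$. The continuous piece equals $\int_0^t \langle u(s), g(s)\rangle\,dW(s)$, where one may replace $u(s-)$ by $u(s)$ since the exceptional set of jump times has zero Lebesgue measure (and the pairing with $g$ against $dW$ is defined via Proposition \ref{prop: quad var estimate}, which needs only the $L^2(\Omega\times[0,T])$ integrability of $u\,g$ in the appropriate Hilbert--Schmidt sense). The discontinuous piece equals $\int_0^t\int_Z (u(s-), h(s,z))_\cH\,\Nt(dz,ds)$ by a stochastic Fubini-type identification, proved first for simple integrands and extended by approximation using Proposition \ref{prop: BDG}; the use of the left-limit $u(s-)$ here is essential, whereas for the $dW$-integral it does not matter.

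The main technical obstacle is the identification of $\int_0^t u(s-)\,dM^d(s) = \int_0^t\int_Z (u(s-),h(s,z))_\cH\,\Nt(dz,ds)$, since in general $u$ is not bounded and $h$ is not in the "nice" integrability class of Proposition \ref{prop: prelim poisson}(1). I would resolve this by a standard double approximation: approximate $h$ by $h_n = \1_{\|h\|_\cH < n}\1_{Z_n}h$ (so that both sides are defined pathwise and coincide by the Fubini theorem for finite sums), and approximate $u$ by truncating at the jump times where $u$ is discontinuous; then pass to the limit using the isometry for both $dM^d$-integrals provided by Propositions \ref{prop: BDG} and \ref{prop: quad var estimate}, together with the dominated convergence theorem applied via the bound $\|u(s-)\|_\cH \leq \sup_{s\in[0,T]}\|u(s-)\|_\cH \in L^2(\Omega)$. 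Combining all the identifications substitutes into the formula above and yields the claim.
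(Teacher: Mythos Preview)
Your proposal is correct and follows essentially the same route as the paper: apply Proposition~\ref{prop ito}, split $M$ into its continuous Wiener part and its purely discontinuous Poisson part, use their orthogonality to decompose $[M]$, and identify each piece together with the corresponding stochastic integrals of $u(s-)$. The paper's argument is terser—citing Proposition~\ref{prop: prelim poisson} directly for $[M^d]$ without spelling out the $L^2$-approximation you describe—so your extra care about the $L^1$ versus $L^2$ hypothesis is warranted but not a point of divergence.
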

\begin{proof}  Though it is a well-known argument how to reduce the result to Proposition~\ref{prop ito}, for completeness we include the details. It suffices to compute
    \begin{equation*}
        2\int_0^t u(s-)\,dM(s) + [M](t).
    \end{equation*}
    Clearly, almost surely for all $t\in [0,T]$,
    \begin{align}
    \label{eq: martingale integral}
    \int_0^t u(s-)\,dM(s) &= \int_0^t \lb u(s),g(s)\rb \,dW(s) + \int_0^t\int_Z (u(s-),h(s,z))_\cH\,\Nt(dz,ds).
    \end{align}
    By orthogonality we have
    \begin{align}
    [M](t)
    &= [M_1](t) + [M_2](t).
    \end{align}
    Clearly
    $$
    [ M_1](t) = \int_0^t\|g(s)\|_{\calL_2(U,\cH)}^2\,ds.
    $$
    Moreover, by Proposition~\ref{prop: prelim poisson} we have
    \begin{align}
        [M_2](t) = \int_0^t\int_{Z} \|h(s,z)\|_\cH^2\, N(dz,ds),
    \end{align}
    which finishes the proof.

\end{proof}

The following corollary is not needed in the further course of this article. We discuss its relevance below in Remark~\ref{rem:ito_cor}.

\begin{corollary}
    \label{cor: ito 2}
    Let the assumptions of Corollary~\ref{cor: ito} hold and assume additionally that
    \begin{equation}
    \label{eq: assumptions ito 2}
    \begin{split}
        \int_0^T\int_Z\big| \|u(s-)+h(s,z)\|^2_\cH - \|u(s-)\|_\cH^2 \big|^2\,\nu(dz)ds&<\infty\\
        \int_0^T\int_Z \|h(s,z)\|^4_{\cH} \,\nu(dz)ds&<\infty.
        \end{split}
    \end{equation}
    Then we have almost surely, for all $t\in [0,T]$,
     \begin{align}
     \nonumber
        \| u_t \|_\cH^2 = &\| u_0 \|_\cH^2 + 2\int_0^t \langle f(s), u(s)\rangle ds
        + \int_0^t \|g(s)\|_{\calL_2(U,\cH)}^2\,ds    + 2 \int_0^t (u(s),g(s))_\cH \,dW(s)    \\
        \label{eq: ito 2}
        & + \int_0^t\int_{Z} \|u(s-)+h(s,z)\|^2_\cH - \|u(s-)\|_\cH^2\,\Nt(dz,ds)  + \int_0^t\int_{Z} \|h(s,z)\|^2_{\cH}\,\nu(dz)ds .
    \end{align}
\end{corollary}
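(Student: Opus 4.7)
The plan is to start from the formula given by Corollary \ref{cor: ito} and massage the two terms involving the Poisson measure into the single compensated jump integral appearing in \eqref{eq: ito 2}. The algebraic backbone is the elementary identity
\begin{align}
\|u(s-)+h(s,z)\|_\cH^2 - \|u(s-)\|_\cH^2 = 2(u(s-),h(s,z))_\cH + \|h(s,z)\|_\cH^2,
\end{align}
so once all integrals are known to be well defined, the result will follow by linearity together with the compensation identity $N = \Nt + \nu$.

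First I would rewrite the last term in the formula from Corollary \ref{cor: ito}. Since $h\in L^2(\Omega\times[0,T],L^2(Z,\cH;\nu))$, we have $\E\int_0^T\int_Z \|h(s,z)\|_\cH^2\,\nu(dz)ds<\infty$, so by Proposition~\ref{prop: prelim poisson}(1) applied to the nonnegative integrand $\|h\|_\cH^2$,
\begin{align}
\int_0^t\int_Z \|h(s,z)\|_\cH^2\,N(dz,ds) = \int_0^t\int_Z \|h(s,z)\|_\cH^2\,\Nt(dz,ds) + \int_0^t\int_Z \|h(s,z)\|_\cH^2\,\nu(dz)ds.
\end{align}
For this to be a meaningful decomposition where the first term on the right is a (locally) square integrable martingale to be combined with another $\Nt$-integral, the second integrability condition in \eqref{eq: assumptions ito 2}, namely $\int_0^T\int_Z\|h(s,z)\|_\cH^4\,\nu(dz)ds<\infty$, is what one needs; after a standard stopping-time localization we may in fact assume both integrability assumptions in \eqref{eq: assumptions ito 2} hold in expectation.

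Next I would combine the two Poisson martingale integrals appearing in the formula of Corollary \ref{cor: ito} using linearity:
\begin{align}
&2\int_0^t\int_Z (u(s-),h(s,z))_\cH\,\Nt(dz,ds) + \int_0^t\int_Z \|h(s,z)\|_\cH^2\,\Nt(dz,ds) \\
&\qquad = \int_0^t\int_Z \bigl[2(u(s-),h(s,z))_\cH + \|h(s,z)\|_\cH^2\bigr]\,\Nt(dz,ds) \\
&\qquad = \int_0^t\int_Z \bigl[\|u(s-)+h(s,z)\|_\cH^2 - \|u(s-)\|_\cH^2\bigr]\,\Nt(dz,ds),
\end{align}
where the last equality uses the algebraic identity above. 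The first condition in \eqref{eq: assumptions ito 2} guarantees that this combined $\Nt$-integral is a square integrable martingale, so summing and splitting is legitimate. Plugging the two rewritings back into the expression provided by Corollary~\ref{cor: ito} yields \eqref{eq: ito 2}.

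The only delicate point is making sure each integral is well defined before breaking and recombining them; this is exactly the role of the two assumptions in \eqref{eq: assumptions ito 2}. Without the second one, $\|h\|_\cH^2$ would not in general have a well-defined $\Nt$-integral as a square integrable martingale, and without the first one, the combined integrand $\|u(s-)+h(s,z)\|_\cH^2 - \|u(s-)\|_\cH^2$ need not be in $L^2$ with respect to $d\P\otimes ds\otimes \nu$. A standard localization by stopping times $(\sigma_n)$ can be used if one only has the pathwise finiteness stated in \eqref{eq: assumptions ito 2}, passing to the limit $n\to\infty$ in each term at the end; this is the only mild technicality of the argument.
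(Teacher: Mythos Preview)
Your proposal is correct and follows essentially the same route as the paper: split the $N$-integral of $\|h\|_\cH^2$ into its $\Nt$- and $\nu$-parts, then use the algebraic identity $\|u+h\|^2-\|u\|^2 = 2(u,h)+\|h\|^2$ to combine the two $\Nt$-integrals into the one appearing in \eqref{eq: ito 2}. The paper compresses the justification into the phrase ``a standard limiting argument, using that $h\in L^2$'', while you spell out more explicitly how each of the two hypotheses in \eqref{eq: assumptions ito 2} is used to make the individual $\Nt$-integrals well defined and combinable; the underlying argument is the same.
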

\begin{proof}
A standard limiting argument, using that $h\in L^2 ([0,T],L^2(Z,\cH;\nu))$, gives
\begin{align*}
    &\int_0^t\int_Z  \|h(s,z)\|^2\,N(dz,ds)
    \\
    ={} &\int_0^t\int_Z  \|h(s,z)\|^2_{\cH}\,\Nt(dz,ds)
    +\int_0^t\int_Z  \|h(s,z)\|^2 \,\nu(dz)ds\\
    ={} &\int_0^t\int_Z  \|u(s-)+h(s,z)\|^2-\|u(s-)\|_\cH^2\,\Nt(dz,ds) \\
    &\quad - 2\int_0^t\int_Z (u(s-),h(s,z))_\cH\,\Nt(dz,ds)
     +\int_0^t\int_Z  \|h(s,z)\|^2_{\cH} \,\nu(dz)ds
\end{align*}
which, together with the second term on the right-hand side of  \eqref{eq: martingale integral} finishes the proof.
\end{proof}

\begin{remark}
\label{rem:ito_cor}
    As is remarked in~\cite{gyongy2021ito}, in many publications the It\^o formula~\eqref{eq: ito 2} is stated without prior assumption of the additional conditions~\eqref{eq: assumptions ito 2}. However, without them, the integral against the Poisson martingale measure on the right-hand side of~\eqref{eq: ito 2} may fail to exist, as is demonstrated in~\cite[Example 2.1]{gyongy2021ito}.
\end{remark}

\section{The extended variational setting}
\label{sec:variational}

\noindent Fix $T>0$. In this section, we consider the variational problem
\begin{equation}
\label{eq:spde apriori}
\tag{VP}
    \begin{split}
        du(t)+\wt{A}(t,u(t)) \, dt &= \wt{B}(t,u(t)) \, dW(t) +\int_{Z} \wt{C}(t,u(t-),z) \, \Nt(dz,dt) \\
        u(0) &= u_0
    \end{split}
\end{equation}
on $[0,T]$. The properties of the operators $\wt{A}$, $\wt{B}$ and $\wt{C}$ are stated in Assumption~\ref{assumption apriori operators}.
The main result of this section will be an a-priori estimate for~\eqref{eq:spde apriori}, see Proposition~\ref{prop: nonlin apriori}.

\subsection{Setting and notion of solution}
\begin{assumption}
    \label{assumption apriori operators}
    For the operators $\wt{A}$, $\wt{B}$ and $\wt{C}$ we assume the following.
    \begin{enumerate}
        \item The mappings
        \begin{align}
            \wt A &\colon \Omega \times [0,T]\times \cV \to \cV^*, \\
            \wt B &\colon \Omega \times [0,T]\times \cV \to \cL_2(U,\cH), \\
            \wt C &\colon \Omega \times [0,T]\times \cV\times Z \to \cH
        \end{align}
        are $\cP\otimes\cB(\cV)$-measurable, $\cP\otimes \cB(\cV)$-measurable and $\cP^-\otimes\cB(\cV)\otimes \cZ$-measurable, respectively, and such that almost surely
        \begin{align}
        \label{eq:ass integrals exist}
            \int_0^T \|\wt A(t,v(t))\|_{\cV^*}\,dt & + \int_0^T\|\wt B(t,v(t))\|_{\calL_2(U,\cH)}^2\,dt \\ & + \int_0^T \|\wt C(t,v(t),\cdot)\|_{L^2(Z,\cH;\nu)}^2 dt<\infty
        \end{align}
        for all $v\in D([0,T],\cH)\cap L^2([0,T],\cV)$.
        \item There are constants $\kappa, \eta > 0$, a non-negative function $\phi\in L^1([0,T])$, a non-negative function $\psi \in L^0(\Omega, L^1([0,T]))$, and finitely many non-negative functions $\psi_i \in L^0(\Omega, L^{p_i}([0,T]))$ with $(\theta_i, p_i)$ admissible, such that a.s. for all $v \in \cV$ and almost every $t\in [0,T]$
        \begin{align}
        \label{eq: coercivity apriori}
            \<\wt A(t,v), v\> - (\frac12 + \eta) &\|\wt B(t,v) \|_{\calL_2(U,\cH)}^2 - (\frac12 +  \eta) \| \wt C(t,v,\cdot) \|_{L^2(Z,\cH;\nu)}^2 \\ & \geq \kappa \| v \|_\cV^2 - \phi(t) \| v \|_\cH^2 - \psi(t) - \sum_i \psi_i(t) \| v \|_{1-\theta_i}.
        \end{align}
    \end{enumerate}
\end{assumption}

\begin{remark}
    To the best of our knowledge, the coercivity condition~\eqref{eq: coercivity apriori} is new. If $\psi_i$ is associated with the admissible pair $(\theta_i,p_i)\coloneqq (0,2)$, then Young's inequality allows to absorb $\| v \|_{1-\theta} = \| v \|_\cV$ into $\kappa \| v \|_\cV^2$, so that we recover the traditional lower bound $\kappa\| v \|_\cV^2 - \phi(t) \| v \|_\cH^2 - \psi(t)$.
    For all other admissible pairs, such a reduction would lead to a dependence of $\phi$ on $\psi_i$, which would lead to the wrong a-priori estimate later on, compare with Proposition~\ref{prop: nonlin apriori} and Lemma~\ref{lem:lin coercive}.
\end{remark}

\noindent Two prototypical examples are the following: First, $(\wt A, \wt B, \wt C)$ can be taken as the nonlinear operators $(A, B, C)$ from the quasilinear problem~\eqref{eq:spde} subject to a nonlinear coercivity condition. See Sections~\ref{sec:blowup} and~\ref{sec:global} for details. Second, $\wt A$, $\wt B$ and $\wt C$ can be taken as linear operators perturbed by an inhomogeneity. This case will be studied in the subsequent Section~\ref{sec: linear} and will lead to a well-posedness result for linear problems with \Ly noise. Assumption~\ref{assumption apriori operators} is flexible enough to capture these two cases at the same time and to provide unified a-priori estimates for them.

\begin{definition}
Let Assumption~\ref{assumption apriori operators} hold, let $T\in (0,\infty]$, let $\sigma$ be a stopping time with values in $[0,T]$, and let $\tau$ be a stopping time with values in $[\sigma,T]$. Let $u_{\sigma}$ be $\cH$-valued and $\cF_{\sigma}$-measurable. We call $u: \llbracket \sigma,\tau\rrbracket  \rightarrow \cV$ a \emph{strong solution} to \eqref{eq:spde apriori} if
        almost surely $u\in D([\sigma,\tau],\cH)\cap L^2([\sigma,\tau],\cV)$ and it satisfies a.s. for all $\sigma \leq t \leq\tau$
        \begin{equation}
        \label{eq: strong solution}
        \begin{split}
            u(t)  =  u_{\sigma} + \int_{\sigma}^t \wt{A}(s,u(s))\,ds &+ \int_{\sigma}^t \wt{B}(s,u(s))\,dW(s) + \int_{\sigma}^t\int_{Z} \wt{C}(s,u(s-),z)\,\Nt(dz, ds),
            \end{split}
        \end{equation}
        where we recall the convention \eqref{eq:conventionsigmastart} for the random left-end point of the stochastic integrals.
        Since we are only dealing with strong solutions in this paper, we will henceforth simply refer to them as \textit{solutions}.
\end{definition}
\begin{definition}
Let Assumption~\ref{assumption apriori operators} hold, let $T\in (0,\infty]$, let $\sigma$ be a stopping time with values in $[0,T]$, and let $\tau$ be a stopping time with values in $[\sigma,T]$. Let $u_{\sigma}$ be $\cH$-valued and $\cF_{\sigma}$-measurable. Let $u: \llbracket \sigma, \tau \rrparenthesis \rightarrow \cV$.
\label{def solution}
\begin{enumerate}
     \item We call $(u,\tau)$ a \emph{local solution} to \eqref{eq:spde apriori} on $\llbracket \sigma, T\rrbracket$ if there is an increasing sequence of stopping times $\{\tau_n\}_{n=1}^\infty$, $\tau_n\uparrow\tau$, such that for each $n\geq 1$, the restriction $u|_{\llbracket\sigma,\tau_n\rrbracket}$ is a solution to \eqref{eq:spde apriori} on $\llbracket \sigma, \tau_n \rrbracket$. In this case we call $\{\tau_n\}_{n=1}^\infty$ a \emph{localising sequence} for $u$.
    \item A local solution $(u,\tau)$ on $\llbracket \sigma, T \rrbracket$ is \emph{unique} if for any other local solution $(\wt u,\wt\tau)$ on $\llbracket \sigma, T\rrbracket$ the identity $\wt u=u$ holds $P\otimes dt$-almost everywhere on $\llbracket \sigma,\tau\wedge\wt\tau \rrparenthesis$.
    \item A unique local solution $(u,\tau)$ on $\llbracket \sigma, T \rrbracket$ is \emph{maximal} if for any other local solution $(\wt u,\wt\tau)$ on $\llbracket \sigma, T \rrbracket$ we have $\wt\tau\leq\tau$ and $\wt u=u$, $P\otimes dt$-almost everywhere on $\llbracket \sigma,\wt\tau\rrparenthesis$.
    \item A local solution $(u,\tau)$ on $\llbracket \sigma, \infty \rrparenthesis$ is called {\em global} if $\tau=\infty$.
    \end{enumerate}
\end{definition}

\subsection{An a-priori estimate}

Using the Itô formula from Section~\ref{subsec:ito}, we show an a-priori estimate for the variational problem~\eqref{eq:spde apriori}. Later on, we are going to use it on the one hand to show an a-priori estimate for linear problems, see Proposition~\ref{prop: lin apriori}, and on the other hand to check the blow up criterion for our critical non-linear problem~\eqref{eq:spde} in Section~\ref{sec:global}.

\begin{proposition}
\label{prop: nonlin apriori}
    Let Assumption~\ref{assumption apriori operators} hold with an $\eta>0$.
    Suppose that $(u,T)$ is a strong solution to
    \begin{align}
    \label{eq: spde estimate lemma}
        du(t) + \wt{A}(t,u(t)) \,dt =  \wt{B}(t,u(t))\,dW(t) + \int_Z \wt{C}(t,u(t-),z) \,\Nt(dz, dt)
    \end{align}
    with $u_0 \coloneqq u(0) \colon \Omega \to \cH$ strongly $\cF_0$-measurable and $\wt{A}(t,u(t)) = \sum_{i=1}^{m_a} \wt{A}_i(t,u(t))$. Assume further for each $i$ the moment condition
    \begin{align}
    \label{eq: integrability condition ito nonlinear}
            \E \Bigl( \int_0^T \| \wt A_i(t,u(t)) \|_{\alpha_i}^{q_i} \, dt \Bigr)^{\nicefrac{2}{q_i}} & + \E \int_0^T \|\wt B(t,u(t)) \|_{\calL_2(U,\cH)}^2 \, dt \\ & + \E \int_0^T \| \wt C(t,u(t),\cdot) \|_{L^2(Z,\cH;\nu)}^2 dt < \infty,
        \end{align}
    where each $(\alpha_i, q_i)$ is an admissible pair.
    Let $0 \leq \tau_1\leq\tau_2 \leq T$ be stopping times.
    Then
    \begin{align}
    \label{eq: sup estimate nonlinear lemma}
        \E \sup_{t \in [\tau_1,\tau_2]} &\| u(t) \|_\cH^2
        +\E\int_{\tau_1}^{\tau_2}\|u(t)\|_\cV^2\,dt
        \\ & +  \E\int_{\tau_1}^{\tau_2} \|\wt B(t,u(t)) \|_{\calL_2(U,\cH)}^2\,dt
             + \E\int_{\tau_1}^{\tau_2} \|\wt C(t,u(t),\cdot)\|_{L^2(Z,\cH;\nu)}^2 dt \\
        &\leq{}  C \Bigl( \E \| u(\tau_1) \|_\cH^2 + \E \| \psi \|_{L^1([\tau_1,\tau_2])} + \sum_{i=1}^{m}\E \| \psi_i \|_{L^{p_i}([\tau_1,\tau_2])}^2 \Bigr) ,
    \end{align}
    where $C = C(T,\eta, \kappa,\|\phi\|_{L^1([0,T])})$ is a constant and $(\theta_i, p_i)$ are the admissible pairs corresponding to $\psi_i$ from Assumption~\ref{assumption apriori operators}.
\end{proposition}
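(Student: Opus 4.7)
The plan is to apply the Itô formula of Corollary~\ref{cor: ito} to $\|u(t)\|_\cH^2$ and to use the coercivity reserve $\eta>0$ in~\eqref{eq: coercivity apriori} to neutralize the covariance terms produced by the stochastic integrals. The moment assumption~\eqref{eq: integrability condition ito nonlinear} together with the admissibility of each $(\alpha_i,q_i)$ is exactly what is needed to invoke Corollary~\ref{cor: ito} with $f=-\wt A=-\sum_i\wt A_i$, $g=\wt B$ and $h=\wt C$. Writing the pure-jump term as $\int_0^t\!\int_Z\|\wt C\|^2\,N = \int_0^t\|\wt C\|_{L^2(Z;\nu)}^2\,ds + M_3(t)$ with $M_3(t)=\int_0^t\!\int_Z\|\wt C\|^2\,\Nt$, and substituting $-2\langle \wt A,u\rangle$ by its coercivity bound (which supplies $(1{+}2\eta)$ copies of $\|\wt B\|^2$ and $\|\wt C\|^2$), one obtains, for every $t\in[\tau_1,\tau_2\wedge\sigma_n]$ and a suitable localizing sequence $\sigma_n\uparrow\infty$, the pathwise core inequality
\begin{align*}
\|u(t)\|_\cH^2 &+ 2\kappa\!\int_{\tau_1}^t\!\|u\|_\cV^2\,ds + 2\eta\!\int_{\tau_1}^t\!\bigl(\|\wt B\|_{\calL_2}^2 + \|\wt C\|_{L^2(Z;\nu)}^2\bigr)\,ds \\
&\leq \|u(\tau_1)\|_\cH^2 + 2\!\int_{\tau_1}^t\!\Bigl(\phi\|u\|_\cH^2 + \psi + \sum_i\psi_i\|u\|_{1-\theta_i}\Bigr)\,ds + M_1(t) + M_2(t) + M_3(t),
\end{align*}
where $M_1(t)=2\int_{\tau_1}^t\langle u,\wt B\rangle\,dW$, $M_2(t)=2\int_{\tau_1}^t\!\int_Z(u(s-),\wt C)_\cH\,\Nt$, and $\sigma_n$ is chosen so that all three martingales are genuine (e.g.\ via $\|u\|_\cH$ and the cumulative covariance integrals).

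A two-step scheme is then required, because the Burkholder--Davis--Gundy constant for $M_1,M_2$ is a fixed universal quantity independent of $\eta$: a direct sup-based argument combining BDG with the reserve $2\eta\E\!\int\|\wt B\|^2$ would force $\eta$ to exceed a universal multiple of $C_{\mathrm{BDG}}^2$, incompatible with arbitrarily small $\eta>0$. Step one (pre-sup): take expectation at $t=\tau_2\wedge\sigma_n$ so that all three martingales vanish. Estimating each $\E\!\int\psi_i\|u\|_{1-\theta_i}\,ds$ by Lemma~\ref{lem:admissible} followed by three-factor Young's inequality (the boundary cases $p_i\in\{1,2\}$ being handled by direct Hölder or Cauchy--Schwarz) produces $\varepsilon\bigl(\E\!\int\|u\|_\cV^2 + \E\sup\|u\|_\cH^2\bigr) + C_\varepsilon\,\E\|\psi_i\|_{L^{p_i}}^2$. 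Absorbing the $\E\!\int\|u\|_\cV^2$-contribution into $2\kappa\E\!\int\|u\|_\cV^2$ and setting $X_0=\E\|u(\tau_1)\|_\cH^2$, $\Psi_i=\E\|\psi_i\|_{L^{p_i}}^2$ yields
\begin{align*}
\E\!\int_{\tau_1}^{\tau_2\wedge\sigma_n}\!\!\bigl(\|u\|_\cV^2 + \|\wt B\|_{\calL_2}^2 + \|\wt C\|_{L^2(Z;\nu)}^2\bigr)\,ds \leq C_1\Bigl[X_0 + \!\int_0^T\!\phi(s)\,\E\|u(s\wedge\sigma_n)\|_\cH^2\,ds + \E\|\psi\|_{L^1} + \sum_i\Psi_i + \varepsilon\,\E\sup\|u\|_\cH^2\Bigr],
\end{align*}
with $C_1 = C_1(\eta,\kappa)$.

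Step two (sup): apply $\sup_{t\leq\tau_2\wedge\sigma_n}$ and then $\E$ to the core inequality. Proposition~\ref{prop: BDG} with Young gives $\E\sup|M_k| \leq \varepsilon'\E\sup\|u\|_\cH^2 + C_{\mathrm{BDG}}^2\,\varepsilon'^{-1}\E\!\int\|\wt B\|^2$ for $k=1$ (analogously for $k=2$ with $\|\wt C\|^2$); since $M_3 = D - I$ is the difference of two nondecreasing processes with $\E D(\tau_2\wedge\sigma_n) = \E I(\tau_2\wedge\sigma_n)$, one has $\E\sup|M_3| \leq 2\E\!\int\|\wt C\|^2$. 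Substituting the pre-sup bound for the remaining $\E\!\int\|\wt B\|^2$ and $\E\!\int\|\wt C\|^2$ and choosing $\varepsilon'=\sqrt{\varepsilon}$ makes the total coefficient of $\E\sup\|u\|_\cH^2$ on the right of order $C_1\sqrt{\varepsilon}$, which is below $1/2$ for $\varepsilon$ sufficiently small in $\eta,\kappa$. After absorption, the only self-referential term left is $\int_0^T\phi(s)\,\E\sup_{r\leq s\wedge\sigma_n}\|u(r)\|_\cH^2\,ds$, and Gronwall's inequality closes the $\sup$-bound with a constant of the form $C_3(T,\eta,\kappa,\|\phi\|_{L^1})$. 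Reinserting this into the step-one bound settles the integral norms, and sending $\sigma_n\uparrow\infty$ via monotone convergence removes the localization. The principal obstacle is precisely the tension between the universal BDG constant and the freedom in $\eta>0$, which both mandates the two-phase scheme and is reflected in the stated $\eta$-dependence of the final constant.
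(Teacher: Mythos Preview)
Your proposal is correct and follows essentially the same two-phase strategy as the paper: It\^o's formula plus the coercivity reserve $\eta$ to obtain a pathwise core inequality, a first pass taking expectations (so the three martingales vanish) to control $\E\!\int(\|u\|_\cV^2+\|\wt B\|^2+\|\wt C\|^2)$ up to an $\varepsilon\,\E\sup\|u\|_\cH^2$ remainder, and a second pass applying $\sup$ and BDG with the step-one bound substituted for the covariance integrals, followed by a two-parameter choice ($\varepsilon',\varepsilon$ versus the paper's $\delta,\varepsilon$) to absorb $\E\sup\|u\|_\cH^2$. The one structural difference is the placement of Gronwall: the paper applies it early to $t\mapsto\E\|u(t)\|_\cH^2$ in Step~1 (so that $\phi$ disappears before the $\sup$-step), whereas you carry the $\phi$-term through and apply Gronwall at the very end to $s\mapsto\E\sup_{r\leq s}\|u(r)\|_\cH^2$. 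Both orderings work and yield the same dependence of $C$ on $\|\phi\|_{L^1}$.

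One L\'evy-specific detail you gloss over: the absorption in step two requires $\E\sup_{t\leq\sigma_n}\|u(t)\|_\cH^2<\infty$, but if $\sigma_n$ is (as you suggest) a hitting time of $\|u\|_\cH$, a jump at $\sigma_n$ may render $\|u(\sigma_n)\|_\cH$ unbounded even though $\sup_{t<\sigma_n}\|u(t)\|_\cH\leq n$. The paper's Step~3 handles this explicitly by bounding $\E\|\Delta u(\sigma_n)\|_\cH^2$ via the It\^o isometry for the Poisson integral, using that $\|u(t-)\|_\cH<n$ on $\llbracket 0,\sigma_n\rrbracket$. This is a routine but necessary fix in the jump setting.
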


\begin{proof}
    Throughout the proof, let $C = C(T,\eta, \kappa,\|\phi\|_{L^1([0,T])})$ be a constant that can change its value from line to line.
    To ease notation, we only treat the case $\tau_1 = 0$, $\tau_2 = T$, but the more general case follows verbatim with the same proof.

    Assume in a first instance that
    \begin{equation}
    \label{eq: moment u}
        \E\sup_{t\in [0,T]}\|u(t)\|_\cH^2 + \int_0^T\|u(s)\|_\cV^2\,ds<\infty.
    \end{equation}
    By It\^o's formula (Corollary~\ref{cor: ito}) we have a.s. for all $t\in [0,T]$ that
    \begin{align}
    \label{eq: apriori ito nonlinear}
    \begin{split}
        \| u(t) \|_\cH^2 ={} &\| u_0 \|_\cH^2 - 2 \int_0^t   \langle \wt A(s, u(s)), u(s) \rangle \,ds + \int_0^t \|\wt B(s,u(s)) \|_{\calL_2(U,\cH)}^2\, ds \\
        &+ 2 \int_0^t \langle \wt B(s,u(s)), u(s) \rangle \,dW(s) \\
        &+ 2\int_0^t\int_Z (\wt C(s,u(s-),z),u(s-))_\cH\,\Nt(dz,ds)
        \\
        &+\int_0^t\int_Z  \| \wt C(s,u(s-),z)\|^2_{\cH} \,N(dz,ds).
    \end{split}
    \end{align}
    Note that~\eqref{eq: integrability condition ito nonlinear} ensures the integrability condition in Corollary~\ref{cor: ito}. In particular, the integrals on the right-hand side of \eqref{eq: apriori ito nonlinear} are well-defined martingales with vanishing expectations.

    \textbf{Step 1}: preliminary estimates.
    Taking the expectation and using the coercivity condition \eqref{eq: coercivity apriori} in conjunction with standard properties of the martingales we immediately get
    \begin{align}
    \label{eq: bound sup E u nonlinear}
        &\E\| u(t) \|_\cH^2 + 2\eta\E\int_0^t \|\wt B(s,u(s)) \|_{\calL_2(U,\cH)}^2\,ds \\ &  \qquad \qquad + 2\eta\E\int_0^t\int_Z\|\wt C(s,u(s),z)\|^2_\cH\,\nu(dz)ds + 2\kappa\E\int_0^t \|u(s)\|_\cV^2\,ds \\
        \leq{} &\E\| u_0 \|_\cH^2 +2\E\int_0^t \phi(s)\|u(s)\|_\cH^2\,ds + 2\E\int_0^t\psi(s)\,ds + \sum_{i=1}^m 2\E\int_0^t \psi_i(s) \| u(s) \|_{1-\theta_i}\,ds.
    \end{align}
    Since $\phi$ is positive and non-random, we calculate with Fubini--Tonelli
    \[\E \int_0^t \phi(s) \| u(s) \|_\cH^2 \,ds = \int_0^t \phi(s) \E \| u(s) \|^2_\cH ds.\]
    Therefore, we can apply Gr\"onwall's inequality to $\E \| u(s) \|^2_{\cH}$ to obtain
    \begin{gather}
    \label{eq: apriori sup1}
        \E\| u(t) \|_\cH^2
        \leq C\Bigl( \E\|u_0\|^2_\cH + \E\|\psi\|_{L^1([0,T])} + \sum_{i=1}^m \E \int_0^t \psi_i(s) \| u(s) \|_{1-\theta_i} \,ds \Bigr),
    \end{gather}
    where we import a dependence on $T$ and $\|\phi\|_{L^1([0,T])}$ into the constant $C$.
    The last two displayed equations also imply that
    \begin{align}
    \label{eq: apriori M u term}
        &\E \int_0^t \phi(s) \| u(s) \|_\cH^2 \,ds \\ & \leq C \| \phi \|_{L^1([0,T])} \Bigl( \E\|u_0\|^2_\cH + \E\|\psi\|_{L^1([0,T])} + \sum_{i=1}^m \E \int_0^t \psi_i(s) \| u(s) \|_{1-\theta_i} \,ds \Bigr).
    \end{align}
    Plugging this back into~\eqref{eq: bound sup E u nonlinear} yields
    \begin{align}
    \label{eq: apriori sup2}
        &\E\| u(t) \|_\cH^2 + 2\eta\E\int_0^t \|\wt B(s,u(s)) \|_{\calL_2(U,\cH)}^2\,ds \\ &\qquad + 2\eta\E\int_0^t\int_Z\|\wt C(s,u(s),z)\|^2_\cH\,\nu(dz)ds + 2\kappa\E\int_0^t \|u(s)\|_\cV^2\,ds \\
        & \leq  C\Bigl( \E\|u_0\|^2_\cH + \E\|\psi\|_{L^1([0,T])} + \sum_{i=1}^m \E \int_0^t \psi_i(s) \| u(s) \|_{1-\theta_i} \,ds \Bigr).
    \end{align}

    Fix $i$ in the sum on the right-hand side of~\eqref{eq: apriori sup2}. We want to further estimate the term $\E \int_0^t \psi_i(s) \| u(s) \|_{1-\theta_i} \,ds$.
    Recall that $\psi_i$ is associated with the admissible pair $(p_i, \theta_i)$.
    Using Hölder's inequality and Young's inequality, we
 deduce
    \begin{align}
        \E \int_0^t \psi_i(s) \| u(s) \|_{1 - \theta_i} \,ds &\leq \Bigl( \E \Bigl( \int_0^t \psi_i(s)^{p_i} \,ds \Bigr)^\frac{2}{p_i} \Bigr)^\frac{1}{2} \Bigl( \E \Bigl( \int_0^t \| u(s) \|_{1 - \theta_i}^{p_i'} \,ds \Bigr)^\frac{2}{p_i'} \Bigr)^\frac{1}{2} \\
        &\leq C_\eps \E \| \psi_i \|_{L^{p_i}([0,T])}^2 + \eps \E \| u \|_{L^{p_i'}([0,t], \cV_{1 - \theta_i})}^2.
    \end{align}
    By admissibility of $(p_i, \theta_i)$ and using Lemma~\ref{lem:admissible} one has the bound
    \begin{align}
        \E \| u \|_{L^{p_i'}([0,t], \cV_{1 - \theta_i})}^2 \leq C \bigl(\E \| u \|_{L^2([0,t], \cV)}^2 + \E\| u \|_{L^\infty([0,t], \cH)}^2 \bigr),
    \end{align}
    for a numerical constant $C > 0$.
    In summary, this gives
    \begin{align}
    \label{eq: apriori psi_j estimate}
        &\sum_{i=1}^m \E \int_0^t \psi_i(s) \| u(s) \|_{1 - \theta_i} \,ds \\
        \leq{} & \eps \E \| u \|_{L^2([0,t], \cV)}^2 + \eps \E \| u \|_{L^\infty([0,t], \cH)}^2 + \sum_{i=1}^m C_\eps \E \| \psi_i \|_{L^{p_i}([0,T])}^2.
    \end{align}
    We plug this back into~\eqref{eq: apriori sup2}. The squared $L^2(\Omega \times [0,t], \cV)$-norm appearing in~\eqref{eq: apriori psi_j estimate} is finite by~\eqref{eq: moment u}, so we can absorb it into the left-hand side of~\eqref{eq: apriori sup2} by choosing $\eps$ sufficiently small. The resulting estimate then reads
    \begin{align}
    \label{eq: apriori sup3}
        &\E\| u(t) \|_\cH^2 + 2\eta\E\int_0^t \|\wt B(s,u(s)) \|_{\calL_2(U,\cH)}^2\,ds \\ & + 2\eta\E\int_0^t\int_Z\|\wt C(s,u(s),z)\|^2_\cH\,\nu(dz)ds + \kappa\E\int_0^t \|u(s)\|_\cV^2\,ds \\
        & \leq C\Bigl( \E\|u_0\|^2_\cH + \E\|\psi\|_{L^1([0,T])} + C_\eps \sum_{i=1}^m \E \| \psi_i \|_{L^{p_i}([0,T])}^2 + \eps \E \sup_{s \leq t} \| u(s) \|_\cH^2 \Bigr).
    \end{align}
    For the moment we are not yet in the position to absorb also the term $\eps \E \sup_{s \leq t} \| u(s) \|_\cH^2$ into the left-hand side due to the wrong order of supremum and expectation for the term on the left-hand side. This will be our next task in the course of this proof.

    \textbf{Step 2}: estimate for $\E \sup_{s \leq t} \| u(s) \|_\cH^2$.
    To obtain an estimate for $\E\sup_{s\leq t}\|u(s)\|_\cH^2$ we have to bound the supremum of the martingale term. This uses a common technique based on the Burkholder--Davis--Gundy inequality. This technique is quite classical and the difference in estimating the Brownian integral to estimating the compensated Poisson integral
    is only the additional integration over $Z$. Hence we only present the treatment of the Poisson martingale term in detail.
    In this sense, by virtue of the Burkholder--Davis--Gundy inequality and Hölder's inequality we get for $\delta>0$
    \begin{align}
        &\E\sup_{s\leq t}\Big|\int_0^s\int_Z \big( \wt C(r,u(r-),z),u(r-)\big)_\cH\,\Nt(dz,dr) \Big| \\
        \leq{} &C \E\Big(\int_0^t\|( \wt C(s,u(s),\cdot),u(s))_\cH\|^2_{L^2(Z;\nu)} ds  \Big)^{1/2}\\
        \leq{} &C\E\sup_{s\leq t}\|u(s)\|_\cH\Big(\int_0^t\| \wt C(s,u(s),\cdot)\|_{L^2(Z,\cH;\nu)}^2ds\Big)^{1/2}\\
        \leq{} &\delta\E\sup_{s\leq t}\|u(s)\|_\cH^2 + C_{\delta} \E\int_0^t\| \wt C(s,u(s),\cdot)\|_{L^2(Z,\cH;\nu)}^2 ds.
    \end{align}
    Now we plug~\eqref{eq: apriori sup2} into the previous bound to get
    \begin{align}
    \label{eq: burkholder N tilde}
        &\E\sup_{s\leq t}\Big|\int_0^s\int_Z \big( \wt C(r,u(r-),z),u(r-)\big)_\cH\,\Nt(dz,dr) \Big| \\
        \leq{} &\delta\E\sup_{s\leq t}\|u(s)\|_\cH^2 + C_\delta C \Big(\E\|u_0\|^2_\cH + \E\|\psi\|_{L^1([0,T])} + \sum_{i=1}^m C_\eps \E \| \psi_i \|_{L^{p_i}([0,t])}^2 + \eps \E \sup_{s \leq t} \| u(s) \|_\cH^2 \Big) \\
        ={} &(\delta + C_\delta C \eps) \E\sup_{s\leq t}\|u(s)\|_\cH^2 + C_\delta C \Big(\E\|u_0\|^2_\cH + \E\|\psi\|_{L^1([0,T])} + \sum_{i=1}^m C_\eps \E \| \psi_i \|_{L^{p_i}([0,t])}^2 \Big).
    \end{align}
    Note that $C$ is now additionally depending on $\eta$.
In the same way we also obtain
\begin{align}
  \label{eq: bound dW term apriori nonlinear}
 \E \sup_{s \leq t} \Big| \int_0^s \langle u(r), &\wt{B}(r)u(r) \rangle \, dW(r) \Big| \\
        &\leq{} (\delta + C_\delta C \eps) \E\sup_{s\leq t}\|u(s)\|_\cH^2 \\
        \,&+ C_\delta C \Big(\E\|u_0\|^2_\cH + \E\|\psi\|_{L^1([0,T])} + \sum_{i=1}^m C_\eps \E \| \psi_{i} \|_{L^{p_i}([0,t])}^2 \Big).
\end{align}
 Next, by virtue of the coercivity condition \eqref{eq: coercivity apriori}, followed by~\eqref{eq: apriori M u term},~\eqref{eq: apriori psi_j estimate} and~\eqref{eq: apriori sup3}, we obtain
\begin{align}
\label{eq: apriori coercivity terms}
    &\E\sup_{s\leq t} \Big| - 2 \int_0^s   \langle \wt A(r) u(r), u(r) \rangle \, dr + \int_0^s \|\wt B(r) u(r)\|_{\calL_2(U,\cH)}^2\, dr \Big| \\
    \leq{} &2\E\int_0^t \phi(s)\|u(s)\|_\cH^2\,ds + 2\E\int_0^t\psi(s)\,ds + \sum_{i=1}^m 2\E \int_0^t \psi_i(s) \| u(s) \|_{1-\theta_i} \,ds \\
    \leq{} &C\E\|u_0\|^2_\cH + C\E\|\psi\|_{L^1([0,T])} + \sum_{i=1}^m C\E \int_0^t \psi_i(s) \| u(s) \|_{1-\theta_i} \,ds \\
    \leq{} &C\E\|u_0\|^2_\cH + C\E\|\psi\|_{L^1([0,T])} + \sum_{i=1}^m CC_\eps \E \| \psi_i \|_{L^{p_i}([0,t])}^2 + \eps C \E \| u \|_{L^\infty([0,t], \cH)}^2 + C \E \| u \|_{L^2([0,t], \cV)}^2 \\
    \leq{} &C\E\|u_0\|^2_\cH + C\E\|\psi\|_{L^1([0,T])} + \sum_{i=1}^m CC_\eps \E \| \psi_i \|_{L^{p_i}([0,t])}^2 + \eps C \| u \|_{L^\infty([0,t], \cH)}^2,
\end{align}
where the last step imports a dependence on $\kappa$ for $C$.
Finally, using that $N(dz,dr)$ is a non-negative measure, followed by Proposition~\ref{prop: prelim poisson} and~\eqref{eq: apriori sup2}, we deduce
\begin{align}
\label{eq: apriori final C term}
&\E\sup_{s\leq t} \Bigl| \int_0^s\int_Z \|\wt C(r,u(r-),z)\|_\cH^2\,N(dz,dr) \Bigr| \\
={} &\E\int_0^t\int_Z \|\wt C(r,u(r-),z)\|_\cH^2\,N(dz,dr) \\
={} &\E\int_0^t\int_Z \|\wt C(r,u(r),z)\|_\cH^2\,\nu(dz)ds \\
\leq{} &C\Bigl( \E\|u_0\|^2_\cH + \E\|\psi\|_{L^1([0,T])} + \sum_{i=1}^m C_\eps \E \| \psi_i \|_{L^{p_i}([0,t])}^2 + \eps \E \sup_{s \leq t} \| u(s) \|_\cH^2 \Bigr).
\end{align}
Thus, taking first the supremum in time and then the expectation on both sides of \eqref{eq: apriori ito nonlinear}, and using
\eqref{eq: burkholder N tilde},~\eqref{eq: bound dW term apriori nonlinear},~\eqref{eq: apriori coercivity terms} and~\eqref{eq: apriori final C term}, we derive
\begin{align}
\label{eq: bound E sup u nonlinear apriori 1}
    &\E\sup_{s\leq t}\|u(s)\|_\cH^2 \\
    \leq{} &C C_\delta \E\|u_0\|^2_\cH + (\delta + CC_\delta \eps) \E\sup_{s\leq t}\|u(s)\|_\cH^2 + C C_\delta \E\|\psi\|_{L^1([0,T])} + \sum_{i=1}^m C C_\eps C_\delta \E \| \psi_i \|_{L^{p_i}([0,t])}^2.
\end{align}
Now choose first $\delta$ and afterwards $\eps$ (relative to $C C_\delta$) sufficiently small to absorb the term $\E\sup_{s\leq t}\|u(s)\|_\cH^2$, which is finite by~\eqref{eq: moment u}, into the left-hand side, to give the first desired estimate
\begin{align}
    \E\sup_{s\in [0,T]}\|u(s)\|_\cH^2 & \leq C \Bigl( \E\|u_0\|^2_\cH + \E\|\psi\|_{L^1([0,T])} + \sum_{i=1}^m \E \| \psi_i \|_{L^{p_i}([0,T])}^2 \Bigr),
\end{align}
however still under the assumption \eqref{eq: moment u}.
By inserting the last bound into~\eqref{eq: apriori sup3} we also find
\begin{align}
        &\E\int_0^T \|u(s)\|_\cV^2\,ds + \E\int_0^T \|\wt B(s,u(s)) \|_{\calL_2(U,\cH)}^2\,ds + \E\int_0^T\int_Z\|\wt C(s,u(s),z)\|^2_\cH\,\nu(dz)ds \\
        \leq{} &C \Bigl( \E\|u_0\|^2_\cH + \E\|\psi\|_{L^1([0,T])} + \sum_{i=1}^m \E \| \psi_i \|_{L^{p_i}([0,T])}^2 \Bigr).
    \end{align}

\textbf{Step 3}: elimination of condition~\eqref{eq: moment u}.
For the general case we first recall that a strong solution $(u,T)$ to \eqref{eq: spde estimate lemma} satisfies $u\in D([0,T],\cH)\cap L^2([0,T],\cV)$ almost surely, so that the stopping times
\begin{align}
    \tau_n:=\inf \bigl\{t\in [0,T]: \sup_{s\in [0,t]}\|u(s)\|_\cH^2 + \int_0^t\|u(s)\|_\cV^2\,ds\geq n \bigr\}\wedge T
\end{align}
verify $\tau_n\to T$ almost surely as $n\to\infty$.
To see that also at $\tau_n$ we have integrability of $\|u(\tau_n)\|_{\cH}^2$ we first observe that
\begin{align}
\Delta u(\tau_n) &= \Delta \int_0^{\tau_n}\int_Z \wt{C}(t,u(t-),z)\,\wt{N}(dz,dt) \\
&= \int_0^{\tau_n}\int_Z \wt{C}(t,u(t-),z)\,\wt{N}(dz,dt)
- \int_0^{\tau_n-}\int_Z \wt{C}(t,u(t-),z)\,\wt{N}(dz,dt)
\end{align}
almost surely. Hence, by  a similar calculation as in \eqref{eq: apriori final C term}, we get
\begin{align}
    \E\|\Delta u(\tau_n)\|_{\cH}^2
    &\leq 4\E\int_0^{\tau_n}\int_Z \|\wt{C}(t,u(t-),z)\|_{\cH}^2\,\nu(dz)dt<\infty
\end{align}
since $\|u(t-)\|_{\cH}^2<n$ on $\llbracket 0,\tau_n\rrbracket$ by definition of $\tau_n$. Hence we have $\E\sup_{s\leq \tau_n}\|u(s)\|_{\cH}^2<\infty$, so that on $[0,\tau_n]$ the integrability condition \eqref{eq: moment u} holds.
Applying the result from Step~2 to the problem \eqref{eq: spde estimate lemma} with both sides stopped after $\tau_n$
yields for all $n\geq 1$,
\begin{gather*}
\label{eq: sup estimate with stopping times}
        \E \sup_{t \in [0,\tau_n]} \| u(t) \|_\cH^2
        +\E\int_0^{\tau_n}\|u(s)\|_\cV^2\,ds
        +  \E\int_0^{\tau_n} \|\wt B(s,u(s)) \|_{\calL_2(U,\cH)}^2\,ds \\
        + \E\int_0^{\tau_n}\int_Z \|\wt C(s,u(s),z)\|^2_\cH\,\nu(dz)ds
        \leq C \E \| u_0 \|_\cH^2 + C\E \| \psi \|_{L^1([0, T])} + \sum_{i=1}^m C \E \| \psi_i \|_{L^{p_i}([0,T])}^2,
    \end{gather*}
for all $n\in\bN$, where we recall that the constant $C$ does not depend on $n$.
Hence, by Fatou's lemma, we obtain
\begin{align}
    \E \sup_{s \in [0,T]}\|u(s)\|_\cH^2 & = \E\sup_{s \in [0,T]}\liminf_{n\to\infty} \|u(s\wedge\tau_n)\|_\cH^2\\
    &\leq \E\liminf_{n\to\infty} \sup_{s\leq \tau_n} \|u(s\wedge\tau_n)\|_\cH^2\\
    &\leq \liminf_{n\to\infty}\E \sup_{s\leq \tau_n} \|u(s\wedge\tau_n)\|_\cH^2\\
    &\leq C \E \| u_0 \|_\cH^2 + C\E \| \psi \|_{L^1([0,T])} + \sum_{i=1}^m C\E \| \psi_i \|_{L^{p_i}([0,T])}^2.
\end{align}
A similar application of Fatou's lemma to the remaining terms finishes the proof.
\end{proof}

\begin{remark}
    Proposition~\ref{prop: nonlin apriori} explains the significance of the different terms in the coercivity condition~\eqref{eq: coercivity apriori}. As a first guideline, the multiplicative constant in~\eqref{eq: sup estimate nonlinear lemma} depends on $\phi$, whereas the $\psi$-terms give additive terms on the right-hand side of the estimate. The proof highlights that the treatment of the $\psi_i$-terms is more challenging than the more traditional $\psi$-term. The $\psi_i$-term correspond to non-standard forcing terms that do not belong to $L^2([0,T], \cV^*)$, compare with Lemma~\ref{lem:lin coercive} and Proposition~\ref{prop: lin apriori}.
\end{remark}

\section{Stochastic maximal $L^2$-regularity}
\label{sec: linear}

\noindent Fix $T>0$. In this section we consider the linear problem
\begin{equation}
\label{eq:spde lin}
\tag{LP}
    \begin{split}
        du(t)+\wt{A}_0(t) u(t) \,dt &= f(t) \,dt + \bigl(\wt{B}_0(t)u(t) + g(t)\bigr) \, dW(t)\\
        &\qquad+\int_{Z} \big(\wt{C}_0(t,z) u(t-) + h(t,z)\big) \, \Nt(dz,dt) \\
        u(0) &= u_0
    \end{split}
\end{equation}
on $[0,T]$.
Here $\wt{A}_0 = \wt{A}_L + \wt{A}_S$, where
\begin{align*}
\wt{A}_L:\Omega\times[0,T]&\to \cL(\cV, \cV^*) \ \ \text{is $\cP$-measurable,}
\\ \wt{A}_S:\Omega\times[0,T]&\to \cL(\cV_{\beta_{A}}, \cV_{\alpha_{A}}) \ \ \text{is $\cP$-measurable,}
\\ \wt{B}_0:\Omega\times[0,T]&\to \cL(\cV_{\beta_{B}}, \cL_2(U, \cH)) \ \ \text{is $\cP$-measurable,}
\\ \wt{C}_0:\Omega\times[0,T]&\to \cL(\cV_{\beta_{C}}, L^2(Z, \cH;\nu)) \ \ \text{is $\cP^-$-measurable,}
\end{align*}
where  the parameters satisfy $0 \leq \alpha_{A} \leq \nicefrac{1}{2}$ and $\beta_{A},\beta_{B},\beta_{C} \in [\nicefrac{1}{2}, 1]$.
Additionally, we introduce the following assumption.
\begin{assumption}
    \label{assumption linear operators}
    For the operators $\wt{A}_0$, $\wt{B}_0$ and $\wt{C}_0$ we assume the following.
    \begin{enumerate}
        \item There are a constant $C_A\geq 0$ and non-negative functions $K_{A} \in L^{r_A}([0,T])$, $K_{B} \in L^{r_B}([0,T])$ and $K_{C} \in L^{r_C}([0,T])$ such that a.s. for all $v \in \cV$ and $t\in [0,T]$
        \begin{align}
        \label{eq: bounded lin wellpos}
            \| \wt{A}_L(t) v \|_{\cV^*} &\leq C_A \| v \|_{\cV},
            \\
            \| \wt{A}_S(t) v \|_{\alpha_{A}} &\leq K_{A}(t) \| v \|_{\beta_{A}}, \\
            \|\wt{B}_0(t) v\|_{\calL_2(U,\cH)} &\leq K_{B}(t) \| v \|_{\beta_{B}}, \\
             \| \wt{C}_0(t,\cdot) v \|_{L^2(Z,\cH;\nu)} &\leq K_{C}(t) \| v \|_{\beta_{C}},
        \end{align}
        where $r_A = {(1 + \alpha_{A} - \beta_{A})^{-1}}$, $r_B = (1 - \beta_{B})^{-1}$ and $r_C = (1 - \beta_{C})^{-1}$ with the convention that $0^{-1} =\infty$.

        \item There are a constant $\kappa > 0$ and a non-negative function $\phi \in L^1([0,T])$ such that a.s. for all $v \in \cV$ and almost every $t\in [0,T]$,
        \begin{align}
        \label{eq: coercivity lin wellpos}
            \< v, \wt{A}_0(t)v \> - \frac12 \|\wt{B}_0(t)v \|_{\calL_2(U,\cH)}^2 - \frac12  \| \wt{C}_0(t,&\cdot)v \|_{L^2(Z,\cH;\nu)}^2 \\
            &\geq \kappa \| v \|_\cV^2 - \phi(t) \| v \|_\cH^2.
        \end{align}
    \end{enumerate}
\end{assumption}
\begin{remark}
    We could also allow $\wt{A}_S = \sum_{i=1}^{m_a} \wt{A}_S^i$, where $\wt{A}_S^i$ is subject to the above assumptions with $\alpha_A$ and $\beta_A$ depending on $i$, for every $i$. To simplify notation, we stick to the case $m_a = 1$. The same holds for $\wt{B}_0$ and $\wt{C}_0$.
\end{remark}

A prototypical example for Assumption~\ref{assumption linear operators} will be $\wt{A}_0(t) = A_0(t, u(t))$, $\wt{B}_0(t) = B_0(t, u(t))$ and $\wt{C}_0(t,z) = C_0(t,u(t),z)$, where the operators $A_0$, $B_0$ and $C_0$ are as in our main result on local existence and uniqueness (Theorem~\ref{thm wellposedness local}) and $u$ is some suitable given process.

\subsection{Compatibility with the variational setting}
\label{subsec:compatibility_var}

In the first step, we show that if~\eqref{eq:spde lin} is subject to Assumption~\ref{assumption linear operators}, then it can be captured within the framework presented in Section~\ref{sec:variational}. As a consequence, the (non-)linear a-priori estimate from Proposition~\ref{prop: nonlin apriori} translates to the current setting. We will state it in the next subsection and conclude the existence and uniqueness  of~\eqref{eq:spde lin} with it.

The verification of Assumption~\ref{assumption apriori operators} will be done in the next two lemmas, which are relatively straightforward.

\begin{lemma}
\label{lem: operator bounds}
    Suppose Assumption~\ref{assumption linear operators}.
    Set $p_A = (\alpha_{A} + \nicefrac{1}{2})^{-1}$.
    Then $(p_A, \alpha_{A})$ is admissible and for all $v \in L^\infty([0,T], \cH) \cap L^2([0,T], \cV)$ it holds a.s.
    \begin{align}
        \| \wt{A}_S v \|_{L^{p_A}([0,T], \cV_{\alpha_{A}})} &\leq \| K_{A} \|_{L^{r_A}([0,T])} \| v \|_{L^2([0,T], \cV)}^{2\beta_{A}-1} \| v \|_{L^\infty([0,T], \cH)}^{2-2\beta_{A}}.
\\       \| \wt{B}_0 v \|_{L^{2}([0,T], \cL_2(U,\cH))} &\leq \| K_{B} \|_{L^{r_B}([0,T])} \| v \|_{L^2([0,T], \cV)}^{2\beta_{B}-1} \| v \|_{L^\infty([0,T], \cH)}^{2-2\beta_{B}}, \\
        \| \wt{C}_0 v \|_{L^2([0,T],L^2(Z,\cH;\nu))} &\leq \| K_{C} \|_{L^{r_C}([0,T])} \| v \|_{L^2([0,T], \cV)}^{2\beta_{C}-1} \| v \|_{L^\infty([0,T], \cH)}^{2-2\beta_{C}}.
    \end{align}
\end{lemma}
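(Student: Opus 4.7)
The plan is straightforward: the pointwise operator bounds from Assumption~\ref{assumption linear operators} will be integrated using Hölder's inequality to separate the $K$-factor from the norm of $v$, and then the standard interpolation inequality $\|v\|_{\theta} \leq \|v\|_{\cH}^{2-2\theta}\|v\|_{\cV}^{2\theta-1}$ for $\theta \in [1/2,1]$ will be invoked to convert the $\cV_{\beta}$-norm of $v$ into a product of $\cH$- and $\cV$-norms. The main point is the bookkeeping: one must verify that the various Hölder exponents combine consistently so that the power of $\|v\|_\cV$ inside the integral becomes exactly $2$ (so that $\|v\|_{L^2([0,T],\cV)}$ appears on the right-hand side), with $\|v\|_{L^\infty([0,T],\cH)}$ absorbing the remaining factor.

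First I would verify admissibility of $(p_A,\alpha_A)$. Since $\alpha_A \in [0,\nicefrac{1}{2}]$, we have $\alpha_A + \nicefrac{1}{2} \in [\nicefrac{1}{2},1]$, so $p_A = (\alpha_A+\nicefrac{1}{2})^{-1} \in [1,2]$. Moreover $\nicefrac{1}{p_A} - \nicefrac{1}{2} = \alpha_A$, which gives admissibility with equality.

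For the bound on $\wt{A}_S$, apply~\eqref{eq: bounded lin wellpos} pointwise and then Hölder on $[0,T]$ with exponents $r_A/p_A$ and its conjugate. The conjugate exponent $q$ defined by $\nicefrac{1}{q} = \nicefrac{1}{p_A} - \nicefrac{1}{r_A}$ evaluates to $\nicefrac{1}{q} = (\alpha_A + \nicefrac{1}{2}) - (1 + \alpha_A - \beta_A) = \beta_A - \nicefrac{1}{2}$, so $q = 2/(2\beta_A-1)$. Then by interpolation $\|v(t)\|_{\beta_A}^q \leq \|v(t)\|_{\cH}^{q(2-2\beta_A)} \|v(t)\|_{\cV}^{q(2\beta_A-1)} = \|v(t)\|_{\cH}^{q(2-2\beta_A)} \|v(t)\|_{\cV}^{2}$, where the key cancellation $q(2\beta_A-1)=2$ is precisely what makes the argument work. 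Pulling $\|v\|_{L^\infty([0,T],\cH)}^{2-2\beta_A}$ out and integrating the remaining $\|v\|_\cV^2$ yields the claimed estimate, since $\nicefrac{2}{q} = 2\beta_A - 1$.

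The bounds on $\wt{B}_0$ and $\wt{C}_0$ follow the same pattern, now applying Hölder's inequality to the $L^2$-integrand with exponents $r_B/2$ and $(r_B/2)'$ (respectively $r_C/2$ and $(r_C/2)'$) before taking square roots. Here the conjugate exponent of $r_B/2$, defined via $\nicefrac{1}{q_2} = \nicefrac{1}{2} - \nicefrac{1}{r_B} = \beta_B - \nicefrac{1}{2}$, again gives $q_2(2\beta_B-1)=2$ after interpolation, so the same mechanism produces $\|v\|_{L^2([0,T],\cV)}^{2\beta_B-1}$ and $\|v\|_{L^\infty([0,T],\cH)}^{2-2\beta_B}$. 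No substantive obstacle is expected; the exponents have been chosen precisely so that everything balances, and the only verification needed is that $r_B,r_C \geq 2$ (which holds since $\beta_B,\beta_C \in [\nicefrac{1}{2},1]$) so that the Hölder step is legitimate.
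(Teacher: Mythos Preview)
Your proposal is correct and follows essentially the same approach as the paper: apply the pointwise bound from Assumption~\ref{assumption linear operators}, then H\"older's inequality with $1/p_A = 1/r_A + 1/q_A$ where $q_A = 2/(2\beta_A - 1)$, and finally the interpolation inequality to split $\|v\|_{\beta_A}$ into $\cH$- and $\cV$-norms. The only cosmetic difference is that the paper packages the last step as an appeal to Lemma~\ref{lem:admissible} (verifying that $(q_A',1-\beta_A)$ is admissible), whereas you unpack that lemma inline via the pointwise interpolation bound; the content is identical.
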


\begin{proof}
    The calculations for $\wt{B}_0$ and $\wt{C}_0$ are similar, so we concentrate on $\wt{A}_S$.
    Set $\nicefrac{1}{q_A} = \beta_{A} - \nicefrac{1}{2}$ and write $\nicefrac{1}{p_A} = (1 + \alpha_{A} - \beta_{A}) + (\beta_{A} - \nicefrac{1}{2}) = \nicefrac{1}{r_A} + \nicefrac{1}{q_A}$. Then Assumption~\ref{assumption linear operators} and Hölder's inequality yield a.s.
    \begin{align}
    \label{eq:wt A Hoelder}
        \| \wt{A}_S v \|_{L^{p_A}([0,T], \cV_{\alpha_{A}})} \leq \left( \int_0^T (K_{A}(t) \| v(t) \|_{\beta_{A}})^{p_A} dt \right)^\frac{1}{p_A} \leq \| K_{\wt{A}_0} \|_{L^{r_A}([0,T])} \| v \|_{L^{q_A}([0,T], \cV_{\beta_{A}})}.
    \end{align}
    To conclude, we want to estimate $\| v \|_{L^{q_A}([0,T], \cV_{\beta_{A}})}$ using Lemma~\ref{lem:admissible}. This requires that $(q'_A,1-\beta_{A})$ is an admissible pair. Indeed, we have $$1-\beta_{A} = 1 - \frac{1}{2} - \frac{1}{q_A} = \frac{1}{q'_A} - \frac{1}{2},$$
    where we used the definition of $q_A$ in the first step. This completes the proof.
\end{proof}

For the Lemma below, recall that $\wt{A}_0 = \wt{A}_L + \wt{A}_S$ in Assumption~\ref{assumption linear operators}.

\begin{lemma}
    \label{lem:lin coercive}
    Suppose that the operators $\wt{A}_0$, $\wt{B}_0$ and $\wt{C}_0$ satisfy Assumption~\ref{assumption linear operators}.
    Let $f_i\in L^0(\Omega,L^{p_i}([0,T],\cV_{\theta_i}))$, $g\in L^0(\Omega,L^2([0,T],\cL_2(U,\cH)))$ and $h\in L^0(\Omega,L^2([0,T], L^2(Z,\cH;\nu)))$ be $\cP$-measurable, $\cP$-measurable and $\cP^-$-measurable, respectively, where $(p_i,\theta_i)$ is an admissible pair for each $i\in \{1, \ldots, m_f\}$. Set $f = \sum_{i=1}^{m_f} f_i$ and define  for $v\in \cV$ and $t\in [0,T]$
    \begin{align}
        \wt{A}(t,v) = \wt{A}_L(t)v+\wt{A}_S(t)v - f, \quad \wt{B}(t,v) = \wt{B}_0(t)v + g(t), \quad \wt{C}(t,v,z) = \wt{C}_0(t,z)v + h(t,z).
    \end{align}
    Then the triple $(\wt{A}, \wt{B}, \wt{C})$ satisfies
    Assumption~\ref{assumption apriori operators} as well as the moment condition~\eqref{eq: integrability condition ito nonlinear}. More precisely, the coercivity condition~\eqref{eq: coercivity apriori} holds for some $\eta > 0$  depending on $\kappa$ with $$\psi(t) = C\Bigl( \|g(t)\|_{\calL_2(U,\cH)}^2 + \| h(t,\cdot) \|_{L^2(Z,\cH;\nu)}^2 \Bigr), \qquad \psi_i(t) = \| f_i(t) \|_{\theta_i},$$ almost surely, where $C$ is a numerical constant. The $L^1$-norm of $\phi$ depends now additionally on $\kappa$ and the norms of $K_{B}$ and $K_{C}$ from Assumption~\ref{assumption linear operators}.
\end{lemma}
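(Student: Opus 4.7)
The plan is to verify the two parts of Assumption~\ref{assumption apriori operators} together with the moment condition~\eqref{eq: integrability condition ito nonlinear}. Measurability in part~(1) is immediate from the hypotheses, and the almost sure integrability~\eqref{eq:ass integrals exist} follows by applying Lemma~\ref{lem: operator bounds} to the four linear contributions (using $\cV_{\alpha_A} \hookrightarrow \cV^*$ for $\wt A_S v$) and noting that $f_i, g, h$ are integrable by hypothesis: for $f_i$, admissibility forces $p_i \in [1,2]$, so Hölder in time together with $\cV_{\theta_i} \hookrightarrow \cV^*$ places $f_i$ in $L^1([0,T], \cV^*)$.

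The heart of the argument is the coercivity~\eqref{eq: coercivity apriori}. First I would peel the inhomogeneities off the linear parts. The elementary inequality $\|x+y\|^2 \leq (1+\varepsilon)\|x\|^2 + (1+\varepsilon^{-1})\|y\|^2$ applied to $\wt B$ and $\wt C$ yields
\begin{align*}
-(\tfrac{1}{2}+\eta)\|\wt B(t,v)\|_{\cL_2(U,\cH)}^2 \geq -\tfrac{1}{2}\|\wt B_0(t)v\|_{\cL_2(U,\cH)}^2 - \delta\|\wt B_0(t)v\|_{\cL_2(U,\cH)}^2 - C_{\eta,\varepsilon}\|g(t)\|_{\cL_2(U,\cH)}^2,
\end{align*}
with $\delta = (\tfrac{1}{2}+\eta)(1+\varepsilon) - \tfrac{1}{2}$, and the same for $\wt C$; the duality~\eqref{eq:dualityVtheta} turns $-\langle f_i,v\rangle$ into the lower bound $-\|f_i\|_{\theta_i}\|v\|_{1-\theta_i}$, identifying the $\psi_i$ terms. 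Collecting the $\tfrac12$-halves and invoking Assumption~\ref{assumption linear operators}(2) yields the lower bound $\kappa\|v\|_\cV^2 - \phi(t)\|v\|_\cH^2$ on the remaining linear contribution.

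The main obstacle is then absorbing the surplus $\delta(\|\wt B_0(t)v\|_{\cL_2(U,\cH)}^2 + \|\wt C_0(t,\cdot)v\|_{L^2(Z,\cH;\nu)}^2)$ into $\tfrac{\kappa}{2}\|v\|_\cV^2$ plus a new time-dependent drift. For $\beta_B \in [\tfrac12, 1)$, I would combine $\|\wt B_0(t) v\|_{\cL_2(U,\cH)}^2 \leq K_B(t)^2\|v\|_{\beta_B}^2$ with the interpolation inequality $\|v\|_{\beta_B}^2 \leq \|v\|_\cH^{4-4\beta_B}\|v\|_\cV^{4\beta_B-2}$ and rewrite $K_B(t)^2\|v\|_{\beta_B}^2$ as $(\|v\|_\cV^2)^{2\beta_B-1}(K_B(t)^{r_B}\|v\|_\cH^2)^{2-2\beta_B}$, where the crucial identity $2/(2-2\beta_B) = r_B$ makes the correct power of $K_B$ appear. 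Young's inequality with weights $(2\beta_B-1)$ and $(2-2\beta_B)$ then gives, for any $\zeta > 0$,
\begin{align*}
\delta\|\wt B_0(t) v\|_{\cL_2(U,\cH)}^2 \leq \zeta\|v\|_\cV^2 + C_\zeta K_B(t)^{r_B}\|v\|_\cH^2,
\end{align*}
and analogously for $\wt C_0$ with $K_C^{r_C}$. Choosing $\eta, \varepsilon, \zeta$ small (depending on $\kappa$) absorbs the $\|v\|_\cV^2$ contributions into $\tfrac{\kappa}{2}\|v\|_\cV^2$, while $K_B^{r_B}, K_C^{r_C} \in L^1([0,T])$ augments $\phi$ to a new drift $\phi' := \phi + C_\zeta(K_B^{r_B}+K_C^{r_C}) \in L^1([0,T])$ with the announced dependencies. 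The boundary case $\beta_B = 1$ (hence $r_B = \infty$, $K_B \in L^\infty$) is handled directly by choosing $\eta$ small in terms of $\kappa$ and $\|K_B\|_\infty$. Finally, the moment condition~\eqref{eq: integrability condition ito nonlinear} is routine: Lemma~\ref{lem: operator bounds} furnishes the required $L^{p_A}$-bound for $\wt A_S u$ with the admissible pair $(p_A, \alpha_A) = ((\alpha_A+\tfrac12)^{-1}, \alpha_A)$, as well as the $L^2$-bounds for $\wt B_0 u$ and $\wt C_0 u$, and the inhomogeneities are integrable by hypothesis.
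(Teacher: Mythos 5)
Your proof is correct and follows essentially the same route as the paper: the linear coercivity assumption handles the leading term, duality gives the terms $\psi_i(t)=\|f_i(t)\|_{\theta_i}$, the surplus quadratic terms are absorbed via the growth bounds, interpolation and Young's inequality (adding $K_B^{r_B}$, $K_C^{r_C}$ to $\phi$), and Lemma~\ref{lem: operator bounds} yields the moment condition, the only difference being that the paper expands the squares and treats the cross terms $\langle \wt{B}_0(t)v, g(t)\rangle$ by Cauchy--Schwarz/Young rather than your weighted inequality $\|x+y\|^2\leq(1+\varepsilon)\|x\|^2+(1+\varepsilon^{-1})\|y\|^2$. One cosmetic caveat: if you let $\varepsilon$ shrink with $\kappa$, the factor $(1+\varepsilon^{-1})$ makes the constant in $\psi$ depend on $\kappa$ rather than being numerical as stated, so it is cleaner to fix $\varepsilon$ (say $\varepsilon=1$) and push all smallness into the Young step, which works whenever $\beta_B,\beta_C<1$; in the boundary case $\beta_B=1$ or $\beta_C=1$ both your argument and the paper's implicitly need the absorption parameters to depend also on $\|K_B\|_{L^\infty}$, $\|K_C\|_{L^\infty}$, which is harmless for the later applications.
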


\begin{proof}
    First we check part two of Assumption~\ref{assumption apriori operators}.
    An expansion of the left-hand side of~\eqref{eq: coercivity apriori} leads almost surely for all $v\in \cV$ and almost every $t\in [0,T]$ to
    \begin{align}
    \label{eq: expand coercivity linear}
        &\langle v, \wt{A}_0(t) v \rangle - \frac{1}{2} \bigl\|\wt{B}_0(t) v \bigr\|_{\calL_2(U,\cH)}^2 - \frac{1}{2} \| \wt{C}_0(t,\cdot) v \|_{L^2(Z,\cH;\nu)}^2 \\
        -\, &\langle v, f(t) \rangle - (\nicefrac{1}{2} + \eta) \|g(t)\|_{\calL_2(U,\cH)}^2 - (\nicefrac{1}{2} + \eta) \| h(t,\cdot) \|_{L^2(Z,\cH;\nu)}^2 \\
        -\, &(1 + 2\eta) \langle \wt{B}_0(t) v, g(t) \rangle -  (1 + 2\eta) \int_Z \bigl(\wt{C}_0(t,z) v, h(t,z) \bigr)_{\cH} \, \nu(dz) \\
        -\, &\eta \bigl\|\wt{B}_0(t) v \bigr\|_{\calL_2(U,\cH)}^2 - \eta \| \wt{C}_0(t,\cdot) v \|_{L^2(Z,\cH;\nu)}^2 \\
        &\eqqcolon I - II - III - IV.
    \end{align}
    By Assumption~\ref{assumption linear operators}, $$I \geq \kappa\| v \|_\cV^2 - \phi(t) \| v \|_\cH^2,$$ with $\kappa$ and $\phi$ taken from that assumption. Concerning the remaining terms, we  either absorb them into the lower bound for $I$ or capture them in the additional $\psi$-terms appearing in the coercivity condition~\eqref{eq: coercivity apriori}. The absorption turns out to be possible provided $\eta \leq 1$ is chosen small enough. We only present the terms $\langle v, f_i(t) \rangle$, $(1 + 2\eta) \langle \wt{B}_0(t) v, g(t) \rangle$ and $\eta \|\wt{B}_0(t) v \|_{\calL_2(U,\cH)}^2$ in detail, the remaining terms follow by similar arguments.

    We start with the term $\eta \|\wt{B}_0(t) v \|_{\calL_2(U,\cH)}^2$. Using the growth condition for $\wt{B}_0$ and the interpolation inequality we find
    \begin{align}
        \bigl\|\wt{B}_0(t) v \bigr\|_{\calL_2(U,\cH)} \leq K_{B}(t) \| v \|_{\beta_{B}} \leq K_{B}(t) \| v\|_\cH^{2-2\beta_{B}} \| v \|_\cV^{2\beta_{B} - 1}.
    \end{align}
    Hence, with Young's inequality,
    \begin{align}
        \eta \bigl\|\wt{B}_0(t) v \bigr\|_{\calL_2(U,\cH)}^2 &\leq \eta K_{B}(t)^2 (\| v \|_\cH^2)^{2-2\beta_{B}} (\| v \|_\cV^2)^{2\beta_{B} - 1} \\
        &\leq \eta(2-2\beta_{B}) K_{B}(t)^\frac{2}{2-2\beta_{B}} \| v \|_\cH^2 + \eta (2\beta_{B} - 1) \| v \|_\cV^2.
    \end{align}
    Note that $K_{B}(t)^\frac{2}{2-2\beta_{B}} = K_{B}(t)^{r_B}$ is integrable by assumption. Recall that $\beta_{B}\geq \nicefrac{1}{2}$. Hence, if we choose $\eta$ small enough (depending on $\kappa$ and the number of terms), we can indeed absorb the term $\eta \|\wt{B}_0(t) v \|_{\calL_2(U,\cH)}^2$ into the lower bound for $I$. The new function $\phi$ will then incorporate the term $K_{B}(t)^{r_B}$ as well.

    Let us continue with the term $(1 + 2\eta) \langle \wt{B}_0(t) v, g(t) \rangle$. We can ignore the prefactor since it is bounded by $3$.
    Re-using the calculations from the first term and applying Young's inequality give
    \begin{align}
    \label{eq:coercivity B and g}
        |\langle \wt{B}_0(t) v, g(t) \rangle| &\leq K_{B}(t) \| v \|_\cH^{2-2\beta_{B}} \| v \|_\cV^{2\beta_{B} - 1} \|g\|_{\calL_2(U,\cH)} \\
        &\leq \frac{1}{2} K_{B}(t)^2 (\| v \|_\cH^2)^{2-2\beta_{B}} (\| v \|_\cV^2)^{2\beta_{B} - 1} + \frac{1}{2}\|g\|_{\calL_2(U,\cH)}^2 \\
        &\leq \eps \| v \|_\cV^2 + C_\eps K_{B}(t)^{r_B} \| v \|_\cH^2 + \frac{1}{2}\|g\|_{\calL_2(U,\cH)}^2.
    \end{align}
    Choosing $\eps$ small enough, we can absorb the first term of the right-hand side of the last inequality of~\eqref{eq:coercivity B and g} into $\kappa \| v \|_\cV^2$. Up to a multiplicative constant, the second term on the right-hand side of ~\eqref{eq:coercivity B and g} was already added to $\phi(t)$ in the first step of the proof. Finally, the term $\frac{1}{2}\|g \|_{\calL_2(U,\cH)}^2$ is integrable and can thus be captured by the $\psi$-term in the coercivity condition~\eqref{eq: coercivity apriori}. This concludes the treatment of the term $-(1 + 2\eta) \langle \wt{B}_0(t) v, g(t) \rangle$.

    We conclude the proof with the term $\langle v, f_i(t) \rangle$ for some fixed $i$. Using the interpolation inequality we obtain
    \begin{align}
        \label{eq: f coercive}
        |\langle v, f_i(t) \rangle| &\leq \| f_i(t) \|_{\theta_i} \| v \|_{1-\theta_i}.
    \end{align}
    Since $(\theta_i,p_i)$ is an admissible pair, the right-hand side of~\eqref{eq: f coercive} can be treated by putting $\psi_i = \| f_i(t) \|_{\theta_i}$.

    Finally, we check the first part of Assumption~\ref{assumption apriori operators} as well as~\eqref{eq: integrability condition ito nonlinear}. The measurability property in Assumption~\ref{assumption apriori operators} follows by assumption.
    Next, we check~\eqref{eq: integrability condition ito nonlinear}.
    To this end, let $v \in L^\infty([0,T], \cH) \cap L^2([0,T], \cV)$. We only present the case for $\wt{A}_S$, the other terms are similar.
    Use Lemma~\ref{lem: operator bounds} to find a.s.
    \begin{align}
        \label{eq:growth_A0}
        \| \wt{A}_S(t) v(t) \|_{L^{p_A}([0,T], \cV_{\alpha_{A}})} \leq \| K_{A} \|_{L^{r_A}([0,T])} \| v \|_{L^2([0,T],\cV)}^{2\beta_{A}-1} \| v \|_{L^\infty([0,T],\cH)}^{2-2\beta_{A}},
    \end{align}
    where we employ the notation from Lemma~\ref{lem: operator bounds}.
    Use Young's inequality and take the expectation to finish the verification of~\eqref{eq: integrability condition ito nonlinear}.
    It remains to check~\eqref{eq:ass integrals exist}.
    Due to the embedding $L^{p_i}([0,T],\cV_{\theta_i}) \subseteq L^1([0,T], \cV^*)$, the integrability conditions for $f$ and $\wt{A}_L$ are clear. For the remaining term $\wt{A}_S$, we use the same embedding in conjunction with~\eqref{eq:growth_A0} to conclude. This finishes this proof.
\end{proof}

\subsection{Wellposedness for the linear problem}

In the main result of this section, Theorem~\ref{thm L2 linear}, we show well-posedness of the linear problem~\eqref{eq:spde lin}. This is based on the a-priori estimate from Section~\ref{sec:variational}. We verified in the last subsection that~\eqref{eq:spde lin} can be captured by the framework developed in Section~\ref{sec:variational}. To summarize, the a-priori estimate in the current setting then reads as follows and is immediate from Proposition~\ref{prop: nonlin apriori} and Lemma~\ref{lem:lin coercive}.

\begin{proposition}[linear a-priori estimate]
\label{prop: lin apriori}
    Let Assumption~\ref{assumption linear operators} hold.
    Let $f = \sum_{i=1}^m f_i$, where $f_i \in L^2(\Omega, L^{p_i}([0,T], \cV_{\theta_i}))$ is $\cP$-measurable with $(p_i, \theta_i)$ an admissible pair, $g\in L^2(\Omega \times [0,T], \cL_2(U, \cH))$ is $\cP$-measurable and $h\in L^2(\Omega \times [0,T], L^2(Z, \cH; \nu))$ is $\cP^-$-measurable.
    Suppose that $u \in L^2(\Omega, L^2([0,T], \cV)) \cap L^2(\Omega, D([0,T], \cH))$ is a strong solution on $[0,T]$ to
    \begin{align}
    \label{eq: spde linear estimate lemma}
        du(t) + \wt{A}_0(t) u(t) \,dt = f(t) \,dt &+ (\wt{B}_0(t)u(t) + g(t)) \,dW(t) \\
        &+ \int_Z \bigl(\wt{C}_0(t,z) u(t-) + h(t,z)\bigr) \,\Nt(dz, dt),
    \end{align}
    where $u_0 \coloneqq u(0) \colon \Omega \to \cH$ is strongly $\cF_0$-measurable.
    Moreover, let $0 \leq \tau_1 \leq \tau_2 \leq T$ be stopping times.
    Then
    \begin{align}
    \label{eq: sup estimate linear lemma}
        &\E \sup_{t \in [\tau_1,\tau_2]} \| u(t) \|_\cH^2 + \int_{\tau_1}^{\tau_2} \E \| u(s) \|_\cV^2 \,ds \\
        \leq{} &C\Bigl( \E \| u(\tau_1) \|_\cH^2 + \sum_{i=1}^m \E \| f_i \|_{L^{p_i}([\tau_1,\tau_2], \cV_{\theta_i})}^2 + \E \| g \|_{L^2([\tau_1,\tau_2], \cL_2(U, \cH))}^2 + \E \| h \|_{L^2([\tau_1,\tau_2], L^2(Z, \cH; \nu))}^2 \Bigr),
    \end{align}
    where the constant $C>0$ depends on $T$, $\kappa$, $\|\phi\|_{L^1([0,T])}$, $\|K_{B}\|_{L^{r_B}([\tau_1, \tau_2])}$ and $\|K_C\|_{L^{r_C}([\tau_1, \tau_2])}$.
\end{proposition}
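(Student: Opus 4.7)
The strategy is to cast the linear problem \eqref{eq: spde linear estimate lemma} as a particular instance of the nonlinear problem \eqref{eq: spde estimate lemma} and invoke the nonlinear a-priori estimate of Proposition~\ref{prop: nonlin apriori}. The bridge is precisely Lemma~\ref{lem:lin coercive}: with
\[
\wt{A}(t,v) \coloneqq \wt{A}_L(t)v + \wt{A}_S(t)v - f(t), \qquad \wt{B}(t,v) \coloneqq \wt{B}_0(t)v + g(t), \qquad \wt{C}(t,v,z) \coloneqq \wt{C}_0(t,z)v + h(t,z),
\]
the triple $(\wt{A},\wt{B},\wt{C})$ satisfies Assumption~\ref{assumption apriori operators} with some $\eta>0$, with $\psi_i(t)=\|f_i(t)\|_{\theta_i}$, and with
$\psi(t) = C\bigl(\|g(t)\|_{\cL_2(U,\cH)}^{2} + \|h(t,\cdot)\|_{L^2(Z,\cH;\nu)}^{2}\bigr)$, at the cost of enlarging $\phi$ by a multiple of $K_B^{r_B}+K_C^{r_C}$; this is exactly where the dependence of the final constant on $\|K_B\|_{L^{r_B}}$ and $\|K_C\|_{L^{r_C}}$ enters.

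Next I would check the moment condition \eqref{eq: integrability condition ito nonlinear} required by Proposition~\ref{prop: nonlin apriori}. Write $\wt{A}(t,u(t)) = \wt{A}_L(t)u(t) + \wt{A}_S(t)u(t) - \sum_i f_i(t)$, so that the $L^{p_i}([0,T],\cV_{\theta_i})$-integrability of the $f_i$ pieces is part of the hypothesis, while the $\wt{A}_L u$ part is handled by the $L^2([0,T],\cV^*)$-bound from Assumption~\ref{assumption linear operators} combined with $u\in L^2(\Omega\times[0,T],\cV)$. For $\wt{A}_S u$, $\wt{B}_0 u$ and $\wt{C}_0 u$ I would apply Lemma~\ref{lem: operator bounds}, whose right-hand sides are products of $\|u\|_{L^2([0,T],\cV)}^{2\beta_\bullet-1}\|u\|_{L^\infty([0,T],\cH)}^{2-2\beta_\bullet}$; Young's inequality and the integrability hypothesis $u\in L^2(\Omega, L^2([0,T],\cV))\cap L^2(\Omega, D([0,T],\cH))$ then yield finite expectation. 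The inhomogeneous terms $g$, $h$ are in $L^2(\Omega\times[0,T];\cdot)$ by assumption, so their contribution is clear.

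With these ingredients in place, a direct application of Proposition~\ref{prop: nonlin apriori} on the stochastic interval $[\tau_1,\tau_2]$ gives
\[
\E\sup_{t\in[\tau_1,\tau_2]}\|u(t)\|_{\cH}^{2} + \E\int_{\tau_1}^{\tau_2}\|u(s)\|_{\cV}^{2}\,ds \leq C\Bigl(\E\|u(\tau_1)\|_{\cH}^{2} + \E\|\psi\|_{L^1([\tau_1,\tau_2])} + \sum_i \E\|\psi_i\|_{L^{p_i}([\tau_1,\tau_2])}^{2}\Bigr),
\]
where the two extra terms (the $\wt{B}$- and $\wt{C}$-norms of $u$) on the left-hand side of \eqref{eq: sup estimate nonlinear lemma} are simply dropped. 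Substituting the identifications of $\psi$ and $\psi_i$ recovers precisely \eqref{eq: sup estimate linear lemma}, since $\|\psi\|_{L^1}$ reproduces the squared $L^2$-norms of $g$ and $h$, and $\|\psi_i\|_{L^{p_i}}^{2} = \|f_i\|_{L^{p_i}([\tau_1,\tau_2],\cV_{\theta_i})}^{2}$. Finally, tracking the dependence of the constant through Proposition~\ref{prop: nonlin apriori} ($T$, $\eta$, $\kappa$, $\|\phi\|_{L^1}$) and through Lemma~\ref{lem:lin coercive} (where $\eta$ is chosen in terms of $\kappa$, and $\phi$ absorbs $K_B^{r_B}+K_C^{r_C}$) yields the stated constant.

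Since the proof is really an assembly of two results already proved in the paper, there is no substantial obstacle; the only point demanding care is the bookkeeping in the second step, where one must verify \eqref{eq: integrability condition ito nonlinear} for the specific pair $(\alpha_i,q_i)=(\alpha_A,p_A)$ arising from $\wt{A}_S$ (as opposed to the other admissible pairs $(\theta_i,p_i)$ associated with the $f_i$), so that Proposition~\ref{prop: nonlin apriori} is legitimately applicable.
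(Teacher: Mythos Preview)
Your proposal is correct and matches the paper's approach exactly: the paper simply states that the result is immediate from Proposition~\ref{prop: nonlin apriori} and Lemma~\ref{lem:lin coercive}. Note that your second paragraph (verifying the moment condition~\eqref{eq: integrability condition ito nonlinear}) is already part of the conclusion of Lemma~\ref{lem:lin coercive}, so strictly speaking you need not repeat that argument here.
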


Next, we use the a-priori estimate of Proposition~\ref{prop: lin apriori} and the method of continuity to obtain the existence and uniqueness of the linear problem \eqref{eq:spde lin}.
\begin{theorem}[linear $L^2$-theory]
\label{thm L2 linear}
    Suppose Assumption~\ref{assumption linear operators}.
    Let $f = \sum_{i=1}^m f_i$, where $f_i \in L^2(\Omega, L^{p_i}([0,T], \cV_{\theta_i}))$ is $\cP$-measurable and $(p_i,\theta_i)$ is admissible for every $i$, $g\in L^2(\Omega \times [0,T], \cL_2(U, \cH))$ is $\cP$-measurable and $h\in L^2(\Omega \times [0,T], L^2(Z, \cH; \nu))$ is $\cP^-$-measurable.
    Moreover, let $\lambda$ be a stopping time with values in $[0,T]$ and $u_\lambda \in L^2_{\cF_\lambda}(\Omega, \cH)$. Then the problem
    \begin{align}
    \label{eq: linear spde L2 theory}
        \begin{split}
            du+\wt{A}_0(t)u(t) \,dt &= f(t) \,dt + (\wt{B}_0(t)u(t)+g(t)) \,dW(t) \\
            &\quad\quad\quad\quad + \int_{Z} \big(\wt{C}_0(t,z)u(t-) + h(t,z)\big)\,\Nt(dz,dt), \\
            u(\lambda) &= u_\lambda,
        \end{split}
    \end{align}
    has a unique solution
    \begin{align}
        u \in L^2(\Omega, D([\lambda, T], \cH)) \cap L^2(\Omega, L^2([\lambda, T], \cV)),
    \end{align}
    satisfying
    \begin{align}
        &\E \sup_{t \in [\lambda,T]} \| u(t) \|_\cH^2 + \E \int_\lambda^T \| u(s) \|_\cV^2 \, ds \\
        \leq{} &C \E \Bigl( \| u_\lambda \|_\cH^2 + \sum_{i=1}^m \| f_i \|_{L^{p_i}([\lambda,T], \cV_{\theta_i})}^2 + \| g \|_{L^2([\lambda,T], \cL_2(U, \cH)}^2 + \| h \|_{L^2([\lambda,T], L^2(Z, \cH; \nu))}^2 \Bigr).
    \end{align}
\end{theorem}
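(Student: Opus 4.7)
The plan is to combine the linear a-priori estimate from Proposition~\ref{prop: lin apriori} with the method of continuity. First, a standard time-shift argument---passing to the filtration $\cF'_t := \cF_{\lambda + t}$ together with the shifted Wiener process and the shifted Poisson martingale measure---reduces matters to $\lambda = 0$. Uniqueness and the stated a-priori bound are then immediate from Proposition~\ref{prop: lin apriori}: if $u_1, u_2$ solve~\eqref{eq: linear spde L2 theory} with the same data, the difference solves the homogeneous problem with zero data, and~\eqref{eq: sup estimate linear lemma} forces $u_1 = u_2$; applied to a single solution, the same estimate yields the asserted bound.

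For existence, I run the method of continuity along the homotopy
\begin{align}
A_s := (1-s)J + s \wt A_0, \quad B_s := s \wt B_0, \quad C_s := s \wt C_0, \qquad s \in [0,1],
\end{align}
where $J\colon \cV \to \cV^*$ denotes the Riesz isomorphism determined by $\langle Jv, v\rangle = \|v\|_\cV^2$. Each triple $(A_s, B_s, C_s)$ verifies Assumption~\ref{assumption linear operators} with $s$-independent constants: the coercivity~\eqref{eq: coercivity lin wellpos} is preserved because the $J$-term contributes $(1-s)\|v\|_\cV^2$ while the stochastic quadratic forms only shrink with $s$, and the growth bounds~\eqref{eq: bounded lin wellpos} pick up factors in $[0,1]$. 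Consequently Proposition~\ref{prop: lin apriori} furnishes the a-priori bound~\eqref{eq: sup estimate linear lemma} uniformly in $s$. Let $S \subseteq [0,1]$ denote the set of $s$ for which the $s$-problem admits a solution for every admissible data; here and below, the list of admissible pairs is understood to include $(2,0)$ and $(p_A, \alpha_A)$ from Lemma~\ref{lem: operator bounds} so as to absorb the perturbation terms generated by the iteration. I will show that $S$ is nonempty, open and closed, hence equals $[0,1]$.

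For $0 \in S$, the equation reduces to $du + Ju\,dt = f\,dt + g\,dW + \int_{Z} h\,\Nt(dz,dt)$ with no stochastic feedback; since $-J$ generates a contractive analytic semigroup on $\cV^*$ (and on $\cH$ by self-adjointness), the mild solution produced by deterministic maximal $L^2$-regularity on the Gelfand triple together with It\^o's isometry and Proposition~\ref{prop: prelim poisson} lies in $L^2(\Omega, D([0,T], \cH)) \cap L^2(\Omega, L^2([0,T], \cV))$ and is strong in the variational sense. For openness, fix $s_0 \in S$ and define on $E := L^2(\Omega, D([0,T], \cH)) \cap L^2_\cP(\Omega \times [0,T], \cV)$ the map $\Psi(u)$ as the unique $(\mathrm{LP}_{s_0})$-solution with data $(u_0, f + (A_{s_0} - A_s)u, g + (B_s - B_{s_0})u, h + (C_s - C_{s_0})u)$; a fixed point of $\Psi$ is a solution of $(\mathrm{LP}_s)$. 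By linearity, Lemma~\ref{lem: operator bounds}, and the uniform a-priori estimate, $\|\Psi(u_1) - \Psi(u_2)\|_E \leq C|s - s_0|\|u_1 - u_2\|_E$, which is a contraction for $|s - s_0|$ small. For closedness, if $s_n \to s^* \in [0,1]$ with $s_n \in S$ and $u^{(n)}$ is the $(\mathrm{LP}_{s_n})$-solution for fixed data, the same linearity trick shows $u^{(n)} - u^{(m)}$ solves $(\mathrm{LP}_{s_n})$ with zero initial data and source terms of order $|s_n - s_m|$ acting on $u^{(m)}$; since $(u^{(m)})_m$ is uniformly bounded in $E$ by the uniform a-priori estimate, $(u^{(n)})_n$ is Cauchy in $E$, and its limit solves $(\mathrm{LP}_{s^*})$.

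The delicate point in this scheme is the base case $s = 0$: one has to verify that the mild semigroup solution actually lies in the strong variational class $L^2(\Omega \times [0,T], \cV)$ despite the jumps introduced by the Poisson martingale measure, which relies on the $L^2$-isometry of Proposition~\ref{prop: prelim poisson} and the self-adjointness of $J$ on $\cH$ to control $\int_0^t\int_Z e^{-(t-s)J} h(s,z)\,\Nt(dz,ds)$ in $L^2([0,T], \cV)$. Once this is secured, the remainder of the method of continuity runs smoothly: the multi-term admissible structure $f = \sum_i f_i$ is precisely what Proposition~\ref{prop: lin apriori} is designed to accommodate, so admissibility propagates through the iteration without any additional effort.
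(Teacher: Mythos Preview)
Your overall strategy matches the paper's: invoke the linear a-priori estimate of Proposition~\ref{prop: lin apriori} and run the method of continuity along the homotopy $A_s = (1-s)\overline{A}_0 + s\wt A_0$, $B_s = s\wt B_0$, $C_s = s\wt C_0$ with a self-adjoint positive invertible base operator $\overline{A}_0$. The paper simply cites the stochastic method of continuity from \cite[Prop.~3.10]{portal_veraar} rather than spelling out the open/closed argument, but the content is the same.

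Two places where your execution differs and deserves more care. First, the reduction to $\lambda = 0$: your ``standard time-shift'' is glib when $\lambda$ is a genuine stopping time, because the shifted problem lives on the random interval $[0,T-\lambda]$ and the coefficients become $\wt A_0(\lambda+\cdot)$, so one cannot directly quote a fixed-horizon result. The paper instead first treats $\lambda=0$, $u_\lambda=0$ by the method of continuity, then passes to general $\lambda$ by causality (linearity implies the solution operator commutes with multiplication by $\1_{\llparenthesis\lambda,T\rrbracket}$, cf.\ \cite[Prop.~3.10, 3.12]{agresti2022nonlinear2}), and finally to $u_\lambda\neq 0$ by subtracting off the solution with zero data and initial value $u_\lambda$. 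This avoids any random time-change.

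Second, the base case $s=0$: you propose to build the mild solution by hand via the semigroup $e^{-tJ}$ and then verify strong $\cV$-regularity and c\`adl\`ag $\cH$-paths for the Poisson stochastic convolution. You correctly flag this as the delicate step, but you do not actually carry it out, and the c\`adl\`ag regularity is not free. The paper sidesteps this entirely: the deterministic part $du + \overline{A}_0 u\,dt = f\,dt$ is handled by \cite[Thm.~II.2.1]{Par75} (plus interpolation for admissible $f_i$), and the purely stochastic part $du + \overline{A}_0 u\,dt = g\,dW + \int_Z h\,\Nt(dz,dt)$ is covered by the existing L\'evy-noise variational theory of \cite[Thm.~1.2]{brzezniak2014strong}, whose smallness hypotheses become vacuous in the linear case. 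Superposition then gives the full base problem. This is cleaner than your semigroup route and avoids having to prove any new stochastic convolution estimate.
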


\begin{proof}
    We subdivide the proof into several steps.

    \textbf{Step 1}: First we consider the existence and uniqueness for the following deterministic problem. Let $\overline{A}_0 \in \calL(\cV, \cV^*)$ be any self-adjoint positive operator which is invertible (see \cite[Proposition 8.1.10]{TayPDE2}). Pointwise in $\Omega$ consider
    \begin{align}
    \label{linear1}
        \begin{split}
            du+ \overline{A}_0 u(t) \,dt &= f(t) \,dt, \\
            u(0) &= 0.
        \end{split}
    \end{align}
    Then~\cite[Thm.~II.2.1]{Par75} provides a unique solution $v_1 \in L^2([0,T], \cV)\cap C([0,T],\cH)$ of \eqref{linear1} if $f \in L^1([0,T],H)$ or $f\in L^2([0,T], V^*)$. The case for general $f$ follows by interpolation~\cite[Thm.~2.2.6 \& Thm.~C.2.6]{hytonen2016analysis}. Moreover, progressive measurability of $v_1$ can be deduced by an approximation argument, or from Fubini's theorem and a variation of constants formula.

    Additionally, we get existence and uniqueness of a solution $v_2\in L^2(\Omega,L^2(0,T,\cV)\cap D([0,T],\cH))$ to the problem
    \begin{align}
    \label{linear2.5}
        \begin{split}
            du+\overline{A}_0u(t) \,dt &= g(t) \,dW(t) + \int_{Z} h(t,z) \,\Nt(dz,dt), \\
            u(0) &= 0,
        \end{split}
    \end{align}
    from~\cite[Thm.~1.2]{brzezniak2014strong}. Observe that the smallness conditions of that theorem disappear in the linear case as is discussed in~\cite[Rem.~1.4]{brzezniak2014strong}. Adding up the solutions to~\eqref{linear1} and~\eqref{linear2.5} we obtain existence of a solution to the full problem
    \begin{align}
    \label{linear2}
        \begin{split}
            du+\overline{A}_0 u(t)\,dt &= f(t)\,dt + g(t)\,dW(t) + \int_{Z} h(t,z)\,\Nt(dz,dt), \\
            u(0) &= 0.
        \end{split}
    \end{align}
    Moreover, from the a-priori estimate of Proposition~\ref{prop: lin apriori} we obtain uniqueness.

    \textbf{Step 2}: $\lambda = 0$ and $u_\lambda = 0$.
    For $r \in [0,1]$ put $\wt{A}^r_0 = (1-r)\overline{A}_0 +  r \wt A_0$, $\wt{B}^r_0 = r\wt B_0$, and $\wt{C}^r_0 = r\wt C_0$. Consider the family of problems
    \begin{align}
    \label{linear3}
        \begin{split}
            du+\wt{A}^r_0(t)u(t)\,dt &= f(t)\,dt + (\wt{B}^r_0(t)u(t)+g(t))\,dW(t) \\
            &\quad\quad\quad\quad+ \int_{Z} \wt{C}^r_0(t,z)u(t-) + h(t,z)\,\Nt(dz,dt), \\
            u(0) &= 0.
        \end{split}
    \end{align}
    The goal of this step is to show existence and uniqueness in the case $r = 1$. To this end, we appeal to the stochastic method of continuity. In the Gaussian case, when $\wt{C}^r_0 = 0$ and $h=0$, the stochastic method of continuity was presented in all detail in~\cite[Prop.~3.10]{portal_veraar}, and the argument extends verbatim to the case of \Ly noise. The hypotheses of the method of continuity are twofold. First, the existence and uniqueness in the case $r = 0$ have to hold. Indeed, this was the content of Step~1 above. Second, there has to be a constant $C > 0$ such that for any solution $v$ of~\eqref{linear3} there is the a priori estimate
    \begin{align}
    \label{eq: lin wellpos apriori}
        &\E \sup_{t \in [0,T]} \| v(t) \|_\cH^2 + \E \int_0^T \| v(s) \|_\cV^2 \, ds \\
        \leq{} &C \E \left( \sum_{i=1}^m \| f_i \|_{L^{p_i}([0,T], \cV_{\theta_i})}^2 + \| g \|_{L^2([0,T], \cL_2(U, \cH)}^2 +\| h \|_{L^2([0,T], L^2(Z, \cH
        ; \nu))}^2 \right).
    \end{align}
    Since the operators $\wt{A}^r_0$, $\wt{B}^r_0$ and $\wt{C}^r_0$ satisfy the coercivity condition~\eqref{eq: coercivity lin wellpos} with $\kappa$ and $\phi$ uniform in $r$, the a-priori estimate~\eqref{eq: lin wellpos apriori} follows from Proposition~\ref{prop: lin apriori}. Thus, the stochastic method of continuity indeed yields the existence and uniqueness of~\eqref{linear3} when $r = 1$.

    \textbf{Step 3}: non-trivial initial value $u_\lambda$ at a random initial time $\lambda$.
    When $\lambda$ is a non-trivial initial time but still with $u_\lambda = 0$, then existence and uniqueness of
    \begin{align}
    \label{linear4}
        \begin{split}
            du+\wt{A}_0(t)u(t)\,dt &= f(t) \, dt + (\wt{B}_0(t)u(t)+g(t))\,dW(t) \\
            &\quad\quad\quad\quad + \int_{Z} \wt{C}_0(t,z)u(t-) + h(t,z)\,\Nt(dz,dt), \\
            u(\lambda) &= 0,
        \end{split}
    \end{align}
    on $\llbracket\lambda, T \rrbracket$ follows from causality, which is a consequence of the linearity of the problem, compare also with~\cite[Prop.~3.10]{agresti2022nonlinear2}. Then the existence and uniqueness of
    \begin{align}
    \label{linear5}
        \begin{split}
            du+\wt{A}_0(t)u(t)\,dt &= f(t)\,dt + (\wt{B}_0(t)u(t)+g(t))\,dW(t) \\
            &\quad\quad\quad\quad+ \int_{Z} \wt{C}_0(t,z)u(t-) + h(t,z)\,\Nt(dz,dt), \\
            u(\lambda) &= u_\lambda,
        \end{split}
    \end{align}
    follows from the homogeneous case~\eqref{linear4} by virtue of~\cite[Prop.~3.12]{agresti2022nonlinear2}.

    \textbf{Step 4}: the a-priori estimate. The a-priori estimate follows from an application of Proposition~\ref{prop: lin apriori}.
\end{proof}

\subsection{No jumps of $u$ at predictable times}

Here, we show that the solution $u$ constructed in Theorem~\ref{thm L2 linear} does not experience jumps at predictable times owing to the total inaccessibility (see Definition~\ref{def: totally inaccessible}) of the jump times of a \Ly process. 
In the proof of Proposition~\ref{prop: jumps u}, we use some well-known properties regarding the jump times of the associated Poisson process.
For further background, we refer the reader to~\cite{jacod2013limit}.\newline
While Proposition~\ref{prop: jumps u} below is not needed in the present paper, we include it as information that may be useful in future work on the subject.

\begin{definition}
\label{def:pred time}
    A stopping time $\tau$ is called \emph{$\cF_t$-predictable}, or simply \emph{predictable}, if there exists a sequence of increasing stopping times $(\tau_n)_{n\geq 1}$, such that $\tau_n<\tau$ on $\{\tau>0\}$ and $\tau_n\to\tau$ almost surely as $n\to\infty$.
\end{definition}

\begin{definition}
\label{def: totally inaccessible}
    A stopping time $\tau$ is called totally inaccessible if $P(\tau = \mu<\infty)=0$ for all predictable stopping times $\mu$.
\end{definition}

\begin{proposition}
\label{prop: jumps u}
    Let the condition of Theorem~\ref{thm L2 linear} hold and let $u$ be the solution of the corresponding linear SPDE \eqref{eq: linear spde L2 theory} on $[0, T]$.
    Let $\mu\in [0,T]$ be a predictable stopping time. Then $\lim_{t\uparrow\mu} u(t) = u(\mu)$. In other words, $u$ does not experience jumps at $\mu$.
\end{proposition}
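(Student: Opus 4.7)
The plan is to identify where the jumps of $u$ can come from, reduce to showing that a certain stochastic integral against the compensated Poisson measure has no jump at $\mu$, and then exploit the fact that the compensator of $N$ is deterministic.

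Using the SPDE, write
\begin{align}
u(t) = u_\lambda + \int_\lambda^t \bigl(f(s) - \wt A_0(s)u(s)\bigr)\,ds + \int_\lambda^t \bigl(\wt B_0(s)u(s) + g(s)\bigr)\,dW(s) + M(t),
\end{align}
where $M(t) := \int_\lambda^t\int_Z H(s,z)\,\wt N(dz,ds)$ and $H(s,z) := \wt C_0(s,z)u(s-) + h(s,z)$. The Bochner integral is absolutely continuous in $t$, and the Wiener integral is a continuous martingale; both are therefore continuous at $\mu$. Hence $\Delta u(\mu) = \Delta M(\mu)$, and it is enough to prove $\Delta M(\mu) = 0$ almost surely.

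Next I would note that $H$ is $\cP^-\otimes\cZ$-measurable, since $u$ is càdlàg and adapted (so the left-limit process $u(\cdot-)$ is left-continuous and adapted, hence $\cP^-$-measurable), and $\wt C_0$, $h$ are $\cP^-$- and $\cP^-\otimes\cZ$-measurable by assumption. The square integrability built into the a-priori bound of Proposition~\ref{prop: lin apriori} guarantees that $M$ is a well-defined, locally square integrable, purely discontinuous martingale, constructed as the limit described in Section~\ref{sec:setting}. A standard consequence of that construction (cf.\ \cite{ikeda2014stochastic}) is that the jumps of $M$ are supported on the jump times of the Poisson random measure $N$, with
\begin{align}
\Delta M(t) = \int_Z H(t,z)\,N(\{t\}\times dz).
\end{align}

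The last ingredient is the following classical fact: a Poisson random measure whose compensator $\nu(dz)\otimes dt$ is deterministic, and in particular non-atomic in $t$, has jump times that are totally inaccessible, see \cite{jacod2013limit}. By the very definition of total inaccessibility, $P(\mu = \tau) = 0$ for every jump time $\tau$ of $N$ and every predictable $\mu$. Combined with the representation of $\Delta M$, this gives $\Delta M(\mu) = 0$ almost surely, and therefore $\lim_{t\uparrow\mu} u(t) = u(\mu)$.

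The only place that asks for some care is the identification of the jumps of $M$ when $H$ is merely in $L^2$; this is handled by the truncation and localization used to define the integral in Section~\ref{sec:setting}, since for bounded, compactly supported integrands the jump formula is immediate, and the approximation preserves the jump at a fixed predictable time in $L^1$. The rest of the argument is a purely structural use of predictability versus total inaccessibility, so I do not foresee additional obstacles.
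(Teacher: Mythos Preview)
Your proof is correct and follows essentially the same route as the paper's: both identify that only the compensated Poisson integral can contribute jumps, then use that the jump times of $N$ are totally inaccessible (since the compensator $\nu(dz)\,dt$ is non-atomic in time) and hence almost surely avoid any predictable time. The paper phrases the middle step via the inclusion of the jump set of $u$ in the jump set of $N$ rather than writing the explicit formula $\Delta M(t) = \int_Z H(t,z)\,N(\{t\}\times dz)$, but the content is the same.
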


\begin{proof}
    Recall that on $\Omega\times\bR_+\times Z$ the Poisson random measure admits a representation of the form
    $$
    N(dz,dt) = \sum_{0<s} \delta_{p_s,s}(dz,dt)\1_{D}(t),
    $$
    where $D = \bigcup_n \llbracket \tau_n\rrbracket$ is
    exhausted by a sequence of totally inaccessible stopping times $(\tau_n)_{n=1}^\infty$, such that $\tau_n\to\infty$ as $n\to\infty$ and $(p_t)_{t\geq 0}$ is an optional jump process (see \cite[Ch.II]{jacod2013limit}). 
    Let $\Delta v$ denote the jump process associated to a càdlàg process $v$, given by $\Delta v(t) =  v(t)-v(t-)$. From the representation of the solution, \eqref{eq: strong solution}, we see almost surely
    \begin{equation}
    \label{eq: equality jumps}
    \Delta u(t) = \Delta \int_0^t\int_Z \wt{C}_0(s,z)u(s-) + h(s,z)\,\Nt(dz,ds),\quad t\in [0,T].
    \end{equation}
    Hence $N^u(dx,dt)$, the jump measure of $u$, has a representation of the form $N^u(dx,dt) = \sum_{0<s}\delta_{(\Delta u(s),s)}(dx,dt)\1_{D'}(t)$, where $D'=\bigcup_n \llbracket \tau_n'\rrbracket$
    with $(\tau_n')_{n\geq 1}$ being the jump times of $u$. By definition we have
    $$
    \Delta u = 0\quad\text{almost surely for}\quad t\in  [0,T] \setminus \bigcup_k \llbracket \tau_k'\rrbracket.
    $$
    Since for each $n\geq 1$, $\llbracket \tau_n' \rrbracket \subset \bigcup_k \llbracket \tau_k\rrbracket$, and thus
    $D'\subset D$, we get
    \begin{align}
        P(\{\tau_k' = \mu\}) &= P\Big( \{\tau_k' = \mu\}\cap \bigcup_n \{\tau_k' = \tau_n\}\Big)\\
        &\leq \sum_n P(\{\tau_k' = \mu\}\cap \{\tau_k' = \tau_n\})\\
        \label{eq: stopping times inaccessible}
        &\leq \sum_n P(\{\tau_n = \mu\}) = 0,
    \end{align}
    where in the last step we used that $\mu$ is predictable.
    Hence, almost surely $\Delta u = 0$ on $\llbracket \mu \rrbracket$, which proves the claim.
\end{proof}

\section{Local well-posedness}
\label{sec: local well-posedness}

In this section, we will prove the existence and uniqueness of a maximal solution to the following variant of the quasilinear problem \eqref{eq:spde}:
\begin{equation}\label{eq:spde2}
\tag{QLP}
    \begin{split}
        du(t)+A(t,u(t))\,dt &= B(t,u(t))\,dW(t) + \int_{Z} C(t,u(t-),z)\,\Nt(dz,dt)\\
        u(\sigma) &= u_{\sigma},
    \end{split}
\end{equation}
where $\sigma$ is a stopping time.
The nonlinearities consist of sums of a quasilinear part, a semilinear part, and an inhomogeneous part:
    \begin{align*}
        A(t,v) &= A_0(t,v)v- F(t,v) - f,
    \\  B(t,v) &= B_0(t,v)v + G(t,v) + g,
    \\  C(t,v) &= C_0(t,v)v + H(t,v) + h.
    \end{align*}
    The operator $A_0$ can be decomposed as $A_0(t,u)v = A_L(t,u)v + A_S(t,u)v$, where $A_L$ is the \emph{leading part} and $A_S$ is the \emph{singular part} of $A$.
    Here the following mapping and measurability properties are assumed
    \begin{align*}
    &A_L:\Omega\times \bR_+\times \cH\to \cL(\cV, \cV^*) \ \ \text{is $\cP\otimes \cB(\cH)$-measurable,}
    \\
    &A_S:\Omega\times\bR_+\times \cH\to \cL(\cV_{\beta_{A}}, \cV_{\alpha_{A}}) \ \ \text{is $\cP\otimes \cB(\cH)$-measurable,}
    \\ &B_0:\Omega\times\bR_+\times \cH\to \cL(\cV_{\beta_{B}}, \cL_2(U, \cH)) \ \ \text{is $\cP\otimes \cB(\cH)$-measurable},
    \\ &C_0:\Omega\times\bR_+\times \cH\to \cL(\cV_{\beta_{C}}, L^2(Z, \cH;\nu)) \ \ \text{is $\cP^-\otimes \cB(\cH)$-measurable,}
    \\ &F:\Omega\times\bR_+\times \cV_{\beta_F}\to \cV_{\alpha_F} \ \ \text{is $\cP\otimes \cB(\cV_{\beta_F})$-measurable,}
    \\ &G:\Omega\times\bR_+\times \cV_{\beta_G}\to \cL_2(U, \cH) \ \ \text{is $\cP\otimes \cB(\cV_{\beta_G})$-measurable,}
    \\ &H:\Omega\times\bR_+\times \cV_{\beta_H}\to L^2(Z,\cH;\nu) \ \ \text{is $\cP^{-}\otimes \cB(\cV_{\beta_H})$-measurable,}
    \\ &f:\Omega\times\bR_+\to \cV_{\alpha_f} \ \ \text{is $\cP$-measurable,}
    \\ &g:\Omega\times\bR_+\to \cL_2(U, \cH) \ \ \text{is $\cP$-measurable,}
    \\ &h:\Omega\times\bR_+\to L^2(Z,\cH;\nu) \ \ \text{is $\cP^{-}$-measurable,}
    \end{align*}
    where the parameters satisfy $\alpha_{A}, \alpha_{F}, \alpha_f \in [0, \nicefrac{1}{2}]$, $\beta_{A},\beta_{B},\beta_{C} \in [\nicefrac{1}{2},1]$, and $\beta_F, \beta_G, \beta_H \in (\nicefrac{1}{2}, 1]$. As before we let
    \begin{align}
    r_A = (1+\alpha_{A} - \beta_{A})^{-1}, & & r_B = (1- \beta_{B})^{-1}, & & r_C = (1- \beta_{C})^{-1}.
    \end{align}

We make the following further local Lipschitz, criticality and coercivity assumptions on the nonlinearities.
\begin{assumption}
\label{assumption operators new}
    Suppose that the following hold:
    \begin{enumerate}
        \item For each $n\geq 1$ and $T>0$ there is a constant $C_{n,T} \geq 0$ and a positive function $K_{A,n,T} \in L^{r_A}([0,T])$  such that, a.s. for all $t\in [0,T]$ and $u,v,w\in \cV$ satisfying $\|u\|_\cH,\|v\|_\cH\leq n$,
        \begin{align*}
            \|A_L(t,u)w\|_{\cV^*}&\leq C_{n,T} \|w\|_{\cV}\\
            \|A_L(t,u)w-A_L(t,v)w\|_{\cV^*}&\leq C_{n,T} \|u-v\|_\cH\|w\|_{\cV}\\
            \|A_S(t,u)w\|_{\alpha_A}&\leq K_{A,n,T}(t) \|w\|_{\beta_{A}}\\
            \|A_S(t,u)w-A_S(t,v)w\|_{\alpha_A}&\leq K_{A,n,T}(t)\|u-v\|_\cH\|w\|_{\beta_A}\\
            \|F(t,u)\|_{\alpha_F}&\leq C_{n,T}(1+\|u\|_{\beta_F}^{1+\rho_F})\\
            \|F(t,u)-F(t,v)\|_{\alpha_F}&\leq C_{n,T}(1+\|u\|_{\beta_F}^{\rho_F} + \|v\|_{\beta_F}^{\rho_F})\|u-v\|_{\beta_F},
        \end{align*}
        where  $\rho_F > 0$ and the following subcriticality condition holds
        \begin{equation}
        \label{eq: criticality F}
        (1 + \rho_F)(2\beta_{F} - 1) \leq 1 + 2 \alpha_{F}.
        \end{equation}
        \item For each $n\geq 1$ and $T>0$ there are a constant $C_{n,T} \geq 0$ and a positive function $K_{B,n,T} \in L^{r_B}([0,T])$  such that, a.s. for all $t\in [0,T]$ and $u,v,w\in \cV$ satisfying $\|u\|_\cH,\|v\|_\cH\leq n$,
        \begin{align*}
            \|B_0(t,u)w\|_{\calL_2(U,\cH)}&\leq K_{B,n,T}(t)\|w\|_{\beta_B}\\
            \|B_0(t,u)w-B_0(t,v)w\|_{\calL_2(U,\cH)}&\leq K_{B,n,T}(t)\|u-v\|_\cH\|w\|_{\beta_B}\\
             \|G(t,u)\|_{\calL_2(U,\cH)}&\leq C_{n,T} (1+\|u\|_{\beta_G}^{1+\rho_G})\\
            \|G(t,u)-G(t,v)\|_{\calL_2(U,\cH)}& \leq C_{n,T}(1+\|u\|_{\beta_G}^{\rho_G}+\|v\|_{\beta_G}^{\rho_G})\|u-v\|_{\beta_G},
        \end{align*}
        where  $\rho_G > 0$ and the following subcriticality condition holds
        \begin{equation}
        (1 + \rho_G)(2\beta_G - 1) \leq 1.
        \end{equation}
        \item For each $n\geq 1$ and $T>0$ there are a constant $C_{n,T}, C_{H,n,T}\geq 0$ and a positive function $K_{C,n,T} \in L^{r_C}([0,T])$  such that, a.s. for all $t\in [0,T]$ and $u,v,w\in \cV$ satisfying $\|u\|_\cH,\|v\|_\cH\leq n$,
        \begin{align*}
            \|C_0(t,u,\cdot)w\|_{L^2(Z,\cH;\nu)}&\leq K_{C,n,T}(t)\|w\|_{\beta_C}\\
            \|C_0(t,u,\cdot)w - C_0(t,v,\cdot)w\|_{L^2(Z,\cH;\nu)}&\leq K_{C,n,T}(t)\|u-v\|_\cH\|w\|_{\beta_C}\\
            \|H(t,u,\cdot)\|_{L^2(Z,\cH;\nu)}&\leq C_{n,T}(1+\|u\|_{\beta_H}^{1+\rho_H})\\
            \|H(t,u,\cdot) - H(t,v,\cdot)\|_{L^2(Z,\cH;\nu)}&\leq C_{n,T}(1+\|u\|_{\beta_H}^{\rho_H} + \|v\|_{\beta_H}^{\rho_H})\|u-v\|_{\beta_H},
        \end{align*}
        where  $\rho_H > 0$ and the following subcriticality condition holds
                \begin{equation}
        (1 + \rho_H)(2\beta_H - 1) \leq 1.
        \end{equation}
        \item For each $n\geq 1$ and $T>0$ there are a constant $\kappa_n>0$ and a positive function $\phi_n \in L^1([0,T])$ such that a.s.\ for all $v\in \cV$ and $u\in \cH$ satisfying $\|u\|_\cH\leq n$, and almost every $t\in [0,T]$,
        \begin{equation}
            \label{coercivity condition}
            \begin{aligned}
            \langle A_0(t,u)v, v \rangle -\tfrac{1}{2}\|B_0(t,u)v\|_{\calL_2(U,\cH)}^2 -& \tfrac{1}{2} \|C_0(t,u,\cdot)v\|_{L^2(Z,\cH;\nu)} \\ & \geq \kappa_n\|v\|_\cV^2 - \phi_n(t) \|v\|_\cH^2.
            \end{aligned}
        \end{equation}
    \end{enumerate}
\end{assumption}

\begin{remark}
	\label{rem: ass nonlinear operators}
    Some comments regarding Assumption~\ref{assumption operators new} are in order:
    \begin{enumerate}
        \item The conditions on $(A_0, B_0, C_0)$ have a lot of symmetry. The exceptions are that $A_S$ is allowed to take values in an interpolation space and that $A_L$ maps always between the endpoint spaces $V$ and $V^*$. Also the conditions on $(F, G, H)$ have a lot of symmetry with the same type of exception for the range space of $F$.
        \item One can allow a sum with different combinations $\rho_{F,j}$ and $\beta_{F,j}$ on the right-hand side, but for simplicity we have not done this here. The same applies to the nonlinearities $G$ and $H$.
        \item Usually, $A_S = 0$, $\beta_B = \beta_C = 1$. One could actually consider a sum of $A_{S,i}$ with different numbers $\alpha_{A,i}$ and $\beta_{A,i}$. The same applies to $B_0$ and $C_0$, as well as $F$, $G$ and $H$. \label{it:sum assump}
        \item If $\beta < 1$, we can also allow $\rho = 0$. Indeed, then $2\beta - 1 < 1$, so for $\rho$ small enough one still has $(1+\rho)(2\beta - 1) < 1$. Here, $(\beta, \rho)$ can be any of the pairs $(\beta_F, \rho_F)$, $(\beta_G, \rho_G)$ or $(\beta_H, \rho_H)$.
        \item The criticality condition~\eqref{eq: criticality F} for $F$ recovers the classical notion of criticality from~\cite{agresti2022critical} when $\alpha_F = 0$. However, taking $\alpha_F$ different to zero is one of the core observations of this paper and allows to treat a broader class of critical equations. See Section~\ref{sec:applications} for examples.
        \item The coercivity condition \eqref{coercivity condition} is an assumption on the linearized part of the equation and is usually easy to verify. A similar condition for $(A,B,C)$ will appear in Section~\ref{sec:global}, where we show that it implies global well-posedness for \eqref{eq:spde2}.
    \end{enumerate}
\end{remark}

The following two Lemmas are useful for proving local well-posedness of the problem \eqref{eq:spde2}. We show that, using the correct truncations and the $\cM$-norm defined below, the terms in \eqref{eq:spde2} locally exhibit Lipschitz behaviour, which allows us to employ a fixed point argument later on.

To abbreviate the notation in the text below we introduce the short-hand notation $\mathcal{M}(a,b) = D([a,b],\cH)\cap L^2([a,b],\cV)$ with the norm
\begin{align}\label{eq:cMspaces}
\|u\|_{\cM(a,b)}:= \sup_{t\in [a,b]}\|u(t)\|_\cH + \Big(\int_{a}^{b} \|u(t)\|_\cV^2\,dt\Big)^{1/2}.
\end{align}
We will also write $\cM(b)$ for $\cM(0,b)$.
The estimate of Lemma~\ref{lem:admissible} implies that for every $\beta\in (1/2, 1]$ and $u\in \cM(a,b)$ one has
    \begin{align}\label{eq:bH embedding}
        \| u \|_{L^{2/(2\beta-1)}([a,b],\cV_\beta)} \leq \| u \|_{L^{2}([a,b],\cV)}^{2\beta-1} \|u\|_{L^\infty([a,b];\cH)}^{2-2\beta} \leq \| u \|_{\cM(a,b)}.
    \end{align}
Let
\[\cX(a,b) = \bigcap L^{2/(2\beta-1)}([a,b],\cV_\beta),\]
where the intersection is taken over all $\beta\in \{1, \beta_A, \beta_B, \beta_C, \beta_F, \beta_G, \beta_H\}$.
As for $\cM$, we use the short-hand notation $\cX(b) \coloneqq \cX(0,b)$.
Let a smooth function $\xi \colon \bR \to [0,1]$ be such that $\supp \xi\subset [-2,2]$ and $\xi=1$ on $[-1,1]$, and define for $\lambda>0$ the dilated function $\xi_\lambda:=\xi(\tfrac{\cdot}{\lambda})$.
Given $0\leq a<b<\infty$, for $(t,x,u)\in [a,b]\times \cH\times \cM(a,b))$ define the truncation
\begin{align}
    \Theta_{\lambda} (t,x,v) &\coloneqq \xi_\lambda \Bigl(\|v\|_{\mathcal{X}(a,t)} + \sup_{s\in [a,t]}\|v(s-)-x\|_\cH\Bigr),
\end{align}
Using the truncation $\Theta_{\lambda}$, we define truncations of the non-linearities $F$, $G$, $H$ in the following lemma and obtain local growth and Lipschitz estimates for them.
\begin{lemma}
\label{lemma truncation 1}
    Fix $T>0$.
    Let $F,G$ and $H$ be as in Assumption~\ref{assumption operators new}, let $\lambda\in (0,1)$ and $\ell>0$. Put $p_F \coloneqq 2/(2\alpha_F + 1) \in [1, 2]$, where $\alpha_F$ is the parameter from Assumption~\ref{assumption operators new}. Then a.s.\ for $0 \leq a<b \leq T$ such that $|b-a|\leq \ell$ the functions
    \begin{align*}
    F_\lambda &\colon \cH\times \cM(a,b)\rightarrow L^{p_F}([a,b],\cV_{\alpha_F}),\\
    G_\lambda &\colon \cH\times \cM(a,b)\rightarrow L^2([a,b],\cL_2(U,\cH)),\\
    H_\lambda &\colon \cH\times \cM(a,b)\rightarrow
    L^2([a,b],L^2(Z,\cH;\nu),
    \end{align*}
    defined pathwise by
    \begin{align*}
        \bigl[F_\lambda (x,v) \bigr](t) &\coloneqq \Theta_\lambda (t,x,v)(F(t,v(t))-F(t,0)),\\
        \bigl[G_\lambda (x,v)\bigr](t) &\coloneqq \Theta_\lambda (t,x,v)(G(t,v(t))-G(t,0)),\\
        \bigl[H_\lambda (x,v)\bigr](t,z) &\coloneqq \Theta_\lambda (t,x,v)(H(t,v(t-),z)-H(t,0,z)),
    \end{align*}
    satisfy for $n\geq 1$ and $\|x\|_\cH\leq n$, $u,v\in \cM(a,b)$
    \begin{align*}
        \|F_\lambda (x,u)\|_{L^{p_F}([a,b],\cV_\alpha)} + \|G_\lambda (x,u)\|_{L^2([a,b],\cL_2(U,\cH))} + \|H_\lambda (x,u,\cdot)\|_{L^2([a,b],L^2(Z,\cH;\nu))} \leq C_{\lambda,\ell},
     \end{align*}
       \begin{align*}
        \|F_\lambda (x,u)-F_\lambda (x,v)&\|_{L^{p_F}([a,b],\cV_\alpha)} + \|G_\lambda (x,u)-G_\lambda(x,v)\|_{L^2([a,b],\cL_2(U,\cH))} \\ & \qquad + \|H_\lambda (x,u,\cdot)-H_\lambda(x,v,\cdot)\|_{L^2([a,b],L^2(Z,\cH;\nu))} \leq C_{\lambda,\ell} \|u-v\|_{\cM(a,b)},
    \end{align*}
    where $C_{\lambda,\ell}$ depends on $n,T,\lambda,\ell$ and the parameters quantified in Assumption~\ref{assumption operators new} (applied with $n$ and $T$). Moreover, for every $\varepsilon>0$ there exist $\ell^*$ and $\lambda^*$ such that $C_{\lambda,\ell}<\varepsilon$ for all $\ell\leq \ell^*$ and $\lambda \leq \lambda^*$.
\end{lemma}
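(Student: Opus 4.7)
The argument is pathwise in $\omega$; we work with the random constants and $L^{r_*}$-functions from Assumption~\ref{assumption operators new} and track how the final estimates depend on $\lambda$ and $\ell$.

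I first record three structural properties of the truncation. Since $t \mapsto y_v(t) := \|v\|_{\cX(a,t)} + \sup_{s \in [a,t]}\|v(s-) - x\|_\cH$ is non-decreasing, the support of $t \mapsto \Theta_\lambda(t, x, v)$ is an interval $[a, t_v^*]$, and on this interval $\|v\|_{\cX(a, t_v^*)} \leq 2\lambda$ together with $\|v(s-)\|_\cH \leq n + 2$. In particular we lie inside the regime of Assumption~\ref{assumption operators new} with enlarged index $N = n + 2$. Using~\eqref{eq:bH embedding} and the càdlàg nature of $v$, one has $|y_u(t) - y_v(t)| \leq C \|u - v\|_{\cM(a,t)}$ for a universal constant, so $v \mapsto \Theta_\lambda(t, x, v)$ is Lipschitz on $\cM$ with constant proportional to $\lambda^{-1}$.

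For the growth estimate on $F_\lambda$, Assumption~\ref{assumption operators new} yields $\|F(t, v) - F(t,0)\|_{\alpha_F} \leq C_N(\|v\|_{\beta_F} + \|v\|_{\beta_F}^{1+\rho_F})$ on the support $[a, t_v^*]$. Let $q_F = 2/(2\beta_F - 1)$ be the exponent of the $\cV_{\beta_F}$-contribution to $\cX$. A direct computation with $p_F = 2/(2\alpha_F + 1)$ shows that the subcriticality condition~\eqref{eq: criticality F} is equivalent to $p_F(1+\rho_F) \leq q_F$. Hölder's inequality in time combined with the truncation bound $\|v\|_{L^{q_F}([a, t_v^*], \cV_{\beta_F})} \leq 2\lambda$ then yields
\begin{align*}
\|F_\lambda(x, v)\|_{L^{p_F}([a,b], \cV_{\alpha_F})} \leq C_N \bigl( \ell^{1 + \alpha_F - \beta_F} \lambda + \ell^{(1 + 2\alpha_F - (1+\rho_F)(2\beta_F - 1))/2} \lambda^{1+\rho_F} \bigr),
\end{align*}
both powers of $\ell$ being non-negative (the first because $\beta_F \leq 1$, the second by subcriticality). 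Since $\rho_F > 0$, this tends to zero as $\lambda, \ell \to 0$. The estimates for $G_\lambda$ and $H_\lambda$ follow identically with $p = 2$ and $\alpha = 0$ in place of $\alpha_F$, invoking the respective subcriticality conditions.

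For the Lipschitz estimate I decompose
\begin{align*}
F_\lambda(x, u) - F_\lambda(x, v) = \Theta_\lambda(\cdot, x, u) \bigl(F(\cdot, u) - F(\cdot, v)\bigr) + \bigl(\Theta_\lambda(\cdot, x, u) - \Theta_\lambda(\cdot, x, v)\bigr) \bigl(F(\cdot, v) - F(\cdot, 0)\bigr).
\end{align*}
The first summand is controlled exactly as in the growth estimate via the Lipschitz form of the Assumption bound on $F$, producing the factor $\|u - v\|_{L^{q_F}} \leq \|u - v\|_{\cM(a,b)}$. For the second summand, the Lipschitz property of $\Theta_\lambda$ contributes a factor proportional to $\lambda^{-1} \|u - v\|_{\cM(a,b)}$, balanced by the $\lambda^{1+\rho_F}$-smallness extracted from $\|v\|_{\beta_F}^{1+\rho_F}$ on the joint support (the difference vanishes unless at least one of $y_u(t), y_v(t) \leq 2\lambda$); the net extra factor is $\lambda^{\rho_F}$, again small. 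The main obstacle in the argument is this exponent bookkeeping: verifying that the subcriticality condition provides Hölder conjugates under which the $\lambda^{-1}$ from the Lipschitz constant of $\Theta_\lambda$ is strictly dominated by the $\lambda^{1+\rho_F}$-smallness of $F$'s growth. The strict positivity $\rho_F > 0$ is essential, since in the borderline case the $\ell$-power of the second term vanishes and only $\lambda^{\rho_F}$ remains to produce smallness.
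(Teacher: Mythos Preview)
Your plan is correct and follows essentially the same route as the paper: the stopping-time support $[a,t_v^*]$, the Hölder step with $p_F(1+\rho_F)\le q_F$ from subcriticality, the Lipschitz constant $\lambda^{-1}$ for $\Theta_\lambda$, and the two-term decomposition for the difference are all exactly what the paper does. One small point to make explicit: in the second summand you need $\|v\|_{\cX(a,\cdot)}\le 2\lambda$ on the \emph{larger} of the two supports, which is only automatic after the symmetric reduction $t_u^*\le t_v^*$ (the paper states this as a WLOG); otherwise the factor $F(\cdot,v)-F(\cdot,0)$ is not controlled on $(t_v^*,t_u^*]$.
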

\begin{proof}
    We present the case of $F$ in detail, the terms concerning $G$ and $H$ can be calculated in the same way.
    To ease notation, we just write $(\alpha,\beta,\rho,p)$ instead of $(\alpha_F,\beta_F, \rho_F,p_F)$ for the parameters from Assumption~\ref{assumption operators new}.
    By translation, it suffices to consider $a = 0$ and $b = \ell$ and we assume without loss of generality $T = \ell$.
    Define
    \begin{equation}
    \label{eq: def tau_u}
    \tau_u:=\inf\{ t\in [0,T]:
    \|u\|_{\cX(t)} + \sup_{s\leq t}\|u(s)-x\|_\cH\geq 2\lambda\}\wedge T.
    \end{equation}
    Then, using that we have $\|u\|_\cH\leq \| x \|_\cH +2\lambda \leq n + 2$ on $[0,\tau_u)$, H\"older's inequality in conjunction with Assumption~\ref{assumption operators new} yield
    \begin{align}
    \label{eq: truncation F bound}
        \| F(\cdot, u) - F(\cdot, 0) &\|_{L^p([0,\tau_u], \cV_\alpha)} \\
        &\leq C \bigl\| (1 + \| u \|_\beta^\rho) \| u \|_\beta \bigr\|_{L^p([0,\tau_u])} \\
        &\leq C \bigl(T^{\nicefrac{\rho}{(p(\rho + 1))}} + \| u \|_{L^{p(\rho+1)}([0,\tau_u], \cV_\beta)}^\rho \bigr) \| u \|_{L^{p(\rho+1)}([0,\tau_u], \cV_\beta)}
    \end{align}
    almost surely.
    Here, the constant $C$ depends on the parameters quantified in Assumption~\ref{assumption operators new} and $n$.
    Using the definition of $p$ and the criticality condition~\eqref{eq: criticality F} we obtain
    \begin{align}
        p(\rho+1) = \frac{2(\rho+1)}{1+2\alpha} \leq \frac{2}{2\beta-1}.
    \end{align}
    Therefore, estimating the $L^{p(\rho+1)}([0,\tau_u], \cV_\beta)$ norms by $C_T$-times the $L^{2/(2\beta-1)}([0,\tau_u], \cV_\beta)$ norm, we can control the right-hand side of~\eqref{eq: truncation F bound} by using
    \[\|u\|_{L^{2/(2\beta-1)}([0,\tau_u], \cV_\beta)}\leq 2\lambda.\]
    Indeed, we get
    \begin{align}
        \| F(\cdot, u) - F(\cdot, 0) \|_{L^p([0,\tau_u], \cV_\alpha)}
        \leq C (T^\frac{\rho}{p(\rho + 1)} + (2\lambda)^{\rho}) \lambda  \label{eq: F lambda u bound} \eqcolon C_{\lambda,T}.
    \end{align}
    Observe that if $\lambda,T\rightarrow 0$ then also  $C_{\lambda,T}\rightarrow 0$.
    Also, note that if $t > \tau_u$ then $\Theta_{\lambda}(t,x,u)=0$. Therefore,
    \begin{align}
         \| F_\lambda(x, u) \|_{L^p([0,T], \cV_\alpha)} \leq \| F(\cdot, u) - F(\cdot, 0) \|_{L^p([0,\tau_u], \cV_\alpha)} \leq C_{\lambda,T}.
    \end{align}
    Next, we compute
    $$
    \| F_\lambda(x,u)- F_\lambda(x,v)\|_{L^p([0,T],\cV_\alpha)}\leq R_1 + R_2,
    $$
    where we define
    \begin{align*}
        R_1&\coloneqq \left(\int_0^T\|\Theta_\lambda (s,x,u)\left(  F(s,u(s))- F(s,v(s))\right)\|_{\cV_\alpha}^p\,ds \right)^\frac{1}{p},\\
        R_2&\coloneqq \left(\int_0^T \|\left(\Theta_\lambda (s,x,u)-\Theta_\lambda(s,x,v\right) (F(s,v(s))-F(s,0)) \|_{\cV_\alpha}^p\,ds \right)^\frac{1}{p}.
    \end{align*}
    We also define $\tau_v$ by the same expression as in~\eqref{eq: def tau_u} but with $u$ replaced by $v$. Since the argument is pathwise, we may assume without loss of generality that $\tau_u\leq\tau_v$.
    Since $F$ satisfies Assumption~\ref{assumption operators new} we can estimate the terms similarly to above (where we had $v = 0$) to obtain
    \begin{align*}
    R_1&\leq C \left(\int_0^{\tau_u} (1+\|u(s)\|_\beta^\rho + \|v(s)\|_\beta^\rho)^p\|u(s)-v(s)\|_\beta^p\,ds \right)^\frac{1}{p}\\
    &\leq C\left(\int_0^{\tau_u}(1+\|u(s)\|_\beta^{p(1+\rho)} + \|v(s)\|_\beta^{p(1+\rho)})\,ds  \right)^\frac{\rho}{p(1+\rho)} \left(\int_0^{\tau_u}\|u(s)-v(s)\|_\beta^{p(1+\rho)}\,ds\right)^\frac{1}{p(1+\rho)}\\
    &\leq C\big(\tau_u^\frac{1}{p(1+\rho)} + 2(2\lambda)^{\rho} \big)\|u-v\|_{\cM(T)}\\
    &\eqcolon C_{\lambda,T}\|u-v\|_{\cM(T)}
    \end{align*}
    for a constant $C=C(n,T,\beta,\rho)$, and the constant $C_{\lambda,T}$ vanishes as $\lambda,T\rightarrow 0$.
    To estimate $R_2$ note first that for $C_\xi:=\sup_{r\in\bR}|\tfrac{d}{dr} \xi(r)|$ we have, by the reverse triangle inequality and \eqref{eq:bH embedding},
    \begin{align}
    \label{eq: Theta lipschitz}
    |\Theta_\lambda(s,x,u)-\Theta_\lambda(s,x,v)|&\leq \frac{C_\xi}{\lambda}\Bigl(\|u-v\|_{\cX(s)} + \sup_{r\leq s}\|u(r)-v(r)\|_{\cH}\Bigr)\\
    &\leq \frac{C}{\lambda} \|u-v\|_{\cM(s)}
    \end{align}
    for a constant $C=C(C_\xi,\beta)$.
    Hence we get, using \eqref{eq: F lambda u bound} with $u$ replaced by $v$,
    \begin{align}
        R_2
        &\leq \left( \int_0^{\tau_v} |\Theta_\lambda(s,x,u)-\Theta_\lambda(s,x,v)|^p \| F(s,v(s)) - F(s,0)\|_{\cV_\alpha}^p\,ds \right)^\frac{1}{p}\\
        &\leq \frac{C}{\lambda}\|u-v\|_{\cM( \tau_v)} \left(\int_0^{\tau_v} \| F(s,v(s)) - F(s,0)\|_{\cV_\alpha}^p\,ds \right)^\frac{1}{p}\\
        &\leq \frac{C}{\lambda} (T^{\nicefrac{1}{(p(1+\rho))}} + (2\lambda)^\rho)\lambda \|u-v\|_{\cM(T)} \\
        &\eqqcolon C_{\lambda,T} \|u-v\|_{\cM(T)},
    \end{align}
    where $C=C(C_\xi,\beta,n,T,\rho)$, and $C_{\lambda,T}\rightarrow 0$ when $T,\lambda\rightarrow 0$.
\end{proof}
The next lemma is again a truncation lemma, but this time for the quasilinear operators $A$, $B$, $B$.
\begin{lemma}
\label{lemma truncation 2}
    Fix $T>0$.
    Let $A_L, A_S$, $B_0$ and $C_0$ be as in Assumption~\ref{assumption operators new}, and let $\lambda\in (0,1)$ and $\ell > 0$. Put $p_A \coloneqq 2/(2\alpha_A + 1) \in [1, 2]$, where $\alpha_A$ is the parameter from Assumption~\ref{assumption operators new}. Then a.s.\ for $0 \leq a<b \leq T$ such that $|b-a|\leq \ell$ the functions
    \begin{align*}
        F_{A_L,\lambda} &\colon \Omega \times \cH\times \cM(a,b)\rightarrow L^2([a,b], \cV^*),
\\       F_{A_S,\lambda} &\colon \Omega \times \cH\times \cM(a,b)\rightarrow L^{p_A}([a,b], \cV_{\alpha_A}), \\
        G_{B_0,\lambda} &\colon \Omega \times \cH\times \cM(a,b)\rightarrow L^2([a,b], \cL_2(U,\cH)), \\
        H_{C_0,\lambda} &\colon \Omega \times \cH\times \cM(a,b)\rightarrow L^2([a,b], L^2(Z,\cH; \nu)),
   \end{align*}
    defined pathwise by
    \begin{align*}
        \bigl[F_{A_L,\lambda}(x,u)\bigr](t) &\coloneqq \Theta_\lambda(t,x,u)(A_L(t,x)-A_L(t,u(t)))u(t), \\
        \bigl[F_{A_S,\lambda}(x,u)\bigr](t) &\coloneqq \Theta_\lambda(t,x,u)(A_S(t,x)- A_S(t,u(t)))u(t), \\
        \bigl[G_{B_0,\lambda}(x,u)\bigr](t) &\coloneqq \Theta_\lambda(t,x,u)(B_0(t,x)-B_0(t,u(t)))u(t), \\
        \bigl[H_{C_0,\lambda}(x,u)\bigr](t,z) &\coloneqq \Theta_\lambda(t,x,u)(C_0(t,x,z)-C_0(t,u(t-),z))u(t-).
    \end{align*}
    satisfy for any $n\geq 1$ and all $\|x\|_\cH\leq n$, $u,v\in \cM(a,b)$
    \begin{align*}
        & \|F_{\lambda,A_L}(x,u)\|_{L^2([a,b],\cV^*)}+
        \|F_{\lambda,A_S}(x,u)\|_{L^{p_A}([a,b],\cV_{\alpha_A})} \\ & + \|G_{B_0,\lambda}(x,u)\|_{L^2([a,b],\cL^2(U,\cH))} + \|H_{C_0,\lambda}(x,u,\cdot)\|_{L^2([a,b],L^2(Z,\cH;\nu))} \leq C_{\lambda,\ell},
    \end{align*}
    \begin{align*}
        \|F_{\lambda,A_L}(x,u) &- F_{\lambda,A_L}(x,v)\|_{L^2([a,b],\cV^*)}  \\
        &+ \|F_{\lambda,A_S}(x,u) - F_{\lambda,A_S}(x,v)\|_{L^{p_A}([a,b],\cV_{\alpha_A})}
        \\ & + \|G_{B_0,\lambda}(x,u)-G_{B_0,\lambda}(x,v)\|_{L^2([a,b],\cL^2(U,\cH))}
         \\
         &+ \|H_{C_0,\lambda}(x,u,\cdot)- H_{C_0,\lambda}(x,v,\cdot)\|_{L^2([a,b],L^2(Z,\cH;\nu))} 
         \leq C_{\lambda,\ell}\|u-v\|_{\cM(a,b)},
    \end{align*}
    where $C_{\lambda,\ell}$ depends on $n,T,\lambda$, $\ell$ and the parameters quantified in Assumption~\ref{assumption operators new} (with $T$ and $n$). Moreover, for every $\varepsilon>0$ there exist $\ell^*$ and $\lambda^*$ such that $C_{\lambda,\ell}<\varepsilon$ for all $\ell\leq \ell^*$ and $\lambda \leq \lambda^*$.
\end{lemma}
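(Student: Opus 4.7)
The plan is to mirror the strategy of Lemma~\ref{lemma truncation 1} but exploit the linearity of $A_L, A_S, B_0, C_0$ in the final argument together with the Lipschitz dependence on the ``coefficient'' $\cH$-variable quantified in Assumption~\ref{assumption operators new}. Throughout, translate to $a=0$, $b=\ell$ and set $\tau_u := \inf\{t\in [0,\ell] : \|u\|_{\cX(t)} + \sup_{s\leq t}\|u(s)-x\|_\cH \geq 2\lambda\}\wedge \ell$, so that $\Theta_\lambda(t,x,u)=0$ for $t>\tau_u$ and $\|x-u(t)\|_{\cH}\leq 2\lambda$, $\|u\|_{L^2([0,\tau_u],\cV)}\leq 2\lambda$, and $\|u\|_{L^{2/(2\beta-1)}([0,\tau_u],\cV_\beta)}\leq 2\lambda$ for every $\beta$ in the definition of $\cX$. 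Note also that $\|u(t)\|_{\cH}\leq n + 2$ on $[0,\tau_u]$, so the Assumption~\ref{assumption operators new} constants corresponding to $n+2$ are uniform.

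For the growth bounds I would insert the $\cH$-Lipschitz estimate of the coefficient directly. For instance,
\begin{align*}
\|[F_{A_L,\lambda}(x,u)](t)\|_{\cV^*} \leq \Theta_\lambda(t,x,u)\, C_{n+2,T}\, \|x-u(t)\|_{\cH}\,\|u(t)\|_{\cV} \leq 2\lambda\, C_{n+2,T}\,\|u(t)\|_{\cV},
\end{align*}
and integrating in $L^2([0,\ell])$ yields the bound $(2\lambda)^2 C_{n+2,T}$, which tends to $0$ as $\lambda\to 0$. The $A_S$ term is analogous, only using Hölder's inequality with the split $1/p_A = 1/r_A + (2\beta_A-1)/2$ to pair $K_{A,n+2,T}\in L^{r_A}$ with $\|u\|_{L^{2/(2\beta_A-1)}([0,\tau_u], \cV_{\beta_A})}\leq 2\lambda$, giving a bound of order $(2\lambda)^2 \|K_{A,n+2,T}\|_{L^{r_A}([0,\ell])}$. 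The same pattern with the admissible pair $(r_B, \beta_B)$, respectively $(r_C, \beta_C)$, handles $G_{B_0,\lambda}$ and $H_{C_0,\lambda}$.

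For the Lipschitz bounds the standard decomposition
\begin{align*}
[F_{A_L,\lambda}(x,u)](t) - [F_{A_L,\lambda}(x,v)](t) = R_1(t) + R_2(t),
\end{align*}
is used, where
\begin{align*}
R_1(t) &:= \Theta_\lambda(t,x,u)\bigl[(A_L(t,x)-A_L(t,u(t)))(u(t)-v(t)) + (A_L(t,v(t))-A_L(t,u(t)))v(t)\bigr],\\
R_2(t) &:= \bigl(\Theta_\lambda(t,x,u)-\Theta_\lambda(t,x,v)\bigr)(A_L(t,x)-A_L(t,v(t)))v(t).
\end{align*}
In $R_1$ the first summand is bounded by $C_{n+2,T}\cdot 2\lambda\cdot \|u(t)-v(t)\|_{\cV}$, so its $L^2$-norm is controlled by $2\lambda C_{n+2,T} \|u-v\|_{\cM(\ell)}$. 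The second summand of $R_1$ uses the $\cH$-Lipschitz property in the middle variable: its $\cV^*$-norm is $\leq C_{n+2,T}\|u(t)-v(t)\|_{\cH}\|v(t)\|_{\cV}$, and Hölder in time together with $\|v\|_{L^2([0,\tau_u],\cV)}\leq 2\lambda$ gives the bound $2\lambda C_{n+2,T}\|u-v\|_{\cM(\ell)}$. The term $R_2$ is handled exactly as in Lemma~\ref{lemma truncation 1} via the Lipschitz estimate~\eqref{eq: Theta lipschitz} for $\Theta_\lambda$, the factor $1/\lambda$ from that estimate being absorbed by the already-obtained growth bound of order $\lambda^2$ for $F_{A_L,\lambda}(x,v)$, yielding an overall estimate of order $\lambda\|u-v\|_{\cM(\ell)}$ (with the appropriate dependence on $\ell$). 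The terms $F_{A_S,\lambda}$, $G_{B_0,\lambda}$ and $H_{C_0,\lambda}$ are treated identically with the corresponding Hölder exponents.

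The only mild subtlety, and the step most worth being careful about, is the term $H_{C_0,\lambda}$: the left-limit $u(t-)$ enters, so the decomposition must be written with $u(t-)$ and $v(t-)$ and the Lipschitz estimate~\eqref{eq: Theta lipschitz} applied with $\sup_{r\leq s}\|u(r-)-v(r-)\|_{\cH}\leq \|u-v\|_{\cM(s)}$; since the càdlàg norms control left-limits, no new difficulty arises. Collecting all constants, one obtains the stated $C_{\lambda,\ell}$ which is of order $\lambda + \ell^{\kappa}$ for some $\kappa>0$ (with the implicit constant depending on $n, T$, and the Assumption~\ref{assumption operators new} data), and hence can be made arbitrarily small by choosing $\lambda$ and $\ell$ small.
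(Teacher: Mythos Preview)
Your approach is essentially the paper's: translate to $[0,\ell]$, introduce the stopping time $\tau_u$ so that $\Theta_\lambda(\cdot,x,u)$ has support in $[0,\tau_u]$, exploit the Lipschitz-in-$\cH$ estimate from Assumption~\ref{assumption operators new}, and split the difference via the Lipschitz bound~\eqref{eq: Theta lipschitz} for $\Theta_\lambda$. Your two-term decomposition $R_1+R_2$ is algebraically the same as the paper's three-term decomposition $I_1+I_2+I_3$ with the roles of $u$ and $v$ swapped.

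There is, however, a real gap in your $R_1$ estimate. You claim $\|v\|_{L^2([0,\tau_u],\cV)}\leq 2\lambda$ and implicitly use $\|v(t)\|_\cH\leq n+2$ on $[0,\tau_u]$ to invoke the constant $C_{n+2,T}$, but these bounds are only available on $[0,\tau_v]$, not on $[0,\tau_u]$. The paper fixes this by also introducing $\tau_v$ and assuming without loss of generality (by symmetry of the Lipschitz estimate in $u,v$) that $\tau_u\geq\tau_v$; with your decomposition the corresponding assumption is $\tau_u\leq\tau_v$, which you must state before estimating $R_1$. Without it the step ``H\"older in time together with $\|v\|_{L^2([0,\tau_u],\cV)}\leq 2\lambda$'' is unjustified.

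A second point the paper handles explicitly and you gloss over: when $\beta\in\{\beta_A,\beta_B,\beta_C\}$ equals $1/2$, the factor $\|u\|_{L^{2/(2\beta-1)}([0,\tau_u],\cV_\beta)}$ becomes $\|u\|_{L^\infty([0,\tau_u],\cH)}$, which the paper bounds by $n+2$ rather than by $2\lambda$. In that case the smallness of $C_{\lambda,\ell}$ comes entirely from $\|K_{\cdot,n+2,T}\|_{L^{r}([0,\ell])}\to 0$ as $\ell\to 0$, not from $\lambda$. Your blanket claim that the growth bound is ``of order $(2\lambda)^2$'' does not cover this endpoint.
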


\begin{proof}
    We present the case of $H_{C_0,\lambda}$ in detail and comment on the necessary changes for $F_{A_L,\lambda}$, $F_{A_S,\lambda}$ and $G_{B_0,\lambda}$ afterwards.
    By translation, it suffices to consider $a = 0$ and $b = \ell$, and again we assume without loss of generality $\ell = T$.
    Recall $\beta_C$, $K_{C,n,T}$ and $r_C$ from Assumption~\ref{assumption operators new}. To simplify notation we write $(\beta,K_{n,T}, r)$ instead of $(\beta_C, K_{C,n,T}, r_C)$.
    Similar to Lemma~\ref{lemma truncation 1} we introduce the stopping times $\tau_u$ and $\tau_v$ using the formula~\eqref{eq: def tau_u}. By symmetry, we can assume $\tau_u \geq \tau_v$.

    Let us start with the bound for $H_{C_0,\lambda}$. Using that $\Theta_\lambda(t,x,u) = 0$ when $t > \tau_u$ and $\| u \|_\cH \leq n + 2$ on $\llbracket 0, \tau_u \rrparenthesis$, we compute with Assumption~\ref{assumption operators new} that
    \begin{align}
        \|H_{C_0,\lambda}(x,u,\cdot)\|_{L^2([0,T],L^2(Z,\cH;\nu))}&\leq \left( \int_0^{\tau_u} \| (C_0(s,x,\cdot) - C_0(s,u(s),\cdot))u(s)\|_{L^2(Z,\cH;\nu)}^2 ds \right)^\frac{1}{2} \\
        &\leq \left( \int_0^{\tau_u} (K_{n+2,T}(s) \| x - u(s) \|_\cH \| u(s) \|_\beta)^2 ds \right)^\frac{1}{2}.
    \end{align}
    Since $\tfrac{1}{2} = \tfrac{2\beta-1}{2} + \tfrac{1}{r}$, Hölder's inequality gives
    \begin{align}
    \label{eq:truncation2_hoelder}
        &\left( \int_0^{\tau_u} (K_{n+2,T}(s) \| x - u(s) \|_\cH \| u(s) \|_\beta)^2 ds \right)^\frac{1}{2} \\
        \leq{} &\sup_{s <\tau_u} \| u(s) - x \|_\cH \| K_{n+2,T} \|_{L^r([0,T])} \| u \|_{L^\frac{2}{2\beta-1}([0,\tau_u), \cV_\beta)}.
    \end{align}
    If $\beta > 1/2$ deduce using the definition of $\tau_u$ that
    \begin{align}
        \|H_{C_0,\lambda}(x,u,\cdot)\|_{L^2([0,T],L^2(Z,\cH;\nu))} \leq C \| K_{n+2,T} \|_{L^r([0,T])} \lambda^2 \eqqcolon C_{\lambda,T},
    \end{align}
    where $C_{\lambda,T} \to 0$ when $\lambda,T \to 0$. Otherwise, if $\beta = 1/2$, estimate similarly that
    \begin{align}
        \|H_{C_0,\lambda}(x,u,\cdot)\|_{L^2([0,T],L^2(Z,\cH;\nu))} \leq C \| K_{n+2,T} \|_{L^r([0,T])} \lambda (n+2)
        \eqqcolon &C_{\lambda,T},
    \end{align}
    where still $C_{\lambda,T} \to 0$ when $\lambda,T \to 0$.

    For the Lipschitz bound for $H_{C_0,\lambda}$, we decompose
    $$
    \|H_{C_0,\lambda} (x,u,\cdot)-H_{C_0,\lambda}(x,v,\cdot)\|_{L^2([0,T],L^2(Z,\cH;\nu))}\leq I_1 + I_2 + I_3,
    $$
    where
    \begin{align*}
        I_1&\coloneqq \left(\int_0^T\|\big(\Theta_\lambda(s,x,u) - \Theta_\lambda(s,x,v)\big)(C_0(s,x,\cdot)-C_0(s,u(s),\cdot))u(s)\|_{L^2(Z,\cH;\nu)}^2 ds \right)^\frac{1}{2}, \\
      I_2&\coloneqq \left(\int_0^T\|\Theta_\lambda(s,x,v)\big(C_0(s,u(s),\cdot)-C_0(s,v(s),\cdot)\big) u(s)\|_{L^2(Z,\cH;\nu)}^2 ds \right)^\frac{1}{2}, \\
      I_3&\coloneqq \left(\int_0^T\|\Theta_\lambda(s,x,v) (C_0(s,x,\cdot)-C_0(s,v(s),\cdot))(u(s)-v(s))\|_{L^2(Z,\cH;\nu)}^2 ds \right)^\frac{1}{2}. \\
    \end{align*}
    Recall from~\eqref{eq: Theta lipschitz} the Lipschitz bound
    $$
    |\Theta_\lambda (s,x,u)-\Theta_\lambda (s,x,v)|\leq \frac{C_\xi}{\lambda}\|u-v\|_{\cM(s)}.
    $$
    When $s > \tau_u$ and consequently also $s > \tau_v$, the integrand of $I_1$ vanishes. Otherwise, we have $\| u \|_\cH \leq n + 2$ on $\llbracket 0, \tau_u \rrparenthesis$. Using Assumption~\ref{assumption operators new} and Hölder's inequality estimate
    \begin{align*}
        I_1&\leq \frac{C_\xi}{\lambda} \left(\int_0^{\tau_u} \|u-v\|_{\cM(s)}^2 \|(C_0(s,x,\cdot)-C_0(s,u(s),\cdot))u(s)\|_{L^2(Z,\cH;\nu)}^2 ds \right)^\frac{1}{2}\\
        &\leq \frac{C}{\lambda} \|u-v\|_{\cM(T)} \left(\int_0^{\tau_u} \bigl(K_{n+2,T}(s) \| u(s) - x \|_\cH \| u(s) \|_\beta \bigr)^2 \,ds \right)^\frac{1}{2} \\
        &\leq{} \frac{C}{\lambda} \|u-v\|_{\cM(T)} \sup_{s\in [0,\tau_u)}\| u(s) -x\|_{\cH} \| K_{n+2,T} \|_{L^r([0,T])} \| u \|_{L^\frac{2}{2\beta-1}([0,\tau_u), \cV_\beta)}.
    \end{align*}
    If $\beta > 1/2$, the definition of $\tau_u$ as above yields
    \begin{align}
        I_1 \leq C \lambda \| K_{n+2,T} \|_{L^r([0,T])} \|u-v\|_{\cM(T)}
        \eqqcolon{} C_{\lambda,T} \|u-v\|_{\cM(T)},
    \end{align}
    where $C_{\lambda,T}$ has the desired property.
    If $\beta = 1/2$ then $r=2$, so calculate
    \begin{align}
        I_1 \leq C \| K_{n+2,T} \|_{L^2([0,T])} (n+2) \|u-v\|_{\cM(T)} \coloneqq C_{\lambda,T} \|u-v\|_{\cM(T)}.
    \end{align}
    Observe that $C_{\lambda,T} \to 0$ when $T \to 0$ as this is the case for $\| K_{n+2,T} \|_{L^2([0,T])}$, independent of $\lambda$.
    This completes the treatment of $I_1$.

    To derive a bound for $I_2$ we can argue as for the bound for $H_{C_0,\lambda}(x,u,\cdot)$ with two remarks. First, we can restrict integration to $s \leq \tau_v$,
    but since $\tau_v \leq \tau_u$ we obtain $\| u \|_\cH \vee \| v \|_\cH \leq n + 2$ on $\llbracket 0, \tau_v \rrparenthesis$. Second, instead of $\sup_{s <\tau_u} \| u(s) - x \|_\cH$ we obtain $\sup_{s <\tau_v} \| u(s) - v(s) \|_\cH \leq \| u - v \|_{\cM(T)}$.

    Finally, for $I_3$ calculate similarly
    \begin{align}
        I_3 &\leq \left( \int_0^{\tau_v} \|(C_0(s,x,\cdot)-C_0(s,v(s),\cdot))(u(s)-v(s))\|_{L^2(Z,\cH;\nu)}^2ds \right)^\frac{1}{2} \\
        &\leq \left( \int_0^{\tau_v} (K_{n+2,T}(s) \| v(s) - x \|_\cH \| u(s) - v(s) \|_\beta)^2 \, ds \right)^\frac{1}{2} \\
        &\leq \sup_{s <\tau_v} \| v(s) - x \|_\cH \| K_{n+2,T} \|_{L^r([0,T])} \| u - v \|_{L^\frac{2}{2\beta-1}([0,T], \cV_\beta)} \\
        &\leq C \lambda \| K_{n+2,T} \|_{L^r([0,T])} \| u - v \|_{\cM(T)} \\
        &\eqqcolon C_{\lambda,T} \| u - v \|_{\cM(T)},
    \end{align}
    where we used \eqref{eq:bH embedding} in the penultimate step.

    The proof for $F_{A_L}$, $F_{A_S}$ and $G_{B_0}$ follows by analogous arguments. In the case of $F_{A_S}$ we have to define $r=r_A$, so that $\nicefrac{1}{p_A} = \nicefrac{(2\beta_A - 1)}{2} + \nicefrac{1}{r}$ lets us apply Hölder's inequality in the same way as in~\eqref{eq:truncation2_hoelder}.
\end{proof}

With the truncation lemmas at hand, we present a local well-posedness result in the case of integrable inhomogeneities. It is based on a fixed-point argument using Banach's fixed point theorem. To define the fixed-point map, we are going to apply the linear theory from Section~\ref{sec: linear} to a suitable auxiliary problem which incorporates the truncated non-linearities.
\begin{theorem}[Local well-posedness]
\label{thm wellposedness local}
    Let $T>0$ and fix a stopping time $\sigma \in [0,T]$.
    Suppose Assumption~\ref{assumption operators new}. Let
    \begin{align}
        f_i\in L^2(\Omega,L^{p_{i}}([0,T],\cV_{\theta_{i}})),\;\; g\in L^2(\Omega\times [0,T],\cL_2(U,\cH)),\;\; h\in L^2(\Omega\times [0,T],L^2(Z,\cH;\nu)),
    \end{align}
    be $\cP$-measurable, $\cP$-measurable, and $\cP^-$-measurable respectively, where the pairs $(p_{i},\theta_{i})$ are admissible in the sense of Definition~\ref{def:admissible}. Put $f = \sum_{i=1}^{m_f} f_i$.
    Let $u_\sigma$ be a bounded $\cF_\sigma$-measurable random variable taking values in $\cH$. The following holds:
    \begin{enumerate}
        \item (Existence and uniqueness) There exists a unique maximal solution $(u, \tau)$ on $\llbracket \sigma, T \rrbracket$ to~\eqref{eq:spde2} such that $\tau > \sigma$ almost surely on $\{\sigma < T\}$ and $\tau$ is a stopping time.
        \item (Continuous dependence on initial data) There are constants $C,\eta > 0$ such that for another bounded and $\cF_\sigma$-measurable initial value $v_\sigma$ with maximal solution $(v,\wt \tau)$ on $\llbracket \sigma, T \rrbracket$ and almost surely satisfying the closeness condition $\| u_\sigma - v_\sigma \|_\cH < \eta$, there exists a stopping time $\nu$ with $\nu\in (\sigma, \tau \wedge \wt\tau]$ on $\{\sigma < T\}$ such that
        \begin{align}
            \left( \E \| u - v \|_{\cM(\sigma,\nu)}^2 \right)^\frac{1}{2} \leq C \left( \E \| u_\sigma - v_\sigma \|_\cH^2 \right)^\frac{1}{2}.
        \end{align}
        \item (Localization property) Given another bounded and $\cF_\sigma$-measurable initial value $v_\sigma$ with corresponding maximal solution $(v,\wt \tau)$ on $\llbracket \sigma, T \rrbracket$ one has with $\Gamma \coloneqq \{u_\sigma = v_\sigma\}$ the identities
        \begin{align}
            \tau\1_{\Gamma} = \wt \tau \1_\Gamma \quad \mathrm{and} \quad u\1_\Gamma = v \1_\Gamma.
        \end{align}
    \end{enumerate}
\end{theorem}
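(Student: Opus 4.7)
The plan is to linearize the quasilinear equation around the initial value $u_\sigma$, truncate the nonlinear remainders via the cut-off $\Theta_\lambda$, and solve the resulting auxiliary problem by Banach's fixed point theorem on the space $L^2(\Omega;\cM(\sigma,\sigma+\ell))$ for $\ell,\lambda$ sufficiently small. Concretely, since $u_\sigma$ is bounded and $\cF_\sigma$-measurable, Assumption~\ref{assumption operators new}(4) makes the coefficients $A_0(\cdot,u_\sigma)$, $B_0(\cdot,u_\sigma)$, $C_0(\cdot,u_\sigma,\cdot)$ satisfy Assumption~\ref{assumption linear operators}. For $v\in L^2(\Omega;\cM(\sigma,\sigma+\ell))$ define $\Phi(v)\coloneqq u$ where $u$ is the unique solution, provided by Theorem~\ref{thm L2 linear}, of the linear problem
\begin{align*}
    du+A_0(t,u_\sigma)u\,dt &= \bigl[F_{A_L,\lambda}(u_\sigma,v)+F_{A_S,\lambda}(u_\sigma,v)+F_\lambda(u_\sigma,v)+F(\cdot,0)+f\bigr]dt\\
    &\quad+\bigl[B_0(t,u_\sigma)u+G_{B_0,\lambda}(u_\sigma,v)+G_\lambda(u_\sigma,v)+G(\cdot,0)+g\bigr]dW(t)\\
    &\quad+\int_Z\bigl[C_0(t,u_\sigma,z)u+H_{C_0,\lambda}(u_\sigma,v,z)+H_\lambda(u_\sigma,v,z)+H(\cdot,0,z)+h\bigr]\wt N(dz,dt),
\end{align*}
with $u(\sigma)=u_\sigma$. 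The linear a-priori estimate of Proposition~\ref{prop: lin apriori} combined with the quantitative Lipschitz bounds of Lemmas~\ref{lemma truncation 1} and~\ref{lemma truncation 2} then yields
\begin{align*}
\|\Phi(v_1)-\Phi(v_2)\|_{L^2(\Omega;\cM(\sigma,\sigma+\ell))}\leq C\,C_{\lambda,\ell}\,\|v_1-v_2\|_{L^2(\Omega;\cM(\sigma,\sigma+\ell))},
\end{align*}
with $C$ depending only on the parameters of the linear problem. Since $C_{\lambda,\ell}\to 0$ as $\lambda,\ell\to 0$, we may choose $\lambda,\ell$ small so that $\Phi$ is a strict contraction; Banach's fixed point theorem yields a unique fixed point $u^\lambda$.

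The next step is to transfer this back to the original equation. Define the stopping time
\begin{align*}
    \tau\coloneqq\inf\bigl\{t\in[\sigma,\sigma+\ell]\colon\|u^\lambda\|_{\cX(\sigma,t)}+\sup_{s\in[\sigma,t]}\|u^\lambda(s-)-u_\sigma\|_\cH\geq\lambda\bigr\}\wedge(\sigma+\ell).
\end{align*}
On $\llbracket\sigma,\tau\rrbracket$ we have $\Theta_\lambda\equiv 1$, so $u^\lambda$ is a local solution of~\eqref{eq:spde2}. Using that $u^\lambda(\sigma)=u_\sigma$ and the càdlàg property, right-continuity at $\sigma$ shows $\tau>\sigma$ a.s.\ on $\{\sigma<T\}$. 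Uniqueness of the local solution (up to the first blow-up of the $\cX$-norm or of the distance to the initial value) follows by applying the same contraction argument after truncation: if $(\tilde u,\tilde\tau)$ is another local solution, a stopping argument at the truncation threshold reduces the comparison to the uniqueness of the fixed point $u^\lambda$. To obtain a \emph{maximal} solution, concatenate local solutions: starting from $(u^\lambda,\tau)$, iterate the construction using the restart stopping time and $u^\lambda(\tau)$ as new initial value, conditionally on $\{\tau<T\}$. A standard Zorn/exhaustion argument (as in \cite{agresti2022nonlinear2}) produces the maximal pair $(u,\tau)$.

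For continuous dependence (2), fix another bounded $\cF_\sigma$-measurable $v_\sigma$ with $\|u_\sigma-v_\sigma\|_\cH<\eta$ a.s. Run the fixed point scheme simultaneously for both initial values on a common small interval $[\sigma,\sigma+\ell]$. The difference $w\coloneqq u-v$ solves a linear equation whose inhomogeneities are controlled by differences of the truncated non-linearities plus terms of the form $(A_0(\cdot,u_\sigma)-A_0(\cdot,v_\sigma))v$ etc. The latter are bounded using the Lipschitz estimates from Assumption~\ref{assumption operators new} together with the smallness of $\|u_\sigma-v_\sigma\|_\cH$. Defining $\nu$ as the minimum of the two truncation-threshold stopping times associated with $u$ and $v$, and applying the linear a-priori estimate from Proposition~\ref{prop: lin apriori} to $w$ on $[\sigma,\nu]$, gives the desired bound. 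For the localization property (3), on $\Gamma=\{u_\sigma=v_\sigma\}$ the linearized problems defining $\Phi$ coincide by $\cF_\sigma$-measurability, so the uniqueness of the fixed point forces $u\1_\Gamma=v\1_\Gamma$ and $\tau\1_\Gamma=\tilde\tau\1_\Gamma$.

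The main obstacle will be the careful interplay between the truncation levels $(\lambda,\ell)$, the random initial value $u_\sigma$, and the constants $C_{n,T}$, $\kappa_n$, $\phi_n$ from Assumption~\ref{assumption operators new} which depend on $\|u_\sigma\|_\cH$. Since $u_\sigma$ is only bounded (not deterministic), one must first condition on $\{\|u_\sigma\|_\cH\leq n\}$ to get uniform constants, then remove this truncation via the localization property. A secondary technical point is that the stochastic integral with $\cP^-\otimes\cZ$-measurable integrand $C_0(\cdot,u_\sigma,\cdot)$ is well-defined because $u_\sigma$ is $\cF_\sigma$-measurable and hence predictable after time $\sigma$; Proposition~\ref{prop: jumps u} ensures the solution does not jump at predictable times, which is compatible with the càdlàg formulation.
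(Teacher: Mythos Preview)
Your overall strategy matches the paper's: linearize at $u_\sigma$, solve the truncated problem by Banach's fixed-point theorem on $L^2(\Omega;\cM(\sigma,\mu))$ using Theorem~\ref{thm L2 linear} and Lemmas~\ref{lemma truncation 1}--\ref{lemma truncation 2}, then remove the truncation on a stopping-time interval. Three points deserve correction or comparison.

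\emph{Maximal extension.} The concatenation/restart idea you describe first does not work directly in the L\'evy setting: at the threshold stopping time $\tau$ the process $u^\lambda$ may jump, so $u^\lambda(\tau)$ need not be bounded in $\cH$, and the local result cannot be re-applied with it as new initial value. The paper (following \cite{hornung2019quasilinear}) instead takes the essential supremum over all stopping times admitting a unique local solution, shows this set is closed under $\vee$, and extracts an increasing localizing sequence; this never requires evaluating $u$ at a non-localizing time. Your parenthetical Zorn/exhaustion reference is the right direction, not the restart sketch.

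\emph{Continuous dependence.} The paper's device is slicker than what you outline. In its Step~1 the fixed-point map $\Pi_{w_\sigma}$ is set up with initial value $w_\sigma$ but with all truncations and frozen coefficients based on the \emph{fixed} $u_\sigma$; the closeness condition $\|u_\sigma-w_\sigma\|_\cH<\lambda/2$ then guarantees the fixed point still solves the original equation with datum $w_\sigma$. Applying this with $w_\sigma=v_\sigma$ yields $u=\Pi_{u_\sigma}(u)$ and $v=\Pi_{v_\sigma}(v)$ sharing the same nonlinear part $\Pi_0$, so $u-v=\cR(u_\sigma-v_\sigma,0,0,0)+\Pi_0(u)-\Pi_0(v)$ and one absorbs the contractive piece. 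Your route of running two separate schemes and controlling cross-terms $(A_0(\cdot,u_\sigma)-A_0(\cdot,v_\sigma))v$ also works, but needs an extra absorption step and does not explain as naturally why the constant $\eta$ enters.

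\emph{Localization.} Uniqueness of the fixed point only gives $u\1_\Gamma=v\1_\Gamma$ on the short contraction interval, not up to the maximal times. The paper instead glues: set $w:=u\1_\Gamma\1_{[\sigma,\tau)}+v\1_{\Omega\setminus\Gamma}\1_{[\sigma,\wt\tau)}$ and $\mu:=\tau\1_\Gamma+\wt\tau\1_{\Omega\setminus\Gamma}$; by causality $(w,\mu)$ is a local solution with datum $v_\sigma$, so maximality of $(v,\wt\tau)$ forces $\tau\leq\wt\tau$ on $\Gamma$, and symmetrically $\wt\tau\leq\tau$.

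Finally, your last paragraph is a red herring: $u_\sigma$ is assumed bounded in this theorem, so no conditioning on $\{\|u_\sigma\|_\cH\leq n\}$ is needed here; that reduction only enters in Theorem~\ref{thm well-posedness unbounded}. The remark about Proposition~\ref{prop: jumps u} is likewise irrelevant to the present proof.
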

\begin{proof}
    Fix $u_0$ as in the statement.

    \textbf{Step 1}: the existence of a (local) solution.
    For technical reasons in the proof, we start with a bounded initial value $w_\sigma$ potentially different to $u_\sigma$ that is $\cH$-valued and $\cF_\sigma$-measurable.
    For $T^* > 0$ define the stopping time $\mu_{T^*} \coloneqq (\sigma + T^*) \wedge T$. If no confusion is to be expected, we simply write $\mu$ instead of $\mu_{T^*}$.
    Let $v \in L^2(\Omega, \cM(\sigma,\mu))$, where $T^* > 0$ will be chosen small enough later on in the proof.
    Now, for $\lambda > 0$ let the maps $F_\lambda,G_\lambda, H_\lambda$ and $F_{A_L,\lambda}, F_{A_S,\lambda}, G_{B_0,\lambda}, H_{C_0,\lambda}$ be defined as in Lemmas~\ref{lemma truncation 1} and \ref{lemma truncation 2}. Recall $A_0 = A_L + A_S$ and let
    \begin{equation*}
        \tilde F_\lambda := (F_\lambda + F_{A_{L},\lambda}+F_{A_S,\lambda})\1_{\llbracket \sigma, \mu \rrbracket},\quad \tilde G_\lambda := (G_\lambda + G_{B_0,\lambda})\1_{\llbracket \sigma, \mu \rrbracket},\quad \tilde H_\lambda:=(H_\lambda + H_{C_0,\lambda})\1_{\llbracket \sigma, \mu \rrbracket},
    \end{equation*}
    and consider the linear SPDE
    \begin{equation}
    \label{eq:aux SPDE 1}
        \begin{split}
            du+A_0(t,u_{\sigma})u \, dt &= (\tilde F_\lambda (u_{\sigma},v)(t) + \tilde f(t)) \, dt + (B_0(t,u_{\sigma})u+ \tilde G_\lambda(u_{\sigma},v)(t) + \tilde g(t)) \, dW(t) \\
            &\quad\quad\quad\quad + \int_Z\big(C_0(t,u_{\sigma},z)u(\cdot\, -) + \tilde H_\lambda(u_{\sigma},v)(t,z) + \tilde h(t,z)\big)\,\Nt(dz,dt)\\
            u(\sigma) &= w_\sigma
        \end{split}
    \end{equation}
    on $\llbracket \sigma,T \rrbracket$, where
    \begin{align*}
        \tilde f(t):= f(t) + F(t,0),\quad \tilde g(t):=g(t) + G(t,0),\quad \tilde h(t,z):=h(t,z) + H(t,0,z).
    \end{align*}
    We follow a similar procedure as in~\cite{agresti2022nonlinear}. First, note that by Assumption~\ref{assumption operators new} in conjunction with boundedness of $u_\sigma$ the operators $(A_0(\cdot,u_\sigma),B_0(\cdot,u_\sigma),C_0(\cdot,u_\sigma,\cdot))$ satisfy the conditions of Theorem~\ref{thm L2 linear}. Hence, we may define a solution map $\cR$ associated with the operators $(A_0(\cdot,u_\sigma),B_0(\cdot,u_\sigma),C_0(\cdot,u_\sigma,\cdot))$ for the initial value $w_\sigma$. Also, the right-hand sides $\tilde F_\lambda(v) + \tilde f$, $\tilde G_\lambda (v) + \tilde g$ and $\tilde H_\lambda(v)+\tilde h$ of~\eqref{eq:aux SPDE 1} are well-defined and satisfy the integrability conditions of Theorem~\ref{thm L2 linear} by virtue of Lemmas~\ref{lemma truncation 1} and \ref{lemma truncation 2}. It follows by Theorem~\ref{thm L2 linear} that
    \begin{align}
    \Pi_{w_\sigma}(v):&=\cR(w_\sigma, \tilde F_\lambda(u_{\sigma}, v) + \tilde f, \tilde G_\lambda (u_{\sigma}, v) + \tilde g, \tilde H_\lambda(u_{\sigma}, v,\cdot)+\tilde h)\,\in L^2(\Omega,\cM(\sigma,T))
    \end{align}
    is well-defined and gives the unique solution of~\eqref{eq:aux SPDE 1} on $\llbracket \sigma, T \rrbracket$.
    \newline
    We show now that for $\lambda$ and $T^*$ sufficiently small, $\Pi_{w_\sigma}$ becomes a contraction on $L^2(\Omega,\cM(\sigma,\mu))$. Let $v,w\in L^2(\Omega,\cM(\sigma,\mu))$.
    Write $(p_F, \alpha_F)$ for the admissible pair defined in Lemma~\ref{lemma truncation 1} and $(p_A, \alpha_A)$ for the admissible pair defined in Lemma~\ref{lemma truncation 2}.
    Then, by Proposition~\ref{prop: lin apriori},
    \begin{align}
        \label{eq:contraction}
       \|\Pi_{w_\sigma}(v)-\Pi_{w_\sigma}(w)\|_{L^2(\Omega,\cM(\sigma,\mu))}
       &\leq C\| F_\lambda (u_{\sigma},v)- F_\lambda(u_{\sigma},w)\|_{L^2(\Omega,L^{p_F}([\sigma,\mu]
       ,\cV_{\alpha_F}))}
       \\ & \ + C\| F_{\lambda,A_{L}} (u_{\sigma},v)- F_{\lambda,A_L}(u_{\sigma},w)\|_{L^2(\Omega,L^{2}([\sigma,\mu],\cV^*))}
       \\ & \ + C\| F_{\lambda,A_S} (u_{\sigma},v)- F_{\lambda,A_S}(u_{\sigma},w)\|_{L^2(\Omega,L^{p_A}([\sigma,\mu],\cV_{\alpha_A}))}  \\
        &\ + C\|\tilde G_\lambda(u_{\sigma},v)-\tilde G_\lambda(u_{\sigma},w)\|_{L^2(\Omega,L^{2}([\sigma,\mu],\calL_2(U,\cH)))} \\
        &\ + C\|\tilde H_\lambda(u_{\sigma},v,\cdot)-\tilde H_\lambda(u_{\sigma},w,\cdot)\|_{L^2(\Omega,L^{2}([\sigma,\mu],L^2(Z,\cH;\nu)))} \\
        &\leq CC_{\lambda,T^*} \|v-w\|_{L^2(\Omega,\cM(\sigma,\mu))},
    \end{align}
    where for the second estimate we used Lemmas~\ref{lemma truncation 1} and \ref{lemma truncation 2}. We remark that the constant $C$ is independent of $T^*$ and $\lambda$ and that $C_{\lambda,T^*}$ can be made arbitrarily small by choosing $\lambda, T^*$ sufficiently close to $0$. Hence, by choosing $\lambda\in (0,1)$ and $T^*>0$ sufficiently small, meaning that $CC_{\lambda,T^*}<1$, we obtain that $\Pi_{w_\sigma}: L^2(\Omega, \cM(\sigma,\mu)) \rightarrow L^2(\Omega,\cM(\sigma,\mu))$ is a contraction. By Banach's fixed point theorem, there is a unique solution $u\in L^2(\Omega,\cM(\sigma,\mu))$ to the problem
    \begin{equation}
    \label{eq:aux SPDE 2}
        \begin{split}
            du+A_0(t,u_\sigma)u \, dt &= \bigl(\tilde F_\lambda (u_\sigma,u)(t) + \tilde f(t)\bigr) \, dt + \bigl(B_0(t,u_\sigma)u+\tilde G_\lambda(u_\sigma,u)(t) + \tilde g(t)\bigr) \, dW(t) \\
            &\quad + \int_Z\bigl(C_0(t,u_\sigma,z)u(\cdot\, -) + \tilde H_\lambda(u_\sigma,u)(t,z) + \tilde h(t,z)\bigr)\,\Nt(dz,dt)\\
            u(\sigma) &= w_\sigma.
        \end{split}
    \end{equation}
     We claim that if $w_\sigma$ is sufficiently close to $u_\sigma$ then there exists a stopping time $\tau$ such that $\tau > \sigma$ on $\{\sigma < T\}$ and such that $\Theta_\lambda(\cdot,u_\sigma,u) = 1$ on $\llbracket \sigma, \tau \rrbracket$. In this case, $(u, \tau)$ is a solution of~\eqref{eq:spde2} on $\llbracket \sigma, \tau \rrbracket$
     with initial value $w_\sigma$ by construction of~\eqref{eq:aux SPDE 2}.
     Indeed, put
    \begin{align}
    \label{eq:contraction stopping time}
        \tau \coloneqq \inf \bigl\{ t\in [\sigma,\mu] \colon \| u \|_{\cX(\sigma,t)} + \sup_{\sigma \leq s<t} \| u(s) - w_\sigma \|_{\cH} \geq \lambda/2 \bigr\} \wedge \mu.
    \end{align}
    Observe that $\sigma < \mu$ on $\{\sigma < T\}$.
    Now first, $\tau > \sigma$ on $\{\sigma < T\}$ since $u \in \cM(\sigma,\mu)$ with $u(\sigma) = w_\sigma$ almost surely, where we used the right-continuity of $u$ at $\sigma$.

    Second, if $\| u_\sigma - w_\sigma \|_\cH < \lambda/2$ almost surely,
    then with $t \in [\sigma, \mu]$ one has $\sup_{\sigma \leq s<t} \| u(s) - u_\sigma \|_\cH < \sup_{\sigma \leq s< t} \| u(s) - w_\sigma \|_\cH + \lambda/2$ almost surely, so that for $t\in [\sigma, \tau]$ almost surely  $\| u \|_{\cX(\sigma,t)} + \sup_{\sigma \leq s \leq t} \| u(s-) - u_\sigma \|_\cH < \lambda$.
    Consequently, $\Theta_\lambda(\cdot,u_\sigma,u) = 1$ on $\llbracket \sigma, \tau \rrbracket$ follows by definition, and hence $(u,\tau)$ is indeed a solution for~\eqref{eq:spde2} on $\llbracket \sigma, \tau \rrbracket$ with initial value $w_\sigma$.

    Note that, in particular, we have $u = \Pi_{w_\sigma}(u)$ on $\llbracket\sigma,\tau\rrbracket$ (we use this relation in later steps of this proof).
    Finally, we may take $w_\sigma \coloneqq u_\sigma$ to complete this step.

    \textbf{Step 2}: uniqueness of the local solution.
    Let $(u, \tau)$ be the local solution on $\llbracket \sigma, T \rrbracket$ constructed in Step~1 for $w_\sigma \coloneqq u_\sigma$ and let $(v, \wt\tau)$ be another local solution of~\eqref{eq:spde2} with $v(\sigma) = u_\sigma$. By definition, there is a sequence of stopping times $(\wt\tau_n)_{n \geq 1}$ such that $\wt\tau_n \uparrow \wt\tau$ and such that $v \1_{\llbracket \sigma, \wt\tau_n \rrbracket}$ is a solution of~\eqref{eq:spde2} on $\llbracket \sigma, \wt\tau_n \rrbracket$. For every $n$ define a new stopping time
    \begin{align}
        \tau_n \coloneqq \inf \bigl\{ t \in [\sigma, \tau \wedge \wt\tau_n] \colon \| v \|_{\cX(\sigma,t)} + \sup_{\sigma \leq s \leq t} \| v(s-) - u_\sigma \|_{\cH} \geq \lambda/2 \bigr\}\wedge \tau \wedge \wt\tau_n.
    \end{align}
    Note that, in particular, $\tau_n \leq \tau \leq T$. By the construction in Step~1 we have $u = \Pi_{u_\sigma}(u)$ on $\llbracket \sigma, \tau_n \rrbracket$. Also, since $\Theta_\lambda(t, u_\sigma, v) = 1$ on $\llbracket \sigma, \tau_n \rrbracket$, it follows $v = \Pi_{u_\sigma}(v)$ on $\llbracket \sigma, \tau_n \rrbracket$. Thus, arguing similar as in~\eqref{eq:contraction} we obtain
    \begin{align}
        \left( \E \| u-v \|_{\cM(\sigma, \tau_n)}^2 \right)^\frac{1}{2} \leq C C_{\lambda,T} \left( \E \| u - v \|_{\cM(\sigma,\tau_n)}^2 \right)^\frac{1}{2},
    \end{align}
    where $C C_{\lambda,T} < 1$. Consequently, $u = v$ on $\llbracket \sigma, \tau_n \rrbracket$. Therefore, by definition of $\tau$ in Step 1, we conclude furthermore $\tau_n = \tau \wedge \wt \tau_n$. Eventually, since $\tau_n = \tau \wedge \wt \tau_n \uparrow \tau \wedge \wt\tau$, we deduce $u = v$ on $\llbracket \sigma, \tau \wedge \wt\tau \rrparenthesis$. This shows uniqueness.

    \textbf{Step 3}: existence of a maximal solution.
    We show the existence of a maximal solution to \eqref{eq:spde2}. The argument is taken from \cite{hornung2019quasilinear}. Let $\cS$ be the set of stopping times $\tau:\Omega\rightarrow [0,T]$ such that there exists a unique local solution $u^\tau$ on $\llbracket \sigma, \tau\rrparenthesis$ to \eqref{eq:spde2} with initial value $u_\sigma$. From Step~2 we know that $\cS$ is non-empty. Next, we show that $\cS$ is closed under the maximum operation $\vee$, that is to say, that for $\tau_1,\tau_2\in\cS$ we have $\tau_1\vee\tau_2\in\cS$. For that purpose, observe first that by uniqueness of the local solutions we have $u^{\tau_1} = u^{\tau_2}$ on $\llbracket \sigma,\tau_1\wedge\tau_2\rrparenthesis$. Hence, it follows by standard stopping time arguments that
    $$
    \bar u(t) := u^{\tau_1}(t\wedge \tau_1) + u^{\tau_2}(t\wedge\tau_2) - u^{\tau_1}(t\wedge\tau_1\wedge\tau_2)
    $$
    is a local solution of~\eqref{eq:spde2} on $\llbracket \sigma,\tau_1\vee\tau_2\rrparenthesis$. Its uniqueness follows from the uniqueness of $u^{\tau_1}$ or $u^{\tau_2}$, respectively. In summary, this means that $\tau_1\vee\tau_2\in\cS$. By \cite[Thm.~A.3]{karatzas2014brownian} we hence know that $\tau:=\esssup \cS$ is also a stopping time and there exists a sequence of stopping times $(\tau_n)_{n\geq 1}\subset\cS$ such that $\tau_n\uparrow\tau$ as $n\to\infty$ and there exists (by compatibility of unique local solutions) a function $u$ such that for each $n\geq 1$, $(u\1_{\llbracket \sigma,\tau_n\rrbracket},\tau_n)$ is a solution to~\eqref{eq:spde2} on $\llbracket \sigma,\tau_n\rrbracket$. Hence $(u,\tau)$ is the (unique) maximal solution on $\llbracket \sigma, T \rrbracket$ with localizing sequence given by $(\tau_n)_{n\in\bN}$.

    \textbf{Step 4}: continuous dependence on initial data.
    In addition to $u_\sigma$, let $v_\sigma$ be another $\cH$-valued and $\cF_\sigma$-measurable bounded initial value satisfying the closeness condition $\| u_\sigma - v_\sigma \|_\cH < \lambda/2$ almost surely introduced in Step~1. They have corresponding unique maximal solutions $(u, \tau_1)$ for $u_\sigma$ and $(v, \tau_2)$ for $v_\sigma$ on $\llbracket \sigma, T \rrbracket$ as constructed in Step~1-3. Due to Step~1, we have  that $\tau_1>\mu$ and $\tau_2>\mu$, and $u = \Pi_{u_\sigma}(u)$ and $v = \Pi_{v_\sigma}(v)$ on $\llbracket \sigma, \tilde \tau_1 \wedge \tilde \tau_2 \rrbracket$, where the positive stopping times $\tilde \tau_1$ and $\tilde \tau_2$ are defined analogously to~\eqref{eq:contraction stopping time} as 
        \begin{align}
    \label{eq:contraction stopping time2}
        \tilde \tau_1 &\coloneqq \inf \bigl\{ t\in [\sigma,\mu] \colon \| u \|_{\cX(\sigma,t)} + \sup_{\sigma \leq s<t} \| u(s) - u_\sigma \|_{\cH} \geq \lambda/2 \bigr\} \wedge \mu,
        \\ \tilde \tau_2 &\coloneqq \inf \bigl\{ t\in [\sigma,\mu] \colon \| v \|_{\cX(\sigma,t)} + \sup_{\sigma \leq s<t} \| v(s) - v_\sigma \|_{\cH} \geq \lambda/2 \bigr\} \wedge \mu.
    \end{align}
     Put $\nu \coloneqq \tilde \tau_1 \wedge \tilde \tau_2$. Now, we can write $\Pi_{u_\sigma}(u) = \cR(u_\sigma,0,0,0) + \Pi_0(u)$ and $\Pi_{v_\sigma}(v) = \cR(v_\sigma,0,0,0) + \Pi_0(v)$. Hence, arguing similar as in Step~2,
    \begin{align}
        \| u - v \|_{L^2(\Omega,\cM(\sigma, \nu))}&\leq \| \cR(u_\sigma-v_\sigma,0,0,0)\|_{L^2(\Omega,\cM(\sigma,\nu))}+ \| \Pi_0(u) - \Pi_0(v) \|_{L^2(\Omega,\cM(\sigma, \nu))}\\
        &\leq C \| u_\sigma - v_\sigma \|_{L^2(\Omega,\cH)} + C C_{\lambda,T^*} \| u - v \|_{L^2(\Omega,\cM(\sigma, \nu))}.
    \end{align}
    Since $C C_{\lambda,T^*} < 1$, we can absorb the second term of the right-hand side into the left-hand side to obtain
    \begin{align}
        \| u - v \|_{L^2(\Omega,\cM(\sigma, \nu))} \leq C \| u_\sigma - v_\sigma \|_{L^2(\Omega,\cH)},
    \end{align}
    where $C$ now also depends on $\lambda$ and $T^*$.
    Therefore, with $\eta \coloneqq \lambda/2$ and a stopping time $\nu$ with $\nu \in (\sigma, \wt\tau_1 \wedge \wt\tau_2]$ on $[\sigma < T]$ the claim follows.

    \textbf{Step 5}: localization property.
    Let $(u,\tau)$ be the maximal solution on $\llbracket \sigma, T \rrbracket$ with initial condition $u_\sigma$ and let $(v,\wt\tau)$ be the maximal solution on $\llbracket \sigma, T \rrbracket$ with initial condition $v_\sigma$, and put $\Gamma \coloneqq \{u_\sigma = v_\sigma\}$ as in the statement.  Set $\mu \coloneqq \tau \1_\Gamma + \wt\tau \1_{\Omega\setminus \Gamma}$ and $w \coloneqq u\1_\Gamma \1_{[\sigma,\tau)} + v \1_{\Omega\setminus \Gamma} \1_{[\sigma,\wt\tau)}$.
    Using causality and the results from above it follows that $(w,\mu)$ is a local solution to \eqref{eq:spde2} on $\llbracket \sigma, T \rrbracket$ with initial value $v_\sigma$. Since $(v,\wt\tau)$ is maximal, we conclude that $\tau = \mu \leq \wt\tau$ on $\Gamma$ and
    $$
    u = w = v\quad \text{on}\quad \Gamma\times [\sigma,\tau).
    $$
    Indeed, to see this it suffices to multiply both sides of the SPDE \eqref{eq:spde2} by $1_\Gamma$. Using that $\Gamma\in\cF_\sigma$ together with the locality property of stochastic integrals we get a.s. for  $t\in \llbracket\sigma,T\rrbracket$,
    \begin{align}
        \1_\Gamma\int_\sigma^t B(s,u(s))\,dW(s) = \int_\sigma^t 1_\Gamma B(s,u(s))\,dW(s) = \int_\sigma^t 1_\Gamma B(s,w(s))\,dW(s), 
    \end{align}
    and analogously for the $\tilde N$ integral.
    Swapping the roles of $(u,\tau)$ and $(v,\wt\tau)$ we likewise obtain $\wt\tau\leq\tau$ on $\Gamma$ and $u=v$ on $\Gamma\times [\sigma,\wt\tau)$. Therefore, $\tau = \wt\tau$ on $\Gamma$ with $u \1_{\Gamma\times [\sigma,\tau)} = v \1_{\Gamma\times [\sigma,\wt\tau)}$. This finishes the proof.
\end{proof}

The next result is similar to Theorem~\ref{thm wellposedness local}, but we consider now the case of initial data and inhomogeneities without moments. The proof is based on localization arguments in conjunction with Theorem~\ref{thm wellposedness local}.

\begin{theorem}
\label{thm well-posedness unbounded}
    Let $T>0$ and
    suppose Assumption~\ref{assumption operators new}.
    Let
    \begin{align}
        f_i\in L^0(\Omega,L^{p_{i}}([0,T],\cV_{\theta_{i}})),\;\; g\in L^0(\Omega\times [0,T],\cL_2(U,\cH)),\;\; h\in L^0(\Omega\times [0,T],L^2(Z,\cH;\nu)),
    \end{align}
    be $\cP$-measurable, $\cP$-measurable, and $\cP^-$-measurable respectively, where the pairs $(p_{i},\theta_{i})$ are admissible in the sense of Definition~\ref{def:admissible}. Put $f = \sum_{i=1}^{m_f} f_i$.
    Also, let $u_0$ be an $\cH$-valued and $\cF_0$-measurable initial value.
    Then the following holds:
    \begin{enumerate}
        \item (Existence and uniqueness) 
        There exists a unique maximal solution $(u,\tau)$ to \eqref{eq:spde2} on $\llbracket 0, T \rrbracket$ such that almost surely $\tau>0$.
        \item (Continuous dependence on initial data) Fix $n \geq 1$ and define $\Gamma_n \coloneqq\{\| u_0 \|_\cH \leq n\}$. Then there are constants $C,\eta > 0$ such that for another $\cF_0$-measurable initial value $v_0$ with maximal solution $(v,\wt \tau)$ on $\llbracket 0, T \rrbracket$ and satisfying the closeness condition $\| u_0 \1_{\Gamma_n} - v_0 \1_{\Gamma_n} \|_\cH < \eta$ almost surely there exists a stopping time $\nu$ with $\nu\in (0, \tau \wedge \wt\tau]$ such that
        \begin{align}
        	\left( \E \| (u - v) \1_{\Gamma_n} \|_{\cM(\nu)}^2 \right)^\frac{1}{2} \leq C \left( \E \| (u_0 - v_0) \1_{\Gamma_n} \|_\cH^2 \right)^\frac{1}{2}.
        \end{align}
        \item (Localization property) Given another $\cF_0$-measurable initial value $v_0$ with corresponding maximal solution $(v,\wt \tau)$ one has with $\Gamma \coloneqq \{u_0 = v_0\}$ the identities
        \begin{align}
            \tau\1_{\Gamma} = \wt \tau \1_\Gamma \quad \mathrm{and} \quad u\1_\Gamma = v \1_\Gamma.
        \end{align}
    \end{enumerate}
\end{theorem}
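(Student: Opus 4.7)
The plan is to reduce to Theorem~\ref{thm wellposedness local} by a double localization, first of the initial data in $\omega$ and then of the inhomogeneities in time. I would truncate the initial data via $\Gamma_n := \{\|u_0\|_\cH \leq n\} \in \cF_0$, setting $u_0^{(n)} := u_0 \1_{\Gamma_n}$, which is bounded and $\cF_0$-measurable. To localize the inhomogeneities, I would introduce the stopping times
\begin{align}
\lambda_k := \inf\Bigl\{ t \in [0,T] : \sum_{i=1}^{m_f} \|f_i\|_{L^{p_i}([0,t],\cV_{\theta_i})}^{p_i} + \|g\|_{L^2([0,t],\cL_2(U,\cH))}^2 + \|h\|_{L^2([0,t],L^2(Z,\cH;\nu))}^2 \geq k \Bigr\} \wedge T.
\end{align}
Because the functional inside is continuous and adapted in $t$, each $\lambda_k$ is predictable and $\lambda_k \uparrow T$ almost surely. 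The truncated data $f\1_{\llbracket 0,\lambda_k\rrbracket}$, $g\1_{\llbracket 0,\lambda_k\rrbracket}$, $h\1_{\llbracket 0,\lambda_k\rrbracket}$ then lie in the $L^2(\Omega)$-spaces demanded by Theorem~\ref{thm wellposedness local}; measurability is preserved in the $\cP^-$-category for $h$ thanks to predictability of $\lambda_k$.

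For each pair $(n,k)$ I would apply Theorem~\ref{thm wellposedness local} with $\sigma=0$ to obtain a unique maximal solution $(u^{n,k}, \tau^{n,k})$ on $\llbracket 0,T\rrbracket$ with $\tau^{n,k}>0$ almost surely. Compatibility in $n$ follows from the localization property of Theorem~\ref{thm wellposedness local}(3) applied to $u_0^{(n)}$ and $u_0^{(n+1)}$, which agree on $\Gamma_n$, yielding $\tau^{n,k}\1_{\Gamma_n} = \tau^{n+1,k}\1_{\Gamma_n}$ and $u^{n,k}\1_{\Gamma_n} = u^{n+1,k}\1_{\Gamma_n}$. Compatibility in $k$ follows by uniqueness and causality: for $k'\geq k$ the data coincide on $\llbracket 0,\lambda_k\rrbracket$, so the restriction of $u^{n,k'}$ to $\llbracket 0,\lambda_k \wedge \tau^{n,k'}\rrbracket$ solves the $k$-truncated problem, and uniqueness gives $u^{n,k'}=u^{n,k}$ on $\llbracket 0,\lambda_k \wedge \tau^{n,k} \wedge \tau^{n,k'}\rrbracket$, whence $\tau^{n,k}\wedge\lambda_k \leq \tau^{n,k'}$. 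Setting $\sigma^{n,k} := \tau^{n,k}\wedge\lambda_k$, I would define $\tau := \sup_{n,k} \sigma^{n,k}\1_{\Gamma_n}$ together with $u := u^{n,k}$ on $\Gamma_n \cap \llbracket 0,\sigma^{n,k}\rrparenthesis$, obtaining the sought maximal solution with $\tau>0$ almost surely since $\bigcup_n \Gamma_n = \Omega$. Maximality is then verified by a standard comparison: any competing local solution $(\tilde u,\tilde\tau)$ becomes, after restriction to $\Gamma_n\cap\llbracket 0,\lambda_k\rrbracket$, a local solution of the $(n,k)$-problem, so $\tilde\tau\wedge\lambda_k\1_{\Gamma_n}\leq \sigma^{n,k}$ and $\tilde u=u$ there; passing to the limit in $n$ and $k$ gives $\tilde\tau\leq\tau$ globally.

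Continuous dependence (2) would follow by applying Theorem~\ref{thm wellposedness local}(2) to the bounded initial data $u_0\1_{\Gamma_n}$ and $v_0\1_{\Gamma_n}$, which satisfy the closeness hypothesis on $\Gamma_n$ by assumption, and transferring back to $u\1_{\Gamma_n}$, $v\1_{\Gamma_n}$ via the localization property of Theorem~\ref{thm wellposedness local}(3). The localization assertion (3) is immediate from its bounded counterpart applied on $\Gamma\cap\Gamma_n$ followed by letting $n\to\infty$.

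The main obstacle I expect is the careful bookkeeping of measurability during the inhomogeneity truncation: because $h$ is only $\cP^-$-measurable, the cutoff $\1_{\llbracket 0,\lambda_k\rrbracket}$ must itself be $\cP^-$-measurable, which forces $\lambda_k$ to be predictable. Choosing $\lambda_k$ via an integrated-norm functional that is continuous in $t$ (as above) secures this while still delivering $\lambda_k\uparrow T$ and the simultaneous $L^2(\Omega)$-integrability of all three truncated inhomogeneities.
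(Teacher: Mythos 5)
Your proposal is correct and rests on the same two ingredients as the paper's proof: truncation of the initial value over $\Gamma_n=\{\|u_0\|_\cH\leq n\}$ and truncation of the inhomogeneities by stopping times built from the running norms of $(f,g,h)$, followed by an application of Theorem~\ref{thm wellposedness local} and gluing via causality and the localization property over the disjoint sets $\Lambda^n=\Gamma_n\setminus\Gamma_{n-1}$. The difference lies in how maximality is obtained. The paper truncates the data only once, with the single stopping time $\mu$ at level $1$, glues the resulting solutions to produce merely a \emph{local} solution with the original data, and then delegates uniqueness, maximal extension, continuous dependence and localization to the moment-free arguments already given in the proof of Theorem~\ref{thm wellposedness local} (in particular the essential-supremum extension of Step~3 there). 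You instead exhaust the data with a sequence $\lambda_k\uparrow T$ and assemble the maximal solution explicitly as a supremum of the compatible family $(u^{n,k},\tau^{n,k}\wedge\lambda_k)$, verifying compatibility in $n$ and $k$ and dominance over any competing local solution by hand; this works (note that $\tau^{n,k}\wedge\lambda_k$ is indeed nondecreasing in $k$, and a localizing sequence for the glued pair has to be extracted by a diagonal argument from the localizing sequences of the $u^{n,k}$), and buys a self-contained construction at the cost of extra bookkeeping, whereas the paper's route is shorter but leans on the earlier proof. Two small points: predictability of $\lambda_k$ is not needed for the measurability of $h\1_{\llbracket 0,\lambda_k\rrbracket}$, since the indicator of a closed stochastic interval $\llbracket 0,\lambda\rrbracket$ is left-continuous and adapted, hence $\cP^-$-measurable, for \emph{any} stopping time $\lambda$; and in part (2) you cannot invoke Theorem~\ref{thm wellposedness local}(2) with the raw data $(f,g,h)$, which lack moments, but only with the $\lambda_k$-truncated data, after which the resulting stopping time should be intersected with $\tau\wedge\wt\tau$ (and, on $\Gamma_n$, with $\tau^{n,k}\wedge\lambda_k$) so that the estimate transfers to $u\1_{\Gamma_n}$ and $v\1_{\Gamma_n}$ — the same reduction the paper performs implicitly when it refers back to the proof of Theorem~\ref{thm wellposedness local}.
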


\begin{proof}
    We only have to show the existence of a local solution. The proofs for the remaining assertions do not rely on the moment condition and can hence be taken from the proof of Theorem~\ref{thm wellposedness local}, compare also with~\cite[Thm.~4.7]{agresti2022nonlinear}.

    Now we construct a local solution for~\eqref{eq:spde2}. For $n\geq 1$ define the truncated initial value $u_{0,n} \coloneqq u_0 \1_{\|u_0\|_\cH\leq n}$. Also, define the stopping time
    $$
    \mu =\inf \bigl\{ t \in [0,T]: \max_i \|f_i\|_{L^{p_i}([0,t],\cV_{\theta_i})}+\|g\|_{L^2([0,t],\cL^2(U,\cH))} + \|h\|_{L^2([0,t],L^2(Z,\cH;\nu))} \geq 1 \bigr\} \wedge T,
    $$
    and note that $\mu>0$ almost surely, as the processes in its argument are continuous and starting at $0$, almost surely.
    By Theorem~\ref{thm wellposedness local} and using the localizing sequence for a maximal solution, for each $n\geq 1$ there exists a solution $u_n$ on $\llbracket 0, \tau_n \rrbracket$ to \eqref{eq:spde2} with initial condition $u_{0,n}$ and right-hand sides given by
    \begin{align}
    \label{eq: right-hand side truncated}
    f\1_{\llbracket 0, \mu \rrbracket},\quad
    g\1_{\llbracket 0, \mu \rrbracket},\quad
    h\1_{\llbracket 0, \mu \rrbracket}.
    \end{align}
    We now construct a solution $u$ on $\llbracket 0, \tau \rrbracket$ to \eqref{eq:spde2} with initial condition $u_0$ and right-hand sides given again by~\eqref{eq: right-hand side truncated}.
    Define $\Lambda^1 \coloneqq \Gamma^1$, $\Lambda^n \coloneqq \Gamma^n\setminus \Gamma^{n-1}$ for $n \geq 2$, and put
    \begin{equation}
    \label{eq: right-hand side 2}
    u\coloneqq \sum_{n\geq 1} u_n\1_{\Lambda^n},\quad \tau' \coloneqq \sum_{n\geq 1} \tau_n\1_{\Lambda^n}.
    \end{equation}
    Almost surely we have $\tau'>0$ since this is the case for each individual stopping time $\tau_n$.
    By causality and the fact that $u_n \1_{\Gamma^n}$ is a solution on $\llbracket 0, \tau_n \rrbracket$ to~\eqref{eq:spde2} with initial value $u_{0,n} \1_{\Gamma^n} = u_0 \1_{\Gamma^n}$ and right-hand sides as in~\eqref{eq: right-hand side truncated}, it follows that $u$ is a solution on $\llbracket 0, \tau' \rrbracket$ to~\eqref{eq:spde2} with initial condition $u_0$ and right-hand sides as in~\eqref{eq: right-hand side truncated}.
    Finally, if we put $\tau \coloneqq \tau' \wedge \mu$, then $u$ is a solution to~\eqref{eq:spde2} on $\llbracket 0, \tau \rrbracket$ with initial value $u_0$ and right-hand sides $(f,g,h)$.
\end{proof}

\begin{remark}
In Assumption~\ref{assumption operators new} the functions $K_{A,n,T}$, $K_{B,n,T}$, $K_{C,n,T}$ can also be allowed to depend on $\omega$. Theorem~\ref{thm well-posedness unbounded} remains true in this setting, and this follows from a standard localization argument.

Moreover, in case the nonlinearities $F$, $G$, $H$ are subcritical it should also be possible to replace the constant in the locally Lipschitz estimate by a function/process that is singular in $t$.
\end{remark}

\section{Blow up criteria}
\label{sec:blowup}

\noindent
The main result of the last section (Theorem~\ref{thm well-posedness unbounded}) ensures existence of a maximal solution $(u,\tau)$ to the problem~\eqref{eq:spde2}. An important question is if this maximal solution is in fact a global solution. In this section,the  we provide blowup criteria that enable us to exclude a blowup in finite time. More precisely, if we fix a finite time $T>0$ and we know that the solution $u$ stays bounded almost surely when approaching $\tau \wedge T$, then we can conclude that $\tau \geq T$ with probability one. The precise result will be given in Theorem~\ref{thm: blowup}

Fix a finite time $T>0$ throughout this section.
If $f_i\in L^0(\Omega,L^{p_{i}}([0,T],\cV_{\theta_{i}}))$, $g\in L^0(\Omega\times [0,T],\cL_2(U,\cH))$, and $h\in L^0(\Omega\times [0,T],L^2(Z,\cH;\nu))$ are $\cP$-measurable, $\cP$-measurable, and $\cP^-$-measurable, respectively, and the pairs $(p_{i},\theta_{i})$ are admissible, then define for $n\geq 1$ the stopping time
$$
\tau_n:=\inf\{t\in [0,T]: \max_i\|f_i\|_{L^{p_i}([0,t],\cV_{\theta_i})} + \|g\|_{L^2([0,t],\cL_2(U,\cH))} + \|h\|_{L^2([0,t],L^2(Z,\cH;\nu))}\geq n\}\wedge T.
$$
Put $f = \sum_{i=1}^{m_f} f_i$ as usual.
Introduce the truncated processes
\begin{align}
\label{eq: def truncated data}
f_n:= f \1_{[0,\tau_n]},\quad g_n(t):=g \1_{[0,\tau_n]},\quad h_n:=h \1_{[0,\tau_n]},\quad n\geq 1.
\end{align}
As before, we let further
\begin{align}
\label{eq: def truncated initial value}
    u_{0,n} \coloneqq u_0\1_{\|u_0\|\leq n},\quad n\geq 1.
\end{align}
The reader should also recall the space $\cM(a,b)$ from \eqref{eq:cMspaces}. We start out with a first reduction.

\begin{lemma}
\label{lemma event localization}
    Let Assumption~\ref{assumption operators new} hold.
    Let
    \begin{align}
        f_i\in L^0(\Omega,L^{p_{i}}([0,T],\cV_{\theta_{i}})),\;\; g\in L^0(\Omega\times [0,T],\cL_2(U,\cH)),\;\; h\in L^0(\Omega\times [0,T],L^2(Z,\cH;\nu)),
    \end{align}
    be $\cP$-measurable, $\cP$-measurable, and $\cP^-$-measurable respectively, where the pairs $(p_{i},\theta_{i})$ are admissible. Put $f = \sum_{i=1}^{m_f} f_i$.
    Also, let $u_0$ be $\cH$-valued and $\cF_0$-measurable.
    Let the truncated data $(u_{0,n}, f_n, g_n, h_n)$ be defined as before the lemma.
    Consider maximal solutions $(u,\sigma)$ and $(u_n,\sigma_n)$, $n\geq 1$, where
    \begin{gather}
        (u,\sigma) \quad\quad \text{solves \eqref{eq:spde2} with original}\quad (u_0,f,g,h),\\
        (u_n,\sigma_n)\quad \text{solves \eqref{eq:spde2} with the truncations}\quad (u_{0,n},f_n,g_n,h_n)\text{ in place of }(u_0,f,g,h).
    \end{gather}
    Consider a Borel measurable mapping
    $$
    \cO: \cM(T) \times\bR_+\mapsto \bR\cup\{\infty\}.
    $$
    Then, if 
    \begin{equation*}
        \liminf_{n\to \infty} P\Big(\sigma_n<T,\quad \cO(u_n,\sigma_n)<\infty \Big) = 0,
    \end{equation*}
    we also have
    \begin{equation*}
        P\Big(\sigma<T,\quad \cO(u,\sigma)<\infty \Big) = 0.
    \end{equation*}
\end{lemma}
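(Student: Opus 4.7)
The plan is to reduce to a straightforward limit argument on a family of $\cF_0$-measurable events on which the truncated and untruncated data coincide. Define
\[\Gamma_n := \{\|u_0\|_{\cH}\leq n\} \cap \{\tau_n = T\}, \qquad n\geq 1.\]
First, I would argue $P(\Gamma_n) \to 1$. The event $\{\|u_0\|_{\cH}\leq n\}$ increases to $\Omega$ since $u_0$ is $\cH$-valued a.s. For $\{\tau_n = T\}$, the nondecreasing process appearing in the definition of $\tau_n$ is a.s.\ continuous on $[0,T]$ and a.s.\ finite at $T$ (by the integrability assumptions on $f_i,g,h$), so $\tau_n = T$ eventually in $n$, almost surely.

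Second, the key step is to show that on $\Gamma_n$ the two maximal solutions coincide:
\[\sigma \1_{\Gamma_n} = \sigma_n \1_{\Gamma_n}, \qquad u \1_{\Gamma_n} = u_n \1_{\Gamma_n}.\]
On $\Gamma_n$ the data tuples $(u_0, f, g, h)$ and $(u_{0,n}, f_n, g_n, h_n)$ coincide pointwise on $[0,T]$. Since $\Gamma_n \in \cF_0$, I would follow the pattern of the proof of Theorem~\ref{thm well-posedness unbounded}(3): using that stochastic integrals with respect to both $W$ and $\Nt$ commute with multiplication by $\cF_0$-indicators, one checks that $u_n\1_{\Gamma_n}$ is a local solution of the original equation with initial value $u_0\1_{\Gamma_n}$ up to time $\sigma_n \1_{\Gamma_n}$, hence by maximality of $(u,\sigma)$ one gets $\sigma_n \leq \sigma$ and $u = u_n$ on $\Gamma_n$; reversing the roles yields the converse inclusion.

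Once this coincidence is established, Borel measurability of $\cO$ gives the equality of events
\[\{\sigma<T,\ \cO(u,\sigma)<\infty\}\cap \Gamma_n = \{\sigma_n<T,\ \cO(u_n,\sigma_n)<\infty\}\cap \Gamma_n,\]
and hence
\[P(\sigma < T, \cO(u,\sigma) < \infty) \leq P(\Omega \setminus \Gamma_n) + P(\sigma_n < T, \cO(u_n,\sigma_n) < \infty).\]
Taking $\liminf_{n\to\infty}$ on both sides finishes the proof, using $P(\Gamma_n)\to 1$ from the first step and the assumed $\liminf$-vanishing from the hypothesis.

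The main obstacle is the causality argument in the second step: Theorem~\ref{thm well-posedness unbounded}(3) is phrased only for varying initial values with fixed $(f,g,h)$, so one needs to propagate coincidence through both the It\^o term and the compensated Poisson integral when the forcing is simultaneously truncated. This is routine but has to be verified carefully for the L\'evy integral, using that $\Gamma_n$ being $\cF_0$-measurable allows the indicator to be pulled inside $\int_0^t\int_Z$ against $\Nt$.
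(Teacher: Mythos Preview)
There is a genuine gap in your second step. You claim $\Gamma_n = \{\|u_0\|_\cH \le n\} \cap \{\tau_n = T\} \in \cF_0$, but this is false: the stopping time $\tau_n$ is defined through the paths of $f_i,g,h$ on $[0,T]$, so $\{\tau_n = T\}$ is only $\cF_T$-measurable in general. Consequently you cannot pull $\1_{\Gamma_n}$ inside the stochastic integrals against $W$ and $\Nt$, and the causality argument modelled on Theorem~\ref{thm well-posedness unbounded}(3) breaks down precisely at the point you flagged as ``routine''.

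The paper repairs this by decoupling the two localizations. It introduces an auxiliary maximal solution $(v_n,\wt\tau_n)$ for the problem with the \emph{original} initial value $u_0$ but the \emph{truncated} forcing $(f_n,g_n,h_n)$. The comparison $u\leftrightarrow v_n$ is done via the stopping time $\tau_n$: since $(f,g,h)=(f_n,g_n,h_n)$ on $\llbracket 0,\tau_n\rrbracket$, the pair $(u,\sigma\wedge\tau_n)$ is a local solution of the truncated problem, whence maximality gives $\sigma\wedge\tau_n\le\wt\tau_n$ and $u=v_n$ on $\llbracket 0,\sigma\wedge\tau_n\rrparenthesis$; the reverse inequality is obtained symmetrically. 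This step needs no $\cF_0$-measurability. The comparison $v_n\leftrightarrow u_n$ then uses the localization property of Theorem~\ref{thm well-posedness unbounded}(3) on the genuinely $\cF_0$-measurable set $\{\|u_0\|_\cH\le n\}$, where $u_{0,n}=u_0$. Combining both on $\{\|u_0\|_\cH\le n\}\cap\{\tau_n=T\}$ yields $\sigma=\sigma_n$ and $u=u_n$ there, after which your limit argument in the final paragraph goes through unchanged.
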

\begin{proof}
    Throughout the proof, all solutions refer to problem~\eqref{eq:spde2} on $\llbracket 0, T \rrbracket$ without explicitly mentioning it.
    Observe also that the maximal solutions $(u,\sigma)$ and $(u_n,\sigma_n)$ exist owing to Theorem~\ref{thm well-posedness unbounded} and using Assumption~\ref{assumption operators new}.

    Put $\Gamma_n \coloneqq \{\|u_0\|_\cH\leq n\}$.
    On the one hand, by construction $\Gamma_n \uparrow \Omega$ as $n \to \infty$, and on the other hand, $\{\tau_n = T\} \uparrow \Omega$ as $n \to \infty$ by the integrability condition for the data. Therefore,
    \begin{align}
    \label{eq: localization set convergence}
        \Gamma_n\cap \{\tau_n = T\} \uparrow \Omega \quad \text{as } n \to \infty.
    \end{align}
    Owing to Theorem~\ref{thm well-posedness unbounded} there are maximal solutions $(v_n,\wt\tau_n)$ with initial value $u_0$ and truncated data $(f_n,g_n,h_n)$.
    First, we compare $u$ with $v_n$. To this end, observe first that $(u, \sigma \wedge \tau_n)$ is a local solution with initial condition $u_0$ and truncated data $(f_n,g_n,h_n)$. Thus, by maximality, $\sigma \wedge \tau_n \leq \wt\tau_n$ and $u = v_n$ on $\llbracket 0, \sigma \wedge \tau_n \rrparenthesis$.
    Next, we compare $u_n$ with $v_n$.
    Due to the localization property in Theorem~\ref{thm well-posedness unbounded}, also $\sigma_n = \wt\tau_n$ as well as $u_n = v_n$ on $\Gamma_n$.
    Hence, combining both facts, we find on $\{\tau_n = T\} \cap \Gamma_n$ that $\sigma = \sigma \wedge \tau_n \leq \wt \tau_n = \sigma_n$ with $u = v_n = u_n$ on $[0, \sigma)\times \{\tau_n = T\} \cap \Gamma_n$.
    Similarly, since $(v_n, \wt\tau_n \wedge \tau_n)$ is a local solution with initial value $u_0$ and data $(f,g,h)$, it follows from maximality of $u$ that $\wt \tau_n \wedge \tau_n \leq \sigma$, which gives $\sigma_n = \wt \tau_n \leq \sigma$ on $\{\tau_n = T\} \cap \Gamma_n$.
    In summary, we conclude
    \begin{align}
    \label{eq: localization lemma solutions and times coincide}
        \sigma = \sigma_n \quad\text{and}\quad u = u_n \quad\text{on}\quad \Gamma_n \cap \{\tau_n = T\}.
    \end{align}
    Then, due to~\eqref{eq: localization set convergence} and \eqref{eq: localization lemma solutions and times coincide}, we get
    \begin{align*}
        P(\{\sigma<T\}\cap\{\cO(u,\sigma)<\infty\})& = \lim_{n\to\infty} P\Big( \{\sigma<T\}\cap\{\cO(u,\sigma)<\infty\}\cap\{\tau_n = T\}\cap\Gamma_n \Big)\\
        &  =  \lim_{n\to\infty} P\Big( \{\sigma_n<T\}\cap\{\cO(u_n,\sigma_n)<\infty\}\cap\{\tau_n = T\}\cap\Gamma_n \Big)\\
        & \leq \liminf_{n\to\infty} P\Big( \{\sigma_n<T\}\cap\{\cO(u_n,\sigma_n)<\infty\}\Big)=0. \qedhere
    \end{align*}
\end{proof}

Next, we prove a first blowup criterion. Its assumption, the existence of the limit at the maximal existence time of the solution, will be replaced by the finiteness of a supremum in Theorem~\ref{thm: blowup}. The idea of its proof is simple: if limits at the final time $\sigma$ exist, then we can restart the equation with initial time $\sigma$ using the local wellposedness result of Theorem~\ref{thm wellposedness local} to deduce a contradiction to the maximality of the solution.

The procedure is similar to that in~\cite[Lem. 5.4]{agresti2022nonlinear2}, but the additional jump part presents further challenges. These will be solved by using an extension of the stochastic integral to progressive integrands developed in~\cite[Thm. 3.3.2]{zhu2010study}. One of the reasons we need this is that, a priori, we do not know that $\sigma$ is predictable. Therefore, we cannot exclude the possibility of jumps at $\sigma$ in the first place.
However, in Lemma~\ref{lem: sigma predictable} below, we show that $\sigma$ is indeed predictable, which then implies that the \Ly noise does not introduce jumps at $\sigma$.

\begin{proposition}
	\label{prop: P W_lim = 0} Let Assumption~\ref{assumption operators new} be satisfied.
    Let
    \begin{align}
        f_i\in L^0(\Omega,L^{p_{i}}([0,T],\cV_{\theta_{i}})),\;\; g\in L^0(\Omega\times [0,T],\cL_2(U,\cH)),\;\; h\in L^0(\Omega\times [0,T],L^2(Z,\cH;\nu)),
    \end{align}
    be $\cP$-measurable, $\cP$-measurable, and $\cP^-$-measurable respectively, where the pairs $(p_{i},\theta_{i})$ are admissible in the sense of Definition~\ref{def:admissible}. Put $f = \sum_{i=1}^{m_f} f_i$.
    Also, let $u_0$ be $\cH$-valued, $\cF_0$-measurable, and bounded.
	If $(u,\sigma)$ is the maximal solution to \eqref{eq:spde2} on $\llbracket 0, T \rrbracket$ provided by Theorem~\ref{thm wellposedness local},
	then
	\begin{equation}
		\label{eq: blow up limit}
		P\Bigl( \sigma<T,\, \lim_{t\uparrow\sigma} u(t)\text{ exists in $\cH$ and }\|u\|_{L^2([0,\sigma),\cV)}<\infty\Bigr) = 0.
	\end{equation}
\end{proposition}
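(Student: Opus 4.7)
The plan is to argue by contradiction. Suppose the event
\[
V := \bigl\{\sigma<T,\ \lim_{t\uparrow\sigma} u(t)\text{ exists in }\cH,\ \|u\|_{L^2([0,\sigma),\cV)}<\infty\bigr\}
\]
has positive probability. I will show that on $V$ the solution can be non-trivially extended past $\sigma$, contradicting the maximality of $(u,\sigma)$.

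First, localize to bounded initial data. Write $V = \bigcup_{n\geq 1} V_n$ with $V_n := V \cap \{\|u(\sigma-)\|_\cH \leq n\}$, and choose $n$ so that $P(V_n) > 0$. Define
\[
v_\sigma := u(\sigma-)\,\mathbf{1}_{V_n},
\]
which is an $\cF_\sigma$-measurable, $\cH$-valued, bounded random variable. Apply Theorem~\ref{thm wellposedness local} with initial stopping time $\sigma$ and initial value $v_\sigma$ to obtain a maximal solution $(w,\nu)$ of \eqref{eq:spde2} on $\llbracket \sigma, T\rrbracket$ with $\nu > \sigma$ almost surely on $\{\sigma<T\}$. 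Set
\[
\widetilde u(t) := u(t)\,\mathbf{1}_{[0,\sigma)}(t) + w(t)\,\mathbf{1}_{[\sigma,\nu)}(t),
\]
a candidate solution on $\llbracket 0, \nu \rrparenthesis$.

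The crux is to show that $(\widetilde u, \nu)$ is a local solution of \eqref{eq:spde2} on $\llbracket 0, T\rrbracket$ with initial value $u_0$, at least after intersecting with $V_n$. If this is achieved, the localization property of Theorem~\ref{thm well-posedness unbounded} forces $\nu \leq \sigma$ on $V_n$ (by comparison with $(u,\sigma)$), contradicting $\nu > \sigma$ there. Hence $P(V_n) = 0$ for all $n$, and letting $n\to\infty$ yields $P(V) = 0$. To verify the integral identity at any stopping time $\tau \in \llbracket 0,\nu\rrparenthesis$, one splits each integral as $\int_0^\tau = \int_0^\sigma + \int_\sigma^\tau$. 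The deterministic drift piece is under control because $\|u\|_{L^2([0,\sigma),\cV)}<\infty$ on $V$, so by Assumption~\ref{assumption operators new} the term $A(\cdot,u(\cdot))$ is integrable up to $\sigma$, and it combines with the contribution on $[\sigma,\tau]$ coming from $w \in \cM(\sigma,\nu)$.

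The main obstacle is the treatment of the stochastic integrals up to the time $\sigma$. At this stage of the paper we do \emph{not} yet know that $\sigma$ is predictable (this is Lemma~\ref{lem: sigma predictable}, proved later); in particular $\sigma$ might coincide with a jump time of $N$. To define
\[
\int_0^\sigma B(s,u(s))\,dW(s)\qquad\text{and}\qquad \int_0^\sigma\!\!\int_Z C(s,u(s-),z)\,\Nt(dz,ds)
\]
unambiguously, I invoke the extension of the stochastic integral to progressively measurable integrands, as provided by \cite[Thm.~3.3.2]{zhu2010study} (together with its analogue for the compensated Poisson integral). The required $L^2$-integrability of the integrands over $[0,\sigma)$ follows on $V$ from $u \in L^\infty([0,\sigma),\cH)\cap L^2([0,\sigma),\cV)$ together with the growth bounds on $B_0,C_0,G,H$ in Assumption~\ref{assumption operators new}. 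With this extension, both integrals are well-defined on $[0,\sigma]$, glue to the integrals on $[\sigma,\tau]$, and yield the full identity on $[0,\tau]$. This is precisely why the proposition is formulated with the left-limit hypothesis rather than a two-sided continuity requirement at $\sigma$.
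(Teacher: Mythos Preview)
Your overall strategy (contradiction via restarting at $\sigma$) matches the paper, but there is a genuine gap in how you choose the restart value. You set $v_\sigma = u(\sigma-)\mathbf{1}_{V_n}$, i.e.\ the \emph{left} limit. The problem is that $\sigma$ may coincide with a jump time of the Poisson random measure: at this point in the paper $\sigma$ is not yet known to be predictable (Lemma~\ref{lem: sigma predictable} is derived \emph{from} this proposition), so you cannot exclude a jump of the compensated Poisson integral at $\sigma$. When you glue, the integral identity on $[0,\tau]$ with $\tau>\sigma$ requires
\[
\widetilde u(\sigma)=u_0+\int_0^\sigma(-A)\,ds+\int_0^\sigma B\,dW+\int_0^\sigma\!\!\int_Z C\,\Nt(dz,ds),
\]
and the last integral over $[0,\sigma]$ \emph{includes} the jump at $\sigma$, whereas $u(\sigma-)$ only accounts for $\int_0^{\sigma-}$. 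Hence with your choice $\widetilde u(\sigma)=v_\sigma=u(\sigma-)$ the glued process fails to satisfy the equation by exactly $\Delta J_C(\sigma)$ on the event that this jump is nonzero, and your candidate $(\widetilde u,\nu)$ is not a local solution.

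The paper fixes this by defining the restart value as $v_\sigma=\mathbf{1}_{\cW_J}\bigl(\lim_{t\uparrow\sigma}u(t)+\Delta J_{\bar C}(\sigma)\bigr)$, explicitly adding back the jump of a suitably truncated Poisson integral $J_{\bar C}$ at $\sigma$; a further restriction to a set $\cW_J$ where this jump is bounded keeps $v_\sigma$ bounded. The restarted equation is then solved with the indicator $\mathbf{1}_{\llparenthesis\sigma,T\rrbracket}$ in front of the Poisson term so that the jump at $\sigma$ is not counted twice. The extension of the stochastic integral to progressive integrands from \cite[Thm.~3.3.2]{zhu2010study} is used not merely to make sense of $\int_0^\sigma$, but to show that $J_{\bar C}$ and the original integral $J_C$ are indistinguishable on $\llbracket 0,\sigma\rrbracket$, so that the added jump is really the right one. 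A secondary point: you localize only via $\|u(\sigma-)\|_\cH\le n$, but to invoke the growth bounds of Assumption~\ref{assumption operators new} on the integrands (and to make $J_{\bar C}$ square-integrable) one should rather work on a set where $\sup_{s<\sigma}\|u(s)\|_\cH+\|u\|_{L^2([0,\sigma),\cV)}\le M$, as the paper does.
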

\begin{proof}
    By Lemma~\ref{lemma event localization}, we may assume that the data is already truncated.
	Define
	$$
	\cW_{\lim} \coloneqq \big\{ \sigma<T,\, \lim_{t\uparrow\sigma} u(t)\text{ exists in $\cH$ and }\|u\|_{L^2([0,\sigma),\cV)}<
	\infty\big\},
	$$
	and assume for the sake of contradiction that $P(\cW_{\lim})>0$.
    By the \cl property, $\sup_{s \in [0, \sigma)} \| u(s) \|_H$ is finite on $\cW_{\lim}$.
	Hence, by an exhaustion argument, there exists $M > 0$ and a $\cF_\sigma$-measurable subset $\cW\subset \cW_{\lim}$ such that $P(\cW)>0$ and
	$$
	\sup_{s\in [0,\sigma)}\|u(s)\|_\cH + \|u\|_{L^2([0,\sigma),\cV)} \leq M\quad\text{for all $\omega\in\cW$.}
	$$
    This also implies $\|u(\sigma-)\|_{\cH}\leq M$ on the set $\cW$.
    Define the mappings
    \begin{align}
        \bar C(t,x,z):&= 
        C(t,x,z)\1_{\|\cdot\|_{\cH}\leq M}(x)\big(\1_{\llbracket 0,\sigma\rrparenthesis}(t)  + \1_{\llbracket\sigma\rrbracket}(t)\1_{\cW}\big),\\
        J_{\bar C}(t):&= \int_0^t\int_Z \bar C(s,u(s-),z)
        \,\wt{N}(dz,ds),\quad t\in [0,T],
    \end{align}
    We first observe that with this choice of $\bar C$ we have $\bar C(t,u(t-),z) = C(t,u(t-),z)$ for almost all $\omega\in\cW$ on $\llbracket 0,\sigma\rrparenthesis$.
    We note, moreover that $\1_{\llbracket \sigma\rrbracket}\1_{\cW}$ is adapted  and that on $\cW$, the left limit $u(\sigma-)$ is well-defined and bounded by $M$, so that also on $\llbracket\sigma\rrbracket$ we have $\bar C(\sigma,u(\sigma-),z) = C(\sigma,u(\sigma-),z)$ for almost all $\omega \in\cW$. Hence by \cite[Thm. 3.3.2]{zhu2010study}
     $J_{\bar C}$ is a well-defined martingale and admits a \cl modification which we henceforth refer to by $J_{\bar C}$ and there exists $M_J>0$ and a set $\cW_J\subset \cW$, $\cW_J\in\cF_\sigma$, of positive probability such that $\|\Delta J_{\bar C}(\sigma)\|_{\cH}\leq M_J$.
	Define
    an initial condition
    $$
    v_\sigma \coloneqq \1_{\cW_J}\big(\lim_{t\uparrow\sigma} u(t) + \Delta J_{\bar C}(\sigma)\big).
    $$
    By construction, $v_\sigma$ is $\cF_\sigma$-measurable and $\|v_\sigma\|_\cH\leq M+M_J$ almost surely. Therefore, using Theorem~\ref{thm wellposedness local} for the initial time $\sigma$, there exists a maximal solution $(\wt u,\wt\sigma)$ on $\llbracket \sigma, T \rrbracket$ to the SPDE
	\begin{equation*}
		\begin{split}
			du(t)+A(t,u(t)) \,dt ={} &B(t,u(t)) \,dW(t) +\1_{\llparenthesis \sigma,T\rrbracket}(t)\int_Z C(t,u(t-),z)\,\Nt(dz,dt)\\
			u(\sigma)={} &v_\sigma.
		\end{split}
	\end{equation*}
	Clearly $\wt\sigma>\sigma$ on $\cW_J$ as $\cW_J \subset \cW_{\lim} \subset \{\sigma < T\}$. Define
	$$
	\bar u \coloneqq u\1_{\llbracket 0, \sigma \rrparenthesis} + \wt u\1_{\llbracket\sigma,\wt{\sigma} \rrparenthesis}\1_{\cW_J},
	$$
    as well as the square integrable martingale
    \begin{align}
        J_{C}(t) :&= \int_0^t\int_Z C(s,\bar u(s-),z)\1_{\|\cdot\|\leq M}(\bar u(s-))
        \,\wt{N}(dz,ds),\quad t\in [0,\sigma].
    \end{align}
	By definition of the initial condition we have
    \begin{align}
        \Delta\bar u(\sigma)\1_{\cW_J}& = \big(\bar u(\sigma) - \lim_{t\uparrow\sigma}\bar u(t)\big)\1_{\cW_J}\\
        & = \big(\lim_{t\uparrow\sigma} u(t)  +\Delta J_{\bar C}(\sigma) - \lim_{t\uparrow\sigma} u(t)\big) \1_{\cW_J}\\
        &= \Delta J_{\bar C}(\sigma)\1_{\cW_J}.
    \end{align}
   Further, by \cite[Thm. 3.3.4]{zhu2010study}, the martingales $J_C$ and $J_{\bar C}$ are indistinguishable on $\llbracket 0,\sigma\rrbracket$. Indeed, recall again that $\bar C(s,u(s-),z) = \bar C(s,\bar u(s-),z)$ on $\llbracket 0,\sigma \rrparenthesis$ and that we always work with the \cl modification, so that this follows from
   \begin{align}
   \E\int_0^\sigma \int_Z
\|C(s,\bar u(s-),z)\1_{\|\cdot\|\leq M}(\bar u(s-)) - \bar C(s,u(s-),z)\|_{\cH}^2\,\nu(dz)ds   = 0.
\end{align}
In particular,
    \begin{align}
        \Delta J_{\bar C}(\sigma)\1_{\cW_J} &=\Delta J_C(\sigma)\1_{\cW_J} \\
        &=  \Delta \int_0^\sigma \int_Z C(s,\bar u(s-),z)\,\wt{N}(dz,ds)\1_{\cW_J}
    \end{align}
    almost surely.  Therefore, with the stopping time
    $$
    \bar\sigma \coloneqq \sigma\1_{\Omega\setminus\cW_J} + \wt\sigma\1_{\cW_J}.
    $$
    it is straightforward to see that $(\bar u,\bar\sigma)$
	is a local solution to \eqref{eq:spde2} on $\llbracket 0, T \rrbracket$ with initial condition $u_0$. Since $\bar\sigma > \sigma$ on a set of positive measure, this is a contradiction to the maximality of $(u,\sigma)$.
\end{proof}

With this we can see that the blow up times are predictable.
While this is not of immediate importance here, we include it, together with a short proof, as valuable information for future articles on this subject.

\begin{lemma}
\label{lem: sigma predictable}
    Let the conditions of Proposition~\ref{prop: P W_lim = 0} hold. Then $\sigma$ is predictable.
\end{lemma}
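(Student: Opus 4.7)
The plan is to exhibit an announcing sequence of stopping times for $\sigma$, i.e.\ $\sigma_n$ with $\sigma_n < \sigma$ on $\{\sigma > 0\}$ and $\sigma_n \uparrow \sigma$ almost surely. I set
\[
\sigma_n \coloneqq \inf\Bigl\{ t \in [0,T] : \sup_{s \in [0,t\wedge\sigma)} \|u(s)\|_\cH + \|u\|_{L^2([0,t\wedge\sigma),\cV)} \geq n \Bigr\} \wedge \bigl(T - \tfrac{1}{n}\bigr)_+.
\]
The argument of the infimum is a non-decreasing adapted \cl process on $[0,\sigma)$, so each $\sigma_n$ is a stopping time. On $\{\sigma = T\}$ the truncation $(T-1/n)_+$ forces $\sigma_n \leq T-1/n < T$, while the \cl/$L^2$-regularity of $u$ on the sets $[0,\tau_k]$ of the localizing sequence makes $\sigma_n = T-1/n$ eventually, hence $\sigma_n \uparrow T$. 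Everything thus reduces to showing that, on $\{\sigma < T\}$, the argument of the infimum blows up as $t \uparrow \sigma$ almost surely.

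The central claim is therefore: on $\{\sigma < T\}$ one has $\sup_{t<\sigma}\|u(t)\|_\cH = \infty$ or $\|u\|_{L^2([0,\sigma),\cV)} = \infty$ almost surely. Equivalently, the event $\cA \coloneqq \{\sigma < T,\, \sup_{t<\sigma}\|u(t)\|_\cH < \infty,\, \|u\|_{L^2([0,\sigma),\cV)} < \infty\}$ is null. I prove this by contradiction via Proposition~\ref{prop: P W_lim = 0}: it suffices to show that on $\cA$ the limit $\lim_{t \uparrow \sigma} u(t)$ exists in $\cH$, since then $\cA$ is contained (up to a null set) in the event of Proposition~\ref{prop: P W_lim = 0}, which has measure zero.

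For the existence of the limit, write $\cA = \bigcup_{M\geq 1}\cA_M$ with $\cA_M \coloneqq \cA \cap \{\sup_{t<\sigma}\|u(t)\|_\cH \vee \|u\|_{L^2([0,\sigma),\cV)} \leq M\}$ and work on each $\cA_M$ separately. On $\cA_M$, Assumption~\ref{assumption operators new} together with the interpolation $\|u\|_\beta \leq \|u\|_\cH^{2-2\beta}\|u\|_\cV^{2\beta-1}$ and the (sub)criticality conditions yields pathwise bounds
\[
\|A_L u\|_{L^2([0,\sigma),\cV^*)} + \|A_S u\|_{L^{p_A}([0,\sigma),\cV_{\alpha_A})} + \|F(\cdot,u)\|_{L^{p_F}([0,\sigma),\cV_{\alpha_F})} \leq C_M,
\]
and analogously for the inhomogeneity $f$ and for $B(\cdot,u)$, $C(\cdot,u,\cdot)$ in the spaces $L^2([0,\sigma),\cL_2(U,\cH))$ and $L^2([0,\sigma),L^2(Z,\cH;\nu))$. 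Consequently $\int_0^t A(s,u(s))\,ds$ converges in $\cV^*$ as $t\uparrow\sigma$, while the Brownian and Poisson martingale parts are $\cH$-valued square integrable martingales on $[0,\sigma]$ and thus admit left limits in $\cH$ at $\sigma$. Hence $u(t)$ converges in $\cV^*$ and, by boundedness in $\cH$ and the Banach--Alaoglu theorem, weakly in $\cH$ to the same limit.

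The main obstacle is upgrading weak to strong convergence in $\cH$. I apply It\^o's formula (Corollary~\ref{cor: ito}) to $\|u(t)\|_\cH^2$ after stopping the equation at $\tau_M \coloneqq \inf\{t:\sup_{s\leq t}\|u(s)\|_\cH + \|u\|_{L^2([0,t],\cV)} \geq M\} \wedge T$, which on $\cA_M$ coincides with $\sigma$; the stopping brings the data into the moment class required by the corollary. The terms $\int_0^t \langle A(s,u(s)), u(s)\rangle\,ds$, $\int_0^t \|B\|_{\cL_2(U,\cH)}^2\,ds$ and the Poisson measure integral $\int_0^t\int_Z\|C\|_\cH^2\,N(dz,ds)$ all converge as $t\uparrow\tau_M$ by the bounds above (the last one because the integrand is non-negative with finite total expected mass), while the martingale parts converge by square integrability. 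Thus $\|u(t)\|_\cH^2$ admits a finite limit on $\cA_M$, and combined with the weak convergence this forces strong convergence in $\cH$. Together with Proposition~\ref{prop: P W_lim = 0} this yields $P(\cA_M) = 0$ for every $M$, hence $P(\cA) = 0$, and the announcing sequence constructed above makes $\sigma$ predictable.
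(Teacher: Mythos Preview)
The strong-convergence step is the genuine gap. Weak convergence $u(t)\rightharpoonup\ell$ in $\cH$ together with convergence of $\|u(t)\|_\cH^2$ to some value $c$ only yields strong convergence when $c=\|\ell\|_\cH^2$; your It\^o argument shows the norm converges but gives no mechanism to identify its limit with $\|\ell\|_\cH^2$. The textbook counterexample $u_n=e_n$ in $\ell^2$ (weakly null, constant norm, not strongly convergent) shows this implication cannot be taken for granted. In the equation itself, the obstruction is that $\int_0^t A(s,u(s))\,ds$ is only known to converge in $\cV^*$, not in $\cH$, so even though the martingale pieces converge strongly in $\cH$ as you say, the deterministic integral may oscillate in $\cH$ and block a Cauchy argument for $u$.

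The paper's route is much shorter and sidesteps all of this. Take any localizing sequence $(\sigma_n)$ for the maximal solution, so that $u\in D([0,\sigma_n],\cH)\cap L^2([0,\sigma_n],\cV)$ by definition of a local solution. On the event $\{\sigma_n=\sigma<T\}$ the left limit $u(\sigma-)=u(\sigma_n-)$ then automatically exists in $\cH$ and $\|u\|_{L^2([0,\sigma),\cV)}<\infty$, so Proposition~\ref{prop: P W_lim = 0} forces $P(\sigma_n=\sigma<T)=0$; hence $\sigma_n\wedge(T-\tfrac1n)$ is an announcing sequence. No It\^o formula, no weak/strong upgrade: the c\`adl\`ag regularity on $[0,\sigma_n]$ already delivers the limit. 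If you want to salvage your approach, replace the It\^o step by the auxiliary linear problem used in the proof of Theorem~\ref{thm: blowup}: freeze $u$ in the coefficients on $[0,\tau_M)$, obtain a global solution $v_M\in D([0,T],\cH)$ coinciding with $u$ there, and read off $\lim_{t\uparrow\sigma}u(t)=v_M(\sigma-)$ on $\cA_M$.
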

\begin{proof}
Consider a localizing sequence $(\sigma_n)_{n\geq 1}$ for $u$, verifying $\sigma_n\to\sigma$ as $n\to\infty$, so that by definition $u\in \cM(\sigma_n)$ a.s.\ is a solution to \eqref{eq:spde2} on $\llbracket 0,\sigma_n\rrbracket$. We claim that for the sequence $(\sigma_n\wedge (T-\tfrac{1}{n}))_{n\geq 1}$ we have $\sigma_n\wedge (T-\tfrac{1}{n})\to\sigma$ as $n\to\infty$, while $\sigma_n\wedge (T-\tfrac{1}{n})<\sigma$ for all $n\geq 1$. The convergence holds by definition of $(\sigma_n)_{n\geq 1}$, so that it suffices to prove the strict inequality.
Indeed, from the path regularity of $u$ on $[0,\sigma_n]$ and by Proposition~\ref{prop: P W_lim = 0}, we see
    \begin{align}
        P(\sigma_n = \sigma<T)& = P(\sigma_n = \sigma<T,\  \lim_{t\uparrow\sigma_n} u(t)\ \text{exists in $\cH$ and}\ \|u\|_{L^2([0,\sigma_n],\cV)}<\infty)\\
        & = P(\sigma_n = \sigma<T,\ \lim_{t\uparrow\sigma} u(t)\ \text{exists in $\cH$ and}\ \|u\|_{L^2([0,\sigma],\cV)}<\infty) = 0,
    \end{align}
which proves $\sigma_n<\sigma$ on $\{\sigma<T\}$. Clearly also $\sigma\wedge (T-\tfrac{1}{n})<\sigma = T$ on $\{\sigma = T\}$. This finishes the proof.
\end{proof}

Finally, we have the tools to prove the main theorem of this section, a blowup criterion based on finiteness of the supremum of the maximal solution. Using a stopping time argument and suitable linear auxiliary problems, this case can be reduced to the blowup criterion in Proposition~\ref{prop: P W_lim = 0}.
\begin{theorem}
\label{thm: blowup}
    Let $T>0$.
    Suppose Assumption~\ref{assumption operators new}.
    Let
    \begin{align}
        f_i\in L^0(\Omega,L^{p_{i}}([0,T],\cV_{\theta_{i}})),\;\; g\in L^0(\Omega\times [0,T],\cL_2(U,\cH)),\;\; h\in L^0(\Omega\times [0,T],L^2(Z,\cH;\nu)),
    \end{align}
    be $\cP$-measurable, $\cP$-measurable, and $\cP^-$-measurable respectively, where the pairs $(p_{i},\theta_{i})$ are admissible in the sense of Definition~\ref{def:admissible}. Put $f = \sum_{i=1}^{m_f} f_i$.
    Also, let $u_0$ be $\cH$-valued and $\cF_0$-measurable.
    Let $(u,\sigma)$ be the maximal solution to \eqref{eq:spde2} on $[0, T]$.
    Then
    \begin{align}
        P\Bigl( \sigma<T,\quad \sup_{t\in [0,\sigma)} \|u(t)\|_\cH + \int_0^\sigma \|u(t)\|_\cV^2\,dt<\infty \Bigr) &= 0. \label{eq: blowup criterion 1}
    \end{align}
\end{theorem}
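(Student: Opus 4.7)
The plan is to reduce the claim to Proposition~\ref{prop: P W_lim = 0}, by showing that on the event where $\sup_{t<\sigma}\|u(t)\|_{\cH}$ and $\int_0^\sigma\|u\|_{\cV}^2\,dt$ are finite, the left-limit $\lim_{t\uparrow\sigma}u(t)$ must actually exist in $\cH$. First I would apply Lemma~\ref{lemma event localization} with $\cO(v,\tau)=\sup_{t<\tau}\|v(t)\|_{\cH}+\int_0^\tau\|v\|_{\cV}^2\,dt$, so that it suffices to establish the conclusion for the truncated problem with bounded initial data $u_{0,n}$ and globally integrable inhomogeneities $(f_n,g_n,h_n)$. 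From now on I write $(u,\sigma)$ for that truncated maximal solution and assume for contradiction that
\begin{align*}
\cO := \bigl\{\sigma<T,\ \sup_{t\in[0,\sigma)}\|u(t)\|_{\cH} + \tfrac{1}{2}\int_0^\sigma\|u\|_{\cV}^2\,dt<\infty\bigr\}
\end{align*}
has positive probability. By exhaustion I can choose $M\ge1$ such that the subevent $\cO_M\subset \cO$ on which both quantities are bounded by $M$ satisfies $P(\cO_M)>0$.

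Next, using any localizing sequence $(\sigma_k)_k$ for $u$, I would define the stopping times
\begin{align*}
\lambda_k := \inf\Bigl\{t\in[0,\sigma_k]\,:\, \|u(t)\|_{\cH}^2+\int_0^t\|u(s)\|_{\cV}^2\,ds \geq M+1\Bigr\}\wedge\sigma_k,
\end{align*}
so that $\lambda_k\uparrow\lambda:=\sup_k\lambda_k$, and on $\cO_M$ one has $\lambda_k=\sigma_k$ eventually and thus $\lambda=\sigma$. Set $\bar u := u\mathbf 1_{\llbracket 0,\lambda\rrbracket}$. By construction $\|\bar u\|_{L^\infty([0,T],\cH)}\vee \|\bar u\|_{L^2([0,T],\cV)}\le M+1$ almost surely. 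Now consider the linear SPDE on $[0,T]$
\begin{align*}
dv+A_0(t,\bar u)v\,dt &= \bigl[F(t,\bar u)\mathbf 1_{\llbracket 0,\lambda\rrbracket}+f\bigr]\,dt + \bigl[B_0(t,\bar u)v+G(t,\bar u)\mathbf 1_{\llbracket 0,\lambda\rrbracket}+g\bigr]\,dW(t)\\
&\quad+\int_Z\bigl[C_0(t,\bar u,z)v(t-)+H(t,\bar u,z)\mathbf 1_{\llbracket 0,\lambda\rrbracket}+h\bigr]\,\Nt(dz,dt)
\end{align*}
with $v(0)=u_0$. The frozen operators $(A_0(\cdot,\bar u),B_0(\cdot,\bar u),C_0(\cdot,\bar u))$ satisfy Assumption~\ref{assumption linear operators} thanks to the $\cH$-bound on $\bar u$ combined with parts (1)--(4) of Assumption~\ref{assumption operators new}. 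The forcing terms $F(\cdot,\bar u)$, $G(\cdot,\bar u)$, $H(\cdot,\bar u)$ lie in the appropriate admissible spaces $L^{p_F}([0,T],\cV_{\alpha_F})$, $L^2([0,T],\calL_2(U,\cH))$, $L^2([0,T],L^2(Z,\cH;\nu))$: this is exactly the estimate performed in Lemma~\ref{lemma truncation 1} and is made possible by the subcriticality condition \eqref{eq: criticality F} together with the interpolation embedding $\cM(T)\hookrightarrow L^{2/(2\beta-1)}([0,T],\cV_\beta)$. Therefore Theorem~\ref{thm L2 linear} provides a unique solution $v\in L^2(\Omega;D([0,T],\cH)\cap L^2([0,T],\cV))$.

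To conclude, I would observe that on $\llbracket 0,\lambda\rrbracket$ the process $u$ solves the very same linear SPDE as $v$, again with initial value $u_0$; uniqueness in Theorem~\ref{thm L2 linear} (applied, say, stopped at each $\lambda_k$ and then letting $k\to\infty$) yields $u=v$ on $\llbracket 0,\lambda\rrbracket$. On $\cO_M$ this means $u(t)=v(t)$ for $t\in[0,\sigma)$, so the càdlàg regularity of $v$ at time $\sigma<T$ gives $\lim_{t\uparrow\sigma}u(t)=v(\sigma-)\in \cH$. Combined with $\|u\|_{L^2([0,\sigma),\cV)}\le M$, this places $\cO_M$ inside the event considered in Proposition~\ref{prop: P W_lim = 0}, contradicting $P(\cO_M)>0$. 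The main technical obstacle will be step~three: I need to ensure that $\lambda$ is genuinely a stopping time coinciding with $\sigma$ on $\cO_M$ and that the frozen-coefficient linear equation captures $u$ exactly (rather than some extension past $\sigma$); this is delicate because $u$ is only defined on $\llbracket 0,\sigma\rrparenthesis$ and $\sigma$ is not, a priori, predictable. Cutting off by the localizing sequence $(\sigma_k)$ and passing to the limit avoids having to make sense of $u$ at or beyond $\sigma$ while still producing the linear extension on the full interval $[0,T]$.
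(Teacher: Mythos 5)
Your plan is correct and follows essentially the paper's own proof: after the reduction via Lemma~\ref{lemma event localization}, one freezes the coefficients along the solution stopped where its $\cM$-norm is bounded, solves the resulting linear problem on $[0,T]$ by Theorem~\ref{thm L2 linear}, identifies it with $u$ by uniqueness, and uses the c\`adl\`ag regularity of the linear solution to place the event inside that of Proposition~\ref{prop: P W_lim = 0} (the paper runs this with a sequence of levels $n$ rather than a single $M$ chosen by contradiction, and additionally invokes Lemma~\ref{lem: sigma predictable} and Proposition~\ref{prop: jumps u} only to identify the limit as $v_n(\sigma)$, which is not needed since mere existence of the left limit suffices). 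Only trivial bookkeeping needs fixing: take $\bar u = u\1_{\llbracket 0,\lambda\rrparenthesis}$ rather than the closed interval (since $u$ is undefined at $\lambda=\sigma$), feed $u(t-)$ into $C_0$ and $H$ to keep $\cP^-$-measurability, and match the truncation threshold in $\lambda_k$ to the bound defining $\cO_M$ (e.g.\ $M^2+2M+1$ instead of $M+1$).
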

\begin{proof}
    As in the proof of Proposition~\ref{prop: P W_lim = 0} we may assume the data is truncated already.  Let
\begin{align}
		\cW_{\sup} &\coloneqq \big\{\sigma<T, \quad \sup_{t\in [0,\sigma)}\|u(t)\|_\cH + \int_0^\sigma \|u(t)\|_\cV^2\,dt <\infty\big\}, \\
		\cW_{\lim} &\coloneqq\big\{\sigma<T, \quad \lim_{t\uparrow\sigma} u(t)\in \cH \quad\text{and}\quad \int_0^\sigma \|u(t)\|_\cV^2\,dt <\infty\big\}.
\end{align}
    Define
    \begin{align}
        \tau_n \coloneqq \inf\bigl\{t \in [0,\sigma): \| u \|_{\cM(t)} \geq n \bigr\}\wedge \sigma.
    \end{align}
Then $\sup_{t\in [0,\tau_n)} \|u(t)\|_{\cH} + \|u\|_{L^2([0,\tau_n],\cV)}\leq n$.
    By construction of $\cW_{\sup}$ one has
    \begin{align}
    \label{eq: stopping time limit on sigma<T}
    \{\tau_n = \sigma < T \} \cap \cW_{\sup}
    = \{\tau_n = \sigma\}\cap \cW_{\sup}\uparrow  \cW_{\sup} \quad \text{as $n\to\infty$}.
    \end{align}
    Now, for each $n\geq 1$ we define the operators
    \begin{align}
        &\bar A_n(t)v \coloneqq A_L(t,u(t)\1_{\llbracket 0, \tau_n \rrparenthesis}(t))v +  A_0(t,u(t)\1_{\llbracket 0, \tau_n \rrparenthesis}(t))v,\;\;
        \\ & \bar B_n(t)v \coloneqq B_0(t,u(t)\1_{\llbracket 0, \tau_n \rrparenthesis}(t))v, \qquad \bar C_n(t,z) \coloneqq C_0 (t,u(t-)\1_{\llbracket 0, \tau_n \rrparenthesis}(t),z)
    \end{align}
    as well as the right-hand sides
    \begin{align}
        F_n(t) &\coloneqq F(t,u(t)\1_{\llbracket 0, \tau_n \rrparenthesis}) + f(t),\qquad G_n(t) \coloneqq G(t,u(t)\1_{\llbracket 0, \tau_n \rrparenthesis}) + g(t),\\
        H_n(t) &\coloneqq H(t,u(t-)\1_{\llbracket 0, \tau_n \rrparenthesis}(t),z) + h(t,z).
    \end{align}
    Consider on $[0, T]$ the linear SPDE
    \begin{align}
    \label{eq: aux lin spde}
         dv + \bar A_n(t)v\,dt &= \bar F_n(t) \,dt + \bigl(\bar B_n(t)v + G_n(t) \bigr) \,dW(t) \\
         &\quad+\int_Z \big( \bar C_n(t,z) + H_n(t,z) \big) \,\Nt(dz,dt)\\
         v(0)&=u_0.
    \end{align}
    Due to Assumption~\ref{assumption operators new} and the boundedness of $u \1_{\llbracket 0, \tau_n \rrparenthesis}$ we see that, for each $n\geq 1$, the operators $(\bar A_n,\bar B_n,\bar C_n)$ satisfy Assumption~\ref{assumption linear operators}. Moreover, using \eqref{eq:bH embedding} and a calculation similar to Lemma~\ref{lemma truncation 1}, Assumption~\ref{assumption operators new} together with the boundedness of $u\1_{\llbracket 0, \tau_n \rrparenthesis}$  ensure that $(F_n,G_n,H_n)$ satisfy the integrability conditions of Theorem~\ref{eq: linear spde L2 theory} for each $n\geq 1$. Hence, for each $n\geq 1$, there exists a unique solution $v_n$ to \eqref{eq: aux lin spde} on $\llbracket 0, T \rrbracket$ satisfying
    \begin{align}
    \label{eq: sup estimate v_n}
        \E\sup_{s\in [0,T]} \|v_n(s)\|_\cH^2 + \E\int_0^T \|v_n(s)\|_\cV^2\,ds <\infty.
    \end{align}
    Moreover, by uniqueness $u=v_n$ a.s.\ on $\llbracket 0,\tau_n\rrparenthesis$.
    Hence, since $v_n$ is \cl on $\llbracket 0,T\rrbracket$, it follows that a.s.\ on $\cW_{\sup}$,
    \begin{align}
    \label{eq: limit at tau_n = sigma}
        \lim_{t\uparrow\sigma} u(t)\1_{\{\tau_n = \sigma<T\}} = \lim_{t\uparrow\sigma} v_n(t)\1_{\{\tau_n = \sigma<T\}} \quad\text{in $\cH$.}
    \end{align}
    Hence, by~\eqref{eq: stopping time limit on sigma<T} and~\eqref{eq: limit at tau_n = sigma} we have
    \begin{align*}
    \P(\cW_{\sup}) & = \lim_{n\to \infty} \P\Big(\cW_{\sup}\cap \{\tau_n=\sigma<T\}\Big) \\ & \leq \lim_{n\to \infty} \P\Big(\lim_{t\uparrow \sigma} u(t) \ \text{exists in $\cH$}, \  \|u\|_{L^2([0,\sigma],\cV)}<\infty, \ \tau_n=\sigma<T\Big) \\ &\leq  \P(\cW_{\lim}) = 0,
    \end{align*}
     where in the last step we used Proposition~\ref{prop: P W_lim = 0}.
\end{proof}

\section{Global well-posedness under coercivity conditions}\label{sec:global}

\subsection{Existence and uniqueness results}
Under a coercivity condition on $(A,B,C)$ we prove global existence and uniqueness.
\begin{theorem}[Global well-posedness]\label{thm:varglobal}
Suppose that Assumption~\ref{assumption operators new} holds.
Let $u_0\in L^0(\Omega,\cH)$ be an $\cF_0$-measurable initial value and
let
    \begin{align}
        f_i\in L^0(\Omega,L^{p_{i}}([0,&T],\cV_{\theta_{i}})),\;\; g\in L^0(\Omega,L^2([0,T],\cL_2(U,\cH))), \\
        h&\in L^0(\Omega,L^2([0,T],L^2(Z,\cH;\nu))),
    \end{align}
be $\cP$-measurable, $\cP$-measurable, and $\cP^-$-measurable, respectively, for all $T>0$, where the pairs $(p_{i},\theta_{i})$ are admissible (see Definition~\ref{def:admissible}).
Put $f = \sum_{i=1}^{m_f} f_i$.
Suppose that for all $T>0$ there exist $\kappa,\eta>0$ such that a.s.\ for all $v\in \cV$ and almost every $t\in [0,T]$,
            \begin{equation}
            \label{nonlinear coercivity condition}
            \begin{aligned}
            \langle A(t,v), v \rangle -(\tfrac{1}{2}+\eta)\|B(t,v)&\|_{\calL_2(U,\cH)}^2 - (\tfrac{1}{2}+\eta) \|C(t,v,\cdot)\|_{L^2(Z,\cH;\nu)} \\ & \geq \kappa\|v\|_\cV^2 - \phi(t) \|v\|_\cH^2 - \psi(t) - \sum_{i=1}^{\wt{m}} \psi_i(t) \| v \|_{1-\wt{\theta}_i},
            \end{aligned}
        \end{equation}
        where $\phi\in L^1(0,T)$ is positive, $\psi\in L^0(\Omega,L^1([0,T]))$,  $\psi_i\in L^0(\Omega,L^{\wt{p}_i}([0,T]))$ with admissible pairs $(\wt{p}_{i},\wt{\theta}_{i})$.

Then there exists a unique global solution $u\in L^2_{\rm loc}([0,\infty),\cV)\cap D([0,\infty),\cH)$
to \eqref{eq:spde}. Moreover, for every $T>0$ there is a constant $C_{T}$ such that
\begin{align*}
\E \sup_{t\in [0,T]}\|u(t)\|_{\cH}^{2} + \E\int_0^T \|u(t)\|^2_{\cV} \, dt
& \leq C_T \Bigl( \E \| u_0 \|_\cH^2 + \E \| \psi \|_{L^1([0,T])} + \sum_{i=1}^{\wt{m}}\E \| \psi_i \|_{L^{\wt{p}_i}([0,T])}^2 \Bigr).
\end{align*}
\end{theorem}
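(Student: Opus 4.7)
Fix $T>0$. Theorem \ref{thm well-posedness unbounded} provides a unique maximal solution $(u,\sigma)$ to \eqref{eq:spde2} on $\llbracket 0,T\rrbracket$. The plan is to rule out finite-time blow-up by combining the a priori estimate of Proposition \ref{prop: nonlin apriori} with the blow-up criterion of Theorem \ref{thm: blowup}. Once $\sigma\geq T$ a.s.\ is established, the moment bound falls out of one further application of Proposition \ref{prop: nonlin apriori}. Since $T>0$ is arbitrary, this yields a global solution $u\in D([0,\infty),\cH)\cap L^2_{\mathrm{loc}}([0,\infty),\cV)$.

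\textbf{Paragraph 2 (Localization in $\Omega$).} Because $u_0\in L^0(\Omega,\cH)$ and $\psi,\psi_i$ are only in $L^0(\Omega,L^p([0,T]))$, the bound of Proposition \ref{prop: nonlin apriori} is vacuous without further preparation. For $n\geq 1$ I would set
\begin{equation*}
\lambda_n:=\inf\Bigl\{t\in[0,T]:\|\psi\|_{L^1([0,t])}+\sum_{i=1}^{\wt m}\|\psi_i\|_{L^{\wt p_i}([0,t])}^2\geq n\Bigr\}\wedge T,\qquad \Gamma_n:=\{\|u_0\|_\cH\leq n\},
\end{equation*}
so that $\Gamma_n\cap\{\lambda_n=T\}\uparrow\Omega$ a.s. By the localization property Theorem \ref{thm well-posedness unbounded}(3), applied to the truncated initial value $u_0\1_{\Gamma_n}$ and coefficients stopped at $\lambda_n$, it suffices to verify globality and the desired moment bound in the case where $\|u_0\|_\cH\leq n$ and $\|\psi\|_{L^1([0,T])}+\sum_i\|\psi_i\|_{L^{\wt p_i}([0,T])}^2\leq n$ almost surely.

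\textbf{Paragraph 3 (A priori estimate on stopped solutions).} Under this bounded-data regime, one checks that $(A,B,C)$ verifies Assumption \ref{assumption apriori operators}: part (2) is precisely \eqref{nonlinear coercivity condition}, measurability is built into the setup of Section \ref{sec: local well-posedness}, and the pathwise integrability (1) as well as the moment condition \eqref{eq: integrability condition ito nonlinear} required by Proposition \ref{prop: nonlin apriori} follow from the decomposition $A=A_L+A_S-F-f$ (and the analogous ones for $B,C$) together with Assumption \ref{assumption operators new}, by tracking admissible pairs for each term: the linear parts $A_L,A_S,B_0,C_0$ through Lemma \ref{lem: operator bounds}, the semilinear parts $F,G,H$ through the sub-criticality conditions combined with the interpolation estimate $\|v\|_\beta\leq\|v\|_\cH^{2-2\beta}\|v\|_\cV^{2\beta-1}$ (exactly as in Lemma \ref{lemma truncation 1}), and the inhomogeneities $f_i,g,h$ by assumption. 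Letting $(\tau_m)_{m\geq 1}$ be a localizing sequence for $(u,\sigma)$ with $\|u\|_\cH\leq m$ on $\llbracket 0,\tau_m\rrbracket$, Proposition \ref{prop: nonlin apriori} applied to $u(\cdot\wedge\tau_m)$ with $\tau_1=0$, $\tau_2=\tau_m$ yields, uniformly in $m$,
\begin{equation*}
\E\sup_{t\in[0,\tau_m]}\|u(t)\|_\cH^2+\E\int_0^{\tau_m}\|u(t)\|_\cV^2\,dt\leq C_T\Bigl(\E\|u_0\|_\cH^2+\E\|\psi\|_{L^1([0,T])}+\sum_{i=1}^{\wt m}\E\|\psi_i\|_{L^{\wt p_i}([0,T])}^2\Bigr).
\end{equation*}

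\textbf{Paragraph 4 (Conclusion and main obstacle).} Passing $m\to\infty$ by monotone convergence gives finiteness of $\E\sup_{[0,\sigma)}\|u\|_\cH^2+\E\int_0^\sigma\|u\|_\cV^2\,dt$, hence a.s.\ finiteness of the corresponding pathwise quantities on the localized event. Theorem \ref{thm: blowup} then forces $\sigma\geq T$ a.s.\ on this event; letting $n\to\infty$ and using uniqueness gives $\sigma\geq T$ a.s.\ on all of $\Omega$, i.e.\ a global solution. The final moment estimate follows from one more application of Proposition \ref{prop: nonlin apriori} to $u$ itself on $[0,T]$. The main technical obstacle is the careful verification of \eqref{eq: integrability condition ito nonlinear}: one has to write $A(\cdot,u)$, $B(\cdot,u)$, $C(\cdot,u)$ as sums whose pieces each carry an admissible pair, balancing the linear bounds against the criticality-driven interpolation for the nonlinear terms $F,G,H$; everything else is a standard localization argument.
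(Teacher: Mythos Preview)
Your overall strategy---localize, derive a uniform a priori estimate via Proposition \ref{prop: nonlin apriori}, then invoke the blow-up criterion of Theorem \ref{thm: blowup}---is exactly the paper's. But Paragraph~3 has a real gap. You propose to apply Proposition \ref{prop: nonlin apriori} to $u(\cdot\wedge\tau_m)$, yet this process is \emph{not} a strong solution of \eqref{eq:spde} on $[0,T]$: beyond $\tau_m$ it is constant, so the hypothesis ``$(u,T)$ is a strong solution'' fails. Put differently, the moment condition \eqref{eq: integrability condition ito nonlinear} cannot be checked on $[0,T]$ for the operators $(A,B,C)$ because past $\tau_m$ you have no control on $u$ at all. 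You flag this as ``the main technical obstacle'' in Paragraph~4, and it is indeed the crux; invoking Lemma~\ref{lemma truncation 1} does not help, since that lemma concerns the \emph{truncated} nonlinearities, not $F(\cdot,u)$ evaluated along the actual solution on a random interval.

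The paper closes this gap by an additional construction. For each $n$ it solves the auxiliary \emph{linear} problem (Theorem \ref{thm L2 linear}) with operator $\wt A_0(t)=A_0(t,u(t)\1_{[0,\tau_n]}(t))$ and inhomogeneities $\1_{[0,\tau_n]}[F(\cdot,u)+f]$, $\1_{[0,\tau_n]}B(\cdot,u)$, $\1_{[0,\tau_n]}C(\cdot,u(\cdot-),\cdot)$; the coercivity \eqref{coercivity condition} together with the definition of $\tau_n$ makes this well-posed and produces $\wt u\in L^2(\Omega,\cM(T))$ on all of $[0,T]$, with $\wt u=u$ on $[0,\tau_n]$ by uniqueness. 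One then sets $\wt A(t,v)=\1_{[0,\tau_n]}A(t,v)+\1_{(\tau_n,T]}A_0(t,0)v$ and $\wt B,\wt C$ analogously, so that $\wt u$ solves $d\wt u+\wt A(\cdot,\wt u)\,dt=\wt B\,dW+\int_Z\wt C\,\wt N$ on $[0,T]$; coercivity for $(\wt A,\wt B,\wt C)$ on $[0,T]$ comes from \eqref{nonlinear coercivity condition} on $[0,\tau_n]$ glued with \eqref{coercivity condition} (case $n=0$) on $(\tau_n,T]$, and the moment condition \eqref{eq: integrability condition ito nonlinear} now follows from the $L^2(\Omega,\cM(T))$-bound on $\wt u$ supplied by the linear theory. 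Proposition \ref{prop: nonlin apriori} then applies literally and gives the $n$-independent estimate for $u$ on $[0,\tau_n]$. Two smaller points: your stopping times $\tau_m$ must also cap $\|u\|_{L^2([0,t],\cV)}$ (otherwise the pieces $A_L$, $A_S$, $F$ cannot be bounded in the required admissible norms), and the localization in Paragraph~2 must also include $(f_i,g,h)$---the paper does this via Lemma \ref{lemma event localization}---not only $(\psi,\psi_i)$.
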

\begin{proof}
Fix $T>0$. Let $(u, \tau)$ be the maximal solution of Theorem~\ref{thm well-posedness unbounded} on $[0,T]$. The idea will be to use the blowup criterion of Theorem~\ref{thm: blowup}. By Lemma~\ref{lemma event localization} we may assume that the data $(f, g, h)$ is square integrable and $u_0$ is uniformly bounded. By the same type of argument as in Lemma~\ref{lemma event localization} we may suppose that $\psi\in L^1(\Omega,L^1([0,T]))$ and $\psi_i \in L^2(\Omega,L^{\wt{p}_i}([0,T]))$.

Let $(\tau_n')_{n\geq 1}$ be a localizing sequence for $(u,\tau)$ and let
\[\tau_n  = \inf\bigl\{t\in [0,\tau_n']: \|u\|_{L^2([0,t],\cV)} + \sup_{s\in[0,t]}\|u(s)-u_0\|_{\cH}\geq n\bigr\}\wedge \tau_n'.\]
Then $u$ is a solution to \eqref{eq:spde} on $[0,\tau_n]$.
Now the plan is to use the coercivity assumption \eqref{nonlinear coercivity condition} and Proposition~\ref{prop: nonlin apriori} to find an a-priori estimate.

Let $\wt{u}\in L^2(\Omega,D([0,T],\cH))\cap L^2(\Omega,L^2([0,T],\cV))$ be the solution to the linear problem
\begin{equation*}
    \begin{split}
        dv(t)+\wt{A}_0(t)v(t)\,dt &= \wt{f}(t) dt + \wt{g}(t)\,dW(t) + \int_{Z} \wt{h}(t,z)\,\Nt(dz,dt)\\
        v(0)&=u_0,
    \end{split}
\end{equation*}
which is well-posed by the linear result of Theorem~\ref{thm L2 linear} applied with
\begin{align*}
\wt{A}_0(t)&= A_0(t, u(t)\one_{[0,\tau_n]}(t)), \ \ &\wt{f}(t) &= \one_{[0,\tau_n]}(t)[F(t,u(t)) + f(t)],
\\ \wt{g}(t) &= \one_{[0,\tau_n]}(t)B(t,u(t)),
& \wt{h}(t,z) &= \one_{[0,\tau_n]}(t)C(t,u(t-),z).
\end{align*}
Here we used \eqref{coercivity condition} and the definition of $\tau_n$ to ensure the coercivity condition. By uniqueness, $\wt{u} = u$ on $[0,\tau_n]$.

Next, we apply the coercivity condition for the nonlinear equation via Proposition~\ref{prop: nonlin apriori}. Let
\begin{align*}
\wt{A}(t,v) &= \one_{[0,\tau_n]}(t) A(t,v) + \one_{(\tau_n,T]}(t)A_0(t,0)v,
\\ \wt{B}(t,v) &= \one_{[0,\tau_n]}(t)B(t,v), \ \ \wt{C}(t,v,z) = \one_{[0,\tau_n]}(t)C(t,v,z), \  \ v\in \cV, z\in Z.
\end{align*}
Then from the coercivity assumption \eqref{nonlinear coercivity condition} we see that a.s.\ for all $v\in \cV$ and almost every $t\in [0,T]$,
\begin{align*}
\langle \wt{A}(t,v), v \rangle -(\tfrac{1}{2}+\eta)\|\wt{B}(t,v)&\|_{\calL_2(U,\cH)}^2 - (\tfrac{1}{2}+\eta) \|\wt{C}(t,v,\cdot)\|_{L^2(Z,\cH;\nu)}^2 \\ & \geq \wt{\kappa}\|v\|_\cV^2 - \wt{\phi}(t) \|v\|_\cH^2 - \psi(t) - \sum_{i=1}^{\wt{m}} \psi_i(t) \| v \|_{1-\wt{\theta}_i},
\end{align*}
where $\wt{\kappa} = \min\{\kappa, \kappa_0\}$ and $\wt{\phi} = \min\{\phi, \phi_0\}$ (recall that the constants $\kappa_0$ and $\phi_0$ stem from~\eqref{coercivity condition} applied with $n=0$). By construction, the process $\wt{u}$ satisfies
\begin{equation*}
    \begin{split}
        d\wt{u}(t)+\wt{A}(t,\wt{u}(t))\,dt &= \wt{B}(t,\wt{u}(t)) \,dW(t) + \int_{Z} \wt{C}(t,\wt{u}(t-),z) \,\Nt(dz,dt)\\
        \wt{u}(0)&=u_0
    \end{split}
\end{equation*}
on $[0,T]$. Therefore, by Proposition~\ref{prop: nonlin apriori} with \begin{align*}
A_1(t,v) &= \one_{[0,\tau_n]}(t)A_L(t,v)v + \one_{(\tau_n,T]}(t)A_L(t,0)v,
\\ A_2(t,v) &= \one_{[0,\tau_n]}(t)A_S(t,v)v +  \one_{(\tau_n,T]}(t)A_S(t,0)v,
\end{align*}
$A_3(t,v) = \one_{[0,\tau_n]}(t)F(t,v)$, and $A_{i+3} = f_i$ for $i\in \{1, \ldots, m_f\}$ (recalling the localization) we obtain
\begin{align*}
        \E \sup_{t \in [0,T]} \| \wt{u}(t) \|_\cH^2
        +\E\int_{0}^{T}\|\wt{u}(t)\|_\cV^2\,dt
        \leq{}  C \Bigl( \E \| u_0\|_\cH^2 + \E \| \psi \|_{L^1([0,T])} + \sum_{i=1}^{\wt{m}}\E \| \psi_i \|_{L^{\wt{p}_i}([0,T])}^2 \Bigr) ,
    \end{align*}
Since $\wt{u} = u$ on $[0,\tau_n]$, after letting $n\to \infty$ and applying Fatou's lemma it follows that
\begin{align}
        \E \sup_{t \in [0,\tau)} \| u(t) \|_\cH^2
        +\E\int_{0}^{\tau}\|u(t)\|_\cV^2\,dt
        \leq{}  C \Bigl( \E \| u_0\|_\cH^2 + \E \| \psi \|_{L^1([0,T])} + \sum_{i=1}^{\wt{m}}\E \| \psi_i \|_{L^{\wt{p}_i}([0,T])}^2 \Bigr).
    \end{align}
Therefore, by Theorem~\ref{thm: blowup} we can conclude that
\begin{align*}
\P(\tau<T) =   P\Bigl( \tau<T,\quad \sup_{t\in [0,\tau)} \|u(t)\|_\cH + \int_0^\tau \|u(t)\|_\cV^2\,dt<\infty \Bigr) &= 0.
\end{align*}
This implies $\tau = T$ a.s. Since $T$ was arbitrary, this completes the global existence proof by uniqueness. Moreover, the a priori estimate follows as well.

\end{proof}

\begin{remark}
It is possible to take $\eta=0$ in \eqref{nonlinear coercivity condition}. For this, one can use the stochastic Gronwall lemma, and the a-priori estimate needs to be replaced by a different estimate. For details the reader is referred to \cite[Theorem 3.5]{agresti2022critical}.
\end{remark}

As a first simplification, we state a version without inhomogeneities. In this way, also the coercivity condition is simplified.

 \begin{corollary}[Global well-posedness]\label{cor:varglobal}
Suppose that Assumption~\ref{assumption operators new} holds. Let $u_0\in L^0(\Omega,\cH)$ be an $\cF_0$-measurable initial value and let
$f = 0$, $g = 0$ and $h=0$. Suppose that for all $T>0$ there exist $\kappa,\eta>0$ such that a.s.\ for all $v\in \cV$ and almost every $t\in [0,T]$,
            \begin{equation}
            \label{nonlinear coercivity conditionsimple}
            \begin{aligned}
            \langle A(t,v), v \rangle &-(\tfrac{1}{2}+\eta)\|B(t,v)\|_{\calL_2(U,\cH)}^2 \\
            &- (\tfrac{1}{2}+\eta) \|C(t,v,\cdot)\|_{L^2(Z,\cH;\nu)} 
            \geq \kappa\|v\|_\cV^2 - \phi(t) \|v\|_\cH^2 - \psi(t),
            \end{aligned}
        \end{equation}
        where $\phi\in L^1(0,T)$, $\psi\in L^0(\Omega,L^1([0,T]))$.

Then there exists a unique global solution $u\in L^2_{\rm loc}([0,\infty),\cV)\cap D([0,\infty),\cH)$ a.s.\ to \eqref{eq:spde}. Moreover, there is a constant $C_{T}$ such that
\begin{align*}
\E \sup_{t\in [0,T]}\|u(t)\|_{\cH}^{2} + \E\int_0^T \|u(t)\|^2_{\cV} \, dt
& \leq C_T \left( \E \| u_0 \|_\cH^2 + \E \| \psi \|_{L^1([0,T])} \right).
\end{align*}
\end{corollary}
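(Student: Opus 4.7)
The plan is to deduce Corollary \ref{cor:varglobal} as a direct specialization of Theorem \ref{thm:varglobal}. Since the inhomogeneities $f,g,h$ are taken to be zero, the integrability hypotheses on the data in Theorem \ref{thm:varglobal} are trivially satisfied (with, say, no admissible pairs $(p_i,\theta_i)$ at all, i.e.\ $m_f=0$). Assumption \ref{assumption operators new} is assumed to hold by hypothesis, so local well-posedness and the blow-up criterion still apply verbatim.

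The only remaining point is to match the coercivity hypothesis \eqref{nonlinear coercivity conditionsimple} with the more general coercivity hypothesis \eqref{nonlinear coercivity condition} used in the theorem. This is immediate: take $\widetilde m = 0$ in \eqref{nonlinear coercivity condition} so that the sum $\sum_{i=1}^{\widetilde m} \psi_i(t)\|v\|_{1-\widetilde\theta_i}$ disappears, and we are left precisely with \eqref{nonlinear coercivity conditionsimple}. Thus \eqref{nonlinear coercivity condition} holds with the same $\kappa,\eta,\phi,\psi$ provided in the corollary.

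Applying Theorem \ref{thm:varglobal} then yields a unique global solution $u\in L^2_{\mathrm{loc}}([0,\infty),\cV)\cap D([0,\infty),\cH)$ together with the a-priori estimate
\[
\E\sup_{t\in[0,T]}\|u(t)\|_\cH^2+\E\int_0^T\|u(t)\|_\cV^2\,dt\leq C_T\Bigl(\E\|u_0\|_\cH^2+\E\|\psi\|_{L^1([0,T])}+\sum_{i=1}^{\widetilde m}\E\|\psi_i\|_{L^{\widetilde p_i}([0,T])}^2\Bigr),
\]
in which the empty sum vanishes, giving the bound claimed in the corollary. No additional argument is needed; the entire content lies in observing that Theorem \ref{thm:varglobal} was formulated with enough flexibility to cover this case. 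There is no real obstacle here beyond verifying that the reductions $f=g=h=0$ and $\widetilde m=0$ are admissible in the framework of the theorem, which they are by inspection.
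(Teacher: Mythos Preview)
Your proposal is correct and matches the paper's approach exactly: the corollary is presented in the paper simply as ``a first simplification'' of Theorem~\ref{thm:varglobal} without a separate proof, precisely because setting $f=g=h=0$ and $\widetilde m=0$ reduces \eqref{nonlinear coercivity condition} to \eqref{nonlinear coercivity conditionsimple} and makes the empty sum in the a-priori estimate vanish.
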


Moreover, in the special case that $G$ and $H$ have linear growth, we can further simplify the formulation and omit $\eta$ in the coercivity condition. The proof is similar to Lemma~\ref{lem:lin coercive}.
 \begin{corollary}[Global well-posedness]\label{cor:varglobal2}
Suppose that Assumption~\ref{assumption operators new} holds and there is a constant $K\geq 0$ and $\beta\in [0,1)$ such that for all $v\in \cV$ 
\begin{align}
\|G(t,v)\|_{\calL_2(U, \cH)} + \|H(t,v,\cdot)\|_{L^2(Z;\cH;\nu)} \leq K (\|v\|_{\beta}+1).
\end{align}

Let $u_0\in L^0(\Omega,\cH)$ be an $\cF_0$-measurable initial value and
assume that $f = 0$, $g = 0$ and $h=0$. Suppose that for all $T>0$ there exist $\kappa,\eta>0$ such that a.s.\ for all $v\in \cV$ and almost every $t\in [0,T]$,
            \begin{equation}
            \label{nonlinear coercivity conditionsimple2}
            \begin{aligned}
            \langle A_0(t,v), v \rangle -\tfrac{1}{2}\|B_0(t,v)&\|_{\calL_2(U,\cH)}^2 - \tfrac{1}{2} \|C_0(t,v,\cdot)\|_{L^2(Z,\cH;\nu)}^2 \geq \kappa\|v\|_\cV^2 - \phi(t) \|v\|_\cH^2 - \psi(t),
            \end{aligned}
        \end{equation}
and
\begin{equation}
            \label{nonlinear coercivity conditionsimple3}
            \begin{aligned}
            \langle F(t, v), v \rangle  \leq  \phi(t) \|v\|_\cH^2 + \psi(t),
            \end{aligned}
        \end{equation}
where $\phi\in L^1(0,T)$, $\psi\in L^0(\Omega,L^1([0,T]))$.
Then there exists a unique global solution $u\in L^2_{\rm loc}([0,\infty),\cV)\cap D([0,\infty),\cH)$ a.s.\ to \eqref{eq:spde}. Moreover, there is a constant $C_{T}$ such that
\begin{align*}
\E \sup_{t\in [0,T]}\|u(t)\|_{\cH}^{2} + \E\int_0^T \|u(t)\|^2_{\cV} \, dt
& \leq C_T \left( \E \| u_0 \|_\cH^2 + \E \| \psi \|_{L^1([0,T])} \right).
\end{align*}
\end{corollary}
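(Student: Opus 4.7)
The plan is to reduce Corollary~\ref{cor:varglobal2} to Theorem~\ref{thm:varglobal}, by deriving the nonlinear coercivity condition~\eqref{nonlinear coercivity condition} (in the simpler shape of~\eqref{nonlinear coercivity conditionsimple}, since no $\psi_i$-terms are needed) from the two assumed conditions~\eqref{nonlinear coercivity conditionsimple2} and~\eqref{nonlinear coercivity conditionsimple3}. The strategy closely mirrors Lemma~\ref{lem:lin coercive}: expand $(A,B,C) = (A_0 - F,\, B_0 + G,\, C_0 + H)$, absorb the noise cross terms via Young's inequality, and convert $\|G\|^2 + \|H\|^2$ into coercive plus lower-order contributions, exploiting $\rho_G=\rho_H=0$.

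Concretely, I would first rewrite the left-hand side of~\eqref{nonlinear coercivity condition} and apply
\[
\|B_0(t,v)v + G(t,v)\|^2 \leq (1+\delta)\|B_0(t,v)v\|^2 + C_\delta \|G(t,v)\|^2,
\]
for small $\delta>0$, together with its analog for $C_0+H$. Choosing $\delta$ and the parameter $\eta$ in~\eqref{nonlinear coercivity condition} small, the coefficient in front of $\|B_0 v\|^2$ is pushed from $\tfrac{1}{2}$ to $\tfrac{1}{2}+\tilde\eta$, where $\tilde\eta \to 0$ as $\eta,\delta \to 0$. The linear coercivity~\eqref{nonlinear coercivity conditionsimple2} accounts for the $\tfrac{1}{2}$ part and yields $\kappa\|v\|_\cV^2 - \phi\|v\|_\cH^2 - \psi$. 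The excess $\tilde\eta\|B_0 v\|^2$ is then absorbed using the growth estimate $\|B_0(t,v)v\|^2 \leq K_{B,n,T}(t)^2\|v\|_{\beta_B}^2$ from Assumption~\ref{assumption operators new}, the interpolation $\|v\|_{\beta_B}^2 \leq \|v\|_\cH^{4-4\beta_B}\|v\|_\cV^{4\beta_B-2}$, and Young's inequality; this contributes an $\varepsilon\|v\|_\cV^2$-piece absorbed into the $\kappa$-term and a $\|v\|_\cH^2$-piece absorbed into $\phi$. Because $\rho_G=0$, the $G$-contribution satisfies $\|G(t,v)\|^2 \leq C_{n,T}(1+\|v\|_{\beta_G}^2)$, and is absorbed analogously, with the constant $C_{n,T}$ feeding into $\psi$. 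The terms in $C_0 + H$ are handled identically. Finally, the one-sided estimate~\eqref{nonlinear coercivity conditionsimple3} bounds $-\langle F(t,v),v\rangle \geq -\phi\|v\|_\cH^2 - \psi$.

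The main obstacle is the $n$-dependence of the constants $C_{n,T}$ and $K_{\cdot,n,T}$, inherited from the \emph{local} character of Assumption~\ref{assumption operators new}, which clashes with the uniform-in-$v$ phrasing of~\eqref{nonlinear coercivity condition}. This is resolved in exactly the manner already used inside the proof of Theorem~\ref{thm:varglobal}: the nonlinear coercivity is only invoked at $v = u(t)$ on a localized interval $\llbracket 0,\tau_n\rrbracket$ on which $\|u(t)\|_\cH \leq n + \|u_0\|_\cH$, so the level-$n$ constants are legitimate, and the blow-up criterion from Theorem~\ref{thm: blowup} together with the a-priori estimate of Proposition~\ref{prop: nonlin apriori} closes the argument. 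The edge cases $\beta_B, \beta_C, \beta_G$ or $\beta_H$ equal to $1$ (where the interpolation degenerates) require the corresponding growth kernels to lie in $L^\infty$, which is automatic from $r_B, r_C = \infty$ in Assumption~\ref{assumption operators new}; then $\eta$ is chosen small enough relative to $\|K\|_\infty^2/\kappa$ to keep the coercivity intact.
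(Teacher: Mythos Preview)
Your proposal is correct and matches the paper's own approach, which simply reads ``The proof is similar to Lemma~\ref{lem:lin coercive}'': expand $(A,B,C)=(A_0-F,\,B_0+G,\,C_0+H)$, invoke~\eqref{nonlinear coercivity conditionsimple2} and~\eqref{nonlinear coercivity conditionsimple3} for the main terms, and absorb the residual $\eta'\|B_0 v\|^2$, cross terms, and $\|G\|^2,\|H\|^2$ via growth bounds, interpolation, and Young's inequality exactly as in Lemma~\ref{lem:lin coercive}. Your treatment of the $n$-dependence (localizing as in the proof of Theorem~\ref{thm:varglobal}) is the right way to fill in a detail the paper leaves implicit; note also that the worry about $\beta_G=1$ or $\beta_H=1$ does not arise, since by Remark~\ref{rem: ass nonlinear operators}(4) the hypothesis $\rho_G=\rho_H=0$ forces $\beta_G,\beta_H<1$.
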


\subsection{Continuous dependence on the initial data}

Next we prove continuous dependence on the initial data.
\begin{theorem}[Continuous dependence on the initial data]\label{thm:contdep}
Suppose that the conditions of Theorem~\ref{thm:varglobal}  hold and let $u$ denote the global solution to \eqref{eq:spde} provided there.
For each $n\geq 1$ let $u^n$  denote the unique global solution to \eqref{eq:spde} with strongly $\cF_0$-measurable initial data $u_{0}^n$.
Suppose that $\|u_0-u_0^n\|_\cH\to 0$ in probability, then for every $T\in (0,\infty)$,
\[\|u - u^n\|_{L^2([0,T],\cV)} + \|u - u^n\|_{D([0,T],\cH)}\to 0 \ \ \text{in probability as $n\to \infty$}.\]
If additionally $\sup_{n\geq 1}\E \|u_0^n\|^2 <\infty$, then for any $q\in (0,2)$
\[\|u - u^n\|_{L^q(\Omega,L^2([0,T],\cV))} + \|u - u^n\|_{L^q(\Omega,D([0,T],\cH))}\to 0 \ \ \text{as $n\to \infty$}.\]
\end{theorem}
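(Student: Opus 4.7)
The plan is to apply Itô's formula to $\|u-u^n\|_\cH^2$ on suitably stopped intervals, combine the resulting estimate with a Gronwall argument to obtain convergence in probability, and then upgrade to $L^q$-convergence via uniform integrability.

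First I would reduce to uniformly bounded initial data by exploiting the localization property inherent to the well-posedness theory. For each $k\geq 1$ and on the event $\{\|u_0\|_\cH\leq k,\ \|u_0^n\|_\cH\leq k\}$, the solutions $u$ and $u^n$ coincide with those starting from the truncated data $u_0\one_{\{\|u_0\|_\cH\leq k\}}$ and $u_0^n\one_{\{\|u_0^n\|_\cH\leq k\}}$; since $u_0^n\to u_0$ in probability, a standard $\eps/\delta$ argument reduces the proof to the case of uniformly bounded, hence $L^2$, initial data. Then introduce the stopping times
\[
\sigma_R^n := \inf\bigl\{t\in [0,T]\colon \|u\|_{\cM(t)}\vee \|u^n\|_{\cM(t)}\geq R\bigr\}\wedge T.
\]
By Theorem~\ref{thm:varglobal} applied to both $u$ and $u^n$ (with the uniform $L^2$-bound on initial data), $\sup_n P(\sigma_R^n<T)\to 0$ as $R\to\infty$.

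The core computation is to apply Corollary~\ref{cor: ito} to $v^n:=u-u^n$, using the structural decomposition $A(t,v)=A_0(t,v)v-F(t,v)-f$ (and analogously for $B$ and $C$) to split
\[
A(t,u)-A(t,u^n) = A_0(t,u)v^n + (A_0(t,u)-A_0(t,u^n))u^n - (F(t,u)-F(t,u^n)),
\]
and likewise for the $B$- and $C$-terms. The linearized coercivity condition~\eqref{coercivity condition} applied with $u$ in the $\cH$-slot yields
\[
\langle A_0(t,u)v^n,v^n\rangle - \tfrac{1}{2}\|B_0(t,u)v^n\|_{\calL_2(U,\cH)}^2 - \tfrac{1}{2}\|C_0(t,u,\cdot)v^n\|_{L^2(Z,\cH;\nu)}^2 \geq \kappa_R\|v^n\|_\cV^2 - \phi_R(t)\|v^n\|_\cH^2.
\]
The quasilinear cross terms $(A_0(t,u)-A_0(t,u^n))u^n$ etc.\ are handled via the local Lipschitz bounds of Assumption~\ref{assumption operators new}(1)--(3); on $\llbracket 0,\sigma_R^n\rrbracket$ one has $\|u\|_\cH\vee\|u^n\|_\cH\leq R$, so these terms combined with Young's inequality and interpolation contribute $\eps\kappa_R\|v^n\|_\cV^2$ (to be absorbed) plus $C_\eps M_R^n(t)\|v^n\|_\cH^2$ with $\int_0^{\sigma_R^n} M_R^n\,ds$ deterministically bounded.

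The critical semilinear differences are controlled via duality, $|\langle F(t,u)-F(t,u^n),v^n\rangle|\leq \|F(t,u)-F(t,u^n)\|_{\alpha_F}\|v^n\|_{1-\alpha_F}$, combined with the local Lipschitz bound and the interpolation inequalities for $\cV_\theta$. The subcriticality condition $(1+\rho_F)(2\beta_F-1)\leq 1+2\alpha_F$ is exactly what allows Young's inequality with matching exponents to split the resulting product into $\eps\kappa_R\|v^n\|_\cV^2$ plus $C_\eps N_{R,F}^n(t)\|v^n\|_\cH^2$ with $N_{R,F}^n$ integrable on $[0,\sigma_R^n]$; the analogous treatment of $G$ and $H$ uses their own subcriticality conditions. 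After taking expectations, absorbing the $\|v^n\|_\cV^2$-contributions into the coercive lower bound, and invoking a (stochastic) Gronwall lemma as in the proof of Theorem~\ref{thm:varglobal}, one arrives at
\[
\E\|u-u^n\|_{\cM(T\wedge\sigma_R^n)}^2 \leq C_R\,\E\|u_0-u_0^n\|_\cH^2.
\]
Chebyshev's inequality combined with $\sup_n P(\sigma_R^n<T)\to 0$ then yields $\|u-u^n\|_{\cM(T)}\to 0$ in probability. For the $L^q$-part with $q<2$: the hypothesis $\sup_n\E\|u_0^n\|_\cH^2<\infty$ together with the global a-priori estimate of Theorem~\ref{thm:varglobal} yields $\sup_n\E\|u-u^n\|_{\cM(T)}^2<\infty$, so the family $\{\|u-u^n\|_{\cM(T)}^q\}_n$ is uniformly integrable; combined with convergence in probability this delivers $L^q$-convergence. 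The main obstacle is the careful matching of interpolation exponents needed to exploit the subcriticality conditions sharply, together with the proper handling of the Poisson-integral terms in the Itô formula (requiring $\cP^-$-measurability and the BDG-type bounds of Proposition~\ref{prop: BDG}).
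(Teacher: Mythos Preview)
Your approach is essentially the one the paper takes (packaged there as Proposition~\ref{prop:estimateLpcont}, with the final passage to convergence in probability and in $L^q$ deferred to \cite[Theorem~3.8]{agresti2022critical}). The only notable difference is in the Gronwall step. The paper stops only on the $\cH$-norms of $u$ and $u^n$, so the Gronwall coefficient $K_R$ (built from $\|u\|_\cV^2$, $\|u^n\|_{\beta_F}^{\rho_F}$, etc.) has merely a \emph{random} $L^1(0,T)$-bound; this forces the use of the stochastic Gronwall inequality of \cite{geiss2024sharp}, which yields the tail estimate
\[
\P(\|u-u^n\|_{\cM(T)}\geq\varepsilon)\leq \varepsilon^{-2}\chi_1(R)\,\E\|u_0-u_0^n\|_\cH^2+\chi_2(R)
\]
directly, without ever producing an $L^2$-bound on the stopped difference. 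Your stronger stopping time (controlling the full $\cM$-norm) does make $\int_0^{\sigma_R^n} K_R\,ds\leq C_R$ almost surely, which in principle allows the $L^2$-bound you claim---but not simply by ``taking expectations and applying Gronwall'', since $K_R(s)$ is still random pointwise in $s$ and you cannot run the argument of Proposition~\ref{prop: nonlin apriori} (where $\phi$ is deterministic). You would need either to pass to the weighted process $e^{-\int_0^t K_R}\|v^n(t)\|_\cH^2$ (a supermartingale plus good terms, using that $e^{-\int_0^t K_R}\geq e^{-C_R}$) or again a stochastic Gronwall lemma. After Chebyshev the two routes merge; what each buys is that the paper's version works with a weaker stopping time at the cost of a sharper Gronwall tool, while your version front-loads the control into the stopping time.
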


The following tail estimate is the key to the proof of the continuous dependency. A key part in the proof is the usage of a stochastic Gronwall lemma.
\begin{proposition}\label{prop:estimateLpcont}
Suppose that the conditions of Theorem~\ref{thm:varglobal}  hold.
Let $u$ and $v$ denote the solution to \eqref{eq:spde} with initial values $u_0$ and $v_0$ with the property that there is an $r>0$ such that a.s. $\|u_0\|_{\cH}+\|v_0\|_{\cH}\leq r$. Then for every $T>0$ there exist $\chi_1,\chi_2:[r,\infty)\to (0,\infty)$, both independent of $u_0$ and $v_0$ and such that $\lim_{R\to \infty}\chi_2(R) = 0$, such that for all $\varepsilon>0$ and $R\geq r$ one has
\begin{align*}
\P(\|u-v\|_{\cM(T)}\geq \varepsilon) \leq \varepsilon^{-2}\chi_1(R) \E\|u_0 - v_0\|_{\cH}^2 + \chi_2(R)(1+\E\|u_0\|^2_{\cH} + \E\|v_0\|_{\cH}^2),
\end{align*}
where $\cM(T)=D([0,T],\cH)\cap L^2([0,T],\cV)$ as in Section~\ref{sec: local well-posedness}.
\end{proposition}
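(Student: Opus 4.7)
My plan is to combine a stopping-time reduction with an Itô-based energy identity for the difference $w := u - v$, and then close the argument with a stochastic Gr\"onwall lemma on the stopped interval. First I would fix $R \geq r$ and set
\[\tau_R := \inf\bigl\{t \in [0,T] : \|u\|_{\cM(t)} \vee \|v\|_{\cM(t)} \geq R\bigr\}\wedge T.\]
Since a.s.\ $\|u_0\|_\cH, \|v_0\|_\cH \leq r \leq R$, one has $\tau_R > 0$. The a priori estimate from Theorem~\ref{thm:varglobal} (applied to $u$ and to $v$ with the common inhomogeneities $f,g,h,\psi,\psi_i$) gives
\[\E\|u\|_{\cM(T)}^2 + \E\|v\|_{\cM(T)}^2 \leq C\bigl(1 + \E\|u_0\|_\cH^2 + \E\|v_0\|_\cH^2\bigr),\]
so Chebyshev immediately yields $\P(\tau_R < T) \leq C R^{-2}(1 + \E\|u_0\|_\cH^2 + \E\|v_0\|_\cH^2)$. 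This identifies $\chi_2(R) = C/R^2$, which vanishes as $R \to \infty$ as required.

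On $[0,\tau_R]$ I would subtract the two instances of~\eqref{eq:spde} and apply Corollary~\ref{cor: ito} to $\|w\|_\cH^2$, expanding
\[A(t,u)-A(t,v) = A_0(t,u)w + [A_0(t,u) - A_0(t,v)]v - \bigl(F(t,u) - F(t,v)\bigr)\]
and analogously for $B$ and $C$. The diagonal terms $\langle A_0(u)w,w\rangle$, $\|B_0(u)w\|^2$, $\|C_0(u)w\|^2$ cooperate through the coercivity~\eqref{coercivity condition} (applied with $n$ of order $R$), producing the dissipation $\kappa_R\|w\|_\cV^2$. The off-diagonal quasilinear remainders $[A_0(u)-A_0(v)]v$, $[B_0(u)-B_0(v)]v$, $[C_0(u,\cdot)-C_0(v,\cdot)]v$ are controlled through the Lipschitz-in-$\cH$ bounds in Assumption~\ref{assumption operators new} paired with Young's inequality; this splits their contribution into a piece that can be absorbed into $\kappa_R\|w\|_\cV^2$ and a term $\Phi_R^{(1)}(t)\|w\|_\cH^2$ whose coefficient lies in $L^1([0,T])$. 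For the semilinear differences $F(u)-F(v)$, $G(u)-G(v)$, $H(u)-H(v)$ I would use the duality $|\langle F(u)-F(v),w\rangle|\leq \|F(u)-F(v)\|_{\alpha}\|w\|_{1-\alpha}$ together with the interpolation identity $\|w\|_{\beta} \|w\|_{1-\alpha} \leq \|w\|_\cH^{2-2(\beta-\alpha)}\|w\|_\cV^{2(\beta-\alpha)}$ and weighted Young. The subcriticality $(2\beta-1)(\rho+1) \leq 1+2\alpha$ is exactly what ensures, after invoking the embedding $\cM(0,\tau_R) \hookrightarrow L^{2/(2\beta-1)}([0,\tau_R], \cV_\beta)$ from~\eqref{eq:bH embedding} and the a.s.\ bound $\|u\|_{\cM(\tau_R)}\vee\|v\|_{\cM(\tau_R)}\leq R$, that the remaining coefficient of $\|w\|_\cH^2$ has a \emph{deterministic} $L^1([0,T])$-bound $\Psi(R)$ depending only on $R$.

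This produces an inequality of the form
\[\|w(t\wedge\tau_R)\|_\cH^2 + \kappa_R\int_0^{t\wedge\tau_R}\|w(s)\|_\cV^2\,ds \leq \|u_0 - v_0\|_\cH^2 + \int_0^{t\wedge\tau_R}\Phi_R(s)\|w(s)\|_\cH^2\,ds + M(t\wedge\tau_R),\]
where $M$ is a local c\`adl\`ag martingale starting at $0$ whose quadratic variation is itself dominated (via Propositions~\ref{prop: BDG} and~\ref{prop: quad var estimate}) by a constant multiple of $\int_0^{\cdot\wedge\tau_R}\Phi_R(s)\|w(s)\|_\cH^2\,ds$, and $\|\Phi_R\|_{L^1([0,T])}\leq \Psi(R)$ a.s. Then I would invoke the stochastic Gr\"onwall lemma for c\`adl\`ag semimartingales (as used in~\cite{agresti2022critical}) to conclude $\E\|w\|_{\cM(\tau_R)}^2 \leq \chi_1(R)\,\E\|u_0 - v_0\|_\cH^2$. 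Combining with the earlier tail bound through
\[\{\|u-v\|_{\cM(T)}\geq \varepsilon\} \subseteq \{\tau_R < T\}\cup \{\|w\|_{\cM(\tau_R)}\geq \varepsilon\}\]
and Chebyshev's inequality yields the claim.

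The hard part will be verifying carefully, in the borderline critical regime $(2\beta-1)(\rho+1) = 1+2\alpha$, that \emph{every} residual coefficient of $\|w\|_\cH^2$ produced by the Young steps does land in $L^1([0,\tau_R])$ with an $R$-dependent but $\omega$-independent bound: the sharp exponents produced by the interpolation force the stopping time $\tau_R$ to control not merely $\sup_t\|u(t)\|_\cH$ but the stronger quantity $\|u\|_{L^{2/(2\beta-1)}([0,\tau_R],\cV_\beta)}$, which is why defining $\tau_R$ through the $\cM$-norm is essential. A secondary delicate point will be handling the Poisson martingale integral rigorously on $[0,\tau_R]$ so that its quadratic variation is dominated by the $\|w\|_\cH^2$ factor in a form compatible with the stochastic Gr\"onwall lemma, using Proposition~\ref{prop: quad var estimate} together with the fact that the compensated integrand inherits the Lipschitz estimate from $C$.
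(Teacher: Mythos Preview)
Your approach is correct and follows the same overall strategy as the paper: write the equation for $w=u-v$, apply It\^o plus the coercivity condition~\eqref{coercivity condition} to obtain an energy inequality with a Gr\"onwall-type coefficient, absorb the cross-terms via interpolation and Young using the subcriticality bound, and close with a stochastic Gr\"onwall lemma after localizing by a stopping time.

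The one genuine difference is your choice of stopping time. You localize via $\tau_R = \inf\{t:\|u\|_{\cM(t)}\vee\|v\|_{\cM(t)}\geq R\}\wedge T$, whereas the paper uses the weaker $\tau_R = \inf\{t:\|u(t)\|_\cH+\|v(t)\|_\cH\geq R\}\wedge T$. Your choice buys a \emph{deterministic} $L^1$-bound $\Psi(R)$ on the Gr\"onwall coefficient $\Phi_R$, exactly as you note, because the estimates for $K_v$, $K_{u,v}$ etc.\ reduce to powers of $\|u\|_{L^2([0,\tau_R],\cV)}$ and $\|v\|_{L^2([0,\tau_R],\cV)}$, which are then $\leq R$. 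The paper instead keeps the coefficient random and uses the tail-bound form of the stochastic Gr\"onwall lemma \cite[Cor.~5.4]{geiss2024sharp}, which produces an additional term $\P(\theta_R\|K_R\|_{L^1}>\gamma)$ that is then controlled by a second application of Markov plus the a~priori estimate of Theorem~\ref{thm:varglobal}. Your route is cleaner, at the cost of a slightly more restrictive stopping time; the paper's route is more flexible but requires one extra step.

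One small caution: you write that stochastic Gr\"onwall yields $\E\|w\|_{\cM(\tau_R)}^2 \leq \chi_1(R)\E\|u_0-v_0\|_\cH^2$. Most stochastic Gr\"onwall lemmas (including the one in \cite{geiss2024sharp} and those used in \cite{agresti2022critical}) give tail bounds or $L^p$-bounds for $p<1$, not second moments directly, because the martingale term obstructs a naive expectation argument when the coefficient is random pointwise. With your deterministic $L^1$-bound the tail-bound form of \cite[Cor.~5.4]{geiss2024sharp} applied with $\gamma=\Psi(R)$ immediately gives $\P(\|w\|_{\cM(\tau_R)}\geq\varepsilon)\leq \varepsilon^{-2}\theta_R e^{\Psi(R)}\E\|u_0-v_0\|_\cH^2$, which is exactly what the proposition asks for---so Chebyshev on a second-moment bound is unnecessary. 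Alternatively, because the $L^1$-bound is deterministic, the BDG-based argument of Proposition~\ref{prop: nonlin apriori} can be run to establish the second-moment bound you claim; just be aware this is not a one-line citation of ``stochastic Gr\"onwall''.
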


\begin{proof}
Let $w = u-v$. Then $w$ is the unique solution to
\begin{align*}
       \left\{\begin{array}{ll}
        dw + \wt{A}_0(t) w(t) dt &= f_w(t) dt + (\wt{B}_0(t) w(t)  + g_w(t)) dW(t) \\
        &\qquad\quad+ \int_Z (\wt C_0(t,z)v(t-) + h_{w}(t,z)) \,\wt{N}(dz,dt),
\\        w(0) &= u_0-v_0,
\end{array}\right.
\end{align*}
where $\wt{A}_0(t) w = A_L(t,u(t))w + A_S(t,u(t))w$, $\wt{B}_0(t) w = B_0(t,u(t))w$,\newline
$\wt{C}_0(t,z) w = C_0(t,u(t-),z)w$, and where $f_w = f_1+f_2+f_3$, $g_w = g_1+g_2$, $h_w = h_1+h_2$ are given by
\begin{align*}
f_1 & = ((A_L(\cdot,u) - A_L(\cdot,v))v, & f_2 & =  (A_S(\cdot,u) - A_S(\cdot,v))v,  \\ & & f_3 &= F(\cdot, u) - F(\cdot, v),
\\ g_1 &= (B_0(\cdot, u) - B_0(\cdot, v))v, & g_2  &= G(\cdot, u) - G(\cdot, v),
\\ h_1 &= (C_0(\cdot, u(\cdot\,-),z) - C_0(\cdot, v(\cdot\,-),z))v(\cdot\,-), & h_2  &= H(\cdot, u(\cdot\,-),z) - H(\cdot, v(\cdot\,-),z).
\end{align*}
In order to derive an a-priori estimate for $w$, we will apply coercivity for $(\wt{A}_0,\wt{B}_0,\wt{C}_0)$ combined with a stochastic Gronwall argument. In order to check \eqref{eq: coercivity lin wellpos} we will use \eqref{coercivity condition} and a stopping time argument to ensure $\|u\|_{\cH}+\|v\|_{\cH}\leq R$, where $R\geq r$. Let
\[\tau_R:=\inf\big\{t\in[0,T]\,:\, \|u(t)\|_{\cH}+\|v(t)\|_{\cH}\geq R\big\}\wedge T.\]
Then $\{\tau_{R}=T\}=\{\sup_{t\in [0,T)}\|u(t)\|_{\cH} + \|v(t)\|_{\cH}< R\}$. Moreover, by Markov's inequality in conjunction with Theorem~\ref{thm:varglobal},
\[\P(\tau_R<T) \leq \P(\sup_{t\in [0,T)}(\|u(t)\|_{\cH} + \|v(t)\|_{\cH})\geq R)\leq \frac{C_T}{R} (1+\E\|u_0\|_{\cH}^2 + \E\|v_0\|_{\cH}^2),\]
where $C_T$ depends on the data, but not on the initial values.
Thus by the global estimate of Theorem~\ref{thm:varglobal} deduce
 \begin{align}\label{eq:PtailwM}
\P(\|w\|_{\cM(T)}\geq \varepsilon)
& = \P(\|w\|_{\cM(\tau_R)}\geq \varepsilon, \tau_R=T) + \P(\tau_R<T)
\\ & \leq \P \big(\|w\|_{\cM(\tau_R)}\geq \varepsilon\big)+ \frac{C_T}{R} (1+\E\|u_0\|_{\cH}^2 + \E\|v_0\|_{\cH}^2).
\end{align}

It remains to estimate $\P \big(\|w\|_{\cM(\tau_R)}\geq \varepsilon\big)$ via coercivity and a stochastic Gronwall lemma.
Let $\wt{w}$ be the solution to
\begin{align}\label{eq:diffeqcont}
       \left\{\begin{array}{ll}
        d\wt{w} + \wt{A}_{0,R}(t)\wt{w}(t) \, dt &= \one_{[0,\tau_R]}f_w(t) \, dt + (\wt{B}_{0,R}(t) \wt{w}(t)  + \one_{[0,\tau_R]}g_w(t)) \, dW(t) \\
        &\qquad\qquad+ \int_Z (\wt{C}_{0,R}(t) \wt{w}(t-) + \one_{[0,\tau_R]}h_{w}(t,z)) \,\wt{N}(dz,dt),
\\        \wt{w}(0) &= u_0-v_0.
\end{array}\right.
\end{align}
Here, after setting $u_R(t) \coloneqq \one_{[0,\tau_R]}(t) u(t)$, we use the notation
\begin{align}
    \wt{A}_{0,R}(t)y &= A_L(t,u_R(t))y + \one_{[0,\tau_R]}(t)A_S(t,u_R(t))y \\
    \wt{B}_{0,R}(t)y &= \one_{[0,\tau_R]}(t)B_0(t,u_R(t))y, \\ \wt{C}_{0,R}(t,z) y &= \one_{[0,\tau_R]}(t)C_0(t,u_R(t-),z)y
\end{align}
when $y\in \cV$.
Note that $\wt{w} = w$ on $\llbracket 0, \tau_R \rrbracket$.
Next, by It\^o's formula (see Corollary~\ref{cor: ito}) and the coercivity condition \eqref{coercivity condition} we find that
\begin{align*}
\|\wt{w}(t)\|_{\cH}^2 + 2\kappa_R \|\wt{w}\|_{L^2([0,t],\cV)}^2
\leq & \|u_0-v_0\|_{\cH}^2 +  D(t) + M(t),
\end{align*}
where the deterministic term is given by
\begin{align*}
D(t) =& 2\int_0^t \lb \wt{w}(s), \one_{[0,\tau_R]}(s)f_w(s) \rb + \phi_R(s) \|\wt{w}(s)\|_{\cH}^2 \, ds
\\ & + \int_0^t 2\one_{[0,\tau_R]}(s)(\wt B_{0,R} \wt{w}, g_w(s))_{\calL(U,\cH)} + \one_{[0,\tau_R]}(s) \|g_w(s)\|^2_{\calL_2(U,\cH)} \, ds
\\ & + \int_0^t \int_Z 2(\one_{[0,\tau_R]}(s)h_w(s,z),\wt C_{0,R}(s,z)\wt{w}(s))_\cH  +\one_{[0,\tau_R]}(s)\|h_w(s,z)\|^2  \, \nu(dz) ds,
\end{align*}
and the stochastic term $M(t)$ is given by
\begin{align*}
M(t) &= 2\int_0^t \lb \wt{w}(s), B_R(s)\wt{w}(s)+\one_{[0,\tau_R]}(s)g_w(s) \rb \, d W(s)
\\ & + 2\int_0^t\int_Z  \lb \wt{w}(s-), C_{0,R}(s,z)\wt{w}(s-)+ \one_{[0,\tau_R]}(s)h_w(s,z) \rb \, \wt{N}(dz,ds)
\\ & + \int_0^t\int_Z \big\|\wt{C}_{0,R}(s,z)\wt{w}(s-) + \one_{[0,\tau_R]}(s)h_w(s,z)\big\|_{\cH}^2  \, \wt{N}(dz, ds)
\end{align*}
Observe that $M$ is a local martingale.

Next we estimate the Lebesgue integrals appearing in the It\^o formula pointwise in $\Omega$. On $[0,\tau_R)$, taking $\wt{w} = w$ into account, we have
\begin{align*}
\|f_1\|_{\cV^*} &= \|(A_L(\cdot, u) - A_L(\cdot, v))v\|_{\cV^*} \leq C_{R,T} \|\wt{w}\|_{\cH}  \|v\|_{\cV},
\\
\|f_2\|_{\alpha_A} &= \|(A_S(\cdot, u) - A_S(\cdot, v))v\|_{\alpha_A}\leq K_{A,R,T}\|\wt{w}\|_{\cH} \|v\|_{\beta_A},
\\ \|f_3\|_{\alpha_F} &=  \|F(\cdot,u) - F(\cdot, v)\|_{\alpha_F} \leq C_{R,T} (1+\|u\|_{\beta_F}^{\rho_F} + \|v\|_{\beta_F}^{\rho_F})\|\wt{w}\|_{\beta_F}.
\end{align*}
Since $|\lb \wt{w}, f_w \rb|\leq  \sum_{i=1}^3 |\lb \wt{w}, f_i \rb|$ we estimate these terms individually. Let $\delta>0$ be arbitrary. For $f_1$ on $[0,\tau_R)$ we have
\begin{align*}
|\lb \wt{w}, f_1 \rb| & \leq \|\wt{w}\|_{\cV} \|f_1\|_{\cV^*} \leq C_{R,T} \|\wt{w}\|_{\cV}    \|\wt{w}\|_{\cH}  \|v\|_{\cV}\leq \delta \|\wt{w}\|_{\cV}^2 + C_{\delta,R,T}  \|\wt{w}\|_{\cH}^2  \|v\|_{\cV}^2.
\end{align*}
Using interpolation estimates and the boundedness of $\|v\|_{\cH}$ on $[0,\tau_R)$ we obtain
\begin{align*}
|\lb \wt{w}, f_2 \rb| &  \leq \|\wt{w}\|_{1-\alpha_A} \|f_2\|_{\alpha_A}
\\ & \leq K_{A,R,T}\|\wt{w}\|_{1-\alpha_A}  \|\wt{w}\|_{\cH} \|v\|_{\beta_A}
\\ & \leq K_{A,R,T}\|\wt{w}\|_{\cH}^{2\alpha_A+1} \|\wt{w}\|_{\cV}^{1-2\alpha_A} \|v\|_{\beta_A}
\\ & \leq \delta \|\wt{w}\|_{\cV}^{2}   + C_{\delta,R} K_v \|\wt{w}\|_{\cH}^{2}
\end{align*}
where  $K_v(t) = K_{A,R,T}(t)^{\nicefrac{2}{1+2\alpha_A}} \|v\|_{\beta_A}^{{\nicefrac{2}{1+2\alpha_A}}}$. Combining the conditions on $(\alpha_A,\beta_A)$ and $v\in \cM(0,T)$ with the first estimate in \eqref{eq:bH embedding}, one can check that
\[\|K_v\|_{L^1([0,\tau_R])} \leq \|K_{A,R,T}(t)\|_{L^{r_A}([0,T])}^{\nicefrac{2}{1+2\alpha_A}}  \|v\|_{L^{\nicefrac{2}{2\beta_A-1}}([0,\tau_R],\cV_\beta)}^{\nicefrac{2}{1+2\alpha_A}} \leq C_R \|v\|_{L^2([0,T],\cV)}^{\gamma_A}\]
for some constant $C_R$ and where $\gamma_A =\nicefrac{2(2\beta_A-1)}{1+2\alpha_A}\leq 2$.
For $f_3$ we find in a similar way
\begin{align*}
|\lb \wt{w}, f_3 \rb| &  \leq \|\wt{w}\|_{{1-\alpha_F}} \|f_3\|_{\alpha_F}
\\ & \leq C_{R,T}\|\wt{w}\|_{1-\alpha_F} (1+\|u\|_{\beta_F}^{\rho_F} + \|v\|_{\beta_F}^{\rho_F})\|\wt{w}\|_{\beta_F}
\\ & \leq C_{R,T}\|\wt{w}\|_{\cH}^{2+2\alpha_F-2\beta_F} \|\wt{w}\|_{\cV}^{2\beta_F-2\alpha_F} (1+\|u\|_{\beta_F}^{\rho_F} + \|v\|_{\beta_F}^{\rho_F})
\\ & \leq \delta \|\wt{w}\|_{\cV}^{2}  +  C_{\delta,R,T}\|\wt{w}\|_{\cH}^2 K_{u,v},
\end{align*}
where $K_{u,v}(t) = (1+\|u\|_{\beta_F}^{\rho_F} + \|v\|_{\beta_F}^{\rho_F})^{\nicefrac{1}{1+\alpha_F-\beta_F}}$.
Combining the subcriticality condition \eqref{eq: criticality F} with  the first estimate in \eqref{eq:bH embedding}, one can check that $\|K_{u,v}\|_{L^1(0,\tau_R)}\leq C_R (1+\|u\|_{L^2([0,T],\cV)}^2+ \|v\|_{L^2([0,T],\cV)}^2)$ for some constant $C_R$.
The terms $\|g_w\|^2_{\calL_2(U,\cH)}$ and $\|h_w\|^2_{\calL_2(U,\cH)}$  can be estimated similarly. The mixed terms can be reduced to the previous ones by arguing as in Lemma~\ref{lem:lin coercive}.

The other deterministic terms can be estimated similarly and in total we obtain that for all $\delta>0$ there exists an $K_R\in L^1(\Omega,L^1(0,T))$ such that a.s.\ for all  $t\in [0,\tau_R)$
\begin{align*}
D(t) \leq \delta \int_0^t \|\wt{w}(s)\|_{\cV}^{2} \, ds  + \int_0^t K_R(s) \|\wt{w}(s)\|_{\cH}^{2} \, ds,
\end{align*}
where $\|K_R\|_{L^1(0,T)}\leq C_R(1+\|u\|_{L^2([0,T],\cV)}^2+\|v\|_{L^2([0,T],\cV)}^2)$.
Therefore, we can conclude that
\begin{align*}
\|\wt{w}(t)\|_{\cH}^2 + \|\wt{w}\|_{L^2([0,t],\cV)}^2
\leq & \theta_R \|u_0-v_0\|_{\cH}^2 + \int_0^t \theta_R K_R(s) \|\wt{w}(s)\|_{\cH}^{2} \, ds  + M(t),
\end{align*}
where $\theta_R = \max\{1,\kappa_R^{-1}\}$.
Now the stochastic Gronwall inequality \cite[Corollary 5.4]{geiss2024sharp} gives that for all $\varepsilon,\gamma>0$ one has
\begin{align}
\label{eq:tail estimate 1}
\P(\|\wt{w}\|_{\cM(T)}^2>\varepsilon^2) \leq  \frac{\theta_R e^\gamma }{\varepsilon^2} \E(\|u_0-v_0\|_{\cH}^2)  + \P(\theta_R\|K_R\|_{L^1(0,T)}>\gamma).
\end{align}
By Markov's inequality and Theorem~\ref{thm:varglobal} we can estimate
\begin{align*}
\P(\theta_R\|K_R\|_{L^1(0,T)}>\gamma)& \leq \frac{\theta_R\E \|K_R\|_{L^1(0,T)}}{\gamma}
\\ & \leq \frac{C_R(1+\E\|u\|_{L^2([0,T],\cV)}^2+\E\|v\|_{L^2([0,T],\cV)}^2)}{\gamma}
\\ & \leq \frac{C_R'(1+\E\|u_0\|_{\cH}^2+\E\|v\|_{\cH}^2)}{\gamma},
\end{align*}
where $C_R'$ also depends on $C_T$, $\psi$ and $\psi_i$.
Now we plug the last bound back into~\eqref{eq:tail estimate 1} and set $\gamma \coloneqq R C_R'$, $\chi_1(R) \coloneqq \theta_R e^\gamma$ to obtain
\begin{align*}
\P(\|\wt{w}\|_{\cM(T)}^2>\varepsilon^2) \leq  \frac{\chi_1(R)}{\varepsilon^2} \E(\|u_0-v_0\|_{\cH}^2)  + \frac{(1+\E\|u_0\|_{\cH}^2+\E\|v\|_{\cH}^2)}{R}.
\end{align*}
Combining the estimates with \eqref{eq:PtailwM} and using $w = \wt{w}$ on $[0,\tau_R)$ we see that
\begin{align*}
\P(\|w\|_{\cM(0,T)}\geq \varepsilon) \leq \frac{\chi_1(R)}{\varepsilon^2} \E(\|u_0-v_0\|_{\cH}^2)  + (C_T+1)\frac{(1+\E\|u_0\|_{\cH}^2+\E\|v\|_{\cH}^2)}{R}
\end{align*}
\end{proof}

\begin{proof}[Proof of Theorem~\ref{thm:contdep}]
Given Proposition~\ref{prop:estimateLpcont}, we can argue as in~\cite[Theorem 3.8]{agresti2022critical}.
\end{proof}

\section{Applications}
\label{sec:applications}

\noindent
Due to our results,  the applications in~\cite[Section 5]{agresti2022critical} can be generalized to the case of L\'evy noise. In particular, this includes Cahn--Hilliard, tamed Navier--Stokes, Allen--Cahn in the strong setting for $d\in \{1, 2, 3, 4\}$, a quasilinear equation for $d=1$, as well as many more. All of these problems can be considered on bounded or unbounded domains, since we never need the compactness of $\cV\hookrightarrow \cH$.

Below we present several examples which differ in essential points from the ones in~\cite{agresti2022critical}. The first one in Subsection~\ref{ss:reaction}, is on reaction-diffusion equations with nonlinearities which have higher powers than could be covered before. In particular, this enables us to include the Allen--Cahn equation in the weak setting for $d=2$, see Example~\ref{ex:seconddiff2} (which requires $\alpha_F=1/2$).  Moreover, in Example~\ref{ex:beta=1} we also discuss a new type of nonlinearity which we can consider by taking $\beta_F=1$. Another example with singular drift term  is presented in Example~\ref{ex:singular}.

In Subsection~\ref{ss:fluid}, a class of fluid dynamics equations will be presented.
Moreover, in  Subsections~\ref{ss:KS} and \ref{ss:SH} we present the stochastic  Kuramoto--Sivashinsky equation and the Swift--Hohenberg equation, which are both fourth order equations. For the Kuramoto--Sivashinsky equation we will see that $\alpha_F = 1/4$ is needed. We explain how the extra flexibility makes it possible to weaken the conditions on the nonlinearities compared to the known results in the literature.

\subsection{Reaction-diffusion equations in the weak setting}\label{ss:reaction}

In this section, we consider second order equations with Dirichlet boundary conditions in the weak setting in the PDE sense. The setting is similar to \cite[Section 5.3]{agresti2022critical} and \cite{LR15}.  The goal of the section is to show that the extended setting allows to include new nonlinearities with polynomial growth for low dimensions. Most notable is that cubic nonlinearities can also be included for $d=2$. These were excluded in the above two references due to the fact that the embedding $H^{1}(\Dom)\hookrightarrow L^\infty(\Dom)$ fails. We can circumvent this issue due to the flexibility in the condition on $F$. At the same time, the results will be presented for the more general case of L\'evy noise. For simplicity, the results are only presented for bounded domains, but they extend to unbounded domains as well.

Let $\Dom\subseteq \R^d$ be open and bounded.
Let
\[\cV = H^1_0(\Dom), \ \  \ \cH = L^2(\Dom) \ \ \ \text{and} \ \ \cV^* = H^{-1}(\Dom).\]
The equation we consider is of the form
\begin{align}
\label{eq:reaction_diffusion}
\left\{
\begin{aligned}
 du  &= \big[\dv(a\nabla u)  + f(\cdot, u) + \dv(\of(\cdot, u))\big]\, d t
\\  & \qquad \qquad \qquad \qquad
+ \sum_{n\geq 1}  \big[(b_{n}\cdot \nabla) u+ g_{n}(\cdot,u) \big]\, d w^n_t
\\  & \qquad \qquad \qquad \qquad
+ \int_Z\big[(c(\cdot,z)\cdot \nabla) u(\cdot-)+ h(\cdot,u(\cdot-),z) \big]\,  \wt{N}(dz,dt),\ \ &\text{ on }\Dom,\\
u &= 0, \ \ &\text{ on }\partial\Dom,
\\
u(0)&=u_{0},\ \ &\text{ on }\Dom,
\end{aligned}\right.
\end{align}
where $(w^n_t\,:\,t\geq 0)_{n\geq 1}$ are independent standard Brownian motions.

Next, we state our assumptions. We emphasize that, compared to the literature mentioned above, the cases $\rho_1\in (3,4]$ for $d=1$, and $\rho_1=2$ for $d=2$ are new.
\begin{assumption}\label{ass:2ndorder1}
Suppose that
\[
\rho_1 \in
 \Big[0,\frac{4}{d}\Big] \qquad \text{and} \qquad \rho_2,\rho_3\in \Big[0,\frac{2}{d}\Big],
\]
and
\begin{enumerate}[{\rm(1)}]
 \item\label{ass:2ndorder11} $a^{j,k}:\R_+\times \Omega\times \Dom\to \R$ and $b^j:=(b^{j}_{n})_{n\geq 1}:\R_+\times \Omega\times \Dom\to \ell^2$ are $\mathcal{P}\otimes \mathcal{B}(\Dom)$-measurable and uniformly bounded, and $c^j:\Omega\times\bR_+\times\Dom\times Z\to\bR$ is $\cP^-\otimes\mathcal{B}(\Dom)\otimes\cZ$-measurable and uniformly bounded.
\item\label{ass:2ndorder12}
There exists $\kappa>0$ such that a.e.\ on $ \R_+\times \Omega\times \Dom$ and for all $\xi\in \R^d$,
$$
\sum_{j,k=1}^d \Big(a^{j,k}(t,x)-\frac12\sum_{n\geq 1} b^j_{n}(t,x)b^k_{n}(t,x) - \frac{1}{2}\int_Z c^j(t,x,z)  c^k(t,x,z) \nu(dz) \Big)
 \xi_j \xi_k
\geq  \kappa |\xi|^2.
$$
\item\label{it:growth_nonlinearities}
The mappings $f:\R_+\times \Omega\times \Dom\times \R\to \R$, $\of:\R_+\times \Omega\times \Dom\times \R\to \R^d$ and $g:=(g_{n})_{n\geq 1}:\R_+\times \Omega\times \Dom\times \R\to \ell^2$
are $\mathcal{P}\otimes \mathcal{B}(\Dom)\otimes \mathcal{B}(\R)$-measurable, and the mapping $h:\Omega\times\bR_+\times\Dom\times \R\times Z\to\bR$ is $\cP^{-}\otimes \mathcal{B}(\Dom)\otimes \mathcal{B}(\R)\otimes\cZ$-measurable.
Moreover, there is a constant $C$ such that for $y, y' \in\R$ a.e.\ on $\Omega\times\R_+\times \Dom$,
\begin{align*}
 |f(\cdot,y)-f(\cdot,y')|
& \leq C(1+|y|^{\rho_1}+|y'|^{\rho_1})|y-y'|,
\\ |f(\cdot,y)|&\leq C(1+|y|^{\rho_1+1}),
\\ |\of(\cdot,y)-\of(\cdot,y')|
& \leq C(1+|y|^{\rho_2}+|y'|^{\rho_2})|y-y'|,
\\ |\of(\cdot,y)|& \leq C (1+|y|^{\rho_2+1}),
\\ \|g(\cdot,y)-g(\cdot,y')\|_{\ell^2} + \|h(\cdot,y,\cdot)-h(\cdot,y',\cdot)\|_{L^2(Z;\nu)}
&\leq C(1+|y|^{\rho_3}+|y'|^{\rho_3})|y-y'|,
\\ \|g(\cdot,y)\|_{\ell^2}+ \|h(\cdot,y,\cdot)\|_{L^2(Z;\nu)}  &\leq C(1+|y|^{\rho_3+1}). \ \
\end{align*}
\item\label{it:secondcoercity} There exist $M,C\geq 0$ and $\eta>0$ such that a.e.\ in $\Omega\times\R_+$ for all $u\in C^\infty_c(\Dom)$
\begin{align*}
&(a \nabla u, \nabla u)_{L^2(\Dom)} + (\of(\cdot, u), \nabla u)_{L^2(\Dom)} -  (f(\cdot, u),u)_{L^2(\Dom)}  \\
 & -(\tfrac12+\eta)  \sum_{n\geq 1} \|(b_n \cdot \nabla) u + g_n(\cdot, u)\|_{L^2(\Dom)}^2
\\
 & -(\tfrac12+\eta)  \int_Z \|(c \cdot \nabla) u + h(\cdot, u,z)\|_{L^2(\Dom)}^2 \nu(dz)
\geq  \kappa \|\nabla u\|^2_{L^2(\Dom)} - M\|u\|^2_{L^2(\Dom)} -C.
\end{align*}
\end{enumerate}
\end{assumption}

If $\overline{f}$ does not depend on the $x$-variable, then one always has $(\of(\cdot, u), \nabla u)_{L^2(\Dom)} = 0$ by the divergence theorem (see \cite[Lemma 5.12]{agresti2022critical}).

In order to formulate \eqref{eq:reaction_diffusion} as \eqref{eq:spde} we set $U = \ell^2$, $\cH = L^2(\Dom)$, $\cV = H^1_0(\Dom)$ and $\cV^* = H^{-1}(\Dom)$. Then $\cV_{\beta} \hookrightarrow  L^r(\Dom)$ for all $\beta\in (1/2,1)$ and $r\in (2, \infty)$ such that $-\frac{d}{r}\leq 2\beta-1-\frac{d}{2}$ (cf.\ \cite[Lemma A.7]{AV20_NS}).

Let $A_0:\R_+\times\Omega\to \calL(\cV,\cV^*)$, $B_0:\R_+\times \Omega\to \calL(\cV, \calL_2(U,\cH))$  and $C_0:\R_+\times \Omega\to \calL(\cV, L^2(Z,\cH;\nu))$  be given by
\begin{align*}
 A_0(t) u &= -\dv(a(t,\cdot) \nabla u), & (B_0(t) u)_n & = (b_{n}(t,\cdot)  \cdot \nabla) u,
 &  (C_0(t) u) & = (c(t,\cdot,z)  \cdot \nabla) u.
\end{align*}
Recall that the non-linearity $F$ can consist of several parts, see Remark~\ref{rem: ass nonlinear operators}~\eqref{it:sum assump}.
Let $F = F_1+F_2$,
where $F_1,F_2:\R_+\times\Omega\times \cV\to \cV^*$, $G:\R_+\times\Omega\times \cV\to \calL_2(U,\cH)$ and $H:\R_+\times\Omega\times \cV\to L^2(Z,\cH;\nu)$ be given pathwise by
\begin{align*}
F_1(t,u) &= f(t,\cdot,u(\cdot)), \qquad F_2(t,u) = \dv [\of(t,\cdot,u(\cdot))], \\  (G(t,u))_n &= g_n(t,\cdot,u(\cdot)), \qquad H(t,u)(z) = h(t,\cdot,u(\cdot),z).
\end{align*}
A process $u$ is called solution to \eqref{eq:reaction_diffusion} if
$u$ is a solution to \eqref{eq:spde} with the above definitions.

It remains to check the conditions of Theorem~\ref{thm:varglobal}. The coercivity condition \eqref{nonlinear coercivity condition} holds with $\phi$ and $\psi$ constant and $\psi_i = 0$. We check the local Lipschitz condition for $F$. The one for $F_1$ and $d\geq 3$, was already checked in~\cite[Section 5.3]{agresti2022critical} (where $\alpha_F=0$). For $d=1$ and $\rho_1\leq 3$, or $d=2$ and $\rho_1<2$, the latter result also applies. The same holds for $F_2$ and $G$ for any $d\geq 1$, and the condition for $H$ can be checked in a similar way. Hence, from now on we may assume $d\in \{1, 2\}$ and $\rho_1\geq 2$. Then we have
\begin{align*}
\|&F_1(t,u) - F_1(t,v)\|_\frac{1}{2} \lesssim \|(1+|u|^{\rho_1}+|v|^{\rho_1}) |u-v|\|_{L^{2}(\Dom)} & \text{(by Assumption~\ref{ass:2ndorder1})}
\\ & \lesssim (1+\|u\|^{\rho_1}_{L^{2(\rho_1+1)}(\Dom)}+\|v\|^{\rho_1}_{L^{2(\rho_1+1)}(\Dom)}) \|u-v\|_{L^{2(\rho_1+1)}(\Dom)} & \text{(by H\"older's inequality)}
\\ & \lesssim (1+\|u\|^{\rho_1}_{\beta_1}+\|v\|^{\rho_1}_{\beta_1})  \|(u-v)\|_{\beta_1} & \text{(by Sobolev embedding).}
\end{align*}
As explained above, the Sobolev embedding requires
$-\frac{d}{2(\rho_1 + 1)} \leq 2\beta_1 - 1 - \frac{d}{2}$.
The largest possible $\beta_1\in (1/2,1]$ we can take in the criticality condition \eqref{eq: criticality F} with $(\rho_F, \alpha_F, \beta_F)$ replaced by $(\rho_1, \tfrac{1}{2}, \beta_1)$ is $(1+\rho_1)(2\beta_1-1) = 2$. By the assumption $\rho_1\geq 2$, one can check that $\beta\in (1/2,1)$. Moreover, with this choice, the condition for the Sobolev embedding becomes
\[ -\frac{d}{2(\rho_1+1)} \leq \frac{2}{\rho_1+1} - \frac{d}{2}.\]
The latter is equivalent to $\rho_1\leq \frac{4}{d}$, which was our assumption.

The next result is now a direct consequence of Theorems~\ref{thm:varglobal} (or Corollary~\ref{cor:varglobal2} if $\rho_3 = 0$) and~\ref{thm:contdep}.
\begin{theorem}[Global well-posedness]
\label{thm:second}
Suppose that Assumption~\ref{ass:2ndorder1} holds. If $\rho_3 = 0$, also the case $\eta = 0$ is permitted. Let $u_0\in L^0(\Omega, L^2(\Dom))$ be strongly $\cF_0$-measurable.
Then \eqref{eq:reaction_diffusion} has a unique global solution
\begin{equation}\label{eq:solspacesecond}
u\in D([0,\infty),L^2(\Dom))\cap L^2_{\rm loc}([0,\infty),H^1_0(\Dom)) \ a.s.
\end{equation}
and for every $T>0$ there is a constant $C_T$ independent of $u_0$ such that
\begin{align}\label{eq:aprioriLpsec}
\E \|u\|_{D([0,T],L^2(\Dom))}^2 + \E\|u\|_{L^2(0,T),H^1_0(\Dom))}^2\leq C_T(1+\E\|u_0\|_{L^2(\Dom)}^2).
\end{align}
Moreover, $u$ depends continuously on the initial data $u_0$ in probability in the sense of Theorem~\ref{thm:contdep} with $\cH=L^2(\Dom)$ and $\cV=H^{1}_0(\Dom)$.
\end{theorem}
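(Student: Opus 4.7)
The plan is to formulate \eqref{eq:reaction_diffusion} as an instance of the abstract problem \eqref{eq:spde} with the spaces $\cH = L^2(\Dom)$, $\cV = H^1_0(\Dom)$, $\cV^* = H^{-1}(\Dom)$, and with the operators $A_0, B_0, C_0, F_1, F_2, G, H$ as defined immediately before the theorem statement. The conclusion then follows from Theorem \ref{thm:varglobal} (or Corollary \ref{cor:varglobal2} when $\rho_3 = 0$, in which case $\eta = 0$ is allowed) combined with Theorem \ref{thm:contdep}, once Assumption \ref{assumption operators new} and the nonlinear coercivity condition \eqref{nonlinear coercivity condition} are verified in this concrete setting.

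First I would verify Assumption \ref{assumption operators new}. Measurability and linear boundedness of $A_0 \in \cL(\cV, \cV^*)$, $B_0 \in \cL(\cV, \calL_2(U, \cH))$, and $C_0 \in \cL(\cV, L^2(Z, \cH; \nu))$ follow immediately from Assumption \ref{ass:2ndorder1}\ref{ass:2ndorder11}, and the linearised coercivity \eqref{coercivity condition} is a direct expansion of the parabolicity hypothesis \ref{ass:2ndorder1}\ref{ass:2ndorder12} after writing out the cross terms produced by $(b_n \cdot \nabla)u$ and $(c \cdot \nabla)u$ as squares. The local Lipschitz and subcriticality conditions for $F_2$ (where $\dv$ maps $L^r$ into $H^{-1}$ so that the range lives in $\cV^*$), $G$, and $H$ can be checked as in \cite[Section 5.3]{agresti2022critical} using $\alpha = 0$; the same is true for $F_1$ in dimensions $d \geq 3$, as well as in $d = 1$ with $\rho_1 \leq 3$ and $d = 2$ with $\rho_1 < 2$.

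The main technical point, and the source of the novelty for low dimensions, is the local Lipschitz estimate for $F_1$ in the borderline regime $d = 1$ with $\rho_1 \in (3, 4]$ or $d = 2$ with $\rho_1 = 2$. Here the classical variational framework with $\alpha_F = 0$ fails, since the bound would essentially require $H^1(\Dom) \hookrightarrow L^\infty(\Dom)$. Instead, I would exploit the new flexibility with $\alpha_F = 1/2$: saturate the subcriticality condition \eqref{eq: criticality F} by choosing $\beta_F \in (1/2, 1)$ with $(\rho_1 + 1)(2\beta_F - 1) = 2$, and then apply the Sobolev embedding $\cV_{\beta_F} \hookrightarrow L^{2(\rho_1 + 1)}(\Dom)$. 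The embedding constraint $-d/[2(\rho_1+1)] \leq 2\beta_F - 1 - d/2$ is equivalent to $\rho_1 \leq 4/d$, which is exactly the standing hypothesis. This is precisely the calculation executed in the paragraph preceding the theorem statement, and I expect this step to be the main obstacle since it is the one that requires the extension of \cite{agresti2022critical} developed in the present paper.

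Finally, the nonlinear coercivity condition \eqref{nonlinear coercivity condition} for the full triple $(A, B, C)$, with constant $\phi, \psi$ and no $\psi_i$-term, is simply a restatement of Assumption \ref{ass:2ndorder1}\ref{it:secondcoercity}. Theorem \ref{thm:varglobal} then produces the unique global solution in the class \eqref{eq:solspacesecond} together with the a priori bound \eqref{eq:aprioriLpsec}. For the case $\rho_3 = 0$ with $\eta = 0$, Corollary \ref{cor:varglobal2} applies after observing that the dissipativity bound \eqref{nonlinear coercivity conditionsimple3} on $F$ can be extracted from Assumption \ref{ass:2ndorder1}\ref{it:secondcoercity} (using that the bad $L^2(\Dom)$-norm terms coming from $F$ are dominated by the $\phi \|v\|_\cH^2 + \psi$ lower bound). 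Continuous dependence in probability with respect to the initial data follows at once from Theorem \ref{thm:contdep}, completing the proof.
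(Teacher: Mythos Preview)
Your proposal is correct and follows essentially the same approach as the paper: the paper's proof is the single sentence ``The next result is now a direct consequence of Theorems~\ref{thm:varglobal} (or Corollary~\ref{cor:varglobal2} if $\rho_3 = 0$) and~\ref{thm:contdep},'' with all the verification of Assumption~\ref{assumption operators new} and the coercivity condition carried out in the discussion preceding the theorem statement, exactly as you outline. In particular, you correctly identify the key new ingredient---the $\alpha_F = 1/2$ estimate for $F_1$ in the borderline low-dimensional regimes---and reproduce the paper's Sobolev-embedding computation verbatim.
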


As a more concrete example, we consider a generalized Burger's equation with gradient noise, where for simplicity we take $(\omega,t,x)$-independent coefficients. In particular, it includes the Allen--Cahn equations for $d\in \{1, 2\}$.
\begin{example}\label{ex:seconddiff1}
Let $d\geq 1$ and let $\Dom$ be a bounded $C^1$-domain. Consider the problem
\begin{equation}
\label{eq:reaction_diffusionexdiff1}
\left\{
\begin{aligned}
d u  &= [\Delta u+ f(u) + \dv(\of(u))] \, d t+ \sum_{n\geq 1} \big[(b_n\cdot \nabla)u + g_{n}(u)\big] \, d w_t^n \\ &  \quad +  \int_Z \big[(c(z)\cdot \nabla)u(\cdot -) + h(u(\cdot -),z)\big] \, \wt N(dz,dt) &\quad & \text{ on }\Dom,\\
u &= 0& \quad & \text{ on }\partial\Dom,
\\
u(0)&=u_{0}&\quad & \text{ on }\Dom.
\end{aligned}\right.
\end{equation}
Here, $(b_n)_{n\geq 1}$  and $(c(z))_{z\in Z}$ are real valued such that stochastic parabolicity condition holds:
\[\theta := 1 - \frac12\|(b_n)_{n\geq 1}\|_{\ell^2}^2 - \frac{1}{2} \|c\|_{L^2(Z;\nu)}^2>0.\]
For the nonlinearities, we assume that there is a constant $C\geq 0$ such that for $y,y'\in \R$ it holds
\begin{align*}
|f(y)-f(y')| & \leq C (1+|y|^{\rho_1}+|y'|^{\rho_1})|y-y'|,
\\ |f(y)|&\leq C(1+|y|^{\rho_1+1}),
\\ |\of(y)-\of(y')| & \leq C (1+|y|^{\rho_2}+|y'|^{\rho_2})|y-y'|,
\\ |\of(y)|& \leq C(1+|y|^{\rho_2+1}),
\\ \|g(y)-g(y')\|_{\ell^2}+ \|h(y,\cdot)-h(y',\cdot)\|_{L^2(Z;\nu)}&\leq C|y-y'|,
\\ \|(g_n(y))_{n\geq 1}\|_{\ell^2}+\|h(y,\cdot)\|_{L^2(Z;\nu)} &\leq C(1+|y|),
\end{align*}
where $\rho_1\in [0,\frac{4}{d}]$, and $\rho_2\in [0,\frac{2}{d}]$ (cf.\ Assumption~\ref{ass:2ndorder1}).
Suppose that the following dissipativity condition holds: there is an $M\geq 0$ such that
\begin{align}\label{eq:dissLR}
y f(y)\leq M(1+|y|^2).
\end{align}
In particular, if $d\in \{1, 2\}$, Burgers type nonlinearities are included: take $\of(y) = y^2$. Moreover, if $d\in \{1,2\}$, Allen--Cahn type nonlinearities such as $f(y) = y-y^3$ are included as well. For $d=1$ one can even allow $f(y) = y - y^5$, possibly with additional terms $c_i y^i$ with $i\in \{1, \ldots, 4\}$.  Both $-y^3$ for $d=2$ and $-y^5$ for $d=1$ are critical, and were not included in previous settings.

As in~\cite[Example 5.13]{agresti2022critical}, one can check that Assumption~\ref{ass:2ndorder1} is satisfied. Thus, Theorem~\ref{thm:second} implies that for every strongly $\cF_0$-measurable $u_0:\Omega\to L^2(\Dom)$, there exists a unique global solution $u$ to \eqref{eq:reaction_diffusionexdiff1} which satisfies \eqref{eq:solspacesecond} and \eqref{eq:aprioriLpsec}, and the continuous dependency assertion of Theorem~\ref{thm:second}.
\end{example}

The functions $g$ and $h$ can also have superlinear growth. For simplicity we present this for the Allen--Cahn equation, and only in the case $b=0$, $c=0$ and $(\omega,t,x)$-independent coefficients.
\begin{example}[Allen--Cahn with quadratic noise]\label{ex:seconddiff2}
Let $d\in \{1, 2\}$. Consider the problem
\begin{equation}
\label{eq:reaction_diffusionexdiff2}
\left\{
\begin{aligned}
d u &=\big[ \Delta u -u^3+u \big]\, d t
+ \sum_{n\geq 1}  g_{n}(u) \, d w_t^n+ \int_Z h(u(\cdot -),z) \,  \wt N(dz,dt)&\quad &\text{ on }\Dom,\\
u &= 0 &\quad & \text{ on $\partial \Dom$},
\\
u(0)&=u_{0}&\quad &\text{ on }\Dom.
\end{aligned}\right.
\end{equation}
Here, we assume that there exists a $C_0\geq 0$ and $C_1\in [0,2)$ such that
\begin{align}
 \nonumber \|g(y) - g(y')\|_{\ell^2} + \|h(y,\cdot) - h(y',\cdot)\|_{L^2(Z;\nu)}&\leq C_0(1+|y|+|y'|)|y-y'|,
\\ \label{eq:coercsecondex}\|g(y)\|_{\ell^2}^2+ \|h(y,\cdot)\|_{L^2(Z;\nu)}^2&\leq C_0+C_1|y|^4,
\end{align}
where $y,y'\in \R$.
Then, Theorem~\ref{thm:second} implies that \eqref{eq:reaction_diffusionexdiff2} has a unique global solution $u$ as in \eqref{eq:solspacesecond} and \eqref{eq:aprioriLpsec} holds. Indeed, Assumptions~\ref{ass:2ndorder1}~\eqref{ass:2ndorder11},~\eqref{ass:2ndorder12},~\eqref{it:growth_nonlinearities}  are clearly satisfied with $\rho_1 = 2$ and $\rho_3 = 1$. For \eqref{it:secondcoercity} it remains to note that with $f(y) = -y^3 + y$ we have for $\eta>0$ small enough that a.e.\ on $\R_+\times \Omega\times \Dom$ for all $y\in \R$
\begin{align*}
y f(y) + (\tfrac12+\eta)\|g(y)\|_{\ell^2}^2+ (\tfrac12+\eta)\|h(y,\cdot)\|_{L^2(Z;\nu)}^2& \leq [(\tfrac12+\eta)C_1 - 1]|y|^4+ C(1+|y|^2)\\ & \leq C(1+|y|^2).
\end{align*}
Thus, \eqref{it:secondcoercity} follows.
\end{example}

Next, we present an example where $\beta_F= 1$ and $\alpha_F = \frac12$.  By allowing $\beta_F = 1$ we can take $F$ to act in a nonlinear way in $\nabla u$ as well. This is not possible if $\beta_F <1$ as in earlier works on the variational setting. The example below is non-physical as far as we know, but we include it as there might be new types of nonlinearities which can now also be treated. For simplicity we consider the same form of the equation \eqref{eq:reaction_diffusion} under the same Assumption~\ref{ass:2ndorder1}~\eqref{ass:2ndorder11} and \eqref{ass:2ndorder12} on $a, b, c$. For $g, h$ we assume that they are globally Lipschitz.
\begin{example}[Nonlinearity of gradient type]\label{ex:beta=1}
Consider \eqref{eq:reaction_diffusion} with $f$ replaced by the nonlinearity $f(u,\nabla u) = -u|\nabla u|$ and with $d=1$. For $(a,b,c)$ we suppose that Assumption~\ref{ass:2ndorder1}~\eqref{ass:2ndorder11} and \eqref{ass:2ndorder12} holds. Moreover, we assume that $g$ and $h$ satisfy for $y,y' \in \R$ the following:
\begin{align*}
 \|g(y)-g(y')\|_{\ell^2} + \|h(y,\cdot)-h(y',\cdot)\|_{L^2(Z;\nu)}&\leq C|y-y'|,
\\ \|(g_n(y))_{n\geq 1}\|_{\ell^2} + \|h(y,\cdot)\|_{L^2(Z;\nu)}  &\leq C(1+|y|).
\end{align*}
Then for every strongly $\cF_0$-measurable $u_0:\Omega\to L^2(\Dom)$ there exists a unique global solution $u$ to \eqref{eq:reaction_diffusion} which satisfies \eqref{eq:solspacesecond} and \eqref{eq:aprioriLpsec}, and the continuous dependency assertion of Theorem~\ref{thm:second} holds. To see this, it suffices as before to check that the conditions of Corollary~\ref{cor:varglobal2} are satisfied. Most conditions are clear.  We need to verify the mapping properties of $F(u) = -u|\nabla u|$. The coercivity $\lb F(u), u\rb \leq 0$ is obvious from the choice of the nonlinearity.

We check the mapping properties of $F$ with the choices $\beta_ F = 1$ and $\alpha_F = 1/2$. Note that for $u,v\in \cV$,
\begin{align*}
\|u|\nabla u|  - v|\nabla v|\|_{L^2(\Dom)} &  \leq \|(u-v)|\nabla u|\|_{L^2(\Dom)}   + \|v(|\nabla u| - |\nabla v|)\|_{L^2(\Dom)}
\\ & \leq \|u-v\|_{L^\infty(\Dom)}\|\nabla u\|_{L^2(\Dom)}   + \|v\|_{L^\infty(\Dom)} \| \nabla u - \nabla v\|_{L^2(\Dom)}
\\ & \lesssim (\|u\|_{\cV} + \|v\|_{\cV}) \|u-v\|_{\cV},
\end{align*}
where we used $\cV = H^1\hookrightarrow L^\infty$ for $d=1$ in the last step. Therefore, $\rho_F = 1$, and the subcriticality condition~\eqref{eq: criticality F} is satisfied.
\end{example}

\begin{example}[Singular drift]\label{ex:singular}
 Suppose that Assumption~\ref{ass:2ndorder1} holds.  Consider \eqref{eq:reaction_diffusion} with an additional term $A_S(t) u = A_{S,0}u + A_{S,1}u=\zeta_0 u + \sum_{j=1}^d \zeta_j \partial_j u$, where $\zeta_0\in L^1_{\rm loc}([0,\infty))$ and $\zeta_j\in L^2_{\rm loc}([0,\infty))$. Recall from Remark~\ref{rem: ass nonlinear operators}~\eqref{it:sum assump} that $A_S$ is indeed allowed to consist of a finite sum of operators with individual growth restrictions. One has
\[\|A_{S,1}(t) u\|_{L^2(\Dom)} \leq \zeta(t) \|u\|_{H^1(\Dom)}, \]
where $\zeta(t):= \Big(\sum_{i=1}^d |\zeta_j(t)|^2\Big)^{1/2}$. This shows that $A_{S,1}$ indeed satisfies the growth estimate of Assumption~\ref{assumption operators new} with $\alpha_{A,1} = 1/2$, $\beta_{A,1} = 1$, and $r_{A,1} = 2$. The $A_{S,0}$ term can be treated in the same way with $\alpha_{A,0} = 1/2$, $\beta_{A,0} = 1/2$, and $r_{A,1} = 1$. The coercivity condition of Assumption~\ref{ass:2ndorder1}~\eqref{it:secondcoercity} still holds with this additional term if the constant $M$ is replaced by $\phi:=-|\zeta_0|$ which is in $L^1_{\rm loc}([0,\infty))$. Indeed, the first order term cancels since $\int_{\Dom} u\partial_j u dx = \int_{\Dom} \partial_j (u^2/2) dx = 0$, where we benefit from the Dirichlet boundary conditions. The zeroth order term is taken care of via the $\phi$-term.
\end{example}

\subsection{Stochastic fluid dynamics models with L\'evy noise}\label{ss:fluid}

The general problem we consider has the form
\begin{equation}\label{eq:abstractfluid}
\begin{aligned}
d u + A_0 u \, dt &= \Phi(u, u) \, dt + (B_0 u + G(u)) \,dW + \int_Z(C_0 u(\cdot -) + H(u(\cdot -))) \, \wt N(dz, dt),
\\ u(0) &= u_0.
\end{aligned}
\end{equation}
Many fluid dynamics models fit into the above setting for a suitable bilinear map $\Phi:\cV_{\nicefrac{3}{4}}\times \cV_{\nicefrac{3}{4}}\to \cV^*$
(see e.g.\ \cite{AVsurvey,ChueshovMillet}).
For example, it includes the 2d Navier--Stokes equations on bounded or unbounded domains, but also 2d quasigeostrophic equations, 2d Boussinesq equations, $2d$ magneto-hydrodynamic equations, $2d$ magnetic B\'enard problem, 3d Leray $\alpha$-model for Navier--Stokes equations, or shell models of turbulence.
Since these papers explain many details on the specific fluid dynamics applications, we will only formulate the abstract results below.

\begin{assumption}\label{ass:bilinearFluid}
Let $A_0\in \calL(\cV, \cV^*)$, $B_0\in \calL(\cV, \calL_2(U,\cH))$ and $C_0\in \calL(V,L^2(Z,\cH;\nu))$.
\begin{enumerate}[{\rm (1)}]
\item\label{it1:bilinearFluid}   There exist $\kappa>0$ and $C\geq 0$ such that for all $v\in \cV$,
    \[\lb A_0v, v\rb  - \frac{1}{2} \|B_0 v\|_{\calL_2(U;\cH)}^2 - \frac{1}{2} \|C_0 v\|_{L^2(Z;\nu,\cH)}^2 \geq \kappa\|v\|_{\cV}^2- C \|v\|_{\cH}^2.\]

\item\label{it2:bilinearFluid}
$\Phi:\cV_{\nicefrac{3}{4}}\times \cV_{\nicefrac{3}{4}}\to \cV^*$ is bilinear and satisfies
\[\|\Phi(u, v)\|_{\cV^*}\leq C\|u\|_{\cV_{\nicefrac{3}{4}}} \|v\|_{\cV_{\nicefrac{3}{4}}}, \ \ \lb u, \Phi(u,u)\rb = 0, \ \ u,v\in \cV.\]

\item\label{it3:bilinearFluid} For some $\beta_G,\beta_H\in (1/2, 1)$, $G:\cV_{\beta_G}\to \calL_2(U,\cH)$ and $H:\cV_{\beta_H}\to L^2(Z,H;\nu)$  are globally Lipschitz
\end{enumerate}
\end{assumption}

To formulate \eqref{eq:abstractfluid} as \eqref{eq:spde} we let $A(v) = A_0 v - F(v)$ with $F:\cV_{\beta}\to \cV^*$ given $F(v) = \Phi(v,v)$. Then $F$ satisfies Assumption~\ref{assumption operators new} with $\rho_F=1$ and $\beta_F = \nicefrac{3}{4}$.

\begin{theorem}\label{thm:fluidabstract}
Suppose that Assumption~\ref{ass:bilinearFluid} holds.
Then for every $u_0\in L^0(\Omega,\cH)$ strongly $\cF_0$-measurable, \eqref{eq:abstractfluid} has a unique global solution $u\in L^2_{\rm loc}([0,\infty),\cV)\cap D([0,\infty),\cH)$ a.s. Moreover, for all $T\in (0,\infty)$
\begin{align*}
\E \sup_{t\in [0,T]}\|u(t)\|_{\cH}^{2} + \E\int_0^T \|u(t)\|^2_{\cV} dt & \leq C_{T} (1+\E\|u_0\|_{\cH}^{2}).
\end{align*}
Furthermore, the following continuous dependency on the initial data holds: if $u_0^n \in L^0(\Omega;\cH)$ strongly $\cF_0$-measurable are such that $\|u_0-u_0^n\|_{\cH}\to 0$ in probability, then for every $T\in (0,\infty)$,
\[\|u - u^n\|_{L^2([0,T],\cV)} + \|u - u^n\|_{D([0,T],\cH)}\to 0 \ \ \text{in probability},\]
where $u^n$ is the unique global solution to \eqref{eq:abstractfluid} with initial data $u_0^n$.
\end{theorem}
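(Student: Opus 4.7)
The strategy is to cast \eqref{eq:abstractfluid} in the form \eqref{eq:spde2} and invoke the global well-posedness result of Theorem \ref{thm:varglobal} together with Theorem \ref{thm:contdep}. Set $A(v) = A_0 v - F(v)$ with $F(v) \coloneqq \Phi(v,v)$, $B(v) = B_0 v + G(v)$, and $C(v,z) = C_0(z) v + H(v,z)$, all with no singular drift term ($A_S = 0$) and no inhomogeneities ($f=g=h=0$). The first task is to verify Assumption \ref{assumption operators new}. The bounds on $A_0$, $B_0$, $C_0$ are immediate with $\beta_A = \beta_B = \beta_C = 1$, $\alpha_A = 0$. The bilinearity of $\Phi$ together with $\|\Phi(u,v)\|_{\cV^*} \leq C \|u\|_{\cV_{\nicefrac{3}{4}}} \|v\|_{\cV_{\nicefrac{3}{4}}}$ yields
\begin{align*}
\|F(u)-F(v)\|_{\cV^*} = \|\Phi(u-v,u) + \Phi(v,u-v)\|_{\cV^*} \leq C (\|u\|_{\cV_{\nicefrac{3}{4}}} + \|v\|_{\cV_{\nicefrac{3}{4}}}) \|u-v\|_{\cV_{\nicefrac{3}{4}}},
\end{align*}
so $F$ satisfies the assumption with $\alpha_F = 0$, $\beta_F = \nicefrac{3}{4}$, $\rho_F = 1$. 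The (sub)criticality condition becomes $(1+1)(2\cdot\nicefrac{3}{4}-1) = 1 = 1 + 2\alpha_F$ and is therefore (critically) satisfied. The global Lipschitz assumption on $G$ and $H$ with $\beta_G,\beta_H \in (\nicefrac{1}{2},1)$ trivially yields the corresponding bounds with $\rho_G=\rho_H=0$. The linear coercivity condition \eqref{coercivity condition} is exactly Assumption \ref{ass:bilinearFluid}~\eqref{it1:bilinearFluid}.

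Next I verify the nonlinear coercivity \eqref{nonlinear coercivity condition}. Since $\Phi$ is skew in the sense $\lb v, \Phi(v,v) \rb = 0$, one has $\lb A(v), v \rb = \lb A_0 v, v \rb$. For the noise terms, expand
\begin{align*}
\|B_0 v + G(v)\|_{\cL_2(U,\cH)}^2 \leq (1+\delta) \|B_0 v\|_{\cL_2(U,\cH)}^2 + (1+\delta^{-1}) \|G(v)\|_{\cL_2(U,\cH)}^2,
\end{align*}
and similarly for the $C_0$-term. Pick $\eta,\delta>0$ small enough so that $(\nicefrac{1}{2}+\eta)(1+\delta) < \nicefrac{1}{2} + \eta'$ for some small $\eta' > 0$; then Assumption \ref{ass:bilinearFluid}~\eqref{it1:bilinearFluid} absorbs the $B_0$- and $C_0$-parts into $\kappa'\|v\|_\cV^2 - C\|v\|_\cH^2$ with a slightly reduced $\kappa' > 0$. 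The remaining contribution $\|G(v)\|^2 + \|H(v)\|^2 \lesssim 1 + \|v\|_{\beta_G}^2 + \|v\|_{\beta_H}^2$ is handled by interpolation: for $\beta \in (\nicefrac{1}{2},1)$ one has $\|v\|_\beta^2 \leq \|v\|_\cH^{4-4\beta} \|v\|_\cV^{4\beta-2}$, so Young's inequality gives $\|v\|_\beta^2 \leq \varepsilon \|v\|_\cV^2 + C_\varepsilon \|v\|_\cH^2$ for arbitrarily small $\varepsilon$. Absorbing one more $\varepsilon\|v\|_\cV^2$ into the $\cV$-coercivity yields \eqref{nonlinear coercivity condition} with constant $\phi$, constant $\psi$, and no $\psi_i$-terms.

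With these verifications in place, Theorem \ref{thm:varglobal} yields a unique global solution $u \in L^2_{\rm loc}([0,\infty),\cV) \cap D([0,\infty),\cH)$ for every strongly $\cF_0$-measurable $u_0 \in L^0(\Omega,\cH)$ together with the a priori estimate
\begin{align*}
\E \sup_{t \in [0,T]} \|u(t)\|_\cH^2 + \E \int_0^T \|u(t)\|_\cV^2 \, dt \leq C_T(1 + \E\|u_0\|_\cH^2).
\end{align*}
The continuous dependence on initial data then follows directly from Theorem \ref{thm:contdep}.

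The only non-routine step is the verification of \eqref{nonlinear coercivity condition}: the cross terms between $B_0 v$ and $G(v)$ (and analogously for the L\'evy part) must be controlled without spoiling the $\kappa \|v\|_\cV^2$ coercivity, which requires balancing $\delta$ against $\eta$ before invoking interpolation for the $G$- and $H$-growth terms. The criticality of $F$ is not an obstacle here because the criticality condition \eqref{eq: criticality F} is baked into Theorem \ref{thm:varglobal} via Assumption \ref{assumption operators new}.
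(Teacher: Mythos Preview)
Your proof is correct and follows essentially the same route as the paper. The only cosmetic difference is that the paper invokes Corollary~\ref{cor:varglobal2} (which is tailored to the case $\rho_G=\rho_H=0$ and separates the coercivity into the linear condition~\eqref{nonlinear coercivity conditionsimple2} plus the one-sided bound $\langle F(v),v\rangle\leq 0$), whereas you invoke Theorem~\ref{thm:varglobal} directly and verify the full nonlinear coercivity~\eqref{nonlinear coercivity condition} by hand via the $\delta$-splitting and interpolation argument; that argument is precisely what Corollary~\ref{cor:varglobal2} packages internally (cf.\ Lemma~\ref{lem:lin coercive}).
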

\begin{proof}
We apply Corollary~\ref{cor:varglobal2} and Theorem~\ref{thm:contdep}. It is straightforward to check that Assumption~\ref{assumption operators new} holds with $A_S = 0$. Indeed, for $F$ note that by the bilinearity and boundedness
\begin{align*}
\|F(u)-F(v)\|_{\cV^*} & = \|\Phi(u, u) - \Phi(v, v)\|_{\cV^*}
\\ & = \|\Phi(u, u-v)+\Phi(u-v,v)\|_{\cV^*}
\\ & \leq C(\|u\|_{\nicefrac{3}{4}} +\|v\|_{\nicefrac{3}{4}})\|u-v\|_{\nicefrac{3}{4}}.
\end{align*}
This leads to $\rho_F = 1$ and $\beta_F = \nicefrac{3}{4}$, which is critical. By the conditions on $G$ and $H$ we may take $\rho_G = 0 = \rho_H$.
For the coercivity condition \eqref{nonlinear coercivity conditionsimple3} it suffices to note that
\begin{align*}
\lb F(u), u \rb = \lb \Phi(u,u),u\rb=0. &\qedhere
\end{align*}
\end{proof}

\begin{remark}
The problem \eqref{eq:abstractfluid} also fits into the setting of \cite{brzezniak2014strong} if one assumes an additional smallness condition on the noise (see (1.2) in the latter paper).
\end{remark}

\subsection{Stochastic Kuramoto--Sivashinsky equation}\label{ss:KS}
We are going to study a fourth order equation called the Kuramoto--Sivashinsky equation. We consider the conservative form of the equation as studied in~\cite{ZhangKS}. It is used both for chemical
reactions and laminar flames. The nonlinearity is of Burgers' type, and can be handled in our new variational setting.

A stochastic version of the Kuramoto--Sivashinsky equation was considered in~\cite{Duan-etal}. We show how our results can be used to simplify their proofs and weaken the conditions on the nonlinearity (see Remark~\ref{rem:comparisonDuan} for a comparison). For simplicity, we only consider Gaussian noise, but the results can be extended to the case of L\'evy noise without difficulty. For simplicity, we formulate the results on a bounded $C^2$-domain $\Dom\subseteq \R^d$. With minor modifications, the result could be formulated for unbounded domains as well, since we do not need any compactness of embeddings.

On $\Dom$ consider the following fourth order equation:
\begin{equation}
\label{eq:KS}
\left\{
\begin{aligned}
d u  &= [-\Delta^2 u -\Delta u - \dv(\bar{f}(u))]\, d t + \sum_{n\geq 1}g_n(\cdot,u,\nabla u) \, d w_t^n&\quad & \text{ on } \Dom,
\\ u& =0 \ \  \text{ and } \ \ \Delta u =0&\quad & \text{ on }\partial \Dom,
\\ u(0)&=u_0&\quad  &\text{ on } \Dom.
\end{aligned}\right.
\end{equation}
As usual, the $w^n$ are independent standard Brownian motions with respect to our given filtration. The nonlinearity $\dv(\bar{f}(u))$ is of conservative type, which will be used below in \eqref{eq:conserKS}.

The only assumption we will need is the following:
\begin{assumption}\label{ass:KS}
Let $d\geq 1$, $\rho\in [0,6/d]$ and let $\Dom$ be a bounded $C^2$-domain. Suppose that $\bar{f}:\R\to \R^d$ and there exists a constant $C$ such that for all $y,y'\in \R$,
\begin{align*}
|\bar{f}(y)-\bar{f}(y')|
& \leq C (1+|y|^{\rho}+|y'|^{\rho})|y-y'|.
\end{align*}
Suppose that $g:\Omega\times\R_+\times\Dom\times\R\times\R^{d}\to \ell^2$ is $\cP\otimes \cB(\Dom)\otimes\cB(\R)\otimes\cB(\R^d)$-measurable, and there is a constant $C$ such that for all $y,y'\in \R$ and $v,v'\in \R^d$, a.e. on $\Omega\times\R_+\times \Dom$
\begin{align*}
\|g(\cdot,y,v) - g(\cdot,y',v')\|_{\ell^2} &\leq C|y-y'| + C|v-v'|,
\\ \|g(\cdot,y,v)\|_{\ell^2} &\leq C(1+|y|+|v|).
\end{align*}
\end{assumption}
The physical case corresponds to $\rho=1$, which is admissible for $d\leq 6$.

In order to show well-posedness, we formulate \eqref{eq:KS} in our setting and check the conditions of Corollary~\ref{cor:varglobal2}. Let $\cV = H^2(\Dom)\cap H^1_0(\Dom)$, $\cH = L^2(\Dom)$. Then for all $\beta\in (1/2, 1)$, $\cV_{\beta} \hookrightarrow H^{4\beta-2}(\Dom)$. Moreover, one can show that $\cV_{3/4} = H^1_0(\Dom)$, and thus by duality $\cV_{1/4} = H^{-1}(\Dom)$.

Let $A_0 v=\Delta^2 v +\Delta v$, $B_0 = 0$, $C_0=0$. Then the required coercivity condition follows from elliptic regularity theory (using the $C^2$-regularity of the domain) and the interpolation inequality:
\[\lb A_0 v, v\rb = \|\Delta v\|_{L^2(\Dom)}^2 -\| \nabla v \|_{L^2(\Dom)}^2 \geq \kappa \|v\|_{H^2(\Dom)}^2 - C \|v\|^2_{L^2(\Dom)}.\]

Define $F$, $G$ and $H$ by $F(v) = \dv(\bar{f}(v))$ and $G(v) = g(\cdot,v, \nabla v)$ and $H = 0$. Then
\begin{align*}
\|G(\cdot,u) - G(\cdot, v)\|_{\calL_2(U,\cH)} &= \|g(\cdot, u,\nabla u) - g(\cdot, v,\nabla v)\|_{L^2(\Dom;\ell^2)} \\ & \leq C\|u-v\|_{L^2(\Dom)} + C\|\nabla u - \nabla v\|_{L^2(\Dom)}\leq C' \|u-v\|_{3/4}.
\end{align*}
The growth condition can be proved in the same way.

To check the conditions on $F$ it suffices to check the locally Lipschitz estimate, since $F(0)$ is a constant and hence in $\cV^*$. First, let $d\geq 3$. By the Sobolev embedding and H\"older's inequality we find
\begin{align*}
\|F(u) - F(v)\|_{\cV^*} &\leq \|\bar{f}(u) - \bar{f}(v)\|_{H^{-1}(\Dom)}
\\ & \leq \|\bar{f}(u) - \bar{f}(v)\|_{L^r(\Dom)}
\\ & \leq C \|(1+|u|^{\rho}+|v|^{\rho})|u-v|\|_{L^r(\Dom)}
\\ & \leq C'(1+\|u\|_{L^{r(\rho+1)(\Dom)}}^{\rho}+\|v\|_{L^{r(\rho+1)(\Dom)}}^{\rho})\|u-v\|_{L^{r(\rho+1)(\Dom)}},
\end{align*}
where we set $-\frac{d}{r} = -1-\frac{d}{2}$. Since $d\geq 3$, we see that $r\in (1, 2)$. Taking the critical value $\beta_F = \frac{1}{2} + \frac{1}{2} \frac{1}{1+\rho}$ one can check that
$V_{\beta}\hookrightarrow H^{4\beta-2}(\Dom)\hookrightarrow L^{r(\rho+1)}(\Dom)$ if $4\beta_F-2 - \frac{d}{2} \geq -\frac{d}{r(\rho+1)} = -\frac{1}{\rho+1} - \frac{d}{2(\rho+1)}$. The latter condition is equivalent to $\rho \leq \tfrac{6}{d}$. It follows that $F$ satisfies the required locally Lipschitz condition with $\alpha_F = 0$ and $(\beta_F,\rho)$ as above.

Next, consider $d\in \{1, 2\}$. Without loss of generality we may assume $\rho\geq 1$. Taking $\alpha_F = 1/4$ (recall $\cV_{1/4} = H^{-1}(\Dom)$) and the corresponding critical value $\beta_F = \frac{1}{2} + \frac{3}{4(\rho+1)}$ which is in $(1/2,1)$, we find that
\begin{align*}
\|F(u) - F(v)\|_{1/4} &\leq \|\bar{f}(u) - \bar{f}(v)\|_{L^2(\Dom)}
\\ & \leq C \|(1+|u|^{\rho}+|v|^{\rho})|u-v|\|_{L^2(\Dom)}
\\ & \leq C'(1+\|u\|_{L^{2(\rho+1)}(\Dom)}^{\rho}+\|v\|_{L^{2(\rho+1)}(\Dom)}^{\rho})\|u-v\|_{L^{2(\rho+1)}(\Dom)}
\\ & \leq C'(1+\|u\|_{\beta_F}+\|v\|_{\beta_F})\|u-v\|_{\beta_F},
\end{align*}
where in the last step we applied the Sobolev embedding with
$4\beta-2 -\frac{d}{2}\geq -\frac{d}{2(\rho+1)}$ which again is valid due to $\rho\leq 6/d$.

To check the coercivity condition \eqref{nonlinear coercivity conditionsimple3} for $F$ note that for all $v\in C^2(\Dom)$ with $v=0$ on $\partial \Dom$ by integration by parts and the divergence theorem
\begin{align}\label{eq:conserKS}
\langle F(v), v \rangle  = \int_{\Dom} \bar{f}(v) \cdot \nabla v \,dx= \int_{\Dom} \dv(\Phi(v)) \,dx = 0,
\end{align}
where we set $\Phi(y) = \int_0^y \bar{f}(z) dz$.  By an approximation argument this extend to all $v\in H^2(\Dom)\cap H^1_0(\Dom)$.

From Corollary~\ref{cor:varglobal2} we conclude the following result.
\begin{theorem}[Global well-posedness]
\label{thm:SH}
Suppose that Assumption~\ref{ass:KS} holds. Let $u_0\in L^0(\Omega; L^2(\Dom))$ be strongly $\cF_0$-measurable.
Then \eqref{eq:KS} has a unique global solution
\begin{equation}\label{eq:solspaceKS}
u\in C([0,\infty),L^2(\Dom))\cap L^2_{\rm loc}([0,\infty),H^2(\Dom)\cap H^1_0(\Dom)) \ a.s.,
\end{equation}
and for every $T>0$ there exists a constant $C_T$ independent of $u_0$ such that
\[\E \|u\|_{C([0,T],L^2(\Dom))}^2 + \E\|u\|_{L^2([0,T],H^2(\Dom))}^2\leq C_T(1+\E\|u_0\|^2_{L^2(\Dom)}).\]
Furthermore, the following continuous dependency on the initial data holds: if $u_0^n \in L^0(\Omega;L^2(\Dom))$ are strongly $\cF_0$-measurable such that $\|u_0-u_0^n\|_{L^2(\Dom)}\to 0$ in probability, then for every $T\in (0,\infty)$,
\[\|u - u^n\|_{L^2([0,T],H^2(\Dom) )} + \|u - u^n\|_{C([0,T];L^2(\Dom))}\to 0 \ \ \text{in probability},\]
where $u^n$ is the unique global solution to \eqref{eq:abstractfluid} with initial data $u_0^n$.
\end{theorem}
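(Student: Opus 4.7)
The plan is to verify the hypotheses of Corollary~\ref{cor:varglobal2} and Theorem~\ref{thm:contdep} in the functional-analytic framework $\cH = L^2(\Dom)$, $\cV = H^2(\Dom)\cap H^1_0(\Dom)$, where $A_0 v = \Delta^2 v + \Delta v$, $B_0 = 0$, $C_0 = 0$, and $F(v) = \dv(\bar{f}(v))$, $G(v) = g(\cdot,v,\nabla v)$, $H = 0$. The key interpolation identities I would rely on are $\cV_{\nicefrac{3}{4}} = H^1_0(\Dom)$ (by elliptic regularity on the $C^2$-domain $\Dom$) and hence, by duality, $\cV_{\nicefrac{1}{4}} = H^{-1}(\Dom)$; more generally $\cV_\beta \hookrightarrow H^{4\beta-2}(\Dom)$ for $\beta \in (\nicefrac{1}{2},1)$.

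For the coercivity of the linear part, I would compute $\langle A_0 v, v\rangle = \|\Delta v\|_{L^2}^2 - \|\nabla v\|_{L^2}^2$ and combine the elliptic regularity estimate $\|v\|_{H^2} \lesssim \|\Delta v\|_{L^2} + \|v\|_{L^2}$ with the interpolation inequality $\|\nabla v\|_{L^2}^2 \leq \eps \|\Delta v\|_{L^2}^2 + C_\eps \|v\|_{L^2}^2$ to obtain $\langle A_0 v, v\rangle \geq \kappa \|v\|_\cV^2 - C \|v\|_\cH^2$, which gives~\eqref{nonlinear coercivity conditionsimple2} since $B_0 = C_0 = 0$. The Lipschitz and growth estimates for $G$ (with $\rho_G = 0$, $\beta_G = \nicefrac{3}{4}$) follow directly from Assumption~\ref{ass:KS}, using $\|G(u) - G(v)\|_{\calL_2(U,\cH)} \leq C(\|u-v\|_{L^2} + \|\nabla(u-v)\|_{L^2}) \lesssim \|u-v\|_{\nicefrac{3}{4}}$.

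The central step—and what I expect to be the main technical obstacle—is verifying the locally Lipschitz/criticality condition for $F(v) = \dv(\bar{f}(v))$, because the analysis splits according to whether $d \geq 3$ or $d \in \{1,2\}$. In the case $d \geq 3$, I would take $\alpha_F = 0$ and exploit $H^{-1} \hookrightarrow L^r$ for $r$ determined by $-\nicefrac{d}{r} = -1 - \nicefrac{d}{2}$, then use Hölder's inequality to pass to $L^{r(\rho+1)}$, and choose $\beta_F$ at the critical value $\beta_F = \tfrac{1}{2} + \tfrac{1}{2(1+\rho)}$ so that Sobolev embedding $\cV_{\beta_F} \hookrightarrow L^{r(\rho+1)}$ is available precisely when $\rho \leq \nicefrac{6}{d}$. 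In the case $d \in \{1,2\}$ (where $H^{-1}$ no longer embeds into a useful $L^r$ with $r < 2$), I would instead land in $\cV^*_{\nicefrac{1}{4}} = \cV_{\nicefrac{3}{4}}^*$, using $\|\dv(\bar{f}(v))\|_{H^{-1}} \leq \|\bar{f}(v)\|_{L^2}$, set $\alpha_F = \nicefrac{1}{4}$, and take $\beta_F = \tfrac{1}{2} + \tfrac{3}{4(\rho+1)}$; the Sobolev embedding $\cV_{\beta_F} \hookrightarrow L^{2(\rho+1)}$ again holds exactly under $\rho \leq \nicefrac{6}{d}$, and the subcriticality condition~\eqref{eq: criticality F} is saturated as $(2\beta_F-1)(\rho+1) = \nicefrac{3}{2} = 1 + 2\alpha_F$. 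This additional flexibility in $\alpha_F$ is precisely what the extended variational setting of this paper provides.

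Finally, the coercivity condition~\eqref{nonlinear coercivity conditionsimple3} for $F$ follows from the conservative structure: for $v \in C^2(\overline{\Dom})$ with $v|_{\partial\Dom}=0$, integration by parts and the divergence theorem yield
\begin{equation*}
\langle F(v), v\rangle = \int_\Dom \bar{f}(v)\cdot \nabla v\,dx = \int_\Dom \dv(\Phi(v))\,dx = 0, \qquad \Phi(y) := \int_0^y \bar{f}(z)\,dz,
\end{equation*}
and this extends by density to all $v \in \cV$. With all hypotheses of Corollary~\ref{cor:varglobal2} verified (with $\psi$ a suitable constant arising from $G(0)$ and $F(0)$), global existence, uniqueness, and the a-priori $L^2$-estimate follow. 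The continuous dependence statement is then an immediate application of Theorem~\ref{thm:contdep} with $\cH = L^2(\Dom)$ and $\cV = H^2(\Dom)\cap H^1_0(\Dom)$.
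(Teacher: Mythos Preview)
Your proposal is correct and follows essentially the same approach as the paper: the same Gelfand triple, the same split of the $F$-estimate into $d\geq 3$ (with $\alpha_F=0$) and $d\in\{1,2\}$ (with $\alpha_F=\tfrac14$), the same critical choices of $\beta_F$, and the same conservative-structure argument for $\langle F(v),v\rangle=0$. One minor slip: for $d\geq 3$ you wrote ``$H^{-1}\hookrightarrow L^r$'', but the embedding used is $L^r(\Dom)\hookrightarrow H^{-1}(\Dom)$ (dual to $H^1_0\hookrightarrow L^{r'}$), which is what actually lets you bound $\|\bar f(u)-\bar f(v)\|_{H^{-1}}$ by its $L^r$-norm.
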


\begin{remark}\label{rem:comparisonDuan} \
\begin{enumerate}
\item It is straightforward  to find a correspondence between the way the noise is modeled in~\cite{Duan-etal} and our setting. Indeed, if $\overline{g}$ denotes the nonlinearity in the latter paper, then we can take $g_n =c_n e_n \overline{g}$, where $(c_n)\in \ell^2$ and $\sup_{n\geq 1}\|e_n\|_{L^\infty(\Dom)}<\infty$ are coming from their Brownian noise term. The $L^\infty$-bound follows from their condition $(C)$. In this way $g$, satisfies Assumption~\ref{ass:KS}.

\item Without any further conditions, it is possible to let $g$ and $h$ depend on $\nabla^2 u$ as well, if the Lipschitz constant with respect to this variable is small enough. This can be proved by a fixed point argument.

\item More interesting might be that it is possible to include a linear term of the form $B u = \sum_{i,j=1}^d b_{n}^{i,j} \partial_i \partial_j u$. In order to ensure coercivity, one needs that for all $u\in H^2(\Dom)\cap H^1_0(\Dom)$ it holds
    \[\|\Delta u\|^2_{L^2(\Dom)} - \frac12 \sum_{n\geq 1} \Big\|\sum_{i,j=1}^d b_{n}^{i,j} \partial_i \partial_j u\Big\|^2_{L^2(\Dom)}\geq \kappa \|u\|_{H^2(\Dom)}^2 - C \|u\|_{L^2(\Dom)}^2.\]
    If the domain $\Dom$ is convex and $C^2$, then a sufficient condition for the latter is
\[\sum_{n\geq 1}\sum_{i,j,k,\ell=1}^d b_n^{i,j} b_n^{k,\ell} \xi_{i,j} \xi_{k,\ell}\leq (2-\kappa') |\xi|^2.\]
where $|\xi|^2 = \sum_{i,j=1}^d \xi_{i,j}^2$. To see this, one can apply Kadlec's formula (see \cite[Exercise 5.5.6-7]{TayPDE1} and \cite[Appendix]{AHHS}).

\item in~\cite{Duan-etal}, in the case where $g$ only depends on $u$, the assumption on $f$ is that $\rho=2$ if $1\leq d\leq 5$, and $\rho<6/d$ if $d\geq 6$. In the case $g$ also depends on $\nabla u$, they assume that the dependency has a small Lipschitz constant, and that $\rho=1$ if $d=1$ and $\rho<2/d$ if $d\geq 2$. The latter excludes the physical case $\rho=1$ if $d\geq 2$. Our conditions are more flexible and include the physical nonlinearity for $d\leq 6$.

\item In the above paper, it is claimed  that one even has that the solution mapping $u_0\mapsto u$ from $L^2(\Omega,L^2(\Dom))$ into $L^2(\Omega,L^2([0,T],H^2(\Dom))\cap C([0,T],L^2(\Dom)))$ is Lipschitz continuous. This was proved for the truncated equation, but we do not know how to extend this to the full problem. A partial result does hold. Indeed, by a standard argument involving uniform integrability, we can obtain such result with range space $L^q(\Omega,L^2([0,T],H^2(\Dom))\cap C([0,T],L^2(\Dom)))$ with arbitrary $q\in (0,2)$ (see \cite[Theorem 3.8]{agresti2022critical}).
\end{enumerate}
\end{remark}

\subsection{Remarks about the stochastic Swift--Hohenberg equation}\label{ss:SH}
The Swift--Hohenberg equation appears in several application areas and is a partial differential equations which has special pattern formations. In the stochastic case, it has been studied on a bounded interval with Dirichlet boundary conditions in~\cite{Gao}. The well-posedness theory was extended to more general bounded domains in~\cite[Section 5.6]{agresti2022critical}. Using the theory of our paper, it can be extended to the setting with L\'evy noise. Furthermore, we can weaken the condition on the nonlinearity considerably and are now also able to include the important case of fifth-order polynomials in the cases $d=1$ and $d=2$. in~\cite{agresti2022critical} these type of nonlinearities were excluded except if $d=1$. More generally, the condition on the nonlinearity $f$ in the latter work can be replaced by $\rho\leq \frac{8}{d}$. This gives many new cases for $d\in \{1, 2, 3, 4\}$.

To prove the above statement, by the cases already considered in~\cite{agresti2022critical} we may assume $d\in \{1, 2, 3, 4\}$ and $\rho\geq 2$. In the same way as we saw before Theorem~\ref{thm:second} (and similarly in Theorem~\ref{thm:SH}), one can check that $F(u) = f(u)$ satisfies our conditions with $\alpha=1/2$ and $\beta$ such that $(1 + \rho)(2\beta - 1) =2$.

\subsubsection*{Acknowledgments}
The authors thank Antonio Agresti for helpful discussions and Esm\'ee Theewis for useful comments. The authors also thank Istv\'an Gy\"ongy and Nicolai Krylov for valuable remarks and discussions. Finally, we thank
the referees for careful reading and helpful comments. 

\subsubsection*{Funding} The first-named author is supported by the Alexander von Humboldt foundation by a Feodor Lynen grant. The second-named and third-named authors are supported by the VICI subsidy VI.C.212.027 of the Netherlands Organisation for Scientific Research (NWO). This project has received funding from
the European Union’s Horizon 2020 research and innovation programme under the Marie Skłodowska-Curie grant agreement No 101034255 \euflag{}.

\end{document}